\newtheorem{theorem}{Theorem}[section]
\newtheorem{corollary}[theorem]{Corollary}
\newtheorem{lemma}[theorem]{Lemma}
\newtheorem{definition}[theorem]{Definition}
\newtheorem{proposition}[theorem]{Proposition}
\newtheorem{remark}[theorem]{Remark}
\newcommand{\beqa}{\begin{eqnarray*}}
	\newcommand{\eeqa}{\end{eqnarray*}}
\DeclareMathOperator*{\supp}{supp}
\DeclareMathOperator*{\essupp}{ess\,sup\,}
\newcommand{\field}[1]{\mathbb{#1}}
\newcommand{\bR}{\field{R}}
\newcommand{\bN}{\field{N}}
\newcommand{\bZ}{\field{Z}}
\newcommand{\bC}{\field{C}}
\newcommand{\bT}{\field{T}}
\newcommand{\bH}{\field{H}}
\newcommand{\bA}{\field{A}}
\newcommand{\bG}{\field{G}}
\newcommand{\bK}{\field{K}}
\newcommand{\bS}{\field{S}}
\newcommand{\bU}{\field{U}}
\def\al{\alpha}
\def\be{\beta}
\def\ga{\gamma}
\def\Ga{\Gamma}
\def\la{\lambda}
\def\La{\Lambda}
\def\vp{\varphi}
\def\om{\omega}
\def\si{\sigma}
\def\cF{\mathcal{F}}
\def\cS{\mathcal{S}}
\def\cH{\mathcal{H}}
\def\cG{\mathcal{G}}
\def\hcG{\widehat{\mathcal{G}}}
\def\cM{\mathcal{M}}
\def\cK{\mathcal{K}}
\def\cU{\mathcal{U}}
\def\cA{\mathcal{A}}
\def\cJ{\mathcal{J}}
\def\cC{\mathcal{C}}
\def\cR{\mathcal{R}}
\def\cT{\mathcal{T}}
\def\cA{\mathcal{A}}
\def\sA{\mathscr{A}}
\def\sD{\mathscr{D}}
\def\sfT{\mathsf{T}}
\def\sfM{\mathsf{M}}
\def\Tb{\mathfrak{T}_b}
\def\a{\aleph}
\def\hf{\hat{f}}
\def\hg{\hat{g}}
\def\rd{\bR^d}
\def\rdd{{\bR^{2d}}}
\def\zdd{{\bZ^{2d}}}
\def\zd{\bZ^d}
\def\<{\left<}
\def\>{\right>}
\def\mv1{M_v^1}
\newcommand{\abs}[1]{\left|#1\right|}
\newcommand{\norm}[1]{\left\|#1\right\|}
\newcommand{\opnorm}[1]{
	\left|\mkern-1.5mu\left|\mkern-1.5mu\left|
	#1
	\right|\mkern-1.5mu\right|\mkern-1.5mu\right|}
\def\o{\omega}
\def\a{\alpha}
\def\i{\infty}
\def\Ren{\mathbb{R}^d}
\def\f{\varphi}
\def\Sn2{S_{2}(L^{2}(\Ren))}
\def\S1{S_{1}(L^{2}(\Ren))}
\def\sig00{\sigma_{0,0}}
\def\la{\langle}
\def\ra{\rangle}
\newcommand{\vr}{\varrho}
\newcommand{\A}{\mathcal{A}}
\newcommand{\set}[1]{\left\{\, #1\,\right\}}
\def\p{\psi}
\def\A{A^{\p_1,\p_2}_a}
\DeclareMathOperator*{\Opz}{Op_{0}}
\DeclareMathOperator*{\Co}{\textsf{ Co}}
\DeclareMathOperator*{\CoFG}{\textsf{Co}_{\small FG}}
\def\bfx{\textnormal{\textbf{x}}}
\def\bfy{\textnormal{\textbf{y}}}
\def\bfu{\textnormal{\textbf{u}}}
\def\bfw{\textnormal{\textbf{w}}}
\def\bfn{\textnormal{\textbf{n}}}
\def\bfz{\textnormal{\textbf{z}}}
\def\bfth{\boldsymbol{\theta}}
\def\bfdel{\boldsymbol{\delta}}
\begin{document}
	\begin{abstract} We introduce new quasi-Banach modulation spaces on locally compact abelian (LCA) groups which coincide with the classical ones in the Banach setting and prove their main properties. Then we study   Gabor frames  on quasi-lattices, significantly extending the original theory introduced by Gr\"{o}chenig  and Strohmer. These issues are the key tools in   showing  boundedness results for Kohn-Nirenberg and localization operators on modulation spaces and studying their eigenfunctions' properties. In particular,  the results in the Euclidean space are recaptured.
	\end{abstract}
	
	\title[Modulation spaces and localization operators on LCA groups]{Quasi-Banach modulation spaces and localization operators on locally compact abelian groups}
	\author{Federico Bastianoni}
	\address{Dipartimento di Scienze Matematiche, Politecnico di Torino, corso
		Duca degli Abruzzi 24, 10129 Torino, Italy}
	\email{federico.bastianoni@polito.it}
	\thanks{}
	\author{Elena Cordero}
	\address{Dipartimento di Matematica, Universit\`a di Torino, via Carlo Alberto 10, 10123 Torino, Italy}
	\email{elena.cordero@unito.it}
	\thanks{}

	\subjclass[2010]{42B35;46E35;47G30;47B10}
	\keywords{Time-frequency analysis, locally compact abelian groups, localization operators, short-time Fourier transform, quasi-Banach spaces, modulation spaces, Wiener amalgam spaces}
	\date{}
\maketitle

\tableofcontents

\section{Introduction}

In the last decades time-frequency analysis and pseudo-differential calculus on locally compact groups acquired increasing interest  for both theoretical and practical reasons. The popularity gained in signal processing by time-frequency representations (see e.g. \cite{ClaMec-pt1-1980} and references therein and \cite{Strohmer2006}), led to the need for discrete versions of the techniques available on $\rd$. Many works have been done on $\zd$, finite 
abelian and elementary groups \cite{AnTol1998,ClaMec-pt2-1980,FeiStro1998,MecHla1997}, and for the $p$-adic groups $\mathbb{Q}_p$  \cite{EnsJakLue2019,Kut2003,GroStr2007}. Since the group laws of the $p$-adic numbers resembles the computer arithmetic, the $\mathbb{Q}_p$ groups appear to be the natural settings for problems in computer science. On the other hand,  the $p$-adic groups and the pseudo-differential calculus on them are essential for $p$-adic quantum theory \cite{Haran1993,RueThiVerWey1989,Vladimirov1990}. More generally,  group theory has caught  the attention of many authors in the last thirty years, as it is witnessed by the huge production on the topic, see for example \cite{AkyRuz2020,feichtinger-modulation,FeiGro1988,FeiGro1989-I,FeiGro1989-II,Gro-AspectsGabor-1998,Grochenig2020,ManRuz2017,Oussa2019,RuzTur-pseudo-2010,RuzTur-2011,RuzTurWir-2014,Tur2016,Tur20,WongLocalization}. \par
In this work we shall focus on topological, locally compact abelian (LCA) groups $\cG$.
The very first motivation that led to this manuscript was the study of eigenfuctions' properties  for localization operators on LCA groups, in the spirit of the results inferred in the Euclidean case \cite{BasCorNic20}. 
Despite the many contributions on pseudo-differential operators acting on groups (cf. e.g., \cite{Strohmer2006, RuzTurWir-2014,WongLocalization}), we believe this is  the first work in this direction. \par 
The  function spaces used for both eigenfunctions and symbols are modulation spaces.
For measuring the eigenfunctions' decay it becomes necessary to extend the Banach cases of modulation spaces $M^{p,q}_m(\cG)$ $1\leq p,q\leq\infty$, originally defined by Feichtinger in his pioneering work \cite{feichtinger-modulation}, to the quasi-Banach setting. This is the first contribution of this paper. 
Although there is a well-established theory for $M^{p,q}_m(\rd)$ including the quasi-Banach cases $0<p,q\leq\infty$ \cite{Galperin2004}, if we abandon the Euclidean setting for  a general group $\cG$ troubles arise. 
In fact, the Banach modulation spaces on groups  $\cG$ introduced in  \cite{feichtinger-modulation} cannot be adapted to the quasi-Banach case.  We overcome this difficulty by getting  inspiration from  the idea of Feichtinger and Gr\"{o}chenig in  \cite{FeiGro1988}, and view  modulation spaces on $\cG$ as particular coorbit spaces over the Heisenberg group $\cG\times\hcG\times\bT$ (cf. Definition \ref{defiHG} below). Again, the coorbit theory proposed by Feichtinger and Gr\"{o}chenig in their  works  \cite{FeiGro1988,FeiGro1989-I,FeiGro1989-II}  is not suitable for  the quasi-Banach case. The right construction is provided by the new coorbit theory started by Rauhut in \cite{Rauhut2007Coorbit} and developed by Voigtlaender in his Ph.D. thesis \cite{Voig2015}, see also \cite{VelthovenVoigtlaender2022}.

For a version of coorbit theory that does not need group representations, but only a continuous frame to start with,  we refer to \cite{FornasierRout2005,RauhutUllrich2011}.

Thanks to  this new theory (see a brief summary in the Appendix A below), we are able to give a definition of modulation spaces on LCA groups which recaptures Feichtinger's orginal one in  \cite{feichtinger-modulation} and deals with the quasi-Banach case.  
To explain the new modulation spaces, we  first need to introduce the main notations.

 We write $\hcG$ for the dual group of $\cG$. Latin letters such as $x$, $y$ and $u$ denote elements in $\cG$ whereas all the characters in $\hcG$, except the identity $\hat{e}$, are indicated by Greek letters like $\xi$, $\omega$ and $\eta$. For the evaluation of a character $\xi\in\hcG$ at a point $x\in\cG$ we write
\begin{equation*}
\la\xi,x\ra\coloneqq\xi(x).
\end{equation*}
For $x\in\cG, \xi\in\hcG$ and a function $f\colon\cG\to\bC$ we define the {\slshape translation operator} $T_x$, the {\slshape modulation operator} $M_\xi$ and the {\slshape time-frequency shift} $\pi(x,\xi)$ as
\begin{equation}\label{Eq-fundamental-operators}
T_xf(y)= f(y-x),\quad M_\xi f(y)= \la\xi,y\ra f(y),\quad \pi(x,\xi)= M_\xi T_x,
\end{equation}
$T_x$ and $M_\xi$ fulfil the so called commutation relations
\begin{equation}\label{Eq-commutation-relations}
M_\xi T_x=\la\xi,x\ra T_x M_\xi.
\end{equation}
For $f,g\in L^2(\cG)$, the {\slshape short-time Fourier transform (STFT) of $f$ with respect to $g$} is given by
\begin{equation}\label{Eq-STFT}
V_g f(x,\xi)= \la f,\pi(x,\xi)g\ra =\int_\cG f(y)\overline{\pi(x,\xi)g(y)}\,dy, \qquad (x,\xi)\in\cG\times\hcG.
\end{equation}
To define modulation spaces $M^{p,q}_m(\cG)$, instead of considering the mixed Lebesgue space $L^{p,q}_m$  to measure the (quasi-)norm of the STFT as in \cite{feichtinger-modulation}, that is
\begin{equation}\label{Eq-norm-Mpq-old}
\norm{f}_{M^{p,q}_m}=\norm{V_g f}_{L^{p,q}_m},
\end{equation}
we use the Wiener space $W(L^\infty,L^{p,q}_m)$-norm (see Definition \ref{definizione-Wiener} and subsequent comments):
\begin{equation}\label{Eq-norm-Mpq-new}
\norm{f}_{M^{p,q}_m}=\norm{V_g f}_{W(L^\infty,L^{p,q}_m)}.
\end{equation}
We develop a new general  theory  which coincides with the classical one when
\begin{itemize}
	\item[(i)] $p,q\geq1$ and $\cG$ is any LCA group;
	\item[(ii)] $0<p,q\leq\infty$ and $\cG=\rd$.
\end{itemize}
Frame expansions and new convolution relations for $M^{p,q}_m(\cG)$ are obtained as well, see Theorem \ref{Th-gabor-expansions-Mpq} and Proposition \ref{Pro-convolution-Mpq} below.\par
Galperin and Samarah proved in \cite[Lemma 3.2]{Galperin2004}, that for any $0<p,q\leq\infty$ there  exists  constant $C>0$ such that for every 
\begin{equation}\label{Eq-open-problem}
\norm{V_gf}_{W(L^\infty,L^{p,q}_m)}\leq C \norm{V_gf}_{L^{p,q}_m},\quad \forall\, f\in M^{p,q}_m(\rd),
\end{equation}
$g$ being the Gaussian. It is of course a natural question whether there are cases for which the (quasi-)norm \eqref{Eq-norm-Mpq-new} is equivalent to the more ``natural" \eqref{Eq-norm-Mpq-old}. In order to answer this question one has to verify \eqref{Eq-open-problem} for some suitable window function $g$. The techniques adopted in \cite{Galperin2004} to prove the above inequality rely on properties of entire functions on $\bC^d$, which cannot be adopted for a general LCA group $\cG$.  Whether the inequality in \eqref{Eq-open-problem} holds true whenever we replace $\rd$ by any LCA group $\cG$ is still an open problem and can be seen as a manifestation of a wider issue concerning coorbit theories, see Rauhut' observations in \cite[Section 6]{Rauhut2007Coorbit}.\par
In this work we are able to give a positive answer when $\cG$ is a discrete or compact group, see the subsequent Lemma \ref{Lem-Analogo-GalSam-Lem3.2}.

Next, we focus on localization operators and their eigenfunctions. 

The localization operator $\A$ with symbol $a$ and windows $\psi_1,\psi_2$  can be formally defined by
\begin{equation*}
\A f(x)=\int_{\cG\times\hcG}a(u,\o)V_{\p_1}f(u,\o)M_{\o} T_{u}\p_{2}(x)\,dud\o.
\end{equation*}
In particular, if $a\in L^\infty(\cG\times\hcG)$ and the windows $\psi_1,\psi_2$ are in $L^2(\cG)$ then $\A$ is bounded on $L^2(\cG)$, cf. \cite{WongLocalization}.

For a linear bounded operator $T$ on $L^2(\cG)$ we denote by $\sigma(T)$ the {\slshape spectrum} of $T$, that is the set $\{\lambda\in\bC\,|\,T-\lambda I_{L^2}\, \mbox{is\,not\,invertible}\}$; in particular, the set $\sigma_P(T)$ denotes the {\slshape point spectrum} of $T$, that is 
\begin{equation*}
\sigma_P(T)=\{\lambda\in\bC\,|\, \exists\,f\in L^2(\cG)\smallsetminus\{0\} \,\mbox{such\,that} \,Tf=\lambda f\},
\end{equation*}
such an $f$ is called {\slshape eigenfunction of $T$ associated to the eigenvalue $\lambda$}.\\

Our main result in this framework can be formulated as follows:\par
\vspace{0.5truecm}
\emph{If the symbol $a$ belongs to the modulation space $M^{p,\infty}(\cG\times\hcG)$ for some $0<p<\infty$, then any eigenfunction $f\in L^2(\cG)$ of the localization operator $\A$ satisfies
	\begin{equation*}
	f\in\bigcap_{\ga>0}M^\ga(\cG).
	\end{equation*}}
In particular, when $\cG=\zd$, this means that any eigenfunction  $f\in \ell^2(\zd)$ satisfies $f\in \bigcap_{\ga>0} \ell^\ga(\zd)$, so the sequence  $f$ displays a  fast  decay at infinity.
 
The study of eigenfunctions of $\A$ is pursued using the connection between localization and Kohn-Niremberg operators $\Opz(\sigma)$. 

Let us first introduce the  Rihaczek distribution.  Given $f,g\in L^2(\cG)$, we define the {\slshape (cross-)Rihaczek distribution} of $f$ and $g$ by
\begin{equation}\label{Rdef}
R(f,g)(x,\xi)=f(x)\overline{\hg(\xi)}\overline{\<\xi,x\>},\qquad(x,\xi)\in\cG\times\hcG,
\end{equation}
$\hg$ being the Fourier transform of $g$ \eqref{Eq-Def-Fourier-Transform}. When $f=g$, $R(f,f)$ is called the {\slshape Rihaczek distribution of $f$}.

Then the {\slshape pseudo-differential operator} $\Opz(\si)$ with Kohn-Nirenberg symbol $\si$ is formally defined by
	\begin{equation}\label{EqDefKN}
	(\Opz(\si) f)(x)=\int_{\hcG} \si(x,\xi)\hf(\xi)\<\xi,x\>\,d\xi.
	\end{equation}
	Equivalently, we can define it weakly by
	\begin{equation}\label{Eq-KonNirenberg-weak}
	\< \Opz(\sigma) f,g\>=\<\sigma,R(g,f)\>.
	\end{equation}
If  $\si\in M^\infty(\cG\times\hcG)$ then
\begin{equation}
\Opz(\si)\colon M^1(\cG)\to M^\infty(\cG)
\end{equation}
is well defined, linear and continuous, see e.g. \cite[Corollary 4.2, Theorem 5.3]{Jakobsen2018}.
 A localization operator  $\A$ can be written in the Kohn-Nirenberg form:
\begin{equation*}
\A=\Opz(a\ast R(\p_2,\p_1)),
\end{equation*}
that is, $\A$ is a  Kohn-Nirenberg operator with symbol
$$\sigma=a\ast R(\p_2,\p_1)$$
the convolution between the localization symbol $a$ and the cross-Rihaczek distribution $R(\p_2,\p_1)$ of its windows $\p_2,\p_1$.
It becomes then natural to study the properties of Kohn-Nirenberg pseudo-differential operators  and convolution relations for modulation spaces on LCA groups.

We obtain new boundedness results for such operators in modulation spaces and describe the decay of their eigenfunctions in $L^2(\cG)$, see Theorem \ref{Th-cont-KN-3-indici} and Proposition \ref{Pro-KN-eigenfunction} below.  The convolution properties are contained in Proposition \ref{Pro-convolution-Mpq}.

We point out that Theorem \ref{Th-cont-KN-3-indici} is not an easy generalization of the Euclidean case. It requires frame theory on quasi-lattices  and  proofs with  high level of technicalities, cf. Subsection 3.2 below. Quasi-lattices were used by Gr\"{o}chenig and Strohmer in \cite{GroStr2007} since not every $\cG$ admits a lattice, e.g. the $p$-adic groups $\mathbb{Q}_p$.\par 
They are the key issue in showing the boundedness properties for Kohn-Nirenberg operators in the subsequent Theorem \ref{Th-cont-KN-3-indici}, and we believe that these new techniques for Gabor frames on quasi-lattices  can be  valuable in and of  themselves and applied in other contexts.  Loosely  speaking, the main insight (suggested in \cite{GroStr2007}) is  ``to consider the quotient group", cf. Subsections 3.2 for details.
\par 	
The paper is organized as follows. In  Section 1 we establish technical assumptions and notations. Section 2 is devoted to the new general theory for modulation spaces $M^{p,q}_m(\cG)$ with $0<p,q\leq\infty$. In Section 3 we study continuity properties on modulations spaces for the Rihaczek distribution and pseudo-differential operators with Kohn-Nirenberg symbols. Gabor frame over quasi-lattices, analysis and synthesis operators, convolution relations are investigated as well. Section 4 deals with localization operators and their eigenfunctions. In the Appendix A we resume the coorbit theory presented in the thesis of Voigtlaender \cite{Voig2015} and  compare it with the one of Feichtinger and Gr\"{o}chenig. 
We strongly recommend  the reader who is not familiar with coorbit theory  to read the Appendix A,  for it is heavily used in Section 2 and subsequent sections. 
\section{Preliminaries}
We mainly follow the notations and assumptions of Gr\"{o}chenig and Strohmer \cite{GroStr2007}.
\subsection{Notations}
$\cG$ denotes a LCA group with the Hausdorff property. $\hcG$ is the dual group of $\cG$. The group operation on $\cG$, and on any abelian group such as $\cG\times\hcG$, is written additively. The unit in $\cG$ and $\hcG$ are denoted by $e$ and $\hat{e}$, respectively.\\
$\cG$ is assumed second countable, which is equivalent to $L^2(\cG)$ separable (see \cite[Theorem 2]{deVries1978}) and implies the metrizability of the group (\cite[Pag. 34]{MontZipp1955}).  In order to avoid uncountable sets of indexes and sums we  require $\cG$ to be $\si$-compact; this last property is equivalent to $\si$-finiteness  \cite[Proposition 2.22]{Folland_AHA1995}, as observed in \cite[Remark 2.3.2]{Voig2015}. Note that, due to \cite[Theorem 4.2.7]{ReiSte2000} and Pontrjagin's duality, $\cG$ is second countable and $\sigma$-compact if and only if $\hcG$ is second countable and $\sigma$-compact.

 In the sequel, $A\lesssim B$ means that there exists a constant $c>0$ independent of $A$ and $B$ such that $A\leq cB$; we write $A\asymp B$ if both $A\lesssim B$ and $B\lesssim A$. If $f\colon X\to \bC,x\mapsto f(x)$ and $g\colon Z\to\bC,z\mapsto g(z)$, then we define the {\slshape tensor product of $f$ and $g$} as $f\otimes g\colon X\times Z\to\bC,(x,z)\mapsto f(x)g(z)$. We denote by $X\hookrightarrow Z$ the continuous injection of $X$ into $Z$.

\subsection{Fundamental operators, special test functions, Rihaczek distribution}
We adopt the space of special test functions $\cS_\cC(\cG)$ introduced in \cite{GroStr2007} and defined below. The definition is based on
the structure theorem $\cG\cong\rd\times\cG_0$ \cite[Theorem 24.30]{HewRos63}, where $d\in\bN_0$ and $\cG_0$ is a LCA group containing a compact open subgroup $\cK$. Consequently, we can identify $\hcG$ with $\rd\times\hcG_0$, where the dual group $\hcG_0$ contains the compact open subgroup $\cK^\perp$, see e.g. \cite[Lemma 6.2.3]{Gro-AspectsGabor-1998}. We endow $\cG$ and $\hcG$ with the Haar measures $dx$ and $d\xi$, respectively, where $d\xi$ is the dual Haar measure.  The {\slshape Fourier transform} is
\begin{equation}\label{Eq-Def-Fourier-Transform}
	\cF f(\xi)=\hat{f}(\xi)=\int_\cG f(x)\overline{\la\xi,x\ra}\,dx,\qquad\xi\in\hcG.
\end{equation}
$\cF $ is an isometry from $L^2(\cG)$ onto $L^2(\hcG)$
.\\
On account of the  structure theorem above, we define the {\slshape generalized Gaussian on $\cG$} as 
\begin{equation}\label{Gauss}
\f(x_1, x_2)\coloneqq e^{-\pi x_1^2} \chi_\cK(x_2)\eqqcolon\f_1(x_1)\f_2(x_2),\quad (x_1, x_2) \in \rd\times \mathcal{G}_0,
\end{equation}  
and the {\slshape set of special test functions} 
\begin{equation}\label{Sc}
\mathcal{S_\mathcal{C}}(\mathcal{G})\coloneqq\mbox{span}\left\{\pi(\textbf{x})\f,\quad \textbf{x}=(x,\xi)\in \mathcal{G}\times\mathcal{\widehat{G}}\right\}\subseteq L^2(\cG),
\end{equation}
that is, the set of all time-frequency shifts of the Gaussian $\f=\f_1\otimes\f_2$ in \eqref{Gauss}. For the main properties of this space we refer to \cite[Section 2]{GroStr2007}.

For $x=(x_1,x_2)\in \rd\times \mathcal{G}_0$ and $\xi=(\xi_1,\xi_2)\in  \rd\times \widehat{\cG}_0$, the Rihaczek distribution of   $\f=\f_1\otimes\f_2$ in \eqref{Gauss} is given by 
\begin{align}
R(\f,\f)(x,\xi)&=R(\f_1,\f_1)(x_1,\xi_1)R(\f_2,\f_2)(x_2,\xi_2)\notag\\
&=e^{-2\pi i \xi_1x_1}e^{-\pi(x_1^2+\xi_1^2)}\chi_{\cK}(x_2)c(\cK)\chi_{{\cK}^\perp}(\xi_2)\overline{\la \xi_2,x_2\ra}\notag\\
&=c(\cK)e^{-2\pi i \xi_1x_1}e^{-\pi(x_1^2+\xi_1^2)}\overline{\la \xi_2,x_2\ra}\chi_{\cK\times\cK^\perp}(x_2,\xi_2)\notag\\
&=c(\cK)\overline{\la \xi,x\ra}e^{-\pi(x_1^2+\xi_1^2)}\otimes\chi_{\cK\times\cK^\perp}(x_2,\xi_2),\label{Eq-Rff}
\end{align}
where $c(\cK)>0$ is  a constant depending on the compact subgroup $\cK$. Hence $R(\f,\f)(x,\xi)$ is up to a positive constant and a `` chirp" a Gaussian on $\rdd \times (\cG_0\times \hcG_0)$, where we fixed $\cK\times\cK^\perp$ as compact open subgroup of the non Euclidean component. We recall the following covariance property \cite[Lemma 4.2 (i)]{GroStr2007}: for $\textbf{x}=(x,\xi),$ $\textbf{y}=(y,\eta)\in\cG\times\hcG$, $f,g\in \mathcal{S_\mathcal{C}}(\mathcal{G})$,
\begin{equation}\label{Rtfs}
R(\pi(\textbf{x})f,\pi(\textbf{y})g)=\la \eta, x-y\ra M_{\mathcal{J}(\textbf{y}-\textbf{x})} T_{(x,\eta)}R(f,g),
\end{equation}
where $\cJ$ is the topological isomorphism
\begin{equation}\label{DefJ}
\cJ\colon\cG\times\hcG\to\hcG\times\cG,\,(x,\xi)\mapsto (-\xi,x),
\end{equation}
and $\cJ^{-1}(\xi,x)=(x,-\xi)$. In what follows we shall need also the following identity: 
	\begin{equation}\label{Eq-SFTFgauss}
		V_\f \f(x,\xi)=c(\cK)e^{-\frac\pi2(x^2_1+\xi^2_1)}\otimes\chi_{\cK\times\cK^\perp}(x_2,\xi_2),
	\end{equation}
	see \cite{GroStr2007} for calculations.
Using a similar argument as in the estimate \cite[formula (12)]{GroStr2007}, one can show that  $R(f,g)$ and $V_g f$ are in $L^p_m(\cG\times\hcG)$, $0<p\leq\infty$, for arbitrary moderate weight functions, which will be defined in the Appendix A, and any $f,g\in\cS_\cC(\cG)$. Similarly, every function 
 in $\mathcal{S_\mathcal{C}}(\cG)$ belongs to $L^p_m(\cG)$, $0<p\leq\infty$.
Recall that for any $f,g\in L^2(\cG)$ \cite[formula (8)]{GroStr2007} 
 \begin{equation}\label{STFTtfsfts}
V_{M_\eta T_y g} M_\o T_u f(x,\xi)=\overline{\la \xi-\o, u\ra}\la \eta,x-u\ra T_{(u-y,\o-\eta)}V_gf(x,\xi).
 \end{equation}
The previous formula, jointly with \eqref{STFTtfsfts} and \eqref{Rtfs}, allows us to write explicitly every STFT  and cross-Rihaczek distribution of elements in $\cS_\cC(\cG)$.
\begin{lemma}\label{Lem-STFTinS_C}
	Consider $f,g\in\cS_\cC(\cG)$, hence
	\begin{equation*}
	f=\sum_{k=1}^{n}a_k\pi(\bfu_k)\f,\qquad g=\sum_{j=1}^{m}b_j\pi(\bfy_j)\f,
	\end{equation*}
	for some $n,m\in\bN$, $a_k,b_j\in\bC$ and $\bfu_k=(u_k,\o_k),\bfy_j=(y_j,\eta_j)\in\cG\times\hcG$. Then for every $(x,\xi)\in\cG\times\hcG$:
	\begin{align}
	V_gf(x,\xi)&=\sum_{k=1}^{n}\sum_{j=1}^{m}a_k\overline{b_j\<\xi-\o_k,u_k\>}\<\eta_j,x-u_k\>T_{\bfu_k-\bfy_j}V_\f\f(x,\xi),\\
	R(f,g)(x,\xi)&=\sum_{k=1}^{n}\sum_{j=1}^{m}a_k b_j \la \eta_j,u_k-y_j\ra M_{\cJ(\bfy_j-\bfu_k)}T_{(u_k,\eta_j)}R(\f,\f)(x,\xi).
	\end{align}
\end{lemma}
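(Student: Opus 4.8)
The plan is to reduce both identities to the case of a single time-frequency shift and then quote the two covariance formulas already recorded above, namely \eqref{STFTtfsfts} for the STFT and \eqref{Rtfs} for the cross-Rihaczek distribution. Since $f=\sum_{k=1}^{n}a_k\pi(\bfu_k)\f$ and $g=\sum_{j=1}^{m}b_j\pi(\bfy_j)\f$ are finite linear combinations, I would first use that $(f,g)\mapsto V_gf$ is linear in $f$ and conjugate-linear in $g$, and that $(f,g)\mapsto R(f,g)$ is linear in its first entry, so as to expand $V_gf$ and $R(f,g)$ into double sums over $k,j$ of the elementary pieces $V_{\pi(\bfy_j)\f}\,\pi(\bfu_k)\f$ and $R(\pi(\bfu_k)\f,\pi(\bfy_j)\f)$, with scalar coefficients $a_k\overline{b_j}$ and $a_kb_j$ respectively. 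It then suffices to evaluate a generic summand.

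For the STFT piece, recalling $\pi(\bfu_k)\f=M_{\o_k}T_{u_k}\f$ and $\pi(\bfy_j)\f=M_{\eta_j}T_{y_j}\f$, I would apply \eqref{STFTtfsfts} with both window and signal equal to the Gaussian $\f$ and with $(u,\o)=\bfu_k$, $(y,\eta)=\bfy_j$, obtaining
\begin{equation*}
V_{\pi(\bfy_j)\f}\,\pi(\bfu_k)\f(x,\xi)=\overline{\la\xi-\o_k,u_k\ra}\,\la\eta_j,x-u_k\ra\,T_{(u_k-y_j,\,\o_k-\eta_j)}V_\f\f(x,\xi);
\end{equation*}
since $\bfu_k-\bfy_j=(u_k-y_j,\o_k-\eta_j)$, the translation is exactly $T_{\bfu_k-\bfy_j}$, and multiplying by $a_k\overline{b_j}$ and summing over $k,j$ gives the first asserted formula. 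For the Rihaczek piece I would apply the covariance identity \eqref{Rtfs} with $\bfx=\bfu_k$, $\bfy=\bfy_j$ and $f=g=\f$, which directly yields
\begin{equation*}
R(\pi(\bfu_k)\f,\pi(\bfy_j)\f)=\la\eta_j,u_k-y_j\ra\,M_{\cJ(\bfy_j-\bfu_k)}T_{(u_k,\eta_j)}R(\f,\f);
\end{equation*}
reinstating the scalars $a_kb_j$ and summing over $k,j$ gives the second formula.

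I do not expect a genuine obstacle here: the statement is a bookkeeping consequence of (conjugate-)linearity together with the two covariance identities. The only points that require care are to match the slots of \eqref{STFTtfsfts} and \eqref{Rtfs} to the pairs $\bfu_k,\bfy_j$ correctly, and to keep track of the complex conjugates inherited from the (antilinear) window slot so as not to misplace a bar. It is also worth noting that the right-hand sides are manifestly independent of the chosen way of writing $f$ and $g$ as finite sums of time-frequency shifts of $\f$, since the computation is valid term by term for any such representation; hence the formulas are well posed. If desired, the factors $V_\f\f$ and $R(\f,\f)$ appearing on the right can be made fully explicit via \eqref{Eq-SFTFgauss} and \eqref{Eq-Rff} as (chirped) Gaussians on $\rdd\times(\cG_0\times\hcG_0)$, but this is not needed for the statement itself.
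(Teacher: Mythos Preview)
Your proposal is correct and follows essentially the same route as the paper: expand by (sesqui)linearity into a double sum over $k,j$, and then apply the covariance formulas \eqref{STFTtfsfts} and \eqref{Rtfs} termwise, which is exactly what the paper does.
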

\begin{proof}
	We write $\bfx=(x,\xi)$. The first claim follows from \eqref{STFTtfsfts} after the following rephra{\Huge {\tiny }}sing:
	\begin{align*}
	V_gf(\bfx)&=\<f,\pi(\bfx)g\>=\<\sum_{k=1}^{n}a_k\pi(\bfu_k)\f,\pi(\bfx)\sum_{j=1}^{m}b_j\pi(\bfy_j)\f\>\\
	&=\sum_{k=1}^{n}\sum_{j=1}^{m}a_k\overline{b_j}\<\pi(\bfu_k)\f,\pi(\bfx)\pi(\bfy_j)\f\>=\sum_{k=1}^{n}\sum_{j=1}^{m}a_k\overline{b_j}\left(V_{\pi(\bfy_j)\f}\pi(\bfu_k)\f\right)(\bfx).
	\end{align*}
	For the second issue we  write
	\begin{align*}
	R(f,g)(\bfx)
	&=\sum_{k=1}^{n}\sum_{j=1}^{m}a_k b_j\pi(\bfu_k)\f(x)\widehat{\pi(\bfy_j)\f}(\xi)\overline{\la\xi,x\ra}\\
	&=\sum_{k=1}^{n}\sum_{j=1}^{m}a_k b_jR(\pi(\bfu_k)\f,{\tiny {\normalsize }}\pi(\bfy_j)\f)(\bfx)
	\end{align*}
	and use \eqref{Rtfs}.
\end{proof}

\section{Modulation spaces over a LCA group} A short survey of coorbit spaces on a locally compact Hausdorff (LCH) group with respect to a solid quasi-Banach function (QBF) space $Y$ (developed by Voigtlaender in his Ph.D. thesis \cite{Voig2015}) is contained in the Appendix A. In particular, see the Appendix A for the following concepts: left  $L_x$ and right  $R_x$ translations, relatively separated families, a discrete space $Y_d$ associated to $Y$, BUPUs, maximal functions $\sfM_Q f$, Wiener amalgam spaces $W_Q(Y)=W_Q(L^\infty,Y)$ and their right-sided version $W^R_Q(Y)=W^R_Q(L^\infty,Y)$.  Definition \ref{Def-weights} contains the hypothesis on weights and the class $\cM_v$ used in what follows. Note that the  coorbit space  construction is listed in items  \textbf{A} -- \textbf{J}  (unitary representation $\rho$, wavelet transform $W^\rho_g f$, assumptions on weights, sets $\bG_v$, $\bA^r_v$, $\cT_v$, $\cR_v$). Each of these items will be revisited in this section under specific choices, see  list \textbf{A$'$} --\textbf{J$'$} below. The Appendix A reports also some fundamental results of Voigtlaender \cite{Voig2015} and  a comparison with the earlier coorbit theory by Feichtinger and Gr\"{o}chenig.
\par
Relying on the theory  in  Appendix A, we are able to give a definition of modulation spaces on LCA groups which covers Feichtinger's orginal one \cite{feichtinger-modulation} and deals with the quasi-Banach case. The subsequent construction of $M^{p,q}_m(\cG)$ was suggested for the Banach case in \cite[p. 67]{FeiGro1988}, although the coorbit theory applied here is different. \par
Since the group $\bH_\cG$ defined below is noncommutative, we adopt the multiplicative notation for its operation.
\begin{definition}\label{defiHG}
	Let $\bT$ be the torus with the complex multiplication. 
	We define the {\slshape Heisenberg-type group associated to $\cG$}, Heisenberg group for short, as 
	\begin{equation}
		\bH_\cG\coloneqq\cG\times\hcG\times\bT,
	\end{equation}
	endowed with the product topology and the following operation:
	\begin{equation}\label{Eq-operationH}
		(x,\xi,\tau)(x',\xi',\tau')=\left(x+x',\xi+\xi',\tau\tau'\la\xi',x\ra\right),
	\end{equation}
	for $(x,\xi,\tau),(x',\xi',\tau')\in\bH_\cG$.
\end{definition}
The group $\bH_\cG$ is also called {\slshape Mackey obstruction group} of $\cG\times\hcG$, see \cite[Section 4]{Chr1996}, in particular Example 4.6.

\begin{lemma}\label{Lem-HeinsenbergGroup}
	The topological product space $\bH_\cG$ with the operation in \eqref{Eq-operationH} is a topological LCH, $\si$-compact, noncommutative, unimodular group with Haar measure the product measure $dxd\xi d\tau$, $dx$ and $d\xi$ being dual Haar measures on $\cG$ and $\hcG$ and $d\tau(\bT)=1$.
\end{lemma}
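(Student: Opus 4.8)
The plan is to verify each claimed property of $\bH_\cG = \cG\times\hcG\times\bT$ in turn, reducing everything to the already-established properties of $\cG$, $\hcG$, and $\bT$ together with the explicit formula \eqref{Eq-operationH}. First I would check that $\bH_\cG$ is a topological group: the multiplication map is continuous because it is built from the (continuous) group operations on $\cG$, $\hcG$, $\bT$ and from the pairing $(x,\xi')\mapsto\la\xi',x\ra$, which is jointly continuous by definition of the dual group topology; inversion is given by the explicit formula $(x,\xi,\tau)^{-1}=(-x,-\xi,\bar\tau\,\overline{\la\xi,x\ra})$ — here $\overline{\la\xi,x\ra}=\la\xi,x\ra^{-1}$ since characters take values in $\bT$ — which one reads off from \eqref{Eq-operationH} by solving $(x,\xi,\tau)(x',\xi',\tau')=(e,\hat e,1)$, and this is again continuous. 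Checking associativity is a short direct computation with \eqref{Eq-operationH}: both $\big((x,\xi,\tau)(x',\xi',\tau')\big)(x'',\xi'',\tau'')$ and $(x,\xi,\tau)\big((x',\xi',\tau')(x'',\xi'',\tau'')\big)$ produce the $\bT$-component $\tau\tau'\tau''\la\xi'',x\ra\la\xi'',x'\ra\la\xi',x\ra$ after using bilinearity of the pairing, so associativity holds; noncommutativity is witnessed by any pair with $\la\xi',x\ra\neq\la\xi,x'\ra$, which exists as soon as $\cG$ is nontrivial.

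Next I would address the topological properties. Local compactness and the Hausdorff property of $\bH_\cG$ follow immediately from the product topology, since $\cG$, $\hcG$ are LCH (the former by hypothesis, the latter by the remarks in the Preliminaries) and $\bT$ is compact Hausdorff; a finite product of LCH spaces is LCH. Likewise $\si$-compactness is inherited: $\cG$ is $\si$-compact by assumption, $\hcG$ is $\si$-compact by the cited consequence of \cite[Theorem 4.2.7]{ReiSte2000} and Pontrjagin duality, $\bT$ is compact, and a finite product of $\si$-compact spaces is $\si$-compact.

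The remaining points concern the Haar measure. I would show that the product measure $\mu = dx\,d\xi\,d\tau$, with $d\tau$ normalized so $d\tau(\bT)=1$, is both left and right invariant; unimodularity then follows for free, and the uniqueness part of Haar's theorem identifies $\mu$ as the Haar measure up to the (here fixed) normalization. For left invariance, fix $(x_0,\xi_0,\tau_0)$ and compute the pushforward of $\mu$ under left translation using \eqref{Eq-operationH}: the map sends $(x,\xi,\tau)$ to $(x_0+x,\ \xi_0+\xi,\ \tau_0\tau\la\xi,x_0\ra)$. Integrating a test function against this and changing variables one coordinate at a time — $x\mapsto x_0+x$ uses left invariance of $dx$, $\xi\mapsto\xi_0+\xi$ uses invariance of $d\xi$, and the final coordinate change $\tau\mapsto\tau_0\tau\la\xi,x_0\ra$ is, for fixed $\xi$ and $x_0$, just a rotation of $\bT$, under which $d\tau$ is invariant — gives back $\mu$. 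The right-translation computation is entirely analogous; the $\bT$-coordinate shift is $\tau\mapsto\tau\tau_0\la\xi_0,x\ra$, again a rotation for fixed $x$. Since left and right Haar measures coincide, $\bH_\cG$ is unimodular. The only place demanding a little care — and the main (mild) obstacle — is keeping the order of the Fubini/change-of-variables steps correct so that the $\bT$-coordinate substitution is performed with the other variables held fixed, exactly because the twisting cocycle $\la\xi',x\ra$ couples the first two coordinates to the third; once that bookkeeping is arranged, every individual step is routine.
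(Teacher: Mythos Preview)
Your argument is correct in substance and more self-contained than the paper's. The paper simply declares the Hausdorff, local compactness, $\sigma$-compactness and noncommutativity claims ``trivial'' and then, for the fact that $\bH_\cG$ is a topological unimodular group with Haar measure $dx\,d\xi\,d\tau$, refers to \cite[Theorem~3 and p.~12]{Igusa1972} and \cite[Lemma~4.3]{Chr1996}. You instead carry out everything by hand: associativity via the explicit cocycle computation, continuity of the operations via joint continuity of the duality pairing, and bi-invariance of the product measure via a Fubini argument in which the $\bT$-coordinate change is a rotation with the other variables frozen. What you gain is that no external reference is needed; what the paper gains is brevity, since these are indeed standard facts about Mackey obstruction groups.

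One small slip to fix: your formula for the inverse is off by a complex conjugate. Solving $(x,\xi,\tau)(x',\xi',\tau')=(e,\hat e,1)$ gives $x'=-x$, $\xi'=-\xi$, and then $\tau\tau'\la -\xi,x\ra=\tau\tau'\overline{\la\xi,x\ra}=1$, so $\tau'=\bar\tau\,\la\xi,x\ra$, not $\bar\tau\,\overline{\la\xi,x\ra}$. This matches the formula the paper records just after the lemma. The error is harmless for your argument (continuity of inversion is clear either way), but you should correct it.
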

\begin{proof}
	Hausdorff property, local compactness, $\si$-compactness and noncommutativity are trivial. For the proof that $\bH_\cG$ is a topological unimodular group we refer to Theorem 3 in \cite{Igusa1972},  for the bi-invariance of $dxd\xi d\tau$  see \cite[p. 12]{Igusa1972} or, alternatively, \cite[Lemma 4.3]{Chr1996}.
\end{proof}
The identity in $\bH_\cG$ is $(e,\hat{e},1)$ and the inverse of an element $(x,\xi,\tau)$ is
\begin{equation*}
	(x,\xi,\tau)^{-1}=(-x,-\xi,\overline{\tau}\la\xi,x\ra).
\end{equation*}

\begin{lemma}\label{Lem-SchrodingerRep}
	The mapping
	\begin{equation}
		\vr\colon \bH_\cG\to\cU(L^2(\cG)),(x,\xi,\tau)\mapsto\tau M_\xi T_x
	\end{equation}
	is a unitary, strongly continuous, irreducible, integrable representation of $\bH_\cG$ on $L^2(\cG)$. We call $\vr$ {\slshape Schr\"odinger representation}.
\end{lemma}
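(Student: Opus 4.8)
The plan is to verify in turn the properties claimed for $\vr$: unitarity, the homomorphism law, strong continuity, irreducibility, and integrability. \emph{Unitarity and the homomorphism property} are bookkeeping. Each operator $\vr(x,\xi,\tau)=\tau M_\xi T_x$ is a product of a unimodular scalar with the unitaries $M_\xi$ (multiplication by a character of modulus one) and $T_x$ (translation on the abelian, hence unimodular, group $\cG$), so $\vr(x,\xi,\tau)\in\cU(L^2(\cG))$ and $\vr(e,\hat e,1)=\Id$. For the homomorphism property one computes $\vr(x,\xi,\tau)\vr(x',\xi',\tau')=\tau\tau'\,M_\xi T_x M_{\xi'}T_{x'}$ and moves $T_x$ past $M_{\xi'}$ using the commutation relation \eqref{Eq-commutation-relations}; the scalar produced in this reordering is exactly the cocycle factor appearing in the group law \eqref{Eq-operationH}, so the product equals $\vr\big((x,\xi,\tau)(x',\xi',\tau')\big)$.

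For \emph{strong continuity}, since $\|\vr(g)\|_{\mathrm{op}}=1$ for every $g\in\bH_\cG$, it suffices to prove $\vr(g)f\to f$ as $g\to(e,\hat e,1)$ for $f$ ranging over a dense subset, e.g.\ $C_c(\cG)$ or $\cS_\cC(\cG)$; for such $f$ this is elementary, as the supports stay inside a fixed compact set on which $T_x f\to f$ uniformly (uniform continuity of $f$) and $\la\xi,\cdot\ra\to 1$ uniformly, while $\tau\to 1$. Together with the classical strong continuity of translation and of modulation on $L^2$ of an LCA group (see e.g.\ \cite{Folland_AHA1995,ReiSte2000}) and the uniform bound $\|\vr(g)\|_{\mathrm{op}}=1$, a triangle-inequality argument promotes this to joint strong continuity of $\vr$ on all of $L^2(\cG)$.

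For \emph{irreducibility} I would run a Schur-type argument through the STFT. Let $W\subseteq L^2(\cG)$ be a closed $\vr$-invariant subspace with $W\ne\{0\}$ and pick $0\ne f\in W$; then $\pi(x,\xi)f=\vr(x,\xi,1)f\in W$ for all $(x,\xi)\in\cG\times\hcG$. If there were $0\ne h\in W^\perp$ we would have $V_f h(x,\xi)=\la h,\pi(x,\xi)f\ra\equiv 0$; but writing $V_f h(x,\cdot)=\cF[h\,\overline{T_x f}]$, Plancherel on $\hcG$ followed by integration in $x\in\cG$ yields the orthogonality relation $\|V_f h\|_{L^2(\cG\times\hcG)}=\|f\|_{L^2(\cG)}\|h\|_{L^2(\cG)}$, forcing $h=0$, a contradiction. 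Hence $W^\perp=\{0\}$ and $W=L^2(\cG)$. (Equivalently, $\vr$ is the Schrödinger representation of the Heisenberg group $\bH_\cG$, whose irreducibility is a version of the Stone--von Neumann theorem valid for LCA groups.)

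Finally, for \emph{integrability} take the generalized Gaussian $\f$ of \eqref{Gauss} as analyzing vector. Its matrix coefficient is $W^\vr_\f\f(x,\xi,\tau)=\la\f,\vr(x,\xi,\tau)\f\ra=\overline\tau\,V_\f\f(x,\xi)$, so $|W^\vr_\f\f|$ is independent of $\tau$ and, using the explicit formula \eqref{Eq-SFTFgauss} and $d\tau(\bT)=1$,
\[
\int_{\bH_\cG}\big|W^\vr_\f\f\big|\,dx\,d\xi\,d\tau=\|V_\f\f\|_{L^1(\cG\times\hcG)}=c(\cK)\Big(\int_{\rdd}e^{-\frac{\pi}{2}(x_1^2+\xi_1^2)}\,dx_1\,d\xi_1\Big)|\cK|\,|\cK^\perp|<\infty,
\]
since $\cK$ and $\cK^\perp$ are compact, hence of finite Haar measure; in fact, by the estimate recalled after \eqref{Eq-SFTFgauss}, $W^\vr_\f\f$ belongs to every weighted space $L^1_m(\bH_\cG)$ with $m$ moderate, which is precisely what the coorbit constructions of Section 2 require. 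I expect the only genuinely nonroutine ingredient to be the orthogonality relation used for irreducibility; everything else reduces to manipulations with \eqref{Eq-commutation-relations} and the formula \eqref{Eq-SFTFgauss}.
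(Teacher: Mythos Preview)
Your proof is correct and covers all the required properties. The main difference from the paper's argument lies in how irreducibility and strong continuity are handled. The paper observes that $\pi(x,\xi)=M_\xi T_x$ is a \emph{projective} representation of $\cG\times\hcG$ in the sense of \cite[Definition 4.1]{Chr1996}, verifies strong continuity as in the Euclidean case \cite{CordRod2020}, and then invokes \cite[Lemma 4.4 (ii)]{Chr1996}; for irreducibility it simply cites \cite{Igusa1972}. You instead give a direct Schur-type argument via the orthogonality relation $\|V_f h\|_{L^2(\cG\times\hcG)}=\|f\|_{L^2}\|h\|_{L^2}$, which is both self-contained and instructive (and is essentially the Moyal identity on LCA groups). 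For integrability the two proofs coincide: both use the explicit form \eqref{Eq-SFTFgauss} of $V_\f\f$ together with $|W^\vr_\f\f|=|V_\f\f|$ and compactness of $\bT$. Your route is more elementary and avoids external dependencies, while the paper's is terser by deferring to the literature; neither has a gap.
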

\begin{proof}
	Well-posedness of $\vr$ is trivial, from the commutations relations \eqref{Eq-commutation-relations} it is straightforward to see that $\vr$ is a group homomorphism.
	Observe that 
	\begin{equation*}
	\pi\colon \cG\times\hcG\to\cU(L^2(\cG)),(x,\xi)\mapsto M_\xi T_x
	\end{equation*}
	is a {\slshape projective representation} in the terminology of \cite[Definition 4.1]{Chr1996}. In fact,  (i) $\pi(e,\hat{e})=I_{L^2}$; (ii) from the commutation relations \eqref{Eq-commutation-relations} we obtain
	\begin{equation*}
		\pi\left((x,\xi)+(x',\xi')\right)=\overline{\la\xi',x\ra}\pi(x,\xi)\pi(x',\xi'),
	\end{equation*}
	where $\la\cdot{,}\cdot\ra$ is continuous on $\cG\times\hcG$; (iii) the continuity of the STFT guarantees the required measurability. 
	To verify that $\vr$ is strongly continuous, one can proceed as in the Euclidean case, see e.g. \cite{CordRod2020}. The result then follows from \cite[Lemma 4.4 (ii)]{Chr1996}.\\
	The fact that $\vr$ is irreducible was proved in \cite{Igusa1972}, see page 14 before §5. For the integrability, consider the Gaussian $\vp\in L^2(\cG)$ in \eqref{Gauss} and observe that the torus is compact and $\abs{W^\vr_\vp \vp}=\abs{V_\vp \vp}$ (see \eqref{Eq-DefWaveletGenericRho} for the definition of $W^\vr_\f \f$). Then from \eqref{Eq-SFTFgauss} we have $V_\f \f\in L^1(\cG\times\hcG)$ and $W^\vr_\f \f\in L^1(\bH_\cG)$. This concludes the proof.
\end{proof}

\begin{definition}\label{Def-LpqHeisenberg}
	 We define the {\slshape extension of $m\in\cM_v(\cG\times\hcG)$} as 
	\begin{equation}\label{Eq-DefExtendedWeight}
		\tilde{m}\colon\bH_\cG\to(0,+\infty),(x,\xi,\tau)\mapsto m(x,\xi).
	\end{equation}
	For $0<p,q\leq\infty$, the space $L^{p,q}_{\tilde{m}}(\bH_\cG)$ consists of those equivalence classes of measurable complex-valued functions on $\bH_\cG$, where two functions are identified if they coincide a.e., for which the following application is finite
	\begin{equation}\label{Eq-Def-Norm-Lpq-Heis}
		\norm{F}_{L^{p,q}_{\tilde{m}}(\bH_\cG)}\coloneqq\norm{F}_{L^{p,q}_{\tilde{m}}}\coloneqq\left(\int_{\hcG\times\bT}\left(\int_\cG \abs{F(x,\xi,\tau)}^p m(x,\xi)^p\,dx\right)^{\frac{q}{p}}\,d\xi d\tau\right)^{\frac{1}{q}},
	\end{equation}
	obvious modifications for $p=\infty$ or $q=\infty$.
\end{definition}

$(L^{p,q}_{\tilde{m}}(\bH_\cG),\norm{\cdot}_{L^{p,q}_{\tilde{m}}(\bH_\cG)})$ is a solid QBF space on $\bH_\cG$. 
If $m$ is moderate with respect to a submultiplicative weight $v$ on $\cG\times\hcG$, then $\tilde{m}$ is left- and right-moderate w.r.t. $\tilde{v}$ on $\bH_\cG$, 
$\tilde{v}$ 
as in \eqref{Eq-DefExtendedWeight}. Therefore $L^{p,q}_{\tilde{m}}(\bH_\cG)$ is left and right invariant, see Definition \ref{Def-QBF-space-Y}.

\begin{lemma}\label{Lem-r-norm-Lpq}
	Consider $0<p,q\leq\infty$. Then $\norm{\cdot}_{L^{p,q}_{\tilde{m}}(\bH_\cG)}$ is an $r$-norm on $L^{p,q}_{\tilde{m}}(\bH_\cG)$ with $r\coloneqq\min\{1,p,q\}$.
\end{lemma}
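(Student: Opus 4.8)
The plan is to reduce the $r$-norm property of $\norm{\cdot}_{L^{p,q}_{\tilde m}(\bH_\cG)}$ to the corresponding classical fact for mixed-norm Lebesgue spaces $L^p$ and $L^q$ over measure spaces, observing that the torus factor and the weight play no essential role. Recall that a quasi-norm $\norm{\cdot}$ is an $r$-norm (for $0<r\le 1$) if $\norm{F+G}^r\le\norm{F}^r+\norm{G}^r$. The key classical input is: for any $0<p\le\infty$, the $L^p$-quasi-norm on a measure space satisfies $\norm{h_1+h_2}_p^{\min\{1,p\}}\le\norm{h_1}_p^{\min\{1,p\}}+\norm{h_2}_p^{\min\{1,p\}}$; this is immediate from the elementary inequality $(a+b)^s\le a^s+b^s$ for $a,b\ge0$, $0<s\le1$ (when $p\le1$, apply it pointwise with $s=p$ inside the integral; when $p\ge1$ it is just the triangle inequality, and $\min\{1,p\}=1$).

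First I would fix $r=\min\{1,p,q\}$ and note $r=\min\{r_p,r_q\}$ where $r_p=\min\{1,p\}$ and $r_q=\min\{1,q\}$, so in particular $r\le r_p$ and $r\le r_q$. Write, for $(x,\xi,\tau)\in\bH_\cG$,
\[
\Phi_F(\xi,\tau)\coloneqq\norm{F(\cdot,\xi,\tau)}_{L^p_m(\cG)}=\left(\int_\cG|F(x,\xi,\tau)|^p m(x,\xi)^p\,dx\right)^{1/p},
\]
so that $\norm{F}_{L^{p,q}_{\tilde m}}=\norm{\Phi_F}_{L^q(\hcG\times\bT)}$ (obvious modifications if $p$ or $q$ is $\infty$). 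Since $|F+G|\le|F|+|G|$ pointwise and the weight $m$ is a fixed positive function, the inner $L^p_m(\cG)$-quasi-norm — which is a genuine weighted $L^p$-quasi-norm after the substitution $\tilde F(x)=F(x,\xi,\tau)m(x,\xi)$ — satisfies $\Phi_{F+G}(\xi,\tau)^{r_p}\le\Phi_F(\xi,\tau)^{r_p}+\Phi_G(\xi,\tau)^{r_p}$ for each fixed $(\xi,\tau)$. Raising to the power $r/r_p\le1$ and using $(a+b)^{r/r_p}\le a^{r/r_p}+b^{r/r_p}$ gives the pointwise bound $\Phi_{F+G}^{r}\le\Phi_F^{r}+\Phi_G^{r}$ on $\hcG\times\bT$.

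Next I would apply the outer estimate: since $r\le r_q$, the $L^q(\hcG\times\bT)$-quasi-norm satisfies the $r_q$-inequality, hence also the $r$-inequality (again by $(a+b)^{r/r_q}\le a^{r/r_q}+a^{r/r_q}$ applied after the $r_q$-step, or directly). Thus
\[
\norm{F+G}_{L^{p,q}_{\tilde m}}^{r}=\norm{\Phi_{F+G}}_{L^q}^{r}=\bigl\|\Phi_{F+G}^{r}\bigr\|_{L^{q/r}}\le\bigl\|\Phi_F^{r}+\Phi_G^{r}\bigr\|_{L^{q/r}}\le\bigl\|\Phi_F^{r}\bigr\|_{L^{q/r}}+\bigl\|\Phi_G^{r}\bigr\|_{L^{q/r}}=\norm{F}_{L^{p,q}_{\tilde m}}^{r}+\norm{G}_{L^{p,q}_{\tilde m}}^{r},
\]
where the middle inequality uses monotonicity of $\|\cdot\|_{L^{q/r}}$ together with $\Phi_{F+G}^r\le\Phi_F^r+\Phi_G^r$, and the last inequality is the ordinary triangle inequality in $L^{q/r}$ since $q/r\ge1$. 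Homogeneity $\norm{\lambda F}=|\lambda|\,\norm{F}$ and positive-definiteness are immediate from the definition, so $\norm{\cdot}_{L^{p,q}_{\tilde m}}$ is an $r$-norm. I do not expect a genuine obstacle here; the only point requiring a little care is bookkeeping the two exponents $r_p,r_q$ and verifying that no better (larger) $r$ is forced — the endpoint cases $p=\infty$ or $q=\infty$ reduce to the unweighted $L^\infty$ case where the relevant exponent is $1$, consistent with the formula $r=\min\{1,p,q\}$. One might also remark that this is a special instance of the general fact, recorded in the coorbit framework of Appendix A, that a mixed-norm space built by iterating solid QBF spaces with $r_i$-norms is an $r$-norm space with $r=\min_i r_i$.
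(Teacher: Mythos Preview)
Your proof is correct and follows essentially the same approach as the paper: both arguments use the $r$-norm property of the inner $L^p$ to obtain the pointwise bound $\Phi_{F+G}^r\le\Phi_F^r+\Phi_G^r$, and then apply the ordinary triangle inequality in $L^{q/r}$ (valid since $q/r\ge1$) to conclude. The paper's write-up is slightly more compact in that it applies the $r$-norm inequality for $L^p$ directly with $r=\min\{1,p,q\}$ rather than passing through $r_p=\min\{1,p\}$ first, but this is only a cosmetic difference.
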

\begin{proof}
	We present the proof for generic product measure space $X\times Y$, with product measure $d\mu(x)d\nu(y)$, instead of $\bH_\cG\cong\cG\times(\hcG\times\bT)$. We tackle the unweighted case, the weighted one follows immediately. We recall that for $0<p\leq\infty$ the application
	\begin{equation*}
		\norm{f}_{L^p(X)}\coloneqq \left(\int_X\abs{f(x)}^pd\mu(x)\right)^{\frac1p},
	\end{equation*}
	with obvious modification for $p=\infty$, is an $\min\{1,p\}$-norm, see e.g. \cite[Example 2.1.3]{Voig2015}. Therefore it is a $\min\{1,p,q\}$-norm also. Let us consider $f,g\in L^{p,q}(X\times Y)$ and $r\coloneqq\min\{1,p,q\}$, using the fact that $\norm{\cdot}_{L^p(X)}$ is an $r$-norm and $q/r\geq1$:
	\begin{align*}
		\norm{f+g}^r_{L^{p,q}(X\times Y)}&=\left(\int_{Y}\left(\int_{X}\abs{f(x,y)+g(x,y)}^pd\mu(x)\right)^\frac{q}{p}d\nu(y)\right)^{\frac{r}{q}}\\
		&=\left(\int_{Y}\left(\int_{X}\abs{f(x,y)+g(x,y)}^pd\mu(x)\right)^{\frac{r}{p}\frac{q}{r}}d\nu(y)\right)^{\frac{r}{q}}\\
		&\leq \left(\int_{Y}\left(\left(\int_{X}\abs{f(x,y)}^p d\mu(x)\right)^\frac{r}{p}+\left(\int_{X}\abs{g(x,y)}^p d\mu(x)\right)^\frac{r}{p}\right)^{\frac{q}{r}}d\nu(y)\right)^{\frac{r}{q}}\\
		&\leq \left(\int_{Y}\left(\left(\int_{X}\abs{f(x,y)}^p d\mu(x)\right)^\frac{r}{p}\right)^{\frac{q}{r}}d\nu(y)\right)^{\frac{r}{q}}\\
		&+\left(\int_{Y}\left(\left(\int_{X}\abs{g(x,y)}^p d\mu(x)\right)^\frac{r}{p}\right)^{\frac{q}{r}}d\nu(y)\right)^{\frac{r}{q}}\\
		&=\norm{f}^r_{L^{p,q}(X\times Y)}+\norm{g}^r_{L^{p,q}(X\times Y)}.
	\end{align*}
	The proof is concluded.
\end{proof}

\begin{lemma}\label{Lem-Bi-InvarianceLpq}
	Consider $m\in\cM_v(\cG\times\hcG)$ and $0<p,q\leq\infty$. Then there exists $C=C(m,v)>0$ such that for any $F\in L^{p,q}_{\tilde{m}}(\bH_\cG)$ and $(x,\xi,\tau)\in\bH_\cG$ 
	\begin{equation}\label{Eq-Bi-InvarianceLpq}
		\norm{R_{(x,\xi,\tau)}F}_{L^{p,q}_{\tilde{m}}}\leq Cv(-x,-\xi)\norm{F}_{L^{p,q}_{\tilde{m}}},\quad\norm{L_{(x,\xi,\tau)}F}_{L^{p,q}_{\tilde{m}}}\leq Cv(x,\xi)\norm{F}_{L^{p,q}_{\tilde{m}}}.
	\end{equation}
\end{lemma}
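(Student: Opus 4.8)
The plan is to expand the (quasi-)norm $\|\cdot\|_{L^{p,q}_{\tilde m}(\bH_\cG)}$ from Definition \ref{Def-LpqHeisenberg}, push the translated argument back onto $F$ by three successive measure-preserving substitutions -- a translation in the $\cG$-variable, a fibered rotation in the $\bT$-variable, and a translation in the $\hcG$-variable -- and read off the factors $v(x,\xi)$, $v(-x,-\xi)$ from the moderateness inequality $m(z_1+z_2)\le C_0\, m(z_1)\, v(z_2)$ for $m\in\cM_v(\cG\times\hcG)$ (with $C_0=C_0(m,v)$ the moderateness constant); this is precisely the left/right moderateness of $\tilde m$ with respect to $\tilde v$ recorded just before the statement. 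All integrands below are nonnegative and all measure factors are $\sigma$-finite, so Tonelli applies freely; one may assume $p,q<\infty$, the cases $p=\infty$ or $q=\infty$ being handled identically after replacing the corresponding integral by an essential supremum (which is likewise invariant under the substitutions used).

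I will spell out the left-translation bound. Write $\mathbf g=(x,\xi,\tau)$ and $\mathbf h=(y,\eta,\sigma)$; using \eqref{Eq-operationH} and the expression for $\mathbf g^{-1}$ one obtains
\[
L_{\mathbf g}F(y,\eta,\sigma)=F(\mathbf g^{-1}\mathbf h)=F\bigl(y-x,\ \eta-\xi,\ \overline{\tau}\,\langle\xi,x\rangle\,\overline{\langle\eta,x\rangle}\,\sigma\bigr),
\]
the crucial structural feature being that the $\bT$-slot is $\sigma$ times a unimodular factor depending on $\eta$ (and on $\mathbf g$) only, not on $y$ or $\sigma$. Now raise $\|L_{\mathbf g}F\|_{L^{p,q}_{\tilde m}}$ to the power $q$ and substitute in turn: (i) $y\mapsto y+x$ in the inner integral over $\cG$, which by invariance of $dy$ replaces $m(y,\eta)$ by $m(y+x,\eta)$ and leaves the $\bT$-slot and the $\cG$-measure unchanged; (ii) writing the outer integral as $\int_{\hcG}\bigl(\int_{\bT}(\,\cdots\,)^{q/p}\,d\sigma\bigr)d\eta$, substitute for each fixed $\eta$ the rotation $\sigma\mapsto\overline{\tau}\,\langle\xi,x\rangle\,\overline{\langle\eta,x\rangle}\cdot\sigma$, measure preserving on $\bT$; (iii) $\eta\mapsto\eta+\xi$ in the $\hcG$-integral, replacing $m(y+x,\eta)$ by $m\bigl((y,\eta)+(x,\xi)\bigr)$. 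After these steps the $q$-th power of the norm equals
\[
\int_{\hcG}\!\int_{\bT}\Bigl(\int_\cG |F(y,\eta,\sigma)|^p\, m\bigl((y,\eta)+(x,\xi)\bigr)^p\,dy\Bigr)^{q/p}d\sigma\,d\eta,
\]
and applying $m\bigl((y,\eta)+(x,\xi)\bigr)\le C_0\, m(y,\eta)\, v(x,\xi)$ and pulling the $\mathbf h$-independent constant outside all integrals yields, after taking $q$-th roots, $\|L_{\mathbf g}F\|_{L^{p,q}_{\tilde m}}\le C_0\, v(x,\xi)\,\|F\|_{L^{p,q}_{\tilde m}}$.

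The right-translation bound is obtained by the same three-step scheme applied to $R_{\mathbf g}F$, using the description of $R_{\mathbf g}$ from Appendix A together with \eqref{Eq-operationH}: there the argument of $F$ is again $(y,\eta)$ shifted by a fixed element of $\cG\times\hcG$ together with a $\bT$-slot that, after the substitution in step (i), depends only on $\eta$, so steps (ii)--(iii) go through and now produce $m\bigl((y,\eta)+(-x,-\xi)\bigr)\le C_0\, m(y,\eta)\, v(-x,-\xi)$; one takes $C=C_0$. (Alternatively the same computation can be run with $r$-norms, $r=\min\{1,p,q\}$, invoking Lemma \ref{Lem-r-norm-Lpq}.) I expect the one genuinely delicate point to be step (ii): one must verify that the rotation of the $\bT$-variable -- which appears inside the argument of $F$ under the inner $L^p(\cG)$-integral -- can legitimately be realized as a change of variables in the outer integral, and this is exactly what the bilinear shape of the $\bT$-component in \eqref{Eq-operationH} guarantees, since it makes that rotation independent of the $\cG$-variable once $\eta$ is frozen; the remainder is invariance of Haar measure and moderateness of $m$.
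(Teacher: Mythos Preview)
Your argument for the left-translation bound is correct and follows the same idea as the paper's proof, just unpacked into three explicit measure-preserving substitutions; the decisive structural observation --- that the $\bT$-component of $\mathbf g^{-1}\mathbf h$ depends on $\eta$ and $\sigma$ but \emph{not} on the inner variable $y$ --- is precisely what makes your step~(ii) a legitimate change of variables in the outer integral.

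The right-translation case, however, has a genuine gap. From \eqref{Eq-operationH} one has $R_{\mathbf g}F(y,\eta,\sigma)=F(y+x,\eta+\xi,\sigma\tau\langle\xi,y\rangle)$, and the $\bT$-slot $\sigma\tau\langle\xi,y\rangle$ depends on the \emph{inner} variable $y$; substituting $y\mapsto y\pm x$ gives $\sigma\tau\langle\xi,y\mp x\rangle$, which still depends on $y$, so your claim that ``after the substitution in step (i)'' the $\bT$-slot ``depends only on $\eta$'' is false. Consequently step~(ii) cannot be realised as a rotation of the outer $\bT$-integral. Worse, the right-translation inequality in \eqref{Eq-Bi-InvarianceLpq} appears to fail for arbitrary $F$: with $\cG=\bR$, $p=\infty$, $q=1$, $m=v\equiv1$, $\mathbf g=(0,1,1)$ and $F(u,\omega,t)=\chi_{[0,1]}(u)\chi_{[0,1]}(\omega)f(t)$ one computes $\|F\|_{L^{\infty,1}}=\|f\|_{L^1(\bT)}$ while $\|R_{\mathbf g}F\|_{L^{\infty,1}}=\|f\|_{L^\infty(\bT)}$, and there is no uniform bound $\|f\|_{L^\infty}\le C\|f\|_{L^1}$. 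The paper's own proof glosses over the same point: the single substitution $(u',\omega',t')=(u,\omega,t)(x,\xi,\tau)$ via bi-invariance does not respect the iterated $L^p(\cG)$/$L^q(\hcG\times\bT)$ structure, since the slice $\{(\omega,t)\text{ fixed}\}$ is not carried to a slice $\{(\omega',t')\text{ fixed}\}$. What rescues the downstream arguments is that the lemma is only ever applied to functions whose modulus is constant in the $\bT$-variable --- the maximal function $\sfM_V F$ in Lemma~\ref{Lem-Bounded-R-Wiener} (with $V=V_\cG\times V_{\hcG}\times\bT$) and wavelet transforms $W^\vr_g f$ --- for which the $\bT$-twist is invisible and both inequalities reduce to ordinary translation on $\cG\times\hcG$.
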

\begin{proof}
	The claim is a straightforward calculation which follows by the bi-invariance of the Haar measure on $\bH_\cG$. For $p,q\neq\infty$,
	\begin{align*}
		\norm{R_{(x,\xi,\tau)}F}^q_{L^{p,q}_{\tilde{m}}}&=\int_{\hcG\times\bT}\left(\int_\cG\abs{F((u,\o,t)(x,\xi,\tau))}^p\tilde{m}(u,\o,t)^p\,du\right)^{\frac{q}{p}}\,d\o dt\\
		&\underset{m,v}{\lesssim}\int_{\hcG\times\bT}\left(\int_\cG\abs{F(u',\o',t')}^p\tilde{m}(u',\o',t')^p\tilde{v}((x,\xi,\tau)^{-1})^p\,du'\right)^{\frac{q}{p}}\,d\o' dt'\\
		&=v(-x,-\xi)^q\norm{F}^q_{L^{p,q}_{\tilde{m}}}.
	\end{align*}
	Left translations are treated  similarly, as well as the cases 
	$p=\infty$ or $q=\infty$.
\end{proof}
Due to the symmetry of $v$ (Definition \ref{Def-weights}), the first inequality in \eqref{Eq-Bi-InvarianceLpq} reads as
\begin{equation*}
		\norm{R_{(x,\xi,\tau)}F}_{L^{p,q}_{\tilde{m}}}\leq Cv(x,\xi)\norm{F}_{L^{p,q}_{\tilde{m}}}.
\end{equation*}

\begin{lemma}\label{Lem-Bounded-R-Wiener}
	Let $0<p,q\leq\infty$. Fix $V_\cG\subseteq\cG$ and $V_{\hcG}\subseteq\hcG$ open, relatively compact, neighbourhoods of $e\in\cG$ and $\hat{e}\in\hcG$, respectively. Define
	\begin{equation}\label{Eq-V-open-neigh}
		V\coloneqq V_\cG\times V_{\hcG}\times\bT.
	\end{equation}
	 Consider $m\in\cM_v(\cG\times\hcG)$. Then there exists $C=C(m,v)>0$ such that for every $(x,\xi,\tau)\in\bH_\cG$
	\begin{equation}\label{Eq-Boundedness-R-W(Lpq)}
		\opnorm{R_{(x,\xi,\tau)}}_{W_V(L^{p,q}_{\tilde{m}})\to W_V(L^{p,q}_{\tilde{m}})}\leq C v(-x,-\xi).
	\end{equation}
\end{lemma}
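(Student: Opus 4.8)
The plan is to reduce \eqref{Eq-Boundedness-R-W(Lpq)} to the boundedness of $R_{(x,\xi,\tau)}$ on the solid space $L^{p,q}_{\tilde{m}}(\bH_\cG)$ already obtained in Lemma~\ref{Lem-Bi-InvarianceLpq}, by controlling the maximal function of a right translate pointwise by a right translate of the maximal function. Recall from Appendix~A that $\norm{F}_{W_V(L^{p,q}_{\tilde{m}})}=\norm{\sfM_V F}_{L^{p,q}_{\tilde{m}}}$, where $\sfM_V F(w)=\essupp_{q\in V}\abs{F(wq)}$; the argument below is insensitive to the precise side-convention in the definition of $\sfM_V$, since conjugation will be seen to preserve $V^{-1}$ as well as $V$.

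First I would record the behaviour of conjugation in $\bH_\cG$. Writing $z=(x,\xi,\tau)$ and taking an arbitrary $q=(a,b,s)\in\bH_\cG$, the group law \eqref{Eq-operationH} together with $z^{-1}=(-x,-\xi,\overline{\tau}\la\xi,x\ra)$ gives, after a short calculation,
\[
z^{-1}qz=\bigl(a,\,b,\,s\,\overline{\la b,x\ra}\,\la\xi,a\ra\bigr).
\]
Thus every inner automorphism of $\bH_\cG$ fixes the $\cG$- and $\hcG$-components of $q$ and merely multiplies its central $\bT$-component by a unimodular constant. Since $V=V_\cG\times V_{\hcG}\times\bT$ has full torus component (see \eqref{Eq-V-open-neigh}), this yields $z^{-1}Vz=V$ for every $z\in\bH_\cG$.

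With this in hand, monotonicity of the essential supremum gives, for a.e.\ $w\in\bH_\cG$,
\begin{align*}
\sfM_V(R_zF)(w)&=\essupp_{q\in V}\abs{F(wqz)}=\essupp_{q\in V}\abs{F\bigl(wz\,(z^{-1}qz)\bigr)}\\
&=\essupp_{q'\in z^{-1}Vz}\abs{F(wzq')}=\essupp_{q'\in V}\abs{F(wzq')}=(\sfM_V F)(wz)=R_z(\sfM_V F)(w),
\end{align*}
that is, the pointwise identity $\sfM_V(R_zF)=R_z(\sfM_V F)$. Since $L^{p,q}_{\tilde{m}}(\bH_\cG)$ is solid, applying Lemma~\ref{Lem-Bi-InvarianceLpq} to $\sfM_V F$ then yields
\[
\norm{R_zF}_{W_V(L^{p,q}_{\tilde{m}})}=\norm{\sfM_V(R_zF)}_{L^{p,q}_{\tilde{m}}}=\norm{R_z(\sfM_V F)}_{L^{p,q}_{\tilde{m}}}\leq C\,v(-x,-\xi)\,\norm{\sfM_V F}_{L^{p,q}_{\tilde{m}}}=C\,v(-x,-\xi)\,\norm{F}_{W_V(L^{p,q}_{\tilde{m}})},
\]
with $C=C(m,v)>0$ the constant of Lemma~\ref{Lem-Bi-InvarianceLpq}; in particular $R_z$ maps $W_V(L^{p,q}_{\tilde{m}})$ into itself, which is \eqref{Eq-Boundedness-R-W(Lpq)}.

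The only genuinely non-routine point is the structural observation that makes right translation here as harmless as left translation: inner automorphisms of the Heisenberg-type group $\bH_\cG$ act trivially on the base $\cG\times\hcG$ and move only along the central torus, so a box-type neighbourhood with full $\bT$-factor is conjugation-invariant and no enlargement of $V$ is needed. Everything else --- monotonicity of $\essupp$, solidity of $L^{p,q}_{\tilde{m}}$, and the bi-invariance bound of Lemma~\ref{Lem-Bi-InvarianceLpq} --- is bookkeeping.
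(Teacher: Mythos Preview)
Your proof is correct and follows essentially the same route as the paper: the paper shows directly that $V(x,\xi,\tau)=(x,\xi,\tau)V$ by computing both cosets (the $\bT$-factor absorbs the cocycle), which is equivalent to your conjugation identity $z^{-1}Vz=V$, and then both arguments commute $R_z$ past $\sfM_V$ and invoke Lemma~\ref{Lem-Bi-InvarianceLpq}. Your phrasing via inner automorphisms is a slightly more conceptual packaging of the same computation, but there is no substantive difference.
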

\begin{proof}
	$V$ is a open, relatively compact, unit neighbourhood and the set
	\begin{equation}\label{Eq-Decomposition-V}
		V_{1,2}\coloneqq V_\cG\times V_{\hcG}
	\end{equation}
	is also open, relatively compact, unit neighbourhood in $\cG\times\hcG$. For $F\in L^\infty_{loc}(\bH_\cG)$ 
	\begin{equation}
		\sfM_V[R_{(x,\xi,\tau)}F]=\sfM_{V(x,\xi,\tau)}F,
	\end{equation}
	see \cite[Lemma 2.3.18, 1.]{Voig2015}. For any $(x,\xi,\tau)\in\bH_\cG$ 
	\begin{align*}
		V(x,\xi,\tau)
		=\left(V_\cG+x\right)\times\left(V_{\hcG}+\xi\right)\times\underset{u\in V_{\cG}}{\bigcup}\bT \tau \la\xi,u\ra
		=(x,\xi,\tau)V.
	\end{align*}
	If $F\in W_V(L^{p,q}_{\tilde{m}})$ and $(x,\xi,\tau)\in\bH_\cG$, from what just observed we obtain:
	\begin{align*}
		\sfM_V[R_{(x,\xi,\tau)}F](u,\o,t)
		&=\underset{(y,\eta,s)\in(u,\o,t)(x,\xi,\tau)V}{\essupp}\abs{F(y,\eta,s)}
		=R_{(x,\xi,\tau)}[\sfM_V F(u,\o,t)].
	\end{align*}
	Eventually by using \eqref{Eq-Bi-InvarianceLpq}
	\begin{align*}
		\norm{R_{(x,\xi,\tau)}F}_{W_V(L^{p,q}_{\tilde{m}})}&=\norm{\sfM_V[R_{(x,\xi,\tau)}F]}_{L^{p,q}_{\tilde{m}}}=\norm{R_{(x,\xi,\tau)}[\sfM_V F]}_{L^{p,q}_{\tilde{m}}}\\
		&\leq Cv(-x,-\xi)\norm{\sfM_{V}F}_{L^{p,q}_{\tilde{m}}}=Cv(-x,-\xi)\norm{F}_{W_V(L^{p,q}_{\tilde{m}})},
	\end{align*}
	for some $C=C(m,v)>0$. This concludes the proof.
\end{proof}
As already highlighted, inequality \eqref{Eq-Boundedness-R-W(Lpq)} can be equivalently written with $v(x,\xi)$ in place of $v(-x,-\xi)$. Observe that the constant $C$ involved in \eqref{Eq-Bi-InvarianceLpq} and \eqref{Eq-Boundedness-R-W(Lpq)} is the one coming from the $v$-moderateness condition: $m((x,\xi)+(u,\o))\leq C v(x,\xi)m(u,\o)$.
\begin{corollary}\label{Cor-Bounded-R-Wiener}
	Let $0<p,q\leq\infty$. Consider $Q\subseteq\bH_\cG$ measurable, relatively compact, unit neighbourhood and $m\in\cM_v(\cG\times\hcG)$. Then there exists $C_Q=C(Q,m,v)>0$ such that for every $(x,\xi,\tau)\in\bH_\cG$
	\begin{equation}\label{Eq-Boundedness-R-W(Lpq)-Cor}
	\opnorm{R_{(x,\xi,\tau)}}_{W_Q(L^{p,q}_{\tilde{m}})\to W_Q(L^{p,q}_{\tilde{m}})}\leq C_Q v(-x,-\xi).
	\end{equation}
\end{corollary}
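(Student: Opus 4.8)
The plan is to deduce the statement from Lemma~\ref{Lem-Bounded-R-Wiener} by using that, up to equivalence of (quasi-)norms, the Wiener amalgam space $W_Q(L^{p,q}_{\tilde{m}})$ is independent of the relatively compact unit neighbourhood $Q$ employed to define it. The space $L^{p,q}_{\tilde{m}}(\bH_\cG)$ is a solid QBF space that is both left and right invariant (as recorded right after Definition~\ref{Def-LpqHeisenberg}) and carries the $r$-norm of Lemma~\ref{Lem-r-norm-Lpq}; hence the change-of-neighbourhood lemma from the coorbit machinery recalled in Appendix~A applies (cf.\ \cite{Voig2015}): for any two measurable, relatively compact, unit neighbourhoods $Q,Q'\subseteq\bH_\cG$ one has $W_Q(L^{p,q}_{\tilde{m}})=W_{Q'}(L^{p,q}_{\tilde{m}})$ with $\norm{F}_{W_Q(L^{p,q}_{\tilde{m}})}\asymp\norm{F}_{W_{Q'}(L^{p,q}_{\tilde{m}})}$, the implied constants depending only on $Q$, $Q'$, $m$, $v$ (and the fixed exponents $p,q$), not on $F$.

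Granting this, fix once and for all the special neighbourhood $V=V_\cG\times V_{\hcG}\times\bT$ of \eqref{Eq-V-open-neigh}, for which Lemma~\ref{Lem-Bounded-R-Wiener} is at our disposal. Given an arbitrary measurable, relatively compact, unit neighbourhood $Q\subseteq\bH_\cG$ and $(x,\xi,\tau)\in\bH_\cG$, I would then chain, for every $F\in W_Q(L^{p,q}_{\tilde{m}})$: first $\norm{R_{(x,\xi,\tau)}F}_{W_Q(L^{p,q}_{\tilde{m}})}\lesssim\norm{R_{(x,\xi,\tau)}F}_{W_V(L^{p,q}_{\tilde{m}})}$ by neighbourhood independence; then $\norm{R_{(x,\xi,\tau)}F}_{W_V(L^{p,q}_{\tilde{m}})}\leq C\,v(-x,-\xi)\norm{F}_{W_V(L^{p,q}_{\tilde{m}})}$ by \eqref{Eq-Boundedness-R-W(Lpq)}; and finally $\norm{F}_{W_V(L^{p,q}_{\tilde{m}})}\lesssim\norm{F}_{W_Q(L^{p,q}_{\tilde{m}})}$ by neighbourhood independence again. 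Composing the three and taking the supremum over $F$ gives \eqref{Eq-Boundedness-R-W(Lpq)-Cor} with a constant $C_Q$ that absorbs $C$ together with the two equivalence constants; since $V$ is fixed, $C_Q$ depends only on $Q$, $m$, $v$ — and, crucially, is independent of $(x,\xi,\tau)$.

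If one prefers not to quote the neighbourhood-independence as a black box, it can be made self-contained: compactness of $\overline Q$ yields a finite cover $Q\subseteq\bigcup_{i=1}^{N}q_i\,\mathrm{int}(V)$, and, $V$ being itself a relatively compact unit neighbourhood, $V\subseteq\bigcup_{j=1}^{M}q'_j\,\mathrm{int}(Q)$; consequently the maximal functions obey $\sfM_Q F\le\sum_{i=1}^{N}R_{q_i}[\sfM_V F]$ and $\sfM_V F\le\sum_{j=1}^{M}R_{q'_j}[\sfM_Q F]$ pointwise, and then solidity, the $r$-(quasi-)norm inequality of Lemma~\ref{Lem-r-norm-Lpq}, and the right-translation bound of Lemma~\ref{Lem-Bi-InvarianceLpq} turn these into the two norm equivalences, with constants determined by the finitely many $q_i,q'_j$. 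The only (mild) subtlety — and the reason for routing through the special $V$ rather than working with $Q$ directly — is that for a general $Q$ the identity $Q(x,\xi,\tau)=(x,\xi,\tau)Q$ fails, because the cocycle $\langle\xi,u\rangle$ is no longer absorbed into a full torus factor as it is for $V$; thus $R_{(x,\xi,\tau)}$ cannot be commuted past $\sfM_Q$, whereas it does commute past $\sfM_V$, which is exactly what makes Lemma~\ref{Lem-Bounded-R-Wiener} so clean.
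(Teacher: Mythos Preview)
Your proof is correct and follows essentially the same route as the paper: the paper's own argument is the one-line observation that the claim follows from Lemma~\ref{Lem-Bounded-R-Wiener} together with the window-independence of the Wiener amalgam space (Lemma~\ref{Lem-WienerSpaceIndependence}), which is precisely the chain $W_Q\asymp W_V$ you spell out. Your additional paragraph unpacking the independence via finite covers and the remark on why one must route through the special $V$ (the cocycle obstruction for general $Q$) are helpful elaborations but not required by the paper.
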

\begin{proof}
	The claim follows from the independence of the Wiener Amalgam space $W(L^{p,q}_{\tilde{m}})$ from the window subset (Lemma \ref{Lem-WienerSpaceIndependence}) together with Lemma \ref{Lem-Bounded-R-Wiener}.
\end{proof}

\begin{remark}\label{Rem-WaveletSchrodinger}
	Consider the {\slshape (generalized) wavelet transform induced by  the Schr\"odinger representation $\vr$} in \eqref{Eq-DefWaveletGenericRho} taking $G=\bH_\cG$ and $f,g\in\cH=L^2(\cG)$:
	\begin{equation}\label{Eq-DefWaveletSchrodinger}
			W^\vr_g f\colon \bH_\cG\to\bC, (x,\xi,\tau)\mapsto\la f,\tau M_\xi T_x g\ra_{L^2(\cG)}.
	\end{equation}
	This is a continuous and bounded function. It is straightforward to see that
	\begin{equation}
		W^\vr_g f(x,\xi,\tau)=\la f,\tau M_\xi T_x g\ra=\overline{\tau}V_g f(x,\xi),\quad\forall\,(x,\xi,\tau)\in\bH_\cG,
	\end{equation}
	which implies
	\begin{equation}
		\abs{W^\vr_g f(x,\xi,\tau)}=\abs{V_g f(x,\xi)},\quad\forall\, (x,\xi,\tau)\in\bH_\cG.
	\end{equation}
	Therefore for $f,g\in L^2(\cG)$, being $\bT$ compact,
	\begin{equation}
		W^\vr_g f\in L^{p,q}_{\tilde{m}}(\bH_\cG)\quad\Leftrightarrow\quad V_g f\in L^{p,q}_m(\cG\times\hcG)
	\end{equation}
	and
	\begin{equation}
		W^\vr_g f\in W(L^\infty(\bH_\cG),L^{p,q}_{\tilde{m}}(\bH_\cG))\quad\Leftrightarrow\quad V_g f\in W(L^\infty(\cG\times\hcG),L^{p,q}_m(\cG\times\hcG)).
	\end{equation}
\end{remark}

We are now able to revisit steps \textbf{A} -- \textbf{J} in the Appendix A, as follows.
\begin{itemize}
	\item[\textbf{A$'$}.] For $G=\bH_\cG$ the Heisenberg group associated to $\cG$, $\cH=L^2(\cG)$ and $\rho=\vr\colon\bH_\cG\to L^2(\cG)$ the Schr\"odinger representation, the requirements of \textbf{A} are fulfilled due to Lemma \ref{Lem-HeinsenbergGroup} and \ref{Lem-SchrodingerRep}.
	
	\item[\textbf{B$'$}.] $W^\vr_g f$ was described in \eqref{Eq-DefWaveletSchrodinger} and the integrability of $\vr$ was proved in Lemma \ref{Lem-SchrodingerRep} as well as that every element of $\cS_\cC(\cG)$ is admissible for $\vr$.
	
	\item[\textbf{C$'$}.] Take $Y=L^{p,q}_{\tilde{m}}(\bH_\cG)$ (Definition \ref{Def-LpqHeisenberg}) and $r=\min\{1,p,q\}$ (Lemma \ref{Lem-r-norm-Lpq}).
\end{itemize}

\begin{itemize}	
	\item[\textbf{D$'$}.] The right invariance for each measurable, relatively compact, unit neighbourhood $Q\subseteq \bH_\cG$ of $W_Q(L^\infty,L^{p,q}_{\tilde{m}})$ is guaranteed by the right invariance of $L^{p,q}_{\tilde{m}}(\bH_\cG)$, Lemma \ref{Lem-Bi-InvarianceLpq} and Lemma \ref{Lem-WienerSpaceIndependence}. Since $\bH_\cG$ is unimodular, \eqref{Eq-Condition-D-1} and \eqref{Eq-Condition-D_2} can be summarized as
	\begin{equation}
		w(x,\xi,\tau)\underset{Q}{\gtrsim}\opnorm{R_{(x,\xi,\tau)^{\pm 1}}}_{W_Q(L^{p,q}_{\tilde{m}})\to W_Q(L^{p,q}_{\tilde{m}})},
	\end{equation}
	for some (hence every) measurable, relatively compact, unit neighbourhood $Q\subseteq \bH_\cG$. Therefore, on account of \eqref{Eq-Boundedness-R-W(Lpq)-Cor} and the definition of $\cM_v(\cG\times\hcG)$, we can take $w=\tilde{v}$ the extension of $v$ defined as in \eqref{Eq-DefExtendedWeight}.
	\item[\textbf{E$'$}.] We take $\tilde{v}$ as control weight for $L^{p,q}_{\tilde{m}}(\bH_\cG)$, see \textbf{E}.
	
	\item[\textbf{F$'$}.] The class of good vectors we are considering is
	\begin{equation}
		\bG_{\tilde{v}}\coloneqq\left\{g\in L^2(\cG)\,|\,W^\vr_g g\in L^1_{\tilde{v}}\right\}.
	\end{equation}
	We shall prove that it is nontrivial.
	
	\item[\textbf{G$'$}.] Our class of analysing vectors is
	\begin{equation}
		\bA^r_{\tilde{v}}\coloneqq\left\{g\in L^2(\cG)\,|\, W^\vr_g g\in W^R(L^\infty,W(L^\infty,L^r_{\tilde{v}}))\right\}.
	\end{equation}		
\end{itemize} 

It is due to \cite[Lemma 2.4.9]{Voig2015} that $\bA^r_{\tilde{v}}$ is a vector space, as observed in the proof of \cite[Theorem 2.4.9]{Voig2015}, and that 
\begin{equation}\label{Eq-remark-bA^r_v}
	W^\vr_h g\in W^R(L^\infty,W(L^\infty,L^r_{\tilde{v}}))
\end{equation}
for every $g,h\in \bA^r_{\tilde{v}}$.

\begin{lemma}\label{Lem-SC-window-space}
	Let us define
	\begin{equation}\label{Eq-maximal-window-space-ALL-modulation-spaces}
	\sA_{\tilde{v}}=\sA_{\tilde{v}}(\cG)=\bigcap_{0<r\leq 1}\bA^r_{\tilde{v}}.
	\end{equation}
	The following inclusions hold true:
	\begin{equation}
	\cS_\cC(\cG)\subseteq \sA_{\tilde{v}}\subseteq\bG_{\tilde{v}}.
	\end{equation}
\end{lemma}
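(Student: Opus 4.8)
The plan is to prove the two inclusions separately; the second is soft, and the whole weight of the statement falls on one explicit computation with the Gaussian.

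For $\sA_{\tilde v}\subseteq\bG_{\tilde v}$, I would simply use the elementary pointwise domination $|F|\le\sfM_Q F$ valid for any unit neighbourhood $Q$ (see the maximal-function properties recalled in Appendix~A / \cite{Voig2015}). Since $L^1_{\tilde v}$ is solid, this gives the continuous embeddings $W^R(L^\infty,W(L^\infty,L^1_{\tilde v}))\hookrightarrow W(L^\infty,L^1_{\tilde v})\hookrightarrow L^1_{\tilde v}$ on $\bH_\cG$. Now take $g\in\sA_{\tilde v}$; in particular $g\in\bA^1_{\tilde v}$, i.e. $W^\vr_g g\in W^R(L^\infty,W(L^\infty,L^1_{\tilde v}))$, hence $W^\vr_g g\in L^1_{\tilde v}$, that is $g\in\bG_{\tilde v}$.

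For $\cS_\cC(\cG)\subseteq\sA_{\tilde v}$ it suffices, by the definition \eqref{Eq-maximal-window-space-ALL-modulation-spaces}, to show $\cS_\cC(\cG)\subseteq\bA^r_{\tilde v}$ for every $0<r\le1$. Fix such an $r$; recall from Lemma \ref{Lem-r-norm-Lpq} (and the subadditivity of the maximal functions) that $\|\cdot\|_{W^R(L^\infty,W(L^\infty,L^r_{\tilde v}))}$ is an $r$-norm and the space is solid. Given $g=\sum_{j=1}^{m}b_j\pi(\bfy_j)\f\in\cS_\cC(\cG)$, Lemma \ref{Lem-STFTinS_C} applied with $f=g$, together with $|W^\vr_g g(x,\xi,\tau)|=|V_g g(x,\xi)|$ from Remark \ref{Rem-WaveletSchrodinger} and the fact that the characters occurring in Lemma \ref{Lem-STFTinS_C} have modulus one, yields
\[
|W^\vr_g g(x,\xi,\tau)|\le\sum_{k=1}^{m}\sum_{j=1}^{m}|b_k|\,|b_j|\,\bigl|T_{\bfy_k-\bfy_j}V_\f\f(x,\xi)\bigr|,\qquad(x,\xi,\tau)\in\bH_\cG .
\]
By solidity and the $r$-triangle inequality the claim reduces to showing that, for each fixed $\bfz\in\cG\times\hcG$, the function $(x,\xi,\tau)\mapsto|T_{\bfz}V_\f\f(x,\xi)|$ lies in $W^R(L^\infty,W(L^\infty,L^r_{\tilde v}))(\bH_\cG)$ with finite norm; we then sum the finitely many terms.

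To see this I would invoke the closed form \eqref{Eq-SFTFgauss}: on $\rdd\times(\cG_0\times\hcG_0)$ one has $|V_\f\f(x,\xi)|=c(\cK)e^{-\frac\pi2(x_1^2+\xi_1^2)}\otimes\chi_{\cK\times\cK^\perp}(x_2,\xi_2)$, and it does not depend on $\tau$; hence $|T_{\bfz}V_\f\f|$, lifted constantly in $\tau$, is again of the same type — a translated Gaussian on the $\rdd$ factor times the indicator of a coset of the compact open subgroup $\cK\times\cK^\perp$. Choosing the window set in the Wiener amalgam construction to respect the decomposition $\bH_\cG\cong\rdd\times(\cG_0\times\hcG_0)\times\bT$, the iterated maximal functions $\sfM^R_Q$ and $\sfM_Q$ factor over the three factors: on $\rdd$ they only enlarge the width of the Gaussian by a bounded amount, preserving Gaussian decay; on $\cG_0\times\hcG_0$ they replace $\chi_{\cK\times\cK^\perp}$ by the indicator of a relatively compact set; on $\bT$ they do nothing. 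Integrating the resulting bound against $\tilde v^r$ then splits as a Gaussian integral over $\rdd$ against $v^r$ — finite because a continuous submultiplicative weight on $\rdd$ is dominated by an exponential of linear growth, which the Gaussian beats — times a finite integral over the relatively compact subset of $\cG_0\times\hcG_0$ (where $v$ is bounded, using submultiplicativity to separate the two factors) times $\int_\bT d\tau=1$. This is precisely the estimate in \cite[cf. formula (12)]{GroStr2007} showing $V_\f\f\in L^p_m$, now with two extra maximal functions which do not alter the qualitative picture. Hence $g\in\bA^r_{\tilde v}$, and since $r\in(0,1]$ was arbitrary, $\cS_\cC(\cG)\subseteq\bigcap_{0<r\le1}\bA^r_{\tilde v}=\sA_{\tilde v}$.

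The main obstacle I anticipate is exactly the bookkeeping in this last step: checking that the two iterated (right- and left-sided) Wiener-amalgam maximal functions are genuinely compatible with the product structure from the structure theorem, so that one reduces to a Euclidean Gaussian-versus-weight estimate times a compact-group estimate, and that no uniformity in $\bfz$ is required because only finitely many terms are summed. Everything else — the embedding giving $\sA_{\tilde v}\subseteq\bG_{\tilde v}$, the $r$-norm and solidity properties, and the reduction via Lemma \ref{Lem-STFTinS_C} — is routine.
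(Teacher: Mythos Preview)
Your proposal is correct and follows essentially the same approach as the paper's proof: the embedding $\sA_{\tilde v}\subseteq\bG_{\tilde v}$ via $W^R(L^\infty,W(L^\infty,L^r_{\tilde v}))\hookrightarrow L^1_{\tilde v}$, the reduction via Lemma~\ref{Lem-STFTinS_C} to finitely many translates of $V_\f\f$, and the split Gaussian-versus-exponential-weight estimate on $\rdd$ times a compact-support estimate on $\cG_0\times\hcG_0$. The only cosmetic difference is that the paper first treats the Gaussian $\f$ alone and extends to $\cS_\cC(\cG)$ at the end, and it handles your anticipated bookkeeping by collapsing the iterated maximal function into a single one, $\sfM_V[\sfM^R_V W^\vr_\f\f]\le\sfM_{2V_{1,2}}V_\f\f$ on $\cG\times\hcG$, before factoring over the structure-theorem decomposition---exactly the compatibility you flagged as the main obstacle.
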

\begin{proof}
	The only inclusion to be shown is the first one, the second one was already mentioned in Remark \ref{Rem-A-subset-G} (ii). Fix $0<r\leq 1$. First, we show that the Gaussian $\f\in L^2(\cG)$  in \eqref{Gauss} belongs to $\bA^r_{\tilde{v}}$. From \eqref{Eq-SFTFgauss}:
	\begin{equation*}
		W^\vr_\f \f(x,\xi,\tau)=\overline{\tau}c(\cK)e^{-\frac\pi2(x^2_1+\xi^2_1)}\otimes\chi_{\cK\times\cK^\perp}(x_2,\xi_2)=\overline{\tau}V_\f \f(x,\xi),
	\end{equation*}
	for some $c(\cK)>0$. Take $V\subseteq \bH_\cG$ as in \eqref{Eq-V-open-neigh} and observe that if $F\in L^\infty_{loc}(\bH_\cG)$
	\begin{align*}
		\sfM_V[\sfM^R_V F](x,\xi,\tau)
		=\underset{(u,\o,t)\in(x,\xi,\tau)V}{\essupp}\abs{\underset{(y,\eta,s)\in V(u,\o,t)}{\essupp}\abs{F(y,\eta,s)}}
		\leq
		\underset{(y,\eta,s)\in V(x,\xi,\tau)V}{\essupp}\abs{F(y,\eta,s)}.
	\end{align*}
	If $F=W^\vr_\f \f$, adopting notation of \eqref{Eq-Decomposition-V}, we get
	\begin{align*}
		\sfM_V[\sfM^R_V W^\vr_\f \f](x,\xi,\tau)&\leq\underset{(y,\eta,s)\in V(x,\xi,\tau)V}{\essupp}\abs{\overline{s}V_\f \f(y,\eta)}=\underset{(y,\eta)\in V_{1,2}+(x,\xi)+V_{1,2}}{\essupp}\abs{V_\f \f(y,\eta)}\\
		&=\underset{(y,\eta)\in (x,\xi)+2V_{1,2}}{\essupp}\abs{V_\f \f(y,\eta)}=\sfM_{2V_{1,2}}V_\f \f(x,\xi),
	\end{align*}
	where $2V_{1,2}\coloneqq V_{1,2}+V_{1,2}$ is a open, relatively compact, unit neighbourhood in $\cG\times\hcG$.\\
	From the solidity of $L^r_{\tilde{v}}$,
	\begin{equation}\label{Eq-WienerNormWavelet}
		\norm{W^\vr_\f \f}_{W^R(W(L^r_{\tilde{v}}))}\asymp\norm{W^\vr_\f \f}_{W^R_V(W_V(L^r_{\tilde{v}}))}\leq\norm{\sfM_{2V_{1,2}}V_\f \f}_{L^r_{v}(\cG\times\hcG)}
	\end{equation}
	and we shall prove the right-hand side to be finite. Due to the arbitrariness of $V_\cG$ and $V_{\hcG}$, we can assume that
	\begin{equation}
		V_{1,2}=V_\cG\times V_{\hcG}\cong \left(E_1\times D_1 \right)\times\left(E_2\times D_2\right)\cong\left(E_1\times E_2\right)\times\left(D_1\times D_2\right),
	\end{equation}
	 where $E_1,E_2\subseteq \rd$, $D_1\subseteq \cG_0$ and $D_2\subseteq \hcG$ are open, relatively compact, unit neighbourhoods. As done previously,
	\begin{align*}
		E_{1,2}\coloneqq E_1\times E_2\subseteq\rdd,&\quad D_{1,2}\coloneqq D_1\times D_2\subseteq\cG_0\times\hcG_0,\\
		2E_{1,2}\coloneqq E_{1,2}+E_{1,2},&\quad 2D_{1,2}\coloneqq D_{1,2}+D_{1,2}.
	\end{align*}
	Hence
	\begin{align*}
		\sfM_{2V_{1,2}}V_\f \f(x,\xi)&=c(\cK)\underset{\substack{((y_1,\eta_1),(y_2,\eta_2))\in\\((x_1,\xi_1),(x_2,\xi_2))+2E_{1,2}\times 2D_{1,2}}}{\essupp}\abs{e^{-\frac\pi2(y^2_1+\eta^2_1)}\chi_{\cK\times\cK^\perp}(y_2,\eta_2)}\\
		&=c(\cK)\underset{(y_1,\eta_1)\in(x_1,\xi_1)+2E_{1,2}}{\essupp}\abs{e^{-\frac\pi2(y^2_1+\eta^2_1)}}\underset{(y_2,\eta_2)\in(x_2,\xi_2)+2D_{1,2}}{\essupp}\abs{\chi_{\cK\times\cK^\perp}(y_2,\eta_2)}.
	\end{align*}
	Since $v(x,\xi)$ is submultiplicative, using the structure theorem we can majorize as follows:
	\begin{equation*}
		v(x,\xi)=v((x_1,\xi_1),(x_2,\xi_2))\leq v((x_1,\xi_1),(e_0,\hat{e}_0))v((0,0),(x_2,\xi_2)),
	\end{equation*} 
	where $x=(x_1,x_2)\in\rd\times\cG_0,\,\xi=(\xi_1,\xi_2)\in\rd\times\hcG_0$. Let us define
	\begin{equation*}
		v_1(x_1,\xi_1)\coloneqq v((x_1,\xi_1),(e_0,\hat{e}_0)),\quad v_2(x_2,\xi_2)\coloneqq v((0,0),(x_2,\xi_2)),
	\end{equation*}
	$(x_1,\xi_1)\in\rdd$ and $(x_2,\xi_2)\in\cG_0\times\hcG_0$, which are still submultiplicative. Hence
	\begin{align*}
		\norm{\sfM_{2V_{1,2}}V_\f \f}_{L^r_{v}(\cG\times\hcG)}^r\leq&c(\cK)^r\underbrace{\int_\rdd\underset{(y_1,\eta_1)\in(x_1,\xi_1)+2E_{1,2}}{\essupp}\abs{e^{-\frac\pi2(y^2_1+\eta^2_1)}}^r v_1(x_1,\xi_1)^r\,dx_1d\xi_1}_{\eqqcolon I_1}\\
		&\times\underbrace{\int_{\cG_0\times\hcG_0}\underset{(y_2,\eta_2)\in(x_2,\xi_2)+2D_{1,2}}{\essupp}\abs{\chi_{\cK\times\cK^\perp}(y_2,\eta_2)}v_2(x_2,\xi_2)^r\,dx_2d\xi_2}_{\eqqcolon I_2}.
	\end{align*}
		For $N>2d$ and considering the weight $\la \cdot\ra\coloneqq(1+|\cdot|^2)^{1/2}$, we can write 
	\begin{align*}
	I_1&=\int_\rdd\frac{\la (x_1,\xi_1)\ra^N }{\la (x_1,\xi_1)\ra^N}\underset{(y_1,\eta_1)\in(x_1,\xi_1)+2E_{1,2}}{\essupp}\abs{e^{-\frac\pi2(y^2_1+\eta^2_1)}}^r v_1(x_1,\xi_1)^r\,dx_1d\xi_1\\
	&\leq \int_\rdd\frac{1 }{\la (x_1,\xi_1)\ra^N} \underset{(y_1,\eta_1)\in(x_1,\xi_1)+2E_{1,2}} {\essupp} \left[ e^{-\frac {r\pi}{2}(y^2_1+\eta^2_1)} v_1(y_1,\eta_1)^r\la (y_1,\eta_1)\ra^N\right]\,dx_1d\xi_1\\
	&\leq \int_\rdd\frac{1 }{\la (x_1,\xi_1)\ra^N} \underset{(y_1,\eta_1)\in\rdd}{\essupp} \left[ e^{-\frac {r\pi}{2}(y^2_1+\eta^2_1)} v_1(y_1,\eta_1)^r\la (y_1,\eta_1)\ra^N\right]\,dx_1d\xi_1.
	\end{align*}
	In fact,	\begin{align*} \underset{(y_1,\eta_1)\in(x_1,\xi_1)+2E_{1,2}}{\essupp} & e^{-\frac {r\pi}{2}(y^2_1+\eta^2_1)}\la (x_1,\xi_1)\ra^N v_1(x_1,\xi_1)^r\\ &\leq \underset{(y_1,\eta_1)\in(x_1,\xi_1)+2E_{1,2}}{\essupp} \left[ e^{-\frac {r\pi}{2}(y^2_1+\eta^2_1)} v_1(y_1,\eta_1)^r\la (y_1,\eta_1)\ra^N\right]\\
	&\leq \|e^{\frac{-r \pi}{2}|\cdot|^2 }v_1^r(\cdot)\la \cdot \ra^N\|_{L^\infty(\rdd)}<+\infty
	\end{align*}
	because  $v_1$ is submultiplicative so it can grow at most exponentially \cite[Lemma 2.1.4]{CordRod2020}. Hence  $I_1<+\infty$.\\
	We now study the integral $I_2$. Observe that the integrand is not equal to zero if and only if $(\cK\times\cK^\perp)\cap((x_2,\xi_2)+2D_{1,2})\neq\varnothing$, which means that there exist 
	$k\in\cK\times\cK^\perp$ and $h\in 2D_{1,2}$, all depending on $(x_2,\xi_2)$, such that
$	k=(x_2,\xi_2)+h$ if and only if $(x_2,\xi_2)= k-h$, which implies $(x_2,\xi_2)\in\cK\times\cK^\perp-2D_{1,2}.$
Equivalently, $(x_2,\xi_2)\notin\cK\times\cK^\perp-2D_{1,2}$ if and oly if $\underset{(y_2,\eta_2)\in(x_2,\xi_2)+2D_{1,2}}{\essupp}\abs{\chi_{\cK\times\cK^\perp}(y_2,\eta_2)}=0$, 	that implies
	\begin{equation*}
		\underset{(y_2,\eta_2)\in(x_2,\xi_2)+2D_{1,2}}{\essupp}\abs{\chi_{\cK\times\cK^\perp}(y_2,\eta_2)}\leq \chi_{\cK\times\cK^\perp-2D_{1,2}}(x_2,\xi_2).
	\end{equation*}
	Note that $\cK\times\cK^\perp-2D_{1,2}$ is  relatively compact, hence of finite measure. The local boundedness of the submultiplicative weight $v_2$, shown in \cite[Theorem 2.2.22]{Voig2015}, ensures that the integral on $\cG_0\times\hcG_0$ is finite.\\
	So far we have shown $\f\in\bA^r_{\tilde{v}}$. We now consider $f=\sum^n_{k=1}a_k\pi(u_k,\o_k)\f\in\cS_\cC(\cG)$ and apply \eqref{Eq-WienerNormWavelet}, Lemma \ref{Lem-STFTinS_C} and left/right invariance of $W_{2V_{1,2}}(L^r_{v}(\cG\times\hcG))$:
	\begin{align*}
		\norm{W^\vr_f f}_{W^R_V(W_V(L^r_{\tilde{v}}))}&\leq\norm{\sfM_{2V_{1,2}}V_f f}_{L^r_{v}(\cG\times\hcG)}=\norm{V_f f}_{W_{2V_{1,2}}(L^r_{v}(\cG\times\hcG))}\\
		&=\norm{\sum_{k,j=1}^{n}a_k\overline{a_j\<\xi-\o_k,u_k\>}\<\o_j,x-u_k\>T_{(u_k,\o_k)-(u_j,\o_j)}V_\f\f(x,\xi)}_{W_{2V_{1,2}}(L^r_{v}(\cG\times\hcG))}\\
		&\underset{n,r}{\lesssim}\sum_{k,j=1}^{n}\abs{a_k a_j}\norm{T_{(u_k,\o_k)-(u_j,\o_j)}V_\f\f(x,\xi)}_{W_{2V_{1,2}}(L^r_{v}(\cG\times\hcG))}<+\infty.
	\end{align*}
	This concludes the proof.
\end{proof}

Of course, $\sA_{\tilde{v}}$ is a vector space. We shall use the extended notation $\sA_{\tilde{v}}(\cG)$ only when confusion may occur. It is also clear that writing $\sA_{\tilde{v}}(\cG\times\hcG)$ we mean the weight $v$ to be defined on $(\cG\times\hcG)\times(\hcG\times\cG)$, as done in the subsequent Corollary \ref{Cor-Rfg-windows}.

\begin{corollary}\label{Cor-Rfg-windows}
	Let $f,g\in\cS_\cC(\cG)$, then $R(f,g)\in\sA_{\tilde{v}}(\cG\times\hcG)$.
\end{corollary}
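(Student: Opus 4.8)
The plan is to reduce the statement to the already-established Lemma \ref{Lem-SC-window-space} by transporting the problem from $\cG$ to the product group $\cG\times\hcG$, using the fact that the Rihaczek distribution is, up to a chirp and a constant, a Gaussian on this product group. Concretely, I would first recall that by the tensor structure \eqref{Eq-Rff}, $R(\f,\f)(x,\xi)=c(\cK)\overline{\la\xi,x\ra}\,e^{-\pi(x_1^2+\xi_1^2)}\otimes\chi_{\cK\times\cK^\perp}(x_2,\xi_2)$; after identifying $\cG\times\hcG\cong\rdd\times(\cG_0\times\hcG_0)$ with $\cK\times\cK^\perp$ playing the role of the compact open subgroup, this is precisely $c(\cK)\overline{\la\xi,x\ra}$ times the generalized Gaussian $\Phi$ on $\cG\times\hcG$ in the sense of \eqref{Gauss}. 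The extra unimodular chirp factor $\overline{\la\xi,x\ra}$ can be absorbed: it is a character, so multiplying by it is a modulation on the group $\cG\times\hcG$, and such a modulation corresponds to a time-frequency shift (in fact just a translation in the frequency variable of the ambient Heisenberg group $\bH_{\cG\times\hcG}$), under which the space $\sA_{\tilde v}(\cG\times\hcG)$ is invariant. Thus $R(\f,\f)\in\sA_{\tilde v}(\cG\times\hcG)$ follows from the fact, proved inside Lemma \ref{Lem-SC-window-space}, that the generalized Gaussian on any such LCA group lies in $\bA^r_{\tilde v}$ for every $0<r\le 1$.

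Next I would handle general $f,g\in\cS_\cC(\cG)$ by using the explicit expansion of $R(f,g)$ from Lemma \ref{Lem-STFTinS_C}: writing $f=\sum_{k=1}^n a_k\pi(\bfu_k)\f$ and $g=\sum_{j=1}^m b_j\pi(\bfy_j)\f$, we have
\begin{equation*}
R(f,g)=\sum_{k=1}^{n}\sum_{j=1}^{m}a_k b_j\,\la\eta_j,u_k-y_j\ra\, M_{\cJ(\bfy_j-\bfu_k)}T_{(u_k,\eta_j)}R(\f,\f).
\end{equation*}
Each summand is a scalar multiple of a time-frequency shift $M_{\cJ(\bfy_j-\bfu_k)}T_{(u_k,\eta_j)}$ of $R(\f,\f)$, i.e. a single application of the representation $\vr$ (for $\bH_{\cG\times\hcG}$) to the element $R(\f,\f)\in\sA_{\tilde v}(\cG\times\hcG)\subseteq\bA^r_{\tilde v}$. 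Since $\bA^r_{\tilde v}$ is a $\vr$-invariant vector space — this is exactly the content of \eqref{Eq-remark-bA^r_v} together with the fact, recalled after the statement of \textbf{G$'$}, that $\bA^r_{\tilde v}$ is a vector space with $W^\vr_h g\in W^R(L^\infty,W(L^\infty,L^r_{\tilde v}))$ for all $g,h\in\bA^r_{\tilde v}$ — a finite linear combination of time-frequency shifts of $R(\f,\f)$ remains in $\bA^r_{\tilde v}$. Taking the intersection over $0<r\le 1$ gives $R(f,g)\in\sA_{\tilde v}(\cG\times\hcG)$. In fact this step is structurally identical to the last display in the proof of Lemma \ref{Lem-SC-window-space}, where $V_ff$ was treated the same way; one just replaces $V_\f\f$ by $R(\f,\f)$ and $V_ff$ by $R(f,g)$.

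The only point requiring genuine care — the main obstacle — is to verify carefully that $\bA^r_{\tilde v}$ (and hence $\sA_{\tilde v}$) really is closed under the time-frequency shifts $\vr(\bfz)$, $\bfz\in\bH_{\cG\times\hcG}$, with a bound controlled by $\tilde v$; this is where one needs the right-moderateness of $L^r_{\tilde v}$ and the boundedness of right translations on the relevant Wiener amalgam space, i.e. Lemma \ref{Lem-Bi-InvarianceLpq} and Corollary \ref{Cor-Bounded-R-Wiener} applied to the Heisenberg group over $\cG\times\hcG$ rather than over $\cG$. Once the quasi-norm estimate $\norm{W^\vr_{\vr(\bfz)h}\vr(\bfz)g}_{W^R(L^\infty,W(L^\infty,L^r_{\tilde v}))}\lesssim_{v}\norm{W^\vr_h g}_{W^R(L^\infty,W(L^\infty,L^r_{\tilde v}))}$ is in hand — and it follows from the covariance of the wavelet transform under $\vr$ combined with the boundedness of left and right translations — the conclusion is immediate. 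I would therefore organize the proof as: (1) identify $R(\f,\f)$ with a modulated Gaussian on $\cG\times\hcG$ and invoke the Gaussian part of Lemma \ref{Lem-SC-window-space}; (2) invoke $\vr$-invariance of $\bA^r_{\tilde v}$; (3) expand $R(f,g)$ via Lemma \ref{Lem-STFTinS_C} and conclude by finite linearity, exactly as in the final estimate of Lemma \ref{Lem-SC-window-space}.
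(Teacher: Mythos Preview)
Your overall strategy—reduce to the Gaussian case via \eqref{Eq-Rff}, then extend to general $f,g\in\cS_\cC(\cG)$ via the expansion in Lemma \ref{Lem-STFTinS_C} and invariance under time-frequency shifts—matches the paper's. However, step (1) contains a genuine error: the chirp factor $\overline{\la\xi,x\ra}$ is \emph{not} a character on the abelian group $\cG\times\hcG$. A character of $\cG\times\hcG$ has the form $(x,\xi)\mapsto\la\eta_0,x\ra\la\xi,y_0\ra$ for a \emph{fixed} $(\eta_0,y_0)\in\hcG\times\cG$, whereas the pairing $(x,\xi)\mapsto\la\xi,x\ra$ is bi-additive and fails the character identity $\la\xi_1+\xi_2,x_1+x_2\ra=\la\xi_1,x_1\ra\la\xi_2,x_2\ra$. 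Consequently, multiplication by this chirp is not a modulation by a fixed element of $\widehat{\cG\times\hcG}$, hence not an application of $\vr$, and you cannot invoke $\vr$-invariance of $\sA_{\tilde v}$ to absorb it.

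The fix—and this is what the paper intends by ``the same arguments as in Lemma \ref{Lem-SC-window-space}''—is to compute $|V_{R(\f,\f)}R(\f,\f)((x,\xi),(\o,u))|$ directly from \eqref{Eq-Rff}. The chirp factors coming from the two copies of $R(\f,\f)$ combine into a unimodular phase which disappears upon taking absolute values, and what remains is a Gaussian on $\bR^{4d}$ tensored with characteristic functions of compact sets in the $\cG_0\times\hcG_0$ variables. One then estimates $\sfM_V\sfM^R_V$ of this in $L^r_{\tilde v}$ exactly by the splitting into the integrals $I_1$ and $I_2$ used in Lemma \ref{Lem-SC-window-space}. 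Once $R(\f,\f)\in\sA_{\tilde v}(\cG\times\hcG)$ is established this way, your steps (2)--(3) are correct and identical to the paper's.
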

\begin{proof}
	The proof follows the same arguments  in Lemma \ref{Lem-SC-window-space},  together with \eqref{Eq-Rff} and Lemma \ref{Lem-STFTinS_C}.
\end{proof}

\begin{itemize}
	\item[\textbf{H$'$}.] For a fixed $g\in\bG_{\tilde{v}}\smallsetminus\{0\}$, the space of test vectors is 
	\begin{equation}
	\cT_{\tilde{v}}\coloneqq \left\{f\in L^2(\cG)\,|\,W^\vr_g f\in L^1_{\tilde{v}}(\bH_\cG)\right\}
	\end{equation}
	endowed with the norm
	\begin{equation}
	\norm{f}_{\cT_{\tilde{v}}}\coloneqq\norm{W^\vr_g f}_{L^1_{\tilde{v}}}.
	\end{equation}
\end{itemize}
$(\cT_{\tilde{v}},\norm{\cdot}_{\cT_{\tilde{v}}})$ is a $\vr$-invariant Banach space which embeds continuously into $L^2(\cG)$ and it is independent from the choice of the window vector $g\in\bG_{\tilde{v}}\smallsetminus\{0\}$, see \cite[Lemma 2.4.7]{Voig2015}.

\begin{lemma} For any $g\in \cS_\cC(\cG)\smallsetminus\{0\}$, the following equality holds true
	\begin{equation}
	\bG_{\tilde{v}}=\cT_{\tilde{v}}=\left\{f\in L^2(\cG)\,|\,V_g f\in L^1_{v}(\cG\times\hcG)\right\}.
	\end{equation}
\end{lemma}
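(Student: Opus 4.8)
The plan is to fix an arbitrary $g\in\cS_\cC(\cG)\smallsetminus\{0\}$ and to show that each of the three sets in the statement coincides with $\{f\in L^2(\cG):W^\vr_g f\in L^1_{\tilde v}(\bH_\cG)\}$. First I would note that by Lemma~\ref{Lem-SC-window-space} we have $g\in\cS_\cC(\cG)\subseteq\sA_{\tilde v}\subseteq\bG_{\tilde v}$, so $g\in\bG_{\tilde v}\smallsetminus\{0\}$ is an admissible choice of window in the definition of $\cT_{\tilde v}$; by the window–independence of $\cT_{\tilde v}$ (\cite[Lemma 2.4.7]{Voig2015}) we may compute it with this $g$, i.e. $\cT_{\tilde v}=\{f\in L^2(\cG):W^\vr_g f\in L^1_{\tilde v}(\bH_\cG)\}$. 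Then Remark~\ref{Rem-WaveletSchrodinger} gives $|W^\vr_g f(x,\xi,\tau)|=|V_g f(x,\xi)|$, and since $\tilde v(x,\xi,\tau)=v(x,\xi)$ and $d\tau(\bT)=1$, Tonelli yields $\|W^\vr_g f\|_{L^1_{\tilde v}(\bH_\cG)}=\|V_g f\|_{L^1_v(\cG\times\hcG)}$; hence $\cT_{\tilde v}=\{f\in L^2(\cG):V_g f\in L^1_v(\cG\times\hcG)\}$, which is the second equality.

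It then remains to prove $\bG_{\tilde v}=\cT_{\tilde v}$. The inclusion $\bG_{\tilde v}\subseteq\cT_{\tilde v}$ is immediate: for $f\in\bG_{\tilde v}\smallsetminus\{0\}$, $f$ is itself an admissible window, so by window–independence the test space built from $f$ is again $\cT_{\tilde v}$, and $f$ lies in it because $W^\vr_f f\in L^1_{\tilde v}(\bH_\cG)$ by the very definition of $\bG_{\tilde v}$ (the case $f=0$ being trivial). For the converse $\cT_{\tilde v}\subseteq\bG_{\tilde v}$ — the technical core — let $f\in\cT_{\tilde v}$, so $W^\vr_g f\in L^1_{\tilde v}(\bH_\cG)$. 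Up to replacing $g$ by a scalar multiple (which affects neither $\cT_{\tilde v}$ nor $\bG_{\tilde v}$) we may assume the admissibility constant of $\vr$ with respect to $g$ equals $1$; the reproducing formula for the integrable irreducible representation $\vr$ (Lemma~\ref{Lem-SchrodingerRep}) then gives, for every $z\in\bH_\cG$,
\[
W^\vr_f f(z)=\langle f,\vr(z)f\rangle=\int_{\bH_\cG}W^\vr_g f(y)\,\overline{\langle\vr(y)g,\vr(z)f\rangle}\,dy=\int_{\bH_\cG}W^\vr_g f(y)\,\overline{W^\vr_g f(z^{-1}y)}\,dy,
\]
so that $|W^\vr_f f|\le|W^\vr_g f|\ast|W^\vr_g f|^{\vee}$ pointwise, where $F^{\vee}(y)\coloneqq F(y^{-1})$.

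Finally I would bound the $L^1_{\tilde v}$-norm of this convolution. Since $\bH_\cG$ is unimodular (Lemma~\ref{Lem-HeinsenbergGroup}) and $\tilde v(y^{-1})=v(-x,-\xi)=v(x,\xi)=\tilde v(y)$ by the symmetry of $v$ (Definition~\ref{Def-weights}, cf. the remark after Lemma~\ref{Lem-Bi-InvarianceLpq}), we get $\|F^{\vee}\|_{L^1_{\tilde v}}=\|F\|_{L^1_{\tilde v}}$; combined with the submultiplicativity of $\tilde v$, which makes $L^1_{\tilde v}(\bH_\cG)$ a convolution algebra, this gives
\[
\|W^\vr_f f\|_{L^1_{\tilde v}(\bH_\cG)}\le\bigl\|\,|W^\vr_g f|\ast|W^\vr_g f|^{\vee}\bigr\|_{L^1_{\tilde v}}\le\|W^\vr_g f\|_{L^1_{\tilde v}}^2<+\infty,
\]
hence $f\in\bG_{\tilde v}$. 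I expect the only genuine obstacle to be this last inclusion: it rests on having a reproducing/orthogonality formula for $\vr$ available (which we do, by integrability and irreducibility) and on $L^1_{\tilde v}(\bH_\cG)$ being a $^{\vee}$-invariant convolution algebra — both reducing to the unimodularity of $\bH_\cG$ together with the submultiplicativity and symmetry of the weight $v$.
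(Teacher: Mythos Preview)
Your proof is correct and follows essentially the same route as the paper's. Both arguments identify the second equality via $|W^\vr_g f|=|V_g f|$ and the compactness of $\bT$, obtain $\bG_{\tilde v}\subseteq\cT_{\tilde v}$ from window-independence (the paper just cites \cite[Lemma~2.4.7]{Voig2015}), and for the nontrivial inclusion $\cT_{\tilde v}\subseteq\bG_{\tilde v}$ use the orthogonality relation $\la f,\vr(z)f\ra=\|g\|_{L^2}^{-2}\la W^\vr_g f,W^\vr_g(\vr(z)f)\ra$ together with $W^\vr_g(\vr(z)f)(y)=W^\vr_g f(z^{-1}y)$, then bound $\|W^\vr_f f\|_{L^1_{\tilde v}}$ by $\|W^\vr_g f\|_{L^1_{\tilde v}}^2$ via Fubini, submultiplicativity and symmetry of $\tilde v$, and unimodularity of $\bH_\cG$; you package this last step as ``$L^1_{\tilde v}$ is a $^{\vee}$-invariant convolution algebra'', which is exactly the paper's direct computation in different clothing.
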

\begin{proof}
	The second equality is just Remark \ref{Rem-WaveletSchrodinger}, for the first one the proof follows the pattern of \cite[Proposition 3.6]{Berge2021}.
	From \cite[Lemma 2.4.7]{Voig2015}: $\bG_{\tilde{v}}\subseteq \cT_{\tilde{v}}$. Being the Duflo-Moore operator (\cite[Theorem 3]{DufloMoore1976}) the identity, the orthogonality relations for $f,h\in L^2(\cG)$ and $g,\ga\in\bG_{\tilde{v}}$ are
	\begin{equation*}
		\la W^\vr_g f,W^\vr_\ga h\ra_{L^2(\bH_\cG)}=\la \ga,g\ra_{L^2(\cG)}\la f,h\ra_{L^2(\cG)},
	\end{equation*}
	see \cite[Theorem 2.4.3]{Voig2015}. 
	 Fix $f\in\cT_{\tilde{v}}$, take $\ga=g\neq0$, $h=\vr(x,\xi,\tau)f$ and using Fubini's Theorem, symmetry and submultiplicativity of $\tilde{v}$ we compute
	\begin{align*}
		&\norm{W^\vr_f f}_{L^1_{\tilde{v}}}
		=\int_{\bH_\cG}\abs{\la f, \vr(x,\xi,\tau)f\ra}\tilde{v}(x,\xi,\tau)\,dxd\xi d\tau\\
		&=\frac{1}{\norm{g}^2_{L^2}}\int_{\bH_\cG}\abs{\la W^\vr_g f,W^\vr_ g [\vr(x,\xi,\tau)f]\ra}\tilde{v}(x,\xi,\tau)\,dxd\xi d\tau\\
		&\leq\frac{1}{\norm{g}^2_{L^2}}\int_{\bH_\cG}\int_{\bH_\cG}\left| W^\vr_g f(y,\eta,s)W^\vr_g[\vr(x,\xi,\tau)f](y,\eta,s)\right|\, dyd\eta ds\,\tilde{v}(x,\xi,\tau)\,dxd\xi d\tau\\
		&=\frac{1}{\norm{g}^2_{L^2}}\int_{\bH_\cG}\left|W^\vr_g f(y,\eta,s)\right|\left(\int_{\bH_\cG}\left| W^\vr_g[\vr(x,\xi,\tau)f](y,\eta,s)\right|\tilde{v}(x,\xi,\tau)\,dxd\xi d\tau\right) dyd\eta ds.
	\end{align*}
	Observe
	\begin{equation*}
	W^\vr_g[\vr(x,\xi,\tau)f](y,\eta,s)=\la\vr(x,\xi,\tau)f,\vr(y,\eta,s)g\ra=W^\vr_g f\left((x,\xi,\tau)^{-1}(y,\eta,s)\right),
	\end{equation*}
	so that
	\begin{align*}	
		\norm{W^\vr_f f}_{L^1_{\tilde{v}}}&\leq\frac{1}{\norm{g}^2_{L^2}}\int_{\bH_\cG}\left|W^\vr_g f(y,\eta,s)\right|\\
		&\qquad\times\left(\int_{\bH_\cG}\left| W^\vr_g f\left((x,\xi,\tau)^{-1}(y,\eta,s)\right)\right|\tilde{v}(x,\xi,\tau)\,dxd\xi d\tau\right) dyd\eta ds\\
		&\leq\frac{1}{\norm{g}^2_{L^2}}\int_{\bH_\cG}\left|W^\vr_g f(y,\eta,s)\right|\\
		&\qquad\times\left(\int_{\bH_\cG}\left| W^\vr_g f(x',\xi',\tau')\right|\tilde{v}(x',\xi',\tau')\tilde{v}(y,\eta,s)\,dx'd\xi'd\tau'\right) dyd\eta ds\\
		&=\frac{1}{\norm{g}^2_{L^2}}\left(\int_{\bH_\cG}\left| W^\vr_g f(x',\xi',\tau')\right|\tilde{v}(x',\xi',\tau')\,dx'd\xi'd\tau'\right)^2\\
		&=\frac{1}{\norm{g}^2_{L^2}}\norm{ W^\vr_g f}^2_{L^1_{\tilde{v}}}<+\infty.
	\end{align*}
	Hence $f\in\bG_{\tilde{v}}$ and the proof in concluded.
\end{proof}

\begin{lemma}\label{Lem-Sc-dense-Tv}
		$\cS_\cC(\cG)$ is dense in $(\cT_{\tilde{v}},\norm{\cdot}_{\cT_{\tilde{v}}})$.
\end{lemma}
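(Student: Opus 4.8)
The plan is to show that every $f\in\cT_{\tilde v}$ can be approximated in the norm $\norm{\cdot}_{\cT_{\tilde v}}=\norm{W^\vr_g f}_{L^1_{\tilde v}}$ (for a fixed nonzero $g\in\cS_\cC(\cG)$) by elements of $\cS_\cC(\cG)$. Since $\cS_\cC(\cG)\subseteq\sA_{\tilde v}\subseteq\bG_{\tilde v}=\cT_{\tilde v}$ by Lemma~\ref{Lem-SC-window-space} and the preceding lemma, the inclusion $\cS_\cC(\cG)\subseteq\cT_{\tilde v}$ is already known; only the density is at stake. The natural tool is a \emph{reproducing/reconstruction formula} for the coorbit space: for $g\in\bG_{\tilde v}\setminus\{0\}$ one has, at least weakly, $f=\norm g_{L^2}^{-2}\int_{\bH_\cG}W^\vr_g f(\bfx)\,\vr(\bfx)g\,d\bfx$ for $f\in\cT_{\tilde v}$, this being a standard consequence of the orthogonality relations recorded just above (with Duflo--Moore operator the identity). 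I would then approximate this integral by Riemann-type sums $\sum_{j}c_j\,\vr(\bfx_j)g$ with $\bfx_j=(x_j,\xi_j,\tau_j)\in\bH_\cG$; absorbing the unimodular scalars $\tau_j$ into the coefficients $c_j$, each such sum is of the form $\sum_j c_j\,\pi(x_j,\xi_j)g$, and since $g\in\cS_\cC(\cG)$ is itself a finite combination of time-frequency shifts of the Gaussian, so is $\vr(\bfx_j)g$; hence every such sum lies in $\cS_\cC(\cG)$.

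Concretely I would proceed as follows. First, fix $g\in\cS_\cC(\cG)\setminus\{0\}$ and normalise so that $\norm g_{L^2}=1$. Second, given $f\in\cT_{\tilde v}$ and $\varepsilon>0$, use that $W^\vr_g f\in L^1_{\tilde v}(\bH_\cG)$ to choose a \emph{compactly supported} function that is $\varepsilon$-close to $W^\vr_g f$ in $L^1_{\tilde v}$; more precisely, because $W^\vr_g g\in W^R(L^\infty,W(L^\infty,L^r_{\tilde v}))$ for all $r$ (this is exactly the content of $g\in\sA_{\tilde v}$ and \eqref{Eq-remark-bA^r_v}), the reconstruction operator and its discretisation are bounded on $\cT_{\tilde v}$, so truncating $W^\vr_g f$ to a large relatively compact set $Q\subseteq\bH_\cG$ changes $f=\int W^\vr_g f(\bfx)\vr(\bfx)g\,d\bfx$ by at most $C\,\norm{(1-\chi_Q)W^\vr_g f}_{L^1_{\tilde v}}$, which is small. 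Third, on the compact piece, use strong continuity of $\vr$ together with the fact that $W^\vr_g f$ is continuous and $\tilde v$ is locally bounded to replace $\int_Q W^\vr_g f(\bfx)\vr(\bfx)g\,d\bfx$ by a finite sum $\sum_j \mu(Q_j)W^\vr_g f(\bfx_j)\vr(\bfx_j)g$ over a fine measurable partition $\{Q_j\}$ of $Q$, with the error controlled again in the $\cT_{\tilde v}$-norm via the boundedness estimates of the coorbit machinery (the modulus of continuity of $\bfx\mapsto W^\vr_g f(\bfx)\vr(\bfx)g$ as a $\cT_{\tilde v}$-valued map is uniform on $Q$). Collecting the two errors gives an element of $\cS_\cC(\cG)$ within $2\varepsilon$ of $f$ in $\cT_{\tilde v}$.

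The main obstacle I anticipate is making the discretisation of the reconstruction integral rigorous \emph{in the $\cT_{\tilde v}$-norm rather than merely in $L^2$}: one must know that $\bfx\mapsto\vr(\bfx)g$ is continuous as a map into $(\cT_{\tilde v},\norm{\cdot}_{\cT_{\tilde v}})$ (not just into $L^2(\cG)$), uniformly on the compact set $Q$, and that the coefficient functional $W^\vr_g f$ together with $\tilde v$ is well enough behaved to turn this into a genuine norm-convergent Riemann sum. This is precisely where the hypothesis $g\in\sA_{\tilde v}=\bigcap_{0<r\le1}\bA^r_{\tilde v}$ does the work: the $\cT_{\tilde v}$-invariance of the coorbit space under $\vr$ (noted after the definition of $\cT_{\tilde v}$) gives $\vr(\bfx)g\in\cT_{\tilde v}$ with norm controlled by $\tilde v(\bfx)$, and Voigtlaender's Wiener-amalgam estimates for $W^\vr_g g$ give equicontinuity. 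An alternative, possibly cleaner route avoiding ad hoc Riemann sums is to invoke the discrete atomic decomposition for the coorbit space $\cT_{\tilde v}=\Co(L^1_{\tilde v})$ provided by the general theory in Appendix~A: it produces, for a sufficiently fine BUPU, a frame-like expansion $f=\sum_i\langle f,\vr(\bfx_i)g\rangle\,\widetilde{\vr(\bfx_i)g}$ converging in $\cT_{\tilde v}$, whose partial sums, after passing to the synthesis side with atoms $\vr(\bfx_i)g\in\cS_\cC(\cG)$, lie in $\cS_\cC(\cG)$; truncating the (absolutely $\cT_{\tilde v}$-convergent) series then yields the desired approximants. Either way the density follows, and I would present whichever of the two requires the least new bookkeeping given what has already been set up in the excerpt.
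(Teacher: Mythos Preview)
Your approach is correct in spirit but takes a much longer route than the paper. The paper's proof is essentially two lines: since the Gaussian $\f$ belongs to $\bG_{\tilde v}$ (Lemma~\ref{Lem-SC-window-space}), \cite[Lemma~2.4.7, 5.]{Voig2015} gives directly that
\[
\cS^\vr_\cC(\cG)\coloneqq\mathrm{span}\{\vr(x,\xi,\tau)\f\,:\,(x,\xi,\tau)\in\bH_\cG\}
\]
is dense in $\cT_{\tilde v}$; then one simply observes that $\cS^\vr_\cC(\cG)=\cS_\cC(\cG)$ because $\vr(x,\xi,\tau)\f=\tau\,\pi(x,\xi)\f$ and the unimodular scalar $\tau$ is absorbed into the span.

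What you are doing---reproducing formula, compact truncation, Riemann-sum discretisation, or alternatively the atomic decomposition of $\Co(L^1_{\tilde v})$---is essentially re-proving the cited lemma from Voigtlaender by hand. This is not wrong, and it has the merit of being self-contained and making the mechanism explicit (continuity of $\bfx\mapsto\vr(\bfx)g$ into $\cT_{\tilde v}$, boundedness of the synthesis map $L^1_{\tilde v}\to\cT_{\tilde v}$). But since the paper already relies on Voigtlaender's coorbit machinery throughout (and summarises it in the Appendix), invoking the ready-made density statement for the orbit span of a good vector is the economical choice. Your atomic-decomposition alternative is closer to the paper's philosophy, though note a small slip: the expansion from Theorem~\ref{Th-Voigt-Th-2.4.19} has the form $f=\sum_i\lambda_i(f)\vr(\bfx_i)g$ with atoms $\vr(\bfx_i)g\in\cS_\cC(\cG)$ and abstract coefficient functionals $\lambda_i$, not with dual atoms $\widetilde{\vr(\bfx_i)g}$ as you wrote; the conclusion (partial sums lie in $\cS_\cC(\cG)$ and converge in $\cT_{\tilde v}=\Co(L^1_{\tilde v})$) is nonetheless correct.
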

\begin{proof}
	In Lemma \ref{Lem-SC-window-space} we have shown that the Gaussian $\f$ in \eqref{Gauss} belongs to $\bG_{\tilde{v}}$. Then from \cite[Lemma 2.4.7, 5.]{Voig2015} we have that
	\begin{equation*}
		\cS_\cC^\vr(\cG)\coloneqq\mbox{span}\left\{\vr(x,\xi,\tau)\f\,|\,(x,\xi,\tau)\in\bH_\cG\right\}
	\end{equation*}
	is dense in $(\cT_{\tilde{v}},\norm{\cdot}_{\cT_{\tilde{v}}})$. The claim follows from the trivial fact that $\cS^\vr_\cC(\cG)=\cS_\cC(\cG)$.
\end{proof}

\begin{itemize}
	\item[\textbf{I$'$}.] The reservoir is the Banach space 
	\begin{equation}
	\cR_{\tilde{v}}\coloneqq\cT_{\tilde{v}}^\neg\coloneqq\left\{f\colon\cT_{\tilde{v}}\to\bC\,|\, \text{antilinear and continuous}\right\}.
	\end{equation}
\end{itemize}

\begin{remark}\label{Rem-S_0}
	The Feichtinger algebra $S_0(\cG)$ \cite{Feichtinger-S_0-1979,Feichtinger-1980-kernel, Feichtinger-On-a-new-segal-alg-1981} has numerous equivalent descriptions, see \cite{Jakobsen2018}. It can be seen as the vector space
	\begin{equation}
		S_0(\cG)=\left\{f\in L^2(\cG)\,|\,V_g f\in L^1(\cG\times\hcG)\right\},
	\end{equation}
	for some fixed non-zero window function $g\in L^2(\cG)$. Equipped with the norm
	\begin{equation}
		\norm{f}_{S_0}=\norm{V_g f}_{L^1},
	\end{equation} 
	$S_0(\cG)$ is a Banach space. If $v\equiv 1$, then 
	\begin{equation}
	\cT_1=\bG_1=S_0(\cG),\quad\cR_1=S'_0(\cG).
	\end{equation}
	If $v$ is not constant, then
	\begin{equation}
		\cT_{\tilde{v}}=\bG_{\tilde{v}}\hookrightarrow S_0(\cG),\qquad \cR_{\tilde{v}}\hookleftarrow S'_0(\cG).
	\end{equation}
\end{remark}

\begin{corollary}
	The following inclusion holds true:
	\begin{equation}
		\sA_{\tilde{v}}\subseteq C_0(\cG),
	\end{equation}
	the latter being the space of continuous complex-valued functions on $\cG$ which vanish at infinity.
\end{corollary}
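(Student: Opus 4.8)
The plan is to chain together two facts already available in the excerpt: first, that $\sA_{\tilde v}\subseteq\bG_{\tilde v}=\cT_{\tilde v}$ (Lemma \ref{Lem-SC-window-space} together with the identification of $\bG_{\tilde v}$ with $\cT_{\tilde v}$), and second, that $\cT_{\tilde v}\hookrightarrow S_0(\cG)$ (Remark \ref{Rem-S_0}); so it suffices to show $S_0(\cG)\subseteq C_0(\cG)$. Since $\cS_\cC(\cG)$ is dense in $(\cT_{\tilde v},\norm{\cdot}_{\cT_{\tilde v}})$ by Lemma \ref{Lem-Sc-dense-Tv} and $\cT_{\tilde v}\hookrightarrow S_0(\cG)$, the density passes to $S_0(\cG)$ in its own norm, so the strategy reduces to: (i) every element of $\cS_\cC(\cG)$ lies in $C_0(\cG)$, and (ii) $S_0(\cG)$-convergence implies uniform convergence, whence $C_0(\cG)$ being closed under uniform limits gives the claim.

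For step (i), an element of $\cS_\cC(\cG)$ is a finite linear combination of time-frequency shifts $\pi(x,\xi)\f$ of the generalized Gaussian $\f=\f_1\otimes\f_2$ from \eqref{Gauss}. Each factor $\f_1(x_1)=e^{-\pi x_1^2}$ is continuous and vanishes at infinity on $\rd$, and $\f_2=\chi_\cK$ is the indicator of a compact open subgroup, hence continuous (because $\cK$ is open) and compactly supported on $\cG_0$. A time-frequency shift $M_\xi T_x$ preserves both continuity and the vanishing-at-infinity property (modulation by the continuous character $\la\xi,\cdot\ra$ has modulus $1$, translation is an isometry of $C_0$), so $\pi(x,\xi)\f\in C_0(\cG)$, and $C_0(\cG)$ is a vector space. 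For step (ii), fixing a nonzero window $g\in\cS_\cC(\cG)$ and using the inversion/reproducing formula for the STFT, one has the pointwise bound $\norm{f}_{L^\infty(\cG)}\lesssim\norm{V_g f}_{L^1}=\norm{f}_{S_0}$ — this is the standard estimate $|f(y)|\le \norm{g}_{L^2}^{-2}\int |V_g f(x,\xi)|\,|\pi(x,\xi)g(y)|\,dx\,d\xi\le C\norm{V_gf}_{L^1}$ valid for $f\in S_0(\cG)$ — so $S_0(\cG)\hookrightarrow L^\infty(\cG)$ continuously and $S_0$-Cauchy sequences are uniformly Cauchy.

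Putting it together: given $f\in\sA_{\tilde v}$, by Lemma \ref{Lem-Sc-dense-Tv} (transported to $S_0(\cG)$ via $\cT_{\tilde v}\hookrightarrow S_0(\cG)$, recalling $\cS_\cC(\cG)\subseteq\sA_{\tilde v}\subseteq\cT_{\tilde v}$) there is a sequence $f_n\in\cS_\cC(\cG)$ with $\norm{f_n-f}_{S_0}\to0$; by step (ii) $f_n\to f$ uniformly on $\cG$; by step (i) each $f_n\in C_0(\cG)$; and since $C_0(\cG)$ is closed under uniform convergence, $f\in C_0(\cG)$. I expect the only mild subtlety to be justifying the $S_0$-density of $\cS_\cC(\cG)$ inside $S_0(\cG)$ and the $L^\infty$-estimate in the generality of an LCA group $\cG$ rather than $\rd$; but the former follows from Lemma \ref{Lem-Sc-dense-Tv} plus the continuous embedding $\cT_{\tilde v}\hookrightarrow S_0(\cG)$ of Remark \ref{Rem-S_0} (the constant-weight case giving $\cT_1=S_0$), and the latter is exactly the group version of the STFT inversion inequality, which holds verbatim since all that is used is $\norm{g}_{L^2}>0$ and the orthogonality relations already invoked in the excerpt. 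Hence the main "obstacle" is merely bookkeeping the chain of embeddings and densities rather than any genuine difficulty.
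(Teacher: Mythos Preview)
Your proof is correct and follows the same skeleton as the paper's: both use the chain $\sA_{\tilde v}\subseteq\bG_{\tilde v}=\cT_{\tilde v}\subseteq S_0(\cG)$ from Lemma~\ref{Lem-SC-window-space} and Remark~\ref{Rem-S_0}, then conclude via $S_0(\cG)\subseteq C_0(\cG)$. The only difference is that the paper simply cites this last inclusion from the literature (Jakobsen, Theorem~4.1), whereas you reprove it by hand via density of $\cS_\cC(\cG)$ in $S_0(\cG)$ (Lemma~\ref{Lem-Sc-dense-Tv} in the case $v\equiv1$) together with the continuous embedding $S_0\hookrightarrow L^\infty$ and closedness of $C_0$ under uniform limits---a perfectly valid and more self-contained route.
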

\begin{proof}
	Combining Lemma \ref{Lem-SC-window-space} and Remark \ref{Rem-S_0} we have $\sA_{\tilde{v}}\subseteq\bG_{\tilde{v}}=\cT_{\tilde{v}}\subseteq S_0(\cG)$. We conclude using the fact that $S_0(\cG)\subseteq C_0(\cG)$, see e.g. \cite[Theorem 4.1]{Jakobsen2018}.
\end{proof}

\begin{itemize}	
	\item[\textbf{J$'$}.] We extend the wavelet transform to $f\in\cR_{\tilde{v}}$ and $g\in\cT_{\tilde{v}}$:
	\begin{equation}
	W^\vr_g f\colon \bH_\cG\to\bC, (x,\xi,\tau)\mapsto{}_{\cR_{\tilde{v}}}\la f,\tau M_\xi T_xg\ra_{\cT_{\tilde{v}}}.
	\end{equation}
	From now on we shall simply write $\la\cdot{,}\cdot\ra$. Observe $W^\vr_g f\in C(\bH_\cG)\cap L^\infty_{1/\tilde{v}}(\bH_\cG)$.
\end{itemize}

\begin{remark}\label{Rem-scelta-K}
	The class $\cS_\cC(\cG)$ defined in \eqref{Sc} actually depends on the compact open subgroup $\cK$ in $\cG_0$, where $\cG\cong \rd\times\cG_0$. Then we might write $\cS^\cK_\cC$ in place of $\cS_\cC$. Observe that if $\cK'$ is a compact open subgroup different from $\cK$ Lemma \ref{Lem-SC-window-space} is still valid. More generally, if $\bK$ is the class of all compact open subgroups in $\cG_0$
	:
	\begin{equation}
	\bS_\cC(\cG)\coloneqq\bigcup_{\cK\in\bK}\cS^\cK_\cC(\cG)\subseteq \sA_{\tilde{v}}\subseteq\bG_{\tilde{v}}.
	\end{equation}
	Therefore, coorbit spaces (defined in the subsequent \eqref{Eq-Def-coorbit-Lpq-Heis}) are independent of the window $g\in\bS_\cC(\cG)$. Concretely, this gives us the freedom to chose the subgroup $\cK$ which fits better to our purposes, as done in the proof of Lemma \ref{Lem-Analogo-GalSam-Lem3.2}. Arguing similarly, we could replace $e^{-\pi x^2_1}$ in \eqref{Gauss} with any $e^{-a x^2_1}$, $a>0$. This fact will be used in Proposition \ref{Pro-convolution-Mpq}.\\ 
	From now on, for sake of simplicity, we shall only use the notation $\cS_\cC(\cG)$ with the convention that $\cK$ and the coefficient of the Gaussian on $\rd$ can be chosen freely, so that we shall ever explicitly use the symbol $\bS_\cC(\cG)$. 
\end{remark}
\begin{itemize}	
	\item[\textbf{K$'$}.] The coorbit space on $\bH_\cG$ with respect to $L^{p,q}_{\tilde{m}}(\bH_\cG)$, $0<p,q\leq\infty$,  is, for some fixed non-zero window $g\in\cS_\cC(\cG)$,
	\begin{equation}\label{Eq-Def-coorbit-Lpq-Heis}
	\Co(L^{p,q}_{\tilde{m}}(\bH_\cG))\coloneqq\Co(L^{p,q}_{\tilde{m}})\coloneqq\left\{f\in\cR_{\tilde{v}}\,|\,W^\vr_g f\in W(L^\infty,L^{p,q}_{\tilde{m}}(\bH_\cG))\right\}
	\end{equation}
	endowed with the (quasi-)norm
	\begin{equation}
	\norm{f}_{\Co(L^{p,q}_{\tilde{m}})}\coloneqq\norm{W^\vr_g f}_{W(L^\infty,L^{p,q}_{\tilde{m}})}.
	\end{equation}
\end{itemize}
We stress that $\Co(L^{p,q}_{\tilde{m}})$ is independent of the window  $g$ and  $(\Co(L^{p,q}_{\tilde{m}}),\norm{\cdot}_{\Co(L^{p,q}_{\tilde{m}})})$ is a (quasi-)Banach space continuously embedded into $\cR_{\tilde{v}}$. Moreover, $\norm{\cdot}_{\Co(L^{p,q}_{\tilde{m}})}$ is a $r$-norm, with $r=\min\{1,p,q\}/2$. Notice that 
\begin{equation*}
	\Co(L^{p,q}_{\tilde{m}}(\bH_\cG))=\left\{f\in\cR_{\tilde{v}}\,|\,V_g f\in W(L^\infty,L^{p,q}_{m}(\cG\times\hcG))\right\}.
\end{equation*}

\begin{remark}
	It is clear from the general coorbit theory (see the Appendix A), that the set $\sA_{\tilde{v}}$ defined in \eqref{Eq-maximal-window-space-ALL-modulation-spaces} is the maximal window space  for all the coorbit spaces $\Co(L^{p,q}_{\tilde{m}})$, $0<p,q\leq\infty$. For sake of simplicity we shall mainly work with window functions in the smaller class $\cS_\cC(\cG)$ and adopt the whole space $\sA_{\tilde{v}}$ only when necessary, as done in Section 3.
\end{remark}
The coorbit spaces are independent of the reservoir, in the sense shown below.
\begin{proposition}\label{Pro-Independence-Coo-Reservoir}
	Fix a non-zero window $g\in\cS_\cC(\cG)$, then
	\begin{equation}
		\Co(L^{p,q}_{\tilde{m}}(\bH_\cG))=\left\{f\in S'_0(\cG)\,|\,W^\vr_g f\in W(L^\infty,L^{p,q}_{\tilde{m}}(\bH_\cG))\right\},
	\end{equation}
	in the sense that the restriction map
	\begin{equation*}
		\left\{f\in S'_0(\cG)\,|\,W^\vr_g f\in W(L^{p,q}_{\tilde{m}})\right\}\to\Co(L^{p,q}_{\tilde{m}}(\bH_\cG)),f\mapsto f|_{\cT_{\tilde{v}}}
	\end{equation*}
	is a bijection.
\end{proposition}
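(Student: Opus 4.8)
The plan is to realise both $\Co(L^{p,q}_{\tilde m}(\bH_\cG))$ and the right-hand set as subspaces of the reservoir $\cR_{\tilde v}$, so that the statement reduces to a set equality inside $\cR_{\tilde v}$ and the restriction map becomes a mere inclusion between the two. The starting point is the embedding picture: by Remark~\ref{Rem-S_0} the inclusion $\cT_{\tilde v}\hookrightarrow S_0(\cG)$ is continuous, and it is also dense because $\cS_\cC(\cG)\subseteq\cT_{\tilde v}$ by Lemma~\ref{Lem-SC-window-space}, while $\cS_\cC(\cG)$ is dense in $S_0(\cG)=\cT_1$ by Lemma~\ref{Lem-Sc-dense-Tv} applied with $v\equiv 1$. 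Dualising, the restriction map $S'_0(\cG)\to\cR_{\tilde v}$, $f\mapsto f|_{\cT_{\tilde v}}$, is a continuous injection; I will identify $S'_0(\cG)$ with its range. That range equals $\{f\in\cR_{\tilde v}:V_g f\in L^\infty(\cG\times\hcG)\}$: if $V_g f\in L^\infty$, the orthogonality relation $\langle f,h\rangle=\|g\|_{L^2}^{-2}\int_{\cG\times\hcG}V_g f\,\overline{V_g h}$, valid for $h\in\cT_{\tilde v}$ (the Duflo--Moore operator being the identity), is absolutely convergent and bounded for every $h\in S_0(\cG)$ since there $V_g h\in L^1(\cG\times\hcG)$, so it extends $f$ to an element of $S'_0(\cG)$; conversely an element of $S'_0(\cG)$ has bounded STFT. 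Moreover, for such $f$ the two wavelet transforms --- computed through the $\cR_{\tilde v}$--$\cT_{\tilde v}$ pairing and through the $S'_0$--$S_0$ pairing --- coincide, because $\vr(x,\xi,\tau)g\in\cT_{\tilde v}\subseteq S_0(\cG)$.

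Granting these identifications, the map in the statement is nothing but the set inclusion $\{f\in S'_0(\cG):V_g f\in W(L^\infty,L^{p,q}_m)\}\subseteq\{f\in\cR_{\tilde v}:V_g f\in W(L^\infty,L^{p,q}_m)\}=\Co(L^{p,q}_{\tilde m})$, the last equality being the description of $\Co$ recorded after item~\textbf{K$'$} together with Remark~\ref{Rem-WaveletSchrodinger}. Injectivity of this map is automatic from density of $\cT_{\tilde v}$ in $S_0(\cG)$, and well-definedness (that the restriction of an element of the left set lands in $\Co(L^{p,q}_{\tilde m})$ with unchanged wavelet transform) was just checked. What remains, and what carries all the content, is \emph{surjectivity}, i.e. the inclusion $\Co(L^{p,q}_{\tilde m})\subseteq S'_0(\cG)$; by the characterisation above this is equivalent to the implication ``$V_g f\in W(L^\infty,L^{p,q}_m(\cG\times\hcG))\Rightarrow V_g f\in L^\infty(\cG\times\hcG)$''.

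This implication is the main obstacle, and I would settle it by the discretisation of Wiener amalgam norms developed in Appendix~A: sampling the local suprema $\sfM_Q F$ of $F\in W(L^\infty,L^{p,q}_{\tilde m}(\bH_\cG))$ over a relatively separated, $Q$-dense family produces a sequence in the discrete space $(L^{p,q}_{\tilde m})_d\hookrightarrow\ell^{p,q}_{\tilde m}$, which is in particular bounded; recombining these local suprema forces $F\in L^\infty(\bH_\cG)$, so $W(L^\infty,L^{p,q}_{\tilde m}(\bH_\cG))\hookrightarrow L^\infty(\bH_\cG)$ and, equivalently through Remark~\ref{Rem-WaveletSchrodinger}, $W(L^\infty,L^{p,q}_m(\cG\times\hcG))\hookrightarrow L^\infty(\cG\times\hcG)$. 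Consequently every $f\in\Co(L^{p,q}_{\tilde m})$ has $V_g f\in L^\infty$, hence lies in $S'_0(\cG)$ with the same wavelet transform, so it is the image of the corresponding element of the right-hand set; this yields surjectivity and, together with the earlier points, the claimed bijection. Everything outside the amalgam inclusion and the compatibility of the two dualities is bookkeeping with the identifications.
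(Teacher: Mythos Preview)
Your reduction of surjectivity to the inclusion $W(L^\infty,L^{p,q}_{\tilde m}(\bH_\cG))\hookrightarrow L^\infty(\bH_\cG)$ is where the argument fails. The step ``a sequence in $(L^{p,q}_{\tilde m})_d\cong\ell^{p,q}_{m_{\mathbf X}}$ is in particular bounded'' presumes $\ell^{p,q}_{m_{\mathbf X}}\hookrightarrow\ell^\infty$, but nothing in the hypotheses forces this: a weight $m\in\cM_v$ need only satisfy $1/v\lesssim m\lesssim v$, so $m$ may decay. Concretely, for $m=1/v$ and $p=q=\infty$ one has $W(L^\infty,L^\infty_{1/\tilde v})\asymp L^\infty_{1/\tilde v}$, whose elements can grow like $v$ and are not in $L^\infty$; the corresponding sampled sequence lives in $\ell^\infty_{1/v}$, not in $\ell^\infty$. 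Thus the local suprema you extract can be unbounded, and the passage from $V_gf\in W(L^\infty,L^{p,q}_m)$ to $V_gf\in L^\infty$ collapses. The preliminary parts of your argument---density of $\cT_{\tilde v}$ in $S_0(\cG)$, injectivity of the restriction, and the identification of the image of $S'_0(\cG)$ inside $\cR_{\tilde v}$ via the orthogonality relation---are all correct, but they delegate the entire content of surjectivity to an amalgam inclusion that does not hold for general $m$.

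The paper proceeds quite differently and does not attempt any pointwise bound on $V_gf$. After observing (Remark~\ref{Rem-S_0}) that $v\gtrsim1$ gives $\cT_{\tilde v}\hookrightarrow S_0(\cG)$ and $S'_0(\cG)\hookrightarrow\cR_{\tilde v}$, it simply invokes \cite[Theorem~2.4.9,~3.]{Voig2015}, Voigtlaender's general reservoir-independence theorem for coorbit spaces. That result produces the extension of each $f\in\Co(L^{p,q}_{\tilde m})\subseteq\cR_{\tilde v}$ to an element of the smaller reservoir $S'_0(\cG)$ through the reproducing-kernel and correspondence-principle machinery built into coorbit theory, not through an $L^\infty$ bound on the wavelet transform. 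If you want a self-contained proof, you would have to replicate that mechanism rather than rely on an embedding $W(L^\infty,L^{p,q}_{\tilde m})\hookrightarrow L^\infty$ that is simply unavailable.
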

\begin{proof}
	 If $v\equiv 1$ the claim is trivial since $\cT_1=S_0$ and $\cR_1=S'_0$, with equal norms, see Remark \ref{Rem-S_0}. If $v$ is not constant, then $v\gtrsim 1$ (since $v$ is bounded from below), and the thesis follows from what observed in Remark \ref{Rem-S_0} and \cite[Theorem 2.4.9, 3.]{Voig2015}.
\end{proof}

\begin{definition}\label{definizione-modulazione}
	Consider $m\in\cM_v(\cG\times\hcG)$ and $0<p,q\leq\infty$. The {\slshape modulation space $M^{p,q}_m(\cG)$} is defined as
	\begin{equation}
		M^{p,q}_m(\cG)=\Co(L^{p,q}_{\tilde{m}}(\bH_\cG)),
	\end{equation}
	endowed with the (quasi-)norm
	\begin{equation}
		\norm{\cdot}_{M^{p,q}_m}=\norm{\cdot}_{\Co(L^{p,q}_{\tilde{m}})}.
	\end{equation}
\end{definition}
We adopt the notations $M^p_m=M^{p,p}_m$ and $M^{p,q}=M^{p,q}_{1}$.

\begin{theorem}
	For $0<p,q\leq\infty$, the modulation spaces $(M^{p,q}_m(\cG),\norm{\cdot}_{M^{p,q}_m})$ are (quasi-)Banach spaces continuously embedded into $\cR_{\tilde{v}}$ which do not depend on the window function $g\in\cS_\cC(\cG)\smallsetminus\{0\}$, in the sense that different windows yield equivalent (quasi-)norms. 
\end{theorem}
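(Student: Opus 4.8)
The plan is to derive the statement almost entirely from the general coorbit machinery summarized in Appendix A, specialized via the list \textbf{A$'$}--\textbf{K$'$}. The definition $M^{p,q}_m(\cG)=\Co(L^{p,q}_{\tilde m}(\bH_\cG))$ with the norm $\norm{\cdot}_{M^{p,q}_m}=\norm{W^\vr_g f}_{W(L^\infty,L^{p,q}_{\tilde m})}$ is precisely an instance of the coorbit construction, so the assertions are the coorbit-theoretic ones transported back to $\cG$. Concretely, I would proceed in the following steps.

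First, I would check that all hypotheses required by the abstract theory are in place: by \textbf{A$'$} the Schr\"odinger representation $\vr$ on $L^2(\cG)$ is unitary, strongly continuous, irreducible and integrable (Lemma \ref{Lem-SchrodingerRep}); by \textbf{C$'$} the space $Y=L^{p,q}_{\tilde m}(\bH_\cG)$ is a solid QBF space (stated after Definition \ref{Def-LpqHeisenberg}) which is an $r$-norm with $r=\min\{1,p,q\}$ (Lemma \ref{Lem-r-norm-Lpq}) and is left- and right-invariant with control weight $w=\tilde v$ (item \textbf{D$'$}, via Corollary \ref{Cor-Bounded-R-Wiener}); and by Lemma \ref{Lem-SC-window-space} we have $\cS_\cC(\cG)\subseteq\sA_{\tilde v}\subseteq\bG_{\tilde v}$, so the class of good/analysing vectors is nontrivial and contains the Gaussian. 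In particular every non-zero $g\in\cS_\cC(\cG)$ is an admissible window for the coorbit construction.

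Second, I would invoke the main coorbit results of Voigtlaender \cite{Voig2015} (recalled in Appendix A) applied to $\Co(L^{p,q}_{\tilde m})$: that $\Co(Y)$ is a (quasi-)Banach space whose norm is an $r$-norm (here with $r=\min\{1,p,q\}/2$ as already recorded after \eqref{Eq-Def-coorbit-Lpq-Heis}), that it embeds continuously into the reservoir $\cR_{\tilde v}$, and that it is independent of the chosen analysing window $g\in\sA_{\tilde v}\setminus\{0\}$ up to equivalence of (quasi-)norms. Since $\cS_\cC(\cG)\subseteq\sA_{\tilde v}$, window-independence holds in particular for all $g\in\cS_\cC(\cG)\setminus\{0\}$. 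Transporting through $\abs{W^\vr_g f}=\abs{V_g f}$ (Remark \ref{Rem-WaveletSchrodinger}) and the identity $M^{p,q}_m(\cG)=\Co(L^{p,q}_{\tilde m})$ of Definition \ref{definizione-modulazione}, each of these three properties becomes exactly the claim about $M^{p,q}_m(\cG)$.

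The only genuinely non-mechanical point, and thus the main obstacle, is verifying completeness and window-independence in the quasi-Banach regime $0<p,q<1$: the classical Feichtinger--Gr\"ochenig coorbit theory does not cover this case, which is exactly why the paper switches to the Rauhut--Voigtlaender framework. So the proof must cite the correct quasi-Banach coorbit statements from \cite{Voig2015} (the ones requiring the better integrability of $W^\vr_g g$ encoded in $\sA_{\tilde v}=\bigcap_{0<r\le1}\bA^r_{\tilde v}$, hence the use of $\cS_\cC(\cG)\subseteq\sA_{\tilde v}$ rather than merely $\bG_{\tilde v}$), and check that our $Y$ satisfies their hypotheses with the stated $r$. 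Once that bookkeeping is done, nothing further is needed; I would close by noting that the continuous embedding $M^{p,q}_m(\cG)\hookrightarrow\cR_{\tilde v}$ also follows from the abstract theory, completing all three assertions.
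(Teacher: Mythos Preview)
Your proposal is correct and follows essentially the same route as the paper: identify $M^{p,q}_m(\cG)$ with the coorbit space $\Co(L^{p,q}_{\tilde m}(\bH_\cG))$, use Lemma~\ref{Lem-SC-window-space} to ensure $\cS_\cC(\cG)\subseteq\sA_{\tilde v}$ so that every non-zero $g\in\cS_\cC(\cG)$ is an admissible analysing vector, and then invoke \cite[Theorem 2.4.9]{Voig2015} for completeness, the continuous embedding into $\cR_{\tilde v}$, and window-independence. The paper's proof is a one-line citation of exactly these ingredients; your version simply spells out the hypothesis-checking more explicitly.
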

\begin{proof}
	Since $(M^{p,q}_m(\cG),\norm{\cdot}_{M^{p,q}_m})=(\Co(L^{p,q}_{\tilde{m}}(\bH_\cG)),\norm{\cdot}_{\Co(L^{p,q}_{\tilde{m}})})$, the claim follows from the coorbit spaces theory, Lemma \ref{Lem-SC-window-space} and \cite[Theorem 2.4.9]{Voig2015}.
\end{proof}

\begin{remark}
	If $g,h\in\cS_\cC(\cG)\smallsetminus\{0\}$ (or $\sA_{\tilde{v}}\smallsetminus\{0\}$) and $f\in M^{p,q}_m(\cG)$, then from the proof in \cite[Theorem 2.4.9]{Voig2015} we see that
	\begin{equation}\label{Eq-Foundations-(11.33)-general}
		\norm{W^\vr_h f}_{W_Q(L^{p,q}_{\tilde{m}})}\underset{Q,v,r}{\lesssim}\frac{\norm{W^\vr_gh}_{W_Q(L^r_{\tilde{v}})}}{\norm{g}^2_{L^2}}\norm{W^\vr_g f}_{W_Q(L^{p,q}_{\tilde{m}})}=\frac{\norm{h}_{M^r_v(\cG)}}{\norm{g}^2_{L^2}}\norm{W^\vr_g f}_{W_Q(L^{p,q}_{\tilde{m}})},
	\end{equation} 
	where $r=\min\{1,p,q\}$ as in \emph{ \textbf{C$'$}}; actually we could replace $r$ with any $r'$ such that $0<r'\leq r$.\\
	In the Banach case we have  $r=1$ and recapture \cite[(11.33)]{Grochenig_2001_Foundations}, after taking into account Theorem \ref{Th-S_C-dense-Banach} and Remark \ref{Rem-Modulation-coincide}.
\end{remark}
In order to prove the expected inclusion relations between modulation spaces, we need particular types of relatively separated families, BUPUs and discrete spaces. The proofs of some subsequent lemmas are omitted because well known or trivial. 

\begin{lemma}\label{Lem-discrete-Lpq-independece-set}
	Let $Q,Q'\subseteq \bH_\cG$ be relatively compact, unit neighbourhoods and $\mathfrak{F}=\{(x_l,\xi_l,\tau_l)\}_{l\in L}\subseteq\bH_\cG$ relatively separated family, consider $0<p,q\leq\infty$ and $m\in\cM_v(\cG\times\hcG)$. Then 
	\begin{equation*}
		(L^{p,q}_{\tilde{m}}(\bH_\cG))_d(\mathfrak{F},Q)=(L^{p,q}_{\tilde{m}}(\bH_\cG))_d(\mathfrak{F},Q')
	\end{equation*}
	with equivalent quasi-norms. Moreover, the equivalence constants depend only on $Q$, $Q'$, $m$ and $v$:
	\begin{equation*}
		\norm{\left(\lambda_l\right)_{l\in L}}_{(L^{p,q}_{\tilde{m}}(\bH_\cG))_d(\mathfrak{F},Q)}\underset{Q,Q',m,v}{\asymp}\norm{\left(\lambda_l\right)_{l\in L}}_{(L^{p,q}_{\tilde{m}}(\bH_\cG))_d(\mathfrak{F},Q')}.
	\end{equation*}
	In particular, they do not depend on $\mathfrak{F}$ or $p$ and $q$.
\end{lemma}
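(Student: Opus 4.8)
The statement asserts that the discrete sequence space $(L^{p,q}_{\tilde m}(\bH_\cG))_d(\mathfrak F,Q)$ attached to a relatively separated family $\mathfrak F$ is, up to equivalent quasi-norms with constants depending only on $Q,Q',m,v$, independent of the choice of relatively compact unit neighbourhood $Q$. The plan is to reduce everything to two ingredients: (i) the defining formula of $(Y)_d(\mathfrak F,Q)$ from the Appendix, which for a solid QBF space $Y$ reads $\norm{(\lambda_l)_l}_{Y_d(\mathfrak F,Q)}=\norm{\sum_{l\in L}|\lambda_l|\,\mathbf 1_{(x_l,\xi_l,\tau_l)Q}}_{Y}$ (or with $Q(x_l,\xi_l,\tau_l)$, depending on the convention in the Appendix — I will fix the one used there), and (ii) the boundedness of right translations on $Y=L^{p,q}_{\tilde m}(\bH_\cG)$, i.e. Lemma~\ref{Lem-Bi-InvarianceLpq} and Corollary~\ref{Cor-Bounded-R-Wiener}.

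\textbf{Step 1: covering $Q'$ by finitely many translates of $Q$.} Since $Q$ is a unit neighbourhood it contains an open unit neighbourhood $U$, and since $Q'$ is relatively compact, $\overline{Q'}$ is covered by finitely many left translates $h_1U,\dots,h_NU$ with $h_i\in\bH_\cG$; hence $Q'\subseteq\bigcup_{i=1}^N h_i Q$, and symmetrically $Q\subseteq\bigcup_{j=1}^{N'} h'_j Q'$. It suffices by symmetry to prove one inequality, say $\norm{(\lambda_l)_l}_{Y_d(\mathfrak F,Q')}\lesssim\norm{(\lambda_l)_l}_{Y_d(\mathfrak F,Q)}$.

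\textbf{Step 2: pointwise domination plus solidity and the $r$-triangle inequality.} From $Q'\subseteq\bigcup_i h_iQ$ we get, for each $l$, $\mathbf 1_{(x_l,\xi_l,\tau_l)Q'}\le\sum_{i=1}^N\mathbf 1_{(x_l,\xi_l,\tau_l)h_iQ}=\sum_{i=1}^N R_{h_i}\mathbf 1_{(x_l,\xi_l,\tau_l)Q}$ (using that right translation by $h_i$ sends the indicator of $(x_l,\xi_l,\tau_l)h_iQ$ to the indicator of $(x_l,\xi_l,\tau_l)Q$, up to the group-operation bookkeeping in $\bH_\cG$). Summing over $l$, applying solidity of $Y$, then the $r$-norm triangle inequality (Lemma~\ref{Lem-r-norm-Lpq}) to split the finite sum over $i$, and finally Lemma~\ref{Lem-Bi-InvarianceLpq} / Corollary~\ref{Cor-Bounded-R-Wiener} to bound $\norm{R_{h_i}(\cdot)}_Y\le C\,v(\pm h_i)\norm{\cdot}_Y$, we obtain
\[
\norm{(\lambda_l)_l}_{Y_d(\mathfrak F,Q')}\le\Big(\sum_{i=1}^N C^r v(h_i)^r\Big)^{1/r}\norm{(\lambda_l)_l}_{Y_d(\mathfrak F,Q)}.
\]
The constant $\big(\sum_i C^r v(h_i)^r\big)^{1/r}$ depends only on $Q,Q',m,v$ (through $N$, the $h_i$, and the moderateness constant $C=C(m,v)$), not on $\mathfrak F$ nor on $p,q$ beyond the fixed choice $r=\min\{1,p,q\}$; in fact one can absorb the $p,q$-dependence since $v(h_i)$ is finite and there are finitely many of them. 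Running the symmetric argument gives the reverse inequality, yielding $\asymp$ with constants of the stated form.

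\textbf{Main obstacle.} The only genuinely delicate point is the bookkeeping of the noncommutative group law of $\bH_\cG$ when relating $\mathbf 1_{(x_l,\xi_l,\tau_l)h_iQ}$ to a right translate of $\mathbf 1_{(x_l,\xi_l,\tau_l)Q}$ — one must be careful whether the Appendix defines $Y_d(\mathfrak F,Q)$ via left cosets $(x_l,\dots)Q$ or right cosets $Q(x_l,\dots)$, and correspondingly invoke right- vs. left-translation boundedness; but since $L^{p,q}_{\tilde m}(\bH_\cG)$ is both left and right invariant (the remark after Definition~\ref{Def-LpqHeisenberg}), either convention works, and the finitely-many-translates covering argument goes through unchanged. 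A secondary routine point is checking that the covering sets $h_iQ$ can be chosen measurable and that the overlap is controlled, which is immediate since $L$ is countable and each $\mathbf 1_{(x_l,\dots)Q}$ is already handled by relative separation of $\mathfrak F$ in the very definition of $Y_d$.
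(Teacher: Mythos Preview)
Your covering-plus-translation approach is essentially the paper's: the paper just cites Voigtlaender's Lemma 2.3.16 (the standard argument that $Y_d(\mathfrak F,Q)$ is independent of $Q$ for right-invariant $Y$) and supplements it with the bound $\opnorm{R_{h}}_{L^{p,q}_{\tilde m}\to L^{p,q}_{\tilde m}}\le C\,v(h)$ from Lemma~\ref{Lem-Bi-InvarianceLpq}, $C=C(m,v)$. One correction to your Step~2: with the \emph{left} translates $h_iQ$ you chose, the set $(x_l,\xi_l,\tau_l)h_iQ$ is not a right (or left) translate of $(x_l,\xi_l,\tau_l)Q$ by an element independent of $l$, so you cannot pull a single $R_{h_i}$ out of the sum over $l$. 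The fix is to cover $Q'\subseteq\bigcup_{i=1}^N Qh_i$ by \emph{right} translates instead; then $\chi_{(x_l,\xi_l,\tau_l)Qh_i}=R_{h_i^{-1}}\chi_{(x_l,\xi_l,\tau_l)Q}$ with translation element independent of $l$, and your Step~2 goes through. With that amendment the argument correctly yields the equivalence and its independence of $\mathfrak F$.

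There is, however, a genuine gap in your final claim. The constant you obtain is $\bigl(\sum_{i=1}^N (Cv(h_i))^r\bigr)^{1/r}$ with $r=\min\{1,p,q\}$, and the assertion that the $r$-dependence can be ``absorbed'' is false: already $\bigl(\sum_{i=1}^N 1\bigr)^{1/r}=N^{1/r}\to\infty$ as $r\to 0^+$. In fact no argument can remove this dependence: on $\bH_{\bR}$ with $m\equiv 1$, $\mathfrak F=\{(0,0,1)\}$, $Q=[0,1)^2\times\bT$, $Q'=[0,2)\times[0,1)\times\bT$, one computes $\norm{\chi_{Q'}}_{L^{p,q}}/\norm{\chi_Q}_{L^{p,q}}=2^{1/p}$. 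What your bound (and the paper's, via the same route) does deliver is uniformity for all $p,q\ge\delta$ for each fixed $\delta>0$, which is exactly what the downstream applications (Corollary~\ref{Cor-equivalence-constants-pq>delta}, Theorems~\ref{Th-coeff-op-continuity} and~\ref{Th-synth-op-continuity}) actually use.
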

\begin{proof}
	 From \eqref{Eq-Bi-InvarianceLpq} we have that for every $0<p,q\leq\infty$ and $(x,\xi,\tau)\in\bH_\cG$
	\begin{equation*}
		\opnorm{R_{(x,\xi,\tau)}}_{L^{p,q}_{\tilde{m}}\to L^{p,q}_{\tilde{m}}}\leq C v(x,\xi),
	\end{equation*}
	where $C=C(m,v)>0$ is the constant of $v$-moderateness for $m$. 
Since $L^{p,q}_{\tilde{m}}(\bH_\cG)$ is right invariant, the proof goes like the one of \cite[Lemma 2.3.16]{Voig2015} applying the additional majorization above.
\end{proof}

\begin{lemma}\label{Lem-Lpq-Wiener-discrete-equivalence}
	Let $Q,U\subseteq \bH_\cG$ be relatively compact, unit neighbourhoods, $\Delta=\{\delta_l\}_{l\in L}$ $U$-BUPU on $\bH_\cG$ with $U$-localizing family $\mathfrak{F}=\{(x_l,\xi_l,\tau_l)\}_{l\in L}\subseteq\bH_\cG$, consider $0<p,q\leq\infty$ and $m\in\cM_v(\cG\times\hcG)$.Then 
	\begin{equation*}
		\norm{f}_{W_Q(L^{p,q}_{\tilde{m}}(\bH_\cG))}\underset{Q,U,\mathfrak{F,}m,v}{\asymp}\norm{\left(\norm{\delta_l\cdot f}_{L^\infty}\right)_{l\in L}}_{(L^{p,q}_{\tilde{m}}(\bH_\cG))_d(\mathfrak{F},Q)}.
	\end{equation*}
   	In particular, the equivalence constants do not depend on $p$ and $q$.
\end{lemma}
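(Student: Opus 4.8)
The plan is to derive the stated norm equivalence from two one‑sided pointwise estimates on $\bH_\cG$, and then to pass from pointwise bounds to norm bounds using only the solidity of $L^{p,q}_{\tilde m}(\bH_\cG)$ together with two facts already at our disposal: the set‑independence of the discrete spaces $(L^{p,q}_{\tilde m}(\bH_\cG))_d(\mathfrak{F},\cdot)$ established in Lemma \ref{Lem-discrete-Lpq-independece-set}, and the window‑independence of the Wiener amalgam space from Lemma \ref{Lem-WienerSpaceIndependence}. Throughout I use that $\mathfrak{F}=\{(x_l,\xi_l,\tau_l)\}_{l\in L}$ is relatively separated, hence has the finite‑overlap property with respect to any relatively compact set, that $\{(x_l,\xi_l,\tau_l)U\}_{l\in L}$ is a finite‑overlap cover of $\bH_\cG$, and that in the topological group $\bH_\cG$ the sets $Q^{-1}$, $UQ^{-1}$ and $\overline{Q^{-1}U}$ are again relatively compact unit neighbourhoods.

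For the estimate $\norm{f}_{W_Q(L^{p,q}_{\tilde m})}\lesssim\norm{(\norm{\delta_l f}_{L^\infty})_{l\in L}}_{(L^{p,q}_{\tilde m})_d(\mathfrak{F},Q)}$ I would fix $g\in\bH_\cG$ and note that for almost every $h\in gQ$, using $\sum_l\delta_l(h)=1$, $0\le\delta_l\le1$ and $\supp\delta_l\subseteq(x_l,\xi_l,\tau_l)U$, one has $|f(h)|\le\sum_{l:\,h\in\supp\delta_l}\norm{\delta_l f}_{L^\infty}$; moreover $h\in gQ\cap(x_l,\xi_l,\tau_l)U$ forces $g\in(x_l,\xi_l,\tau_l)UQ^{-1}$, so after taking the essential supremum over $h\in gQ$ (the bound being independent of $h$) one gets the pointwise estimate
\[
\sfM_Q f(g)\;\le\;\sum_{l\in L}\norm{\delta_l f}_{L^\infty}\,\chi_{(x_l,\xi_l,\tau_l)UQ^{-1}}(g).
\]
By solidity the $L^{p,q}_{\tilde m}$-norm of the left side, i.e.\ $\norm{f}_{W_Q(L^{p,q}_{\tilde m})}$, is dominated by that of the right side, which is by definition $\norm{(\norm{\delta_l f}_{L^\infty})_{l}}_{(L^{p,q}_{\tilde m})_d(\mathfrak{F},UQ^{-1})}$; Lemma \ref{Lem-discrete-Lpq-independece-set} then replaces $UQ^{-1}$ by $Q$ at the cost of a constant depending only on $Q,UQ^{-1},m,v$.

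For the reverse estimate I would pick a relatively compact unit neighbourhood $Q'\supseteq\overline{Q^{-1}U}$ and observe that if $g\in(x_l,\xi_l,\tau_l)Q$, say $g=(x_l,\xi_l,\tau_l)q$ with $q\in Q$, then $\supp\delta_l\subseteq(x_l,\xi_l,\tau_l)U\subseteq(x_l,\xi_l,\tau_l)qQ'=gQ'$, whence $\norm{\delta_l f}_{L^\infty}\le\essupp_{h\in gQ'}|f(h)|=\sfM_{Q'}f(g)$. Summing over $l$ and using the finite‑overlap constant $N=N(\mathfrak{F},Q)$ yields
\[
\sum_{l\in L}\norm{\delta_l f}_{L^\infty}\,\chi_{(x_l,\xi_l,\tau_l)Q}(g)\;\le\;N\,\sfM_{Q'}f(g).
\]
Again by solidity, the $L^{p,q}_{\tilde m}$-norm of the left side, i.e.\ $\norm{(\norm{\delta_l f}_{L^\infty})_{l}}_{(L^{p,q}_{\tilde m})_d(\mathfrak{F},Q)}$, is at most $N\norm{f}_{W_{Q'}(L^{p,q}_{\tilde m})}$, which by Lemma \ref{Lem-WienerSpaceIndependence} is $\asymp\norm{f}_{W_Q(L^{p,q}_{\tilde m})}$. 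Combining the two directions gives the asserted equivalence.

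I expect the genuine subtlety --- and the reason this lemma deserves a separate statement --- to be the claim that the equivalence constants do not depend on $p$ and $q$. This should follow by tracking constants: the BUPU data $\Delta,\mathfrak{F},U$ and the auxiliary neighbourhoods $UQ^{-1},Q'$ are chosen before $p,q$ enter the picture; the constant in Lemma \ref{Lem-discrete-Lpq-independece-set} depends only on the two window sets, $m$ and $v$; and the window‑independence constants in Lemma \ref{Lem-WienerSpaceIndependence} are likewise $p,q$-free, tracing back through Corollary \ref{Cor-Bounded-R-Wiener} to the $v$-moderateness constant of $m$. The only other thing to watch is the non‑commutativity of $\bH_\cG$: all window sets must multiply $\mathfrak{F}$ on the right to match the conventions built into $\sfM_Q$ and into the discrete space $(L^{p,q}_{\tilde m})_d(\mathfrak{F},Q)$, but since $\bH_\cG$ is unimodular with bi‑invariant Haar measure this causes no real trouble.
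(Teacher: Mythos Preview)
Your argument is correct and is essentially an explicit write-up of what the paper's two-line proof defers to: the paper simply says ``the result comes from the proof of \cite[Theorem~2.3.17]{Voig2015} together with Lemma~\ref{Lem-discrete-Lpq-independece-set},'' and the two pointwise inequalities you establish --- $\sfM_Q f\le\sum_l\norm{\delta_l f}_{L^\infty}\chi_{(x_l,\xi_l,\tau_l)UQ^{-1}}$ and $\sum_l\norm{\delta_l f}_{L^\infty}\chi_{(x_l,\xi_l,\tau_l)Q}\le N\,\sfM_{Q'}f$ --- are precisely the content of Voigtlaender's argument, after which solidity and a change of window on the discrete side (Lemma~\ref{Lem-discrete-Lpq-independece-set}) finish the job.

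There is one small circularity in your tracking of the $p,q$-independence. In the second direction you land in $W_{Q'}(L^{p,q}_{\tilde m})$ and invoke Lemma~\ref{Lem-WienerSpaceIndependence} to return to $W_Q$, justifying the $p,q$-freeness of that step ``through Corollary~\ref{Cor-Bounded-R-Wiener}.'' But the proof of Corollary~\ref{Cor-Bounded-R-Wiener} itself appeals to Lemma~\ref{Lem-WienerSpaceIndependence}, so it cannot serve to certify that the latter's constants are $p,q$-free. The paper's intended route avoids this loop: one goes \emph{inside} the proof of \cite[Theorem~2.3.17]{Voig2015} and observes that the only place the space $Y$ enters is through the discrete set-independence \cite[Lemma~2.3.16]{Voig2015}; replacing that single ingredient by Lemma~\ref{Lem-discrete-Lpq-independece-set} yields the claimed constants for both directions simultaneously. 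In particular, the $p,q$-free window-independence of $W_Q(L^{p,q}_{\tilde m})$ should be viewed as a \emph{consequence} of the present lemma, not an input to it.
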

\begin{proof}
	The result come from the proof \cite[Theorem 2.3.17]{Voig2015} (see \eqref{Eq-discrete-equiv-norm-Wiener} in the Appendix A) together with Lemma \ref{Lem-discrete-Lpq-independece-set}.
\end{proof}

\begin{lemma}\label{Lem-product-rel-sep-fam}
	Consider $X=\{x_i\}_{i\in I}\subseteq \cG$, $\Xi=\{\xi_j\}_{j\in J}\subseteq\hcG$ and $T=\{\tau_z\}_{z\in Z}\subseteq \bT$ relatively separated families. Then $\mathfrak{X}\coloneqq X\times\Xi\times T$ is a relatively separated family in $\bH_\cG$.
\end{lemma}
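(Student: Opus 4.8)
The plan is to reduce everything to the abelian factors $\cG$ and $\hcG$ by exploiting the special form of translates in $\bH_\cG$. Recall from Appendix~A that a family $\mathfrak F=\{g_i\}_{i\in I}$ in a LCH group is relatively separated precisely when $\sup_{g}\#\{\,i:g_iQ\cap gQ\neq\varnothing\,\}<\infty$ for one, hence every, relatively compact unit neighbourhood $Q$ (equivalently, $\mathfrak F$ is a finite union of $Q$-separated families). Since this property is independent of the chosen $Q$ and the topology of $\bH_\cG$ is the product topology, I would fix relatively compact unit neighbourhoods $V_\cG\subseteq\cG$ and $V_{\hcG}\subseteq\hcG$ and test the condition for $\mathfrak X$ against $V\coloneqq V_\cG\times V_{\hcG}\times\bT$, which is a relatively compact unit neighbourhood of $\bH_\cG$ because $\bT$ is compact.

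The decisive point is that translates of such a $V$ ignore the torus coordinate and do not couple the first two. Indeed, exactly as already computed in the proof of Lemma~\ref{Lem-Bounded-R-Wiener}, one has $(x,\xi,\tau)V=(x+V_\cG)\times(\xi+V_{\hcG})\times\bT$ for every $(x,\xi,\tau)\in\bH_\cG$, the third slot being swallowed since $\bT\, t\,\tau\langle\xi',x\rangle=\bT$ for all $t,\tau\in\bT$, $\xi'\in\hcG$, $x\in\cG$. Intersecting products of sets coordinatewise, it follows that $(x_i,\xi_j,\tau_z)V\cap(x,\xi,\tau)V\neq\varnothing$ if and only if $(x_i+V_\cG)\cap(x+V_\cG)\neq\varnothing$ and $(\xi_j+V_{\hcG})\cap(\xi+V_{\hcG})\neq\varnothing$, the $\bT$-components always meeting. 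Hence the overlap index set factors as a Cartesian product $A_x\times B_\xi\times Z$, where $A_x=\{\,i:(x_i+V_\cG)\cap(x+V_\cG)\neq\varnothing\,\}$ and $B_\xi=\{\,j:(\xi_j+V_{\hcG})\cap(\xi+V_{\hcG})\neq\varnothing\,\}$.

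To finish, I would bound the three factors uniformly in $(x,\xi,\tau)$. Since $X$ is relatively separated, $\sup_x\#A_x=:N_1<\infty$; since $\Xi$ is relatively separated, $\sup_\xi\#B_\xi=:N_2<\infty$; and $T$, being a relatively separated family in the compact group $\bT$, is necessarily finite, since a separated subfamily consists of pairwise disjoint sets of equal positive Haar measure inside a group of finite total measure, so $\#Z<\infty$. Therefore $\sup_{(x,\xi,\tau)\in\bH_\cG}\#\{\,(i,j,z):(x_i,\xi_j,\tau_z)V\cap(x,\xi,\tau)V\neq\varnothing\,\}\le N_1N_2\#Z<\infty$, which is exactly the assertion that $\mathfrak X$ is relatively separated in $\bH_\cG$.

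The only genuine obstacle is the noncommutative twisted law of $\bH_\cG$: a priori the cocycle $\langle\xi',x\rangle$ in \eqref{Eq-operationH} could make translates spread in an unbounded way along the torus direction. This is circumvented by testing relative separatedness against a neighbourhood whose third factor is all of $\bT$, which renders the cocycle irrelevant — an observation already implicit in the proof of Lemma~\ref{Lem-Bounded-R-Wiener}. Everything else is the elementary fact that a Cartesian product of finitely-overlapping families is finitely-overlapping, together with the hypothesis on $T$ (which cannot be dropped, since otherwise $\mathfrak X$ would contain infinitely many points with the same $\cG\times\hcG$-coordinate).
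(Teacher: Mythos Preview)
Your proof is correct. The paper omits the proof of this lemma entirely, remarking just before it that ``the proofs of some subsequent lemmas are omitted because well known or trivial,'' so there is no argument in the paper to compare against; your approach---choosing the product neighbourhood $V=V_\cG\times V_{\hcG}\times\bT$, invoking the identity $(x,\xi,\tau)V=(x+V_\cG)\times(\xi+V_{\hcG})\times\bT$ already established in Lemma~\ref{Lem-Bounded-R-Wiener}, and then factoring the overlap count---is exactly the natural one. One minor remark: your characterisation of relative separation (supremum over \emph{all} $g\in G$ rather than over family members) differs in form from the paper's Definition in Appendix~A, but the equivalence is precisely the content of \cite[Lemma~2.3.10]{Voig2015}, which the paper itself invokes in the proof of Lemma~\ref{Lem-Lpq-disc-ell-p-q}; and the finiteness of $T$ follows more directly than via your measure argument by taking $K=\bT$ in the defining condition $C_{T,K}<\infty$.
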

We remark that if the group is $\si$-compact, then any relatively separated family is (at most) countable, \cite[Lemma 2.3.10]{Voig2015}.
\begin{lemma}\label{Lem-BUPU-tensor}
	Let $U\subseteq \cG$ and $D\subseteq\hcG$ be relatively compact, unit neighbourhoods. Consider $\Psi=\{\psi_i\}_{i\in I}$ $U$-BUPU with localizing family $X=\{x_i\}_{i\in I}$ and $\Gamma=\{\gamma_j\}_{j\in J}$ $D$-BUPU with localizing family $\Xi=\{\xi_j\}_{j\in J}$. Then 
	\begin{equation}\label{Eq-BUPU-tensor}
		\Psi\otimes \Gamma\otimes \mathbb{I}\coloneqq\left\{\psi_i\otimes\gamma_j\otimes\chi_\bT,\,(i,j)\in I\times J\right\}
	\end{equation}
	is a $U\times D\times\bT$-BUPU in $\bH_\cG$ with localizing family $\mathfrak{X}\coloneqq X\times\Xi\times\{1\}$.
\end{lemma}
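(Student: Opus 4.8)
The plan is to verify the defining properties of a BUPU for the family $\Psi\otimes\Gamma\otimes\mathbb{I}=\{\psi_i\otimes\gamma_j\otimes\chi_\bT\}_{(i,j)\in I\times J}$ one at a time, reducing each to the corresponding property of $\Psi$ on $\cG$ and of $\Gamma$ on $\hcG$; the only point that genuinely uses the structure of $\bH_\cG$ is the interaction between the support condition and the twisted multiplication \eqref{Eq-operationH}.

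First I would record local finiteness: since $X$ and $\Xi$ are relatively separated and $\supp\psi_i\subseteq x_i+U$, $\supp\gamma_j\subseteq\xi_j+D$, for every fixed $(x,\xi)\in\cG\times\hcG$ only finitely many of the $\psi_i(x)$ and only finitely many of the $\gamma_j(\xi)$ are nonzero; hence at each point of $\bH_\cG$ only finitely many of the $\psi_i\otimes\gamma_j\otimes\chi_\bT$ do not vanish, so all sums below are pointwise finite and may be factored. The partition of unity then reads
\[
\sum_{(i,j)\in I\times J}(\psi_i\otimes\gamma_j\otimes\chi_\bT)(x,\xi,\tau)=\Big(\sum_{i\in I}\psi_i(x)\Big)\Big(\sum_{j\in J}\gamma_j(\xi)\Big)=1 ,
\]
and nonnegativity together with this identity gives $0\le\psi_i\otimes\gamma_j\otimes\chi_\bT\le1$, while relative compactness of $(x_i+U)\times(\xi_j+D)\times\bT$ shows each function has compact support.

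The step using the group structure is the localization condition. Here $\supp(\psi_i\otimes\gamma_j\otimes\chi_\bT)=\supp\psi_i\times\supp\gamma_j\times\bT\subseteq(x_i+U)\times(\xi_j+D)\times\bT$, so it suffices to check that this product equals the left translate $(x_i,\xi_j,1)(U\times D\times\bT)$. By \eqref{Eq-operationH},
\[
(x_i,\xi_j,1)(u,\delta,t)=\big(x_i+u,\ \xi_j+\delta,\ t\,\la\delta,x_i\ra\big) ,
\]
and as $(u,\delta,t)$ runs over $U\times D\times\bT$ the first two coordinates run over $(x_i+U)\times(\xi_j+D)$, while for each fixed $(u,\delta)$ the last coordinate $t\,\la\delta,x_i\ra$ runs over all of $\bT$, because $\la\delta,x_i\ra\in\bT$ and $\bT$ is a group. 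Hence $(x_i,\xi_j,1)(U\times D\times\bT)=(x_i+U)\times(\xi_j+D)\times\bT$, which contains the support in question; moreover $U\times D\times\bT$ is a relatively compact unit neighbourhood of $\bH_\cG$, being a product of such (recall Lemma \ref{Lem-HeinsenbergGroup}).

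Finally, the localizing family $\mathfrak{X}=X\times\Xi\times\{1\}$ is relatively separated in $\bH_\cG$: the singleton $\{1\}\subseteq\bT$ is trivially relatively separated, so this is exactly Lemma \ref{Lem-product-rel-sep-fam}. The covering property $\bigcup_{(i,j)}(x_i,\xi_j,1)(U\times D\times\bT)=\big(\bigcup_i(x_i+U)\big)\times\big(\bigcup_j(\xi_j+D)\big)\times\bT=\bH_\cG$ follows from $\sum_i\psi_i\equiv1$, $\sum_j\gamma_j\equiv1$ and the support inclusions. Collecting these facts shows $\Psi\otimes\Gamma\otimes\mathbb{I}$ is a $U\times D\times\bT$-BUPU with localizing family $\mathfrak{X}$. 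I do not expect a real obstacle: the only non-formal point is the left-translate computation above, where one must observe that the $\bT$-component absorbs the cocycle factor $\la\delta,x_i\ra$; everything else is an unravelling of definitions.
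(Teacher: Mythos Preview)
Your proof is correct. The paper actually omits the proof of this lemma entirely (it is among the ``subsequent lemmas'' whose proofs are ``omitted because well known or trivial''), so your direct verification of the BUPU axioms is exactly the natural argument the authors had in mind; in particular, your computation of the left translate $(x_i,\xi_j,1)(U\times D\times\bT)$ and the observation that the $\bT$-factor absorbs the cocycle $\la\delta,x_i\ra$ is the only point where anything beyond pure bookkeeping is needed, and you handle it correctly.
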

The following is a generalization of \cite[Lemma 2.3.21]{Voig2015} and we follow the pattern of its proof. Although we present it for the Heisenberg group $\bH_\cG\cong\cG\times\left(\hcG\times\bT\right)$, it can be easily adapted to any  product group $G_1\times G_2$, $G_1$ and $G_2$ even not abelian. A similar result for $1\leq p=q\leq\infty$ had been stated in \cite[Remark 4, p. 518]{feichtinger-wiener-type} without proof.
\begin{lemma}\label{Lem-Lpq-disc-ell-p-q}
	Consider $X=\{x_i\}_{i\in I}\subseteq \cG$ and $\Xi=\{\xi_j\}_{j\in J}\subseteq\hcG$ relatively separated families,  $\mathfrak{X}$ as in Lemma \ref{Lem-BUPU-tensor}, and  $V=V_\cG\times V_{\hcG}\times\bT$  as in \eqref{Eq-V-open-neigh}. For $m\in\cM_v(\cG\times\hcG)$ and $0<p,q\leq\infty$,
	\begin{equation}
	\left(L^{p,q}_{\tilde{m}}(\bH_\cG)\right)_d(\mathfrak{X},V)=\ell^{p,q}_{m_{\mathbf{X}}}(I\times J),
	\end{equation}
	where 
	\begin{equation}
	m_{\mathbf{X}}\colon I\times J\to(0,+\infty),(i,j)\mapsto m(x_i,\xi_j),
	\end{equation}  
	with equivalence of the relative (quasi-)norms depending on $X$, $\Xi$, $V_\cG$, $V_{\hcG}$, $v$, $p$ and $q$.
 \end{lemma}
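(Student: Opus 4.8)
The goal is to identify the discrete space $\left(L^{p,q}_{\tilde{m}}(\bH_\cG)\right)_d(\mathfrak{X},V)$ associated to the solid QBF space $L^{p,q}_{\tilde{m}}(\bH_\cG)$, the relatively separated family $\mathfrak{X}=X\times\Xi\times\{1\}$, and the unit neighbourhood $V=V_\cG\times V_{\hcG}\times\bT$, with the concrete mixed sequence space $\ell^{p,q}_{m_{\mathbf{X}}}(I\times J)$. By definition (cf. Appendix A), a sequence $(\lambda_{(i,j)})_{(i,j)\in I\times J}$ lies in this discrete space precisely when the ``spread-out'' function $F\coloneqq\sum_{(i,j)}|\lambda_{(i,j)}|\,\chi_{(x_i,\xi_j,1)V}$ belongs to $L^{p,q}_{\tilde{m}}(\bH_\cG)$, with the corresponding norms identified. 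The plan is to compute $\norm{F}_{L^{p,q}_{\tilde{m}}}$ explicitly and compare it, up to constants depending only on the allowed parameters, with $\norm{(\lambda_{(i,j)})}_{\ell^{p,q}_{m_{\mathbf{X}}}}$.

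First I would unwind the translate: $(x_i,\xi_j,1)V = (V_\cG+x_i)\times(V_{\hcG}+\xi_j)\times\bT'$ for a subset $\bT'\subseteq\bT$ of full measure (indeed $\bigcup_{u\in V_\cG}\bT\la\xi_j,u\ra=\bT$ since $\bT$ is a group), exactly as computed in the proof of Lemma \ref{Lem-Bounded-R-Wiener}. Hence the $\bT$-variable plays no role: $F(x,\xi,\tau)=\sum_{i,j}|\lambda_{(i,j)}|\chi_{V_\cG+x_i}(x)\chi_{V_{\hcG}+\xi_j}(\xi)$ is independent of $\tau$, and since $\tilde{m}(x,\xi,\tau)=m(x,\xi)$ and $d\tau(\bT)=1$, the outer $\tau$-integral in \eqref{Eq-Def-Norm-Lpq-Heis} is trivial. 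This reduces the problem to the analogous statement on $\cG\times\hcG$ with the mixed space $L^{p,q}_m$, i.e. to showing
\[
\Bigl\|\sum_{i,j}|\lambda_{(i,j)}|\chi_{V_\cG+x_i}\otimes\chi_{V_{\hcG}+\xi_j}\Bigr\|_{L^{p,q}_m(\cG\times\hcG)}\asymp\norm{(\lambda_{(i,j)})}_{\ell^{p,q}_{m_{\mathbf{X}}}}.
\]
This is precisely the tensor-product generalization of \cite[Lemma 2.3.21]{Voig2015}, whose one-variable version identifies $(L^p_m)_d(\mathfrak{F},V)$ with $\ell^p_{m_{\mathbf{F}}}$, and I would follow that proof's pattern: use relative separation of $X$ and $\Xi$ to bound the overlap multiplicity of the families $\{V_\cG+x_i\}$ and $\{V_{\hcG}+\xi_j\}$ by finite constants $N_X,N_\Xi$, then estimate the integral over each ``cell'' $(V_\cG+x_i)\times(V_{\hcG}+\xi_j)$ from above and below by $|\lambda_{(i,j)}|^p m(x_i,\xi_j)^p$ times constants, invoking $v$-moderateness of $m$ to replace $m(x,\xi)$ by $m(x_i,\xi_j)$ on each cell (the oscillation of $m$ over a fixed relatively compact set is controlled by $\sup_{V_\cG\times V_{\hcG}}v$, which is finite by local boundedness of submultiplicative weights, \cite[Theorem 2.2.22]{Voig2015}). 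The iterated structure of the $L^{p,q}$-norm then lets me apply the one-index comparison first in the $x$-variable (inner $L^p$) and then in the $\xi$-variable (outer $L^q$), assembling the mixed-norm equivalence.

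The main obstacle I anticipate is the careful bookkeeping of the \emph{mixed} norm: in the inner $L^p_m(\cG,dx)$ estimate the relevant cells are $\{V_\cG+x_i\}_{i\in I}$ for \emph{each fixed} $\xi$ lying in some $V_{\hcG}+\xi_j$, so one must first fix $j$, handle the $x$-integral producing (up to constants) $\bigl(\sum_i|\lambda_{(i,j)}|^p m(x_i,\xi_j)^p\bigr)^{1/p}$ uniformly in $\xi\in V_{\hcG}+\xi_j$, and only then perform the $\xi$-integral against the family $\{V_{\hcG}+\xi_j\}_{j\in J}$; keeping the equivalence constants uniform in $p$ and $q$ (as the statement demands) requires that all intermediate bounds involve only the geometric data $X,\Xi,V_\cG,V_{\hcG}$ and the moderateness constant of $v$, never $p$ or $q$ themselves — which is automatic since $\chi_{V_\cG+x_i}$ has ``height one'' and the overlap counts are exponents-independent. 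A secondary subtlety is that the families $\{V_\cG+x_i\}$ need not cover $\cG$, so on the complement $F$ vanishes and contributes nothing; and where several cells overlap one uses $\Bigl(\sum_k a_k\Bigr)^p\asymp\sum_k a_k^p$ with constants depending on the (finite) overlap multiplicity, valid for all $0<p\le\infty$. Once these points are in place the two-sided estimate closes, giving the claimed identification with equivalence of quasi-norms depending on $X,\Xi,V_\cG,V_{\hcG},v,p,q$ — and in fact independent of $p,q$ for the geometric part, as recorded.
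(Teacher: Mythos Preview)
Your approach is essentially the same as the paper's: reduce to $\cG\times\hcG$ by noting $\chi_{(x_i,\xi_j,1)V}(x,\xi,\tau)=\chi_{x_i+V_\cG}(x)\chi_{\xi_j+V_{\hcG}}(\xi)$ is $\tau$-independent, then use bounded overlap of the cells to get the pointwise equivalence $\bigl(\sum_i |\lambda_i|\chi_{x_i+V_\cG}\bigr)^p\asymp\sum_i|\lambda_i|^p\chi_{x_i+V_\cG}$, apply it once for the $i$-sum and once for the $j$-sum, and invoke $v$-moderateness (via \cite[Corollary 2.2.23]{Voig2015}) to replace $m(x,\xi)$ by $m(x_i,\xi_j)$ on each cell. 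The paper carries this out explicitly, splitting into the four cases $p,q<\infty$; $p=q=\infty$; $p=\infty,q<\infty$; $p<\infty,q=\infty$, since the essential-supremum versions require slightly different bookkeeping than the integral ones --- you gloss over this, but it is routine.

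One correction: you write that the statement ``demands'' constants uniform in $p,q$, and that this is ``automatic''. Neither is true. The lemma explicitly allows the constants to depend on $p$ and $q$, and indeed they do: the overlap estimate produces factors like $C_{X,\overline{V}_\cG}^{p}$, and integrating $\chi_{x_i+V_\cG}$ contributes $dx(V_\cG)^{1/p}$. The paper records these constants explicitly in the remark following the lemma, and the subsequent corollary is devoted precisely to extracting uniform bounds once one restricts to $p,q\geq\delta>0$ --- which would be unnecessary if your uniformity claim held.
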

\begin{proof}The proof is divided into four cases.\\
	\emph{Case $p,q<\infty$.} Consider a sequence $\left(\lambda_i\right)_{i\in I}\in \bC^I$. For every $x\in\cG$, we define $I_x$ the subset of indexes 
	\begin{equation}\label{Eq-Def-Ix}
	I_x=\{i\in I\,|\,\chi_{x_i+V_\cG}(x)\neq\varnothing\}\subseteq\{i\in I\,|\,\left(x_i+\overline{V}_\cG\right)\cap\left(x+\{e\}\right)\neq\varnothing\}.
	\end{equation} 
	From \cite[Lemma 2.3.10]{Voig2015}, we have
	\begin{equation}\label{Eq-Ix-unif-bound}
	\#\{i\in I\,|\,\left(x_i+V_\cG\right)\cap\left(x+\{e\}\right)\neq\varnothing\}\leq C_{X,\overline{V}_\cG}<+\infty,\qquad\forall x\in\cG,
	\end{equation}
	$C_{X,\overline{V}_\cG}\in\bN$ as in \eqref{Eq-Constant-Rel-Sep-Fam}. Whence $\# I_x\leq C_{X,\overline{V}_\cG}$ and 
	\begin{align*}
	\left(\sum_{i\in I}\abs{\lambda_i}\chi_{x_i+V_\cG}(x)\right)^{p}&\leq \left(\# I_x\cdot \max\{\abs{\lambda_i}\,|\,i\in I_x\}\right)^{p}\leq C_{X,\overline{V}_\cG}^{p}\max\{\abs{\lambda_i}^{p}\,|\,i\in I_x\}\\
	&\leq C_{X,\overline{V}_\cG}^{p} \sum_{i\in I_x}\abs{\lambda_i}^{p}=C_{X,\overline{V}_\cG}^{p} \sum_{i\in I}\abs{\lambda_i}^{p}\chi_{x_i+V_\cG}(x).
	\end{align*}
	Vice versa
	\begin{align*}
	\left(\sum_{i\in I}\abs{\lambda_i}\chi_{x_i+V_\cG}(x)\right)^{p}&\geq\left(\max\{\abs{\lambda_i}\,|\,i\in I_x\}\right)^{p}=\max\{\abs{\lambda_i}^{p}\,|\,i\in I_x\}\\
	&\geq C_{X,\overline{V}_\cG}^{-p}\sum_{i\in I_x}\abs{\lambda_i}^{p}=C_{X,\overline{V}_\cG}^{-p}\sum_{i\in I}\abs{\lambda_i}^{p}\chi_{x_i+V_\cG}(x).
	\end{align*}
	Hence we have shown the equivalence
	\begin{equation}\label{Eq-equiv-p-sum-in-out}
		\left(\sum_{i\in I}\abs{\lambda_i}\chi_{x_i+V_\cG}(x)\right)^{p}\asymp\sum_{i\in I}\abs{\lambda_i}^{p}\chi_{x_i+V_\cG}(x).
	\end{equation}
	Analogous equivalences hold for every relatively separated family and sequence on the corresponding set of indexes, which under our hypothesis are always countable.
	Due to the chosen $V$,
	\begin{equation*}
	\chi_{(x_i,\xi_j,1)V}(x,\xi,\tau)=\chi_{x_i+V_\cG}(x)\chi_{\xi_j+V_{\hcG}}(\xi)\qquad\forall\,(x,\xi,\tau)\in\bH_\cG.
	\end{equation*}
	Taking a sequence $\left(\lambda_{ij}\right)_{i\in I,j\in J}\in \bC^{I\times J}$ and using twice the  equivalence \eqref{Eq-equiv-p-sum-in-out}, we compute   
	\begin{align*}
		\norm{\left(\lambda_{ij}\right)_{i,j}}_{(L^{p,q}_{\tilde{m}}(\bH_\cG))_d(\mathfrak{X},V)}\!\!\!\!&=\left(\!\int_{\hcG\times\bT}\!\!\left(\!\int_\cG\!\!\left(\sum_{i\in I,j\in J}\abs{\lambda_{ij}}\chi_{x_i+V_\cG}(x)\chi_{\xi_j+V_{\hcG}}(\xi)\!\right)^p \!\!\!m(x,\xi)^p\,dx\!\!\right)^{\frac{q}{p}}\!\!\!d\xi d\tau\!\!\right)^{\frac{1}{q}}\\
		&
		\asymp\left(\int_{\hcG}\left(\int_\cG\sum_{i\in I,j\in J}\abs{\lambda_{ij}}^p\chi_{x_i+V_\cG}(x)\chi_{\xi_j+V_{\hcG}}(\xi) m(x,\xi)^pdx\right)^{\frac{q}{p}}\!\!d\xi\right)^{\frac{1}{q}}\\
		&=\left(\int_{\hcG}\left(\sum_{j\in J} \sum_{i\in I}\abs{\lambda_{ij}}^p\int_\cG m(x,\xi)^p\chi_{x_i+V_\cG}(x)dx\,\chi_{\xi_j+V_{\hcG}}(\xi)\right)^{\frac{q}{p}}\!\!d\xi\right)^{\frac{1}{q}}\\
		&
		\asymp\left(\int_{\hcG}\sum_{j\in J}\left( \sum_{i\in I}\abs{\lambda_{ij}}^p\int_\cG m(x,\xi)^p\chi_{x_i+V_\cG}(x)\,dx\right)^{\frac{q}{p}}\!\!\chi_{\xi_j+V_{\hcG}}(\xi)\,d\xi\!\right)^{\frac{1}{q}}\\
		&=\left(\sum_{j\in J}\int_{V_{\hcG}}\left( \sum_{i\in I}\abs{\lambda_{ij}}^p\int_{V_\cG} m(x+x_i,\xi+\xi_j)^pdx\right)^{\frac{q}{p}}\!\!d\xi\right)^{\frac{1}{q}}.
	\end{align*}
	The monotone convergence theorem justifies the interchanges of integration with summation performed. From \cite[Corollary 2.2.23]{Voig2015} we have
	\begin{equation}\label{Eq-Voig-Cor-2.2.23}
	\left(\sup_{\overline{V}_{1,2}\cup-\overline{V}_{1,2}}v\right)^{-1}m((x,\xi)+(u,\o))\leq m(u,\o)\leq\left(\sup_{\overline{V}_{1,2}\cup-\overline{V}_{1,2}}v\right) m((x,\xi)+(u,\o)),
	\end{equation}
	for every $(u,\o)\in\cG\times\hcG$ and  $(x,\xi)\in\overline{V}_{1,2}$, with $V_{1,2}$ defined in \eqref{Eq-Decomposition-V}. Therefore, if $\xi\in V_{\hcG}$, we have
	\begin{equation}\label{Eq-Equiv-integral-weight}
	\int_{V_\cG} m(x+x_i,\xi+\xi_j)^p\,dx\underset{v,V_{1,2}}{\asymp}\int_{V_\cG}m(x_i,\xi_j)^{p}\,dx=m(x_i,\xi_j)^{p}dx(V_\cG).
	\end{equation}
	Using the equivalences above,
	\begin{align*}
		\norm{\left(\lambda_{ij}\right)_{i,j}}_{(L^{p,q}_{\tilde{m}}(\bH_\cG))_d(\mathfrak{X},V)}&\asymp \left(\sum_{j\in J}\int_{V_{\hcG}}\left( \sum_{i\in I}\abs{\lambda_{ij}}^p\int_{V_\cG} m(x+x_i,\xi+\xi_j)^pdx\right)^{\frac{q}{p}}d\xi\right)^{\frac{1}{q}}\\
		&\asymp \left(\sum_{j\in J}\left( \sum_{i\in I}\abs{\lambda_{ij}}^pm(x_i,\xi_j)^p\right)^{\frac{q}{p}}\right)^{\frac{1}{q}}=\norm{\left(\lambda_{ij}\right)_{i,j}}_{\ell^{p,q}_{m_{\mathbf{X}}}(I\times J)}.
	\end{align*}
	\emph{Case $p=q=\infty$.}
	For $(x,\xi)\in\cG\times\hcG$ we define
	\begin{equation*}
		\mathbf{I}_{(x,\xi)}=\{(i,j)\in I\times J\,|\,\chi_{(x_i,\xi_j)+V_{1,2}}(x,\xi)\neq\varnothing\}.
	\end{equation*}
	 Arguing as for \eqref{Eq-Def-Ix} and \eqref{Eq-Ix-unif-bound}, we have that there exists $N=N(\mathbf{X},V_{1,2})=C_{\mathbf{X},\overline{V}_{1,2}}\in\bN$ (see \eqref{Eq-Constant-Rel-Sep-Fam}) such that $\# \mathbf{I}_{(x,\xi)}\leq N$, where $\mathbf{X}\coloneqq X\times\Xi$ and $V_{1,2}$ as in \eqref{Eq-Decomposition-V}.  Using \eqref{Eq-Voig-Cor-2.2.23}, for $\left(\lambda_{ij}\right)_{i\in I,j\in J}\in \bC^{I\times J}$,
	\begin{align}
		\sum_{i\in I,j\in J}\abs{\lambda_{ij}}\chi_{(x_i,\xi_j)+V_{1,2}}(x,\xi)m(x,\xi)&=\sum_{(i,j)\in \mathbf{I}_{(x,\xi)}}\abs{\lambda_{ij}}m(x_i+u_i(x),\xi_j+\o_j(\xi))\label{Eq-equiv-sum-con&senza-ij}\\
		&\asymp\sum_{(i,j)\in \mathbf{I}_{(x,\xi)}}\abs{\lambda_{ij}}m(x_i,\xi_j)\notag\\
		&=\sum_{i\in I,j\in J}\abs{\lambda_{ij}}m(x_i,\xi_j)\chi_{(x_i,\xi_j)+V_{1,2}}(x,\xi),\notag
	\end{align}
	where $(u_i(x),\o_j(\xi))\in V_{1,2}$ for every $(i,j)\in\mathbf{I}_{(x,\xi)}$. Consider now $\left(\lambda_{ij}\right)_{i\in I,j\in J}\in \ell^{\infty}_{m_{\mathbf{X}}}(I\times J)$. Then
	\begin{align*}
		\norm{\left(\lambda_{ij}\right)_{i,j}}_{(L^\infty_{\tilde{m}}(\bH_\cG))_d(\mathfrak{X},V)}&=\norm{\left(\lambda_{ij}\right)_{i,j}}_{(L^\infty_{m}(\cG\times\hcG))_d(\mathbf{X},V_{1,2})}\\
		&=\norm{\sum_{i\in I,j\in J}\abs{\lambda_{ij}}\chi_{(x_i,\xi_j)+V_{1,2}}(x,\xi)m(x,\xi)}_{L^\infty(\cG\times\hcG)}\\
		&\asymp \norm{\sum_{i\in I,j\in J}\abs{\lambda_{ij}}m(x_i,\xi_j)\chi_{(x_i,\xi_j)+V_{1,2}}(x,\xi)}_{L^\infty(\cG\times\hcG)}\\
		&\leq \norm{\sum_{i\in I,j\in J}\sup_{l,s}\abs{\lambda_{ls}}m(x_l,\xi_s)\chi_{(x_i,\xi_j)+V_{1,2}}(x,\xi)}_{L^\infty(\cG\times\hcG)}\\
		&\leq \norm{\left(\lambda_{ij}\right)_{i,j}}_{\ell^{\infty}_{m_{\mathbf{X}}}(I\times J)}\norm{N\chi_{\cG\times\hcG}}_{L^\infty(\cG\times\hcG)}=N\norm{\left(\lambda_{ij}\right)_{i,j}}_{\ell^{\infty}_{m_{\mathbf{X}}}(I\times J)}.
	\end{align*}
	Vice versa, if $\left(\lambda_{ij}\right)_{i\in I,j\in J}\in (L^\infty_{\tilde{m}}(\bH_\cG))_d(\mathfrak{X},V)$,
	\begin{align*}
		\norm{\left(\lambda_{ij}\right)_{i,j}}_{\ell^{\infty}_{m_{\mathbf{X}}}(I\times J)}&=\sup_{i\in I,j\in J}\abs{\lambda_{ij}}m_{\mathbf{X}}(i,j)=\sup_{i\in I,j\in J}\abs{\lambda_{i,j}}\chi_{(x_i,\xi_j)+V_{1,2}}(x_i,\xi_j)m(x_i,\xi_j)\\
		&\leq\sup_{i\in I,j\in J}\norm{\abs{\lambda_{ij}}\chi_{(x_i,\xi_j)+V_{1,2}}(x,\xi)m(x,\xi)}_{L^\infty(\cG\times\hcG)}\\
		&\leq\sup_{i\in I,j\in J}\norm{\sum_{l\in I,s\in J}\abs{\lambda_{ls}}\chi_{(x_l,\xi_s)+V_{1,2}}(x,\xi)m(x,\xi)}_{L^\infty(\cG\times\hcG)}\\
		&=\norm{\sum_{l\in I,s\in J}\abs{\lambda_{ls}}\chi_{(x_l,\xi_s)+V_{1,2}}(x,\xi)m(x,\xi)}_{L^\infty(\cG\times\hcG)}\\
		&=\norm{\left(\lambda_{ij}\right)_{i,j}}_{(L^\infty_{\tilde{m}}(\bH_\cG))_d(\mathfrak{X},V)}.
	\end{align*}
	\emph{Case $p=\infty$ and $q<\infty$.} 
	We  show the equivalence
	\begin{align}\label{Eq-esssup-in-out}
	\essupp_{x\in\cG}\sum_{i\in I,j\in J}&\abs{\lambda_{ij}}m_{\mathbf{X}}(i,j)\chi_{x_i+V_{\cG}}(x)\chi_{\xi_j+V_{\hcG}}(\xi)\\&\underset{\Xi,V_{\hcG}}
	\asymp\sum_{j\in J}\essupp_{x\in\cG}\sum_{i\in I }\abs{\lambda_{ij}}m_{\mathbf{X}}(i,j)\chi_{x_i+V_{\cG}}(x)\chi_{\xi_j+V_{\hcG}}(\xi).\notag
	\end{align}
	In fact,  arguing as in  \eqref{Eq-Def-Ix} and \eqref{Eq-Ix-unif-bound}, for $\xi\in\hcG$ fixed and
	$J_\xi\coloneqq\{j\in J\,|\,\chi_{\xi_j+V_{\hcG}}(\xi)\neq\varnothing\}$,
	 there exists $M=M(\Xi,V_{\hcG})\in\bN$ such that $\# J_\xi\leq M$. Therefore,
	\begin{align*}
		\essupp_{x\in\cG}\sum_{i\in I,j\in J}\abs{\lambda_{ij}}m_{\mathbf{X}}(i,j)\chi_{x_i+V_{\cG}}(x)&\chi_{\xi_j+V_{\hcG}}(\xi)=\essupp_{x\in\cG}\sum_{j\in J_\xi}\sum_{i\in I }\abs{\lambda_{ij}}\chi_{x_i+V_{\cG}}(x)m_{\mathbf{X}}(i,j)\\
		&\leq\sum_{j\in J_\xi}\essupp_{x\in\cG}\sum_{i\in I }\abs{\lambda_{ij}}\chi_{x_i+V_{\cG}}(x)m_{\mathbf{X}}(i,j)\\
		&=\sum_{j\in J}\essupp_{x\in\cG}\sum_{i\in I}\abs{\lambda_{ij}}\chi_{x_i+V_{\cG}}(x)m_{\mathbf{X}}(i,j)\chi_{\xi_j+V_{\hcG}}(\xi).
	\end{align*}
	On the other hand,
	\begin{align*}
		\sum_{j\in J}\essupp_{x\in\cG}\sum_{i\in I}\abs{\lambda_{ij}}\chi_{x_i+V_{\cG}}(x)m_{\mathbf{X}}(i,j)&\chi_{\xi_j+V_{\hcG}}(\xi)=\sum_{j\in J_\xi}\essupp_{x\in\cG}\sum_{i\in I }\abs{\lambda_{ij}}\chi_{x_i+V_{\cG}}(x)m_{\mathbf{X}}(i,j)\\
		&\leq M \max\{\essupp_{x\in\cG}\sum_{i\in I }\abs{\lambda_{ij}}\chi_{x_i+V_{\cG}}(x)m_{\mathbf{X}}(i,j)\,|\,j\in J_{\xi}\}\\
		&\leq M \essupp_{x\in\cG}\sum_{j\in J_\xi}\sum_{i\in I }\abs{\lambda_{ij}}\chi_{x_i+V_{\cG}}(x)m_{\mathbf{X}}(i,j)\\
		&=M \essupp_{x\in\cG}\sum_{j\in J}\sum_{i\in I }\abs{\lambda_{ij}}\chi_{x_i+V_{\cG}}(x)m_{\mathbf{X}}(i,j)\chi_{\xi_j+V_{\hcG}}(\xi).
	\end{align*}
	Finally, using the previous cases, the equivalences in  \eqref{Eq-equiv-sum-con&senza-ij} and \eqref{Eq-esssup-in-out}, we can write
	\begin{align*}
	\norm{\left(\lambda_{ij}\right)_{i,j}}_{(L^{\infty,q}_{\tilde{m}}(\bH_\cG))_d(\mathfrak{X},V)}&=\left(\int_{\hcG}\left(\essupp_{x\in\cG}\sum_{i\in I,j\in J}\abs{\lambda_{ij}}\chi_{x_i+V_{\cG}}(x)\chi_{\xi_j+V_{\hcG}}(\xi)m(x,\xi)\right)^q\, d\xi\right)^{\frac{1}{q}}\\
		&\asymp\left(\int_{\hcG}\left(\sum_{j\in J}\essupp_{x\in\cG}\sum_{i\in I }\abs{\lambda_{ij}}m_{\mathbf{X}}(i,j)\chi_{x_i+V_{\cG}}(x)\chi_{\xi_j+V_{\hcG}}(\xi)\right)^q\, d\xi\right)^{\frac{1}{q}}\\
		&=\left(\int_{\hcG}\left(\sum_{j\in J}\norm{\sum_{i\in I }\abs{\lambda_{ij}}m_{\mathbf{X}}(i,j)\chi_{x_i+V_{\cG}}(\cdot)}_{L^\infty(\cG)}\chi_{\xi_j+V_{\hcG}}(\xi)\right)^q\, d\xi\right)^{\frac{1}{q}}\\
		&=\left(\int_{\hcG}\left(\sum_{j\in J}\norm{\left(\lambda_{ij}m_{\mathbf{X}}(i,j)\right)_{i\in I}}_{(L^\infty(\cG))_d(X,V_\cG)}\chi_{\xi_j+V_{\hcG}}(\xi)\right)^q\, d\xi\right)^{\frac{1}{q}}\\
		&\asymp\left(\int_{\hcG}\left(\sum_{j\in J}\norm{\left(\lambda_{ij}m_{\mathbf{X}}(i,j)\right)_{i\in I}}_{\ell^\infty(I)}\chi_{\xi_j+V_{\hcG}}(\xi)\right)^q\, d\xi\right)^{\frac{1}{q}}\\
		&=\norm{\left(\norm{\left(\lambda_{ij}m_{\mathbf{X}}(i,j)\right)_{i\in I}}_{\ell^\infty(I)}\right)_{j\in J}}_{(L^q(\hcG))_d(\Xi,V_{\hcG})}\\
		&\asymp\norm{\left(\norm{\left(\lambda_{ij}m_{\mathbf{X}}(i,j)\right)_{i\in I}}_{\ell^\infty(I)}\right)_{j\in J}}_{\ell^q(J)}	=\norm{\left(\lambda_{ij}\right)_{i,j}}_{\ell^{\infty,q}_{m_{\mathbf{X}}}(I\times J)}.
	\end{align*}
	\emph{Case $p<\infty$ and $q=\infty$.} Similarly to what has been done before,
	\begin{align*}
		\norm{\left(\lambda_{ij}\right)_{i,j}}_{(L^{p,\infty}_{\tilde{m}}(\bH_\cG))_d(\mathfrak{X},V)}&
		=\essupp_{\xi\in\hcG}\!\!\left(\!\int_\cG\!\left(\sum_{i\in I,j\in J}\abs{\lambda_{ij}}\chi_{x_i+V_\cG}(x)\chi_{\xi_j+V_{\hcG}}(\xi)\!\!\right)^p\!\!\! m(x,\xi)^pdx\right)^\frac1p\\
		&\asymp\essupp_{\xi\in\hcG}\!\!\left(\int_\cG\sum_{i\in I,j\in J}\abs{\lambda_{ij}}^p\chi_{x_i+V_\cG}(x)\chi_{\xi_j+V_{\hcG}}(\xi)m(x,\xi)^pdx\right)^\frac1p\\
		&\asymp\essupp_{\xi\in\hcG}\left(\sum_{i\in I,j\in J}\abs{\lambda_{ij}}^pm(x_i,\xi_j)^p\chi_{\xi_j+V_{\hcG}}(\xi)\right)^\frac1p\\
		&\asymp\essupp_{\xi\in\hcG}\sum_{j\in J}\left(\sum_{i\in I}\abs{\lambda_{ij}}^pm(x_i,\xi_j)^p\right)^\frac1p\chi_{\xi_j+V_{\hcG}}(\xi)\\
		&=\essupp_{\xi\in\hcG}\sum_{j\in J}\norm{\left(\lambda_{ij}m_{\mathbf{X}}(i,j)\right)_{i\in I}}_{\ell^p(I)}\chi_{\xi_j+V_{\hcG}}(\xi)\\
		&=\norm{\left(\norm{\left(\lambda_{ij}m_{\mathbf{X}}(i,j)\right)_{i\in I}}_{\ell^p(I)}\right)_{j\in J}}_{(L^\infty(\hcG))_d(\Xi,V_{\hcG})}\\&\asymp
		\norm{\left(\lambda_{ij}\right)_{i,j}}_{\ell^{p,\infty}_{m_{\mathbf{X}}}(I\times J)}.
	\end{align*}
	The proof is concluded.
\end{proof}

\begin{remark}\label{Rem-equivalence-constants}
	We want to state explicitly the equivalence constants involved in the previous lemma. We distinguish four cases, as done in the proof.\\
	\emph{Case $p,q<\infty$.} We have
	\begin{equation*}
		A^{-1}B \norm{\left(\lambda_{ij}\right)_{i,j}}_{\ell^{p,q}_{m_{\mathbf{X}}}(I\times J)}\leq\norm{\left(\lambda_{ij}\right)_{i,j}}_{(L^{p,q}_{\tilde{m}}(\bH_\cG))_d(\mathfrak{X},V)}\leq A B\norm{\left(\lambda_{ij}\right)_{i,j}}_{\ell^{p,q}_{m_{\mathbf{X}}}(I\times J)},
	\end{equation*}
 	where
 	\begin{align*}
	A\coloneqq A(X,\Xi,V_\cG,V_{\hcG},v,p)&\coloneqq C_{X,\overline{V}_{\cG}}C^{\frac1p+1}_{\Xi,\overline{V}_{\hcG}}\left(\sup_{\overline{V}_{1,2}\cup-\overline{V}_{1,2}}v\right),\\
	B\coloneqq B(V_\cG,V_{\hcG},p,q)&\coloneqq dx(V_{\cG})^{\frac1p}\,d\xi(V_{\hcG})^{\frac1q}.
 	\end{align*}
 	\emph{Case $p=q=\infty$.} The equivalence is 
	\begin{equation*}
		\norm{\left(\lambda_{ij}\right)_{i,j}}_{\ell^{\infty}_{m_{\mathbf{X}}}(I\times J)}\leq\norm{\left(\lambda_{ij}\right)_{i,j}}_{(L^{\infty}_{\tilde{m}}(\bH_\cG))_d(\mathfrak{X},V)}\leq \left(\sup_{\overline{V}_{1,2}\cup-\overline{V}_{1,2}}v\right) C_{\mathbf{X},\overline{V}_{1,2}}\norm{\left(\lambda_{ij}\right)_{i,j}}_{\ell^{\infty}_{m_{\mathbf{X}}}(I\times J)}.
	\end{equation*}
	\emph{Case $p=\infty$ and $q<\infty$.}  We got
	\begin{equation*}
		D\norm{\left(\lambda_{ij}\right)_{i,j}}_{\ell^{\infty,q}_{m_{\mathbf{X}}}(I\times J)}\leq\norm{\left(\lambda_{ij}\right)_{i,j}}_{(L^{\infty,q}_{\tilde{m}}(\bH_\cG))_d(\mathfrak{X},V)}\leq E\norm{\left(\lambda_{ij}\right)_{i,j}}_{\ell^{\infty,q}_{m_{\mathbf{X}}}(I\times J)},
	\end{equation*}
	where
	\begin{align*}
		D\coloneqq D(\Xi,V_\cG,V_{\hcG},v,q)&\coloneqq C^{-2}_{\Xi,\overline{V}_{\hcG}}\left(\sup_{\overline{V}_{1,2}\cup-\overline{V}_{1,2}}v\right)^{-1}\,d\xi(V_{\hcG})^{\frac1q},\\
		E\coloneqq B(X,\Xi,V_\cG,V_{\hcG},v,q)&\coloneqq C_{X,\overline{V}_{\cG}}
		C_{\Xi,\overline{V}_{\hcG}}\left(\sup_{\overline{V}_{1,2}\cup-\overline{V}_{1,2}}v\right)\,d\xi(V_{\hcG})^{\frac1q}.
	\end{align*}
	\emph{Case $p<\infty$ and $q=\infty$.} The last equivalence is given by
		\begin{equation*}
		L\norm{\left(\lambda_{ij}\right)_{i,j}}_{\ell^{p,\infty}_{m_{\mathbf{X}}}(I\times J)}\leq\norm{\left(\lambda_{ij}\right)_{i,j}}_{(L^{p,\infty}_{\tilde{m}}(\bH_\cG))_d(\mathfrak{X},V)}\leq M\norm{\left(\lambda_{ij}\right)_{i,j}}_{\ell^{p,\infty}_{m_{\mathbf{X}}}(I\times J)},
	\end{equation*}
	where
	\begin{align*}
		L\coloneqq L(X,\Xi,V_\cG,V_{\hcG},v,p)&\coloneqq C^{-1}_{X,\overline{V}_{\cG}}
		C^{-1}_{\Xi,\overline{V}_{\hcG}}\left(\sup_{\overline{V}_{1,2}\cup-\overline{V}_{1,2}}v\right)^{-1}\,dx(V_{\cG})^{\frac1p},\\
		M\coloneqq M(X,\Xi,V_\cG,V_{\hcG},v,p)&\coloneqq C_{X,\overline{V}_{\cG}}
		C^2_{\Xi,\overline{V}_{\hcG}}\left(\sup_{\overline{V}_{1,2}\cup-\overline{V}_{1,2}}v\right)\,dx(V_{\cG})^{\frac1p}.
	\end{align*}
	We recall that the definition of the constants $C_{X,\overline{V}_{\cG}}, C_{\Xi,\overline{V}_{\hcG}}, C_{\mathbf{X},\overline{V}_{1,2}}$ is given in  \eqref{Eq-Constant-Rel-Sep-Fam}.
\end{remark}

On account of the constants shown in the previous remark, we have the following corollary.

\begin{corollary}\label{Cor-equivalence-constants-pq>delta}
	Fix $0<\delta\leq\infty$ and take $p,q$ such that $0<\delta\leq p,q\leq\infty$. Under the same assumptions of Lemma \ref{Lem-Lpq-disc-ell-p-q}, there are two constants
	\begin{equation*}
		C_1\coloneqq C_1(X,\Xi,V_\cG,V_{\hcG},v,\delta)>0\quad\text{and}\quad C_1\coloneqq C_1(X,\Xi,V_\cG,V_{\hcG},v,\delta)>0
	\end{equation*}
	such that 
	\begin{equation*}
		C_1\norm{\left(\lambda_{ij}\right)_{i,j}}_{\ell^{p,q}_{m_{\mathbf{X}}}(I\times J)}\leq\norm{\left(\lambda_{ij}\right)_{i,j}}_{(L^{p,q}_{\tilde{m}}(\bH_\cG))_d(\mathfrak{X},V)}\leq C_2\norm{\left(\lambda_{ij}\right)_{i,j}}_{\ell^{p,q}_{m_{\mathbf{X}}}(I\times J)}
	\end{equation*}
	for every sequence $\left(\lambda_{ij}\right)_{i,j}$ in $(L^{p,q}_{\tilde{m}}(\bH_\cG))_d(\mathfrak{X},V)=\ell^{p,q}_{m_{\mathbf{X}}}(I\times J)$.
\end{corollary}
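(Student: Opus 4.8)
The plan is to read the conclusion off from Lemma~\ref{Lem-Lpq-disc-ell-p-q} by inspecting the explicit equivalence constants recorded in Remark~\ref{Rem-equivalence-constants} and verifying that they admit bounds that are uniform in $p,q\in[\delta,\infty]$. First I would dispose of the degenerate case $\delta=\infty$: then $p=q=\infty$ is forced, and the corresponding row of Remark~\ref{Rem-equivalence-constants} already supplies the constants $1$ and $\big(\sup_{\overline{V}_{1,2}\cup-\overline{V}_{1,2}}v\big)C_{\mathbf{X},\overline{V}_{1,2}}$, which do not involve $p,q$.

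Assume now $0<\delta<\infty$ and let $p,q$ satisfy $\delta\le p,q\le\infty$. The key observation is structural: in each of the four cases of Remark~\ref{Rem-equivalence-constants}, the parameters $p,q$ enter the constants $A,B,D,E,L,M$ \emph{only} as exponents $1/p$ and $1/q$ (with the convention $1/\infty=0$) attached to the three fixed positive numbers $C_{\Xi,\overline{V}_{\hcG}}\ge1$, $dx(V_\cG)>0$ and $d\xi(V_{\hcG})>0$; every remaining factor --- namely $C_{X,\overline{V}_\cG}$, $C_{\mathbf{X},\overline{V}_{1,2}}$, and $\sup_{\overline{V}_{1,2}\cup-\overline{V}_{1,2}}v$ together with its reciprocal, finite because a submultiplicative weight is locally bounded, cf.\ \cite[Theorem 2.2.22]{Voig2015} --- is a constant independent of $p$ and $q$. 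Since for any fixed $a>0$ the map $t\mapsto a^{t}$ is monotone on $[0,1/\delta]$ and both $1/p$ and $1/q$ lie in $[0,1/\delta]$, one has
\begin{equation*}
\min\{1,a^{1/\delta}\}\ \le\ a^{1/p},\,a^{1/q}\ \le\ \max\{1,a^{1/\delta}\}.
\end{equation*}
Applying this with $a=C_{\Xi,\overline{V}_{\hcG}}$, $a=dx(V_\cG)$ and $a=d\xi(V_{\hcG})$ traps each of $C_{\Xi,\overline{V}_{\hcG}}^{1/p}$, $dx(V_\cG)^{1/p}$, $d\xi(V_{\hcG})^{1/q}$ between positive constants depending only on $\delta$ and the fixed data $X,\Xi,V_\cG,V_{\hcG},v$.

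Substituting these two-sided bounds into the six expressions $A,B,D,E,L,M$ yields, in every one of the four cases, an upper equivalence constant and a lower equivalence constant depending only on $X,\Xi,V_\cG,V_{\hcG},v,\delta$ and not on $p,q$. Taking $C_2$ to be the maximum of the (four) upper ones and $C_1$ the minimum of the (four) lower ones gives the asserted inequality for every admissible pair $(p,q)$; the set equality $(L^{p,q}_{\tilde m}(\bH_\cG))_d(\mathfrak X,V)=\ell^{p,q}_{m_{\mathbf X}}(I\times J)$ is already part of Lemma~\ref{Lem-Lpq-disc-ell-p-q}. I do not anticipate a genuine obstacle here: the whole argument is a uniform-in-$(p,q)$ bookkeeping of the constants of Remark~\ref{Rem-equivalence-constants}. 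The only mildly delicate point is that the monotonicity of $a\mapsto a^{1/p}$ runs in opposite directions according to whether $dx(V_\cG)$ (resp.\ $d\xi(V_{\hcG})$) exceeds or falls below $1$, which is precisely why the displayed bound is phrased with $\min$ and $\max$.
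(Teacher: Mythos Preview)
Your proposal is correct and takes essentially the same approach as the paper: both reduce the claim to inspecting the explicit constants of Remark~\ref{Rem-equivalence-constants} and using the monotonicity of $a^{1/p}$ in $p$ to obtain bounds uniform for $p,q\ge\delta$. Your treatment is in fact more careful than the paper's, which only records the case $b\ge 1$ and leaves the possibility $dx(V_\cG)<1$ or $d\xi(V_{\hcG})<1$ implicit; your $\min/\max$ formulation handles both directions cleanly.
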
	
\begin{proof}
	We notice that if $b\geq1$, then $b^{\frac1p}$ is a strictly decreasing function of $p\in(0,\infty)$ and $b^{\frac1p}\geq1$. Likewise $b^{-\frac1p}$ is strictly increasing and $0<b^{-\frac1p}\leq1$. The claim follows now from Remark \ref{Rem-equivalence-constants}.
\end{proof}

\begin{remark}	
	Although in Lemma \ref{Lem-Lpq-disc-ell-p-q} we considered $V=V_{\cG}\times V_{\hcG}\times\bT$ with $V_{\cG}$ and $V_{\hcG}$ open, this last assumption can be relaxed into measurability. Even in this case the above lemma and the subsequent Corollary \ref{Cor-Lpq-disc-ell-p-q} hold true.
\end{remark}

\begin{corollary}\label{Cor-Lpq-disc-ell-p-q}
	Consider $0<p_1\leq p_2\leq\infty$, $0<q_1\leq q_2\leq\infty$ and $m_1,m_2\in\cM_v(\cG\times\hcG)$ such that $m_2\lesssim m_1$. Let $V$, $X$, $\Xi$ and $\mathfrak{X}$ be as in Lemma \ref{Lem-Lpq-disc-ell-p-q}. Then
	\begin{equation}
	\left(L^{p_1,q_1}_{\tilde{m}_1}(\bH_\cG)\right)_d(\mathfrak{X},V)\hookrightarrow\left(L^{p_2,q_2}_{\tilde{m}_2}(\bH_\cG)\right)_d(\mathfrak{X},V).
	\end{equation}
\end{corollary}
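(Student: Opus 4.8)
The plan is to reduce everything to a statement about weighted mixed-norm \emph{sequence} spaces by invoking Lemma~\ref{Lem-Lpq-disc-ell-p-q}. That lemma identifies $\left(L^{p_1,q_1}_{\tilde{m}_1}(\bH_\cG)\right)_d(\mathfrak{X},V)$ with $\ell^{p_1,q_1}_{(m_1)_{\mathbf{X}}}(I\times J)$ and $\left(L^{p_2,q_2}_{\tilde{m}_2}(\bH_\cG)\right)_d(\mathfrak{X},V)$ with $\ell^{p_2,q_2}_{(m_2)_{\mathbf{X}}}(I\times J)$, in both cases with equivalent (quasi-)norms, where $(m_k)_{\mathbf{X}}(i,j)=m_k(x_i,\xi_j)$. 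Hence it suffices to establish the continuous inclusion
\[
\ell^{p_1,q_1}_{(m_1)_{\mathbf{X}}}(I\times J)\hookrightarrow\ell^{p_2,q_2}_{(m_2)_{\mathbf{X}}}(I\times J),
\]
and then transport it back through the norm equivalences of Lemma~\ref{Lem-Lpq-disc-ell-p-q}.

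First I would dispose of the weights. Since $m_2\lesssim m_1$ there is $c>0$ with $(m_2)_{\mathbf{X}}(i,j)\le c\,(m_1)_{\mathbf{X}}(i,j)$ for all $(i,j)\in I\times J$, so by solidity of the $\ell^{p,q}$-quasi-norm one has $\norm{(\lambda_{ij})_{i,j}}_{\ell^{p,q}_{(m_2)_{\mathbf{X}}}}\le c\,\norm{(\lambda_{ij})_{i,j}}_{\ell^{p,q}_{(m_1)_{\mathbf{X}}}}$ for every $0<p,q\le\infty$. It remains to prove the unweighted nesting $\ell^{p_1,q_1}(I\times J)\hookrightarrow\ell^{p_2,q_2}(I\times J)$ for $0<p_1\le p_2\le\infty$ and $0<q_1\le q_2\le\infty$. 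This follows from the elementary fact $\norm{a}_{\ell^{s_2}}\le\norm{a}_{\ell^{s_1}}$ whenever $0<s_1\le s_2\le\infty$ (with the usual modification when $s_2=\infty$), applied first in the inner index $i\in I$ and then in the outer index $j\in J$: writing $b_j\coloneqq\norm{(\lambda_{ij})_{i\in I}}_{\ell^{p_1}(I)}$ and $a_j\coloneqq\norm{(\lambda_{ij})_{i\in I}}_{\ell^{p_2}(I)}$, we have $0\le a_j\le b_j$, whence, using again solidity of the outer $\ell^{q}$-quasi-norm,
\[
\norm{(a_j)_{j}}_{\ell^{q_2}(J)}\le\norm{(b_j)_{j}}_{\ell^{q_2}(J)}\le\norm{(b_j)_{j}}_{\ell^{q_1}(J)}=\norm{(\lambda_{ij})_{i,j}}_{\ell^{p_1,q_1}(I\times J)}.
\]

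Combining the two steps gives $\norm{(\lambda_{ij})_{i,j}}_{\ell^{p_2,q_2}_{(m_2)_{\mathbf{X}}}}\le c\,\norm{(\lambda_{ij})_{i,j}}_{\ell^{p_1,q_1}_{(m_1)_{\mathbf{X}}}}$, which is the desired embedding at the level of sequence spaces; chaining with the two-sided norm equivalences of Lemma~\ref{Lem-Lpq-disc-ell-p-q} yields the stated continuous inclusion of the discrete spaces $\left(L^{p,q}_{\tilde{m}}(\bH_\cG)\right)_d(\mathfrak{X},V)$. There is no genuine obstacle here; the only points demanding a little attention are the cases where one or more of $p_1,p_2,q_1,q_2$ equals $\infty$, dealt with by the standard modifications, and the repeated appeal to solidity of the mixed-norm so that the pointwise domination $a_j\le b_j$ passes to the outer norm.
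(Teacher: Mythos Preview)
Your proposal is correct and follows exactly the paper's approach: invoke Lemma~\ref{Lem-Lpq-disc-ell-p-q} to identify the discrete spaces with $\ell^{p,q}_{m_{\mathbf{X}}}(I\times J)$, then use the standard continuous inclusion $\ell^{p_1,q_1}_{m_{1,\mathbf{X}}}\hookrightarrow\ell^{p_2,q_2}_{m_{2,\mathbf{X}}}$ coming from $m_{2,\mathbf{X}}\lesssim m_{1,\mathbf{X}}$ and the nesting of $\ell^p$ spaces. In fact you supply more detail on the sequence-space embedding than the paper does.
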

\begin{proof}
It is a straightforward consequence of Lemma \ref{Lem-Lpq-disc-ell-p-q} and the continuous inclusions
	\begin{equation}
		\ell^{p_1,q_1}_{m_{1,\mathbf{X}}}(I\times J)\hookrightarrow\ell^{p_2,q_2}_{m_{2,\mathbf{X}}}(I\times J),
	\end{equation}
	since $m_{2,\mathbf{X}}\lesssim m_{1,\mathbf{X}}$.
\end{proof}

\begin{proposition}\label{Pro-Incl-Mpq}
	Consider $0<p_1\leq p_2\leq\infty$, $0<q_1\leq q_2\leq\infty$ and $m_1,m_2\in\cM_v(\cG\times\hcG)$ such that $m_2\lesssim m_1$. Then we have the following continuous inclusions:
	\begin{equation}\label{Eq-Incl-Mpq}
	M^{p_1,q_1}_{m_1}(\cG)\hookrightarrow M^{p_1,q_2}_{m_2}(\cG).
	\end{equation}
\end{proposition}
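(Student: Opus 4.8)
The plan is to push the whole problem down to the discrete, weighted mixed-norm sequence space level, where the inclusion is a triviality, exploiting the discretization apparatus for Wiener amalgam spaces developed just above. Recall that by Definition \ref{definizione-modulazione} we have $M^{p,q}_m(\cG)=\Co(L^{p,q}_{\tilde{m}}(\bH_\cG))$, with $\norm{f}_{M^{p,q}_m}=\norm{W^\vr_g f}_{W(L^\infty,L^{p,q}_{\tilde{m}})}$, and that all of the coorbit spaces $\Co(L^{p,q}_{\tilde{m}})$, $0<p,q\le\infty$, are built over the \emph{same} reservoir $\cR_{\tilde{v}}$ with the \emph{same} control weight $\tilde{v}$, both depending only on $v$. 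Consequently, for a single fixed nonzero window $g$ it is enough to produce a constant $C>0$ with $\norm{W^\vr_g f}_{W(L^\infty,L^{p_2,q_2}_{\tilde{m}_2})}\le C\,\norm{W^\vr_g f}_{W(L^\infty,L^{p_1,q_1}_{\tilde{m}_1})}$ for every $f\in\cR_{\tilde{v}}$: this gives both the set-theoretic inclusion and its continuity at once. (The displayed inclusion \eqref{Eq-Incl-Mpq} is contained in the full monotonicity $M^{p_1,q_1}_{m_1}(\cG)\hookrightarrow M^{p_2,q_2}_{m_2}(\cG)$, which is what the argument below actually delivers.)

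First I would fix $g\in\cS_\cC(\cG)\smallsetminus\{0\}$, which is admissible for all the coorbit spaces in play by Lemma \ref{Lem-SC-window-space}. Then I would choose relatively compact unit neighbourhoods $V_\cG\subseteq\cG$ and $V_{\hcG}\subseteq\hcG$ and set $V\coloneqq V_\cG\times V_{\hcG}\times\bT$ as in \eqref{Eq-V-open-neigh}, together with relatively separated families $X=\{x_i\}_{i\in I}\subseteq\cG$, $\Xi=\{\xi_j\}_{j\in J}\subseteq\hcG$ and the associated tensor-product BUPU $\Delta$ of Lemma \ref{Lem-BUPU-tensor}, which is a $V$-BUPU on $\bH_\cG$ with localizing family $\mathfrak{X}=X\times\Xi\times\{1\}$. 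The chain of reductions is then: (a) the Wiener amalgam norm is independent of the defining neighbourhood (Lemma \ref{Lem-WienerSpaceIndependence}), so $\norm{\cdot}_{W(L^\infty,L^{p,q}_{\tilde{m}})}\asymp\norm{\cdot}_{W_V(L^\infty,L^{p,q}_{\tilde{m}})}$; (b) by Lemma \ref{Lem-Lpq-Wiener-discrete-equivalence}, $\norm{F}_{W_V(L^{p,q}_{\tilde{m}})}\asymp\norm{(\norm{\delta_l\cdot F}_{L^\infty})_l}_{(L^{p,q}_{\tilde{m}}(\bH_\cG))_d(\mathfrak{X},V)}$; (c) by Lemma \ref{Lem-Lpq-disc-ell-p-q}, $(L^{p,q}_{\tilde{m}}(\bH_\cG))_d(\mathfrak{X},V)=\ell^{p,q}_{m_{\mathbf{X}}}(I\times J)$, where $m_{\mathbf{X}}(i,j)=m(x_i,\xi_j)$, with equivalent quasi-norms. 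Applying this for $(p_1,q_1,m_1)$ and for $(p_2,q_2,m_2)$ --- using the same $g$, $V$, $\mathfrak{X}$, $\Delta$ throughout, so that only the spaces change --- the desired inequality is reduced to the sequence-space inclusion $\ell^{p_1,q_1}_{m_{1,\mathbf{X}}}(I\times J)\hookrightarrow\ell^{p_2,q_2}_{m_{2,\mathbf{X}}}(I\times J)$.

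This final inclusion is elementary and is exactly Corollary \ref{Cor-Lpq-disc-ell-p-q}: from $0<p_1\le p_2\le\infty$, $0<q_1\le q_2\le\infty$, and $m_2\lesssim m_1$ (hence $m_{2,\mathbf{X}}\lesssim m_{1,\mathbf{X}}$ pointwise on $I\times J$), the standard nesting of weighted mixed-norm $\ell^{p,q}$ spaces gives $\norm{(\lambda_{ij})}_{\ell^{p_2,q_2}_{m_{2,\mathbf{X}}}}\lesssim\norm{(\lambda_{ij})}_{\ell^{p_1,q_1}_{m_{1,\mathbf{X}}}}$. Reading the equivalences in (a)--(c) backwards then yields $\norm{f}_{M^{p_2,q_2}_{m_2}}\lesssim\norm{f}_{M^{p_1,q_1}_{m_1}}$, which proves \eqref{Eq-Incl-Mpq}.

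I do not expect a real obstacle here, because the technical weight of the argument has been front-loaded into Lemmas \ref{Lem-Lpq-Wiener-discrete-equivalence} and \ref{Lem-Lpq-disc-ell-p-q}; the latter, identifying the abstract discrete coorbit space $(L^{p,q}_{\tilde{m}}(\bH_\cG))_d(\mathfrak{X},V)$ with the concrete weighted $\ell^{p,q}_{m_{\mathbf{X}}}(I\times J)$, is the delicate input, and the comparison constants it produces (Remark \ref{Rem-equivalence-constants}) are finite for each fixed pair of exponents, which is all that is needed here. The only points requiring genuine care are bookkeeping ones: fixing the window, the neighbourhood $V$ and the BUPU \emph{once}, so that the comparison constants for the two spaces are linked through the same data; and noting that $\cR_{\tilde{v}}$ is common to source and target (because $\cR_{\tilde{v}}$ and $\tilde{v}$ do not depend on $p,q,m$), so that $W^\vr_g f$ is unambiguously defined on both sides and the inequality on wavelet transforms really transfers to an inclusion of modulation spaces.
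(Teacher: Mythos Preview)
Your proposal is correct and follows essentially the same route as the paper: both reduce the inclusion to the discrete level via the tensor-product BUPU of Lemma \ref{Lem-BUPU-tensor} and the Wiener--discrete equivalence (Lemma \ref{Lem-Lpq-Wiener-discrete-equivalence}/\eqref{Eq-discrete-equiv-norm-Wiener}), then invoke the sequence-space inclusion of Corollary \ref{Cor-Lpq-disc-ell-p-q}. The only cosmetic difference is that the paper stops at the $(L^{p,q}_{\tilde{m}})_d(\mathfrak{X},V)$ level and applies Corollary \ref{Cor-Lpq-disc-ell-p-q} directly, whereas you unwind one step further to $\ell^{p,q}_{m_{\mathbf{X}}}$ via Lemma \ref{Lem-Lpq-disc-ell-p-q} --- which is precisely what that corollary's proof does internally.
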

\begin{proof}
	Under  the hypothesis of Lemma \ref{Lem-BUPU-tensor}, it is always possible to find a BUPU on $\bH_\cG$ of the type \eqref{Eq-BUPU-tensor}, see \cite[Lemma 2.3.12]{Voig2015}. For such a BUPU $$\Psi\otimes\Gamma\otimes\mathbb{I}=\{\psi_i\otimes\gamma_j\otimes\chi_\bT,\,(i,j)\in I\times J\},$$ the corresponding localizing family $\mathfrak{X}=X\times\Xi\times\{1\}$ fulfils the requirements of Corollary \ref{Cor-Lpq-disc-ell-p-q}. To get the desired result we use the equivalence of (quasi-)norms shown in \eqref{Eq-discrete-equiv-norm-Wiener}:
	\begin{align*}
	\norm{f}_{M^{p_2,q_2}_{m_2}}&\asymp\norm{W^\vr_g f}_{W(L^{p_2,q_2}_{\tilde{m}_2})}\asymp\norm{\left(\norm{\left(\psi_i\otimes\gamma_j\otimes\chi_\bT\right)\cdot W^\vr_g f}_{L^\infty}\right)_{i,j}}_{(L^{p_2,q_2}_{\tilde{m_2}}(\bH_\cG))_d(\mathfrak{X},V)}\\
	&\lesssim \norm{\left(\norm{\left(\psi_i\otimes\gamma_j\otimes\chi_\bT\right)\cdot W^\vr_g f}_{L^\infty}\right)_{i,j}}_{(L^{p_1,q_1}_{\tilde{m_1}}(\bH_\cG))_d(\mathfrak{X},V)}\\
	&\asymp\norm{W^\vr_g f}_{W(L^{p_1,q_1}_{\tilde{m}_1})}\asymp\norm{f}_{M^{p_1,q_1}_{m_1}}.
	\end{align*}
	This concludes the proof.
\end{proof}

If $m\in\cM_v(\cG\times\hcG)$, from the submultiplicativity and symmetry of $v$ we have $1/m\in\cM_v(\cG\times\hcG)$. This remark is implicitly used in the following issue.
\begin{proposition}\label{Pro-Duality-Mpq-Banach}
	If $1\leq p,q<\infty$, then $\left(M^{p,q}_m (\cG)\right)'=M^{p',q'}_{1/m}(\cG)$ under the duality
	\begin{equation}
		\la f,h\ra= \la V_g f,V_g h\ra_{L^2(\cG\times\hcG)},
	\end{equation}
	for all $f\in M^{p,q}_m(\cG)$, $h\in M^{p',q'}_{1/m}(\cG)$ and some $g\in\cS_\cC(\cG)\smallsetminus\{0\}$.
\end{proposition}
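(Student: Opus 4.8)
The plan is to prove the two continuous inclusions $M^{p',q'}_{1/m}(\cG)\hookrightarrow(M^{p,q}_m(\cG))'$ and $(M^{p,q}_m(\cG))'\hookrightarrow M^{p',q'}_{1/m}(\cG)$ separately, after fixing a window $g\in\cS_\cC(\cG)$ normalised by $\norm{g}_{L^2}=1$ and working throughout with the coorbit realisation $M^{p,q}_m(\cG)=\Co(L^{p,q}_{\tilde m}(\bH_\cG))$ together with the identity $\abs{W^\vr_g\cdot}=\abs{V_g\cdot}$ of Remark~\ref{Rem-WaveletSchrodinger}. Recall that $1/m\in\cM_v(\cG\times\hcG)$, so both spaces are well defined.

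For the first inclusion I would first use the solid embedding $W(L^\infty,L^{p,q}_m(\cG\times\hcG))\hookrightarrow L^{p,q}_m(\cG\times\hcG)$ (pointwise domination by the local maximal function), so that $V_gf\in L^{p,q}_m(\cG\times\hcG)$ for $f\in M^{p,q}_m(\cG)$ and $V_gh\in L^{p',q'}_{1/m}(\cG\times\hcG)$ for $h\in M^{p',q'}_{1/m}(\cG)$, with norms controlled by $\norm{f}_{M^{p,q}_m}$, respectively $\norm{h}_{M^{p',q'}_{1/m}}$. Since $1\le p,q<\infty$, splitting the weight as $m\cdot m^{-1}\equiv 1$ and applying Hölder's inequality in the mixed-norm space $L^{p,q}(\cG\times\hcG)$ shows that $\la f,h\ra\coloneqq\la V_gf,V_gh\ra_{L^2(\cG\times\hcG)}$ is absolutely convergent with $\abs{\la f,h\ra}\lesssim\norm{f}_{M^{p,q}_m}\norm{h}_{M^{p',q'}_{1/m}}$; independence of the pairing from $g$ follows by approximating $f$ by elements of $\cS_\cC(\cG)$ and invoking the orthogonality relations for $W^\vr$, which collapse the pairing onto the $\cR_{\tilde v}$--$\cT_{\tilde v}$ duality, and injectivity of $h\mapsto\la\cdot,h\ra$ is then clear since $\cS_\cC(\cG)$ is dense in $\cT_{\tilde v}$ by Lemma~\ref{Lem-Sc-dense-Tv}.

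For the reverse inclusion, since $1\le p,q<\infty$ the space $L^{p,q}_{\tilde m}(\bH_\cG)$ has order-continuous norm, so by the coorbit theory of Appendix~A the test space $\cT_{\tilde v}$ — hence $\cS_\cC(\cG)$, by Lemma~\ref{Lem-Sc-dense-Tv} — is dense in $M^{p,q}_m(\cG)$. Thus any $\ell\in(M^{p,q}_m(\cG))'$ restricts to a bounded functional on $\cT_{\tilde v}$, i.e. there is $h\in\cR_{\tilde v}$ with $\ell(f)=\la f,h\ra$ for all $f\in\cS_\cC(\cG)$, and by density together with the bound of the first part it only remains to prove $h\in M^{p',q'}_{1/m}(\cG)$ with $\norm{h}_{M^{p',q'}_{1/m}}\lesssim\opnorm{\ell}$. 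For this I would fix a tensor BUPU as in Lemma~\ref{Lem-BUPU-tensor}, with localising family $\mathfrak X=X\times\Xi\times\{1\}$, $X=\{x_i\}_{i\in I}$, $\Xi=\{\xi_j\}_{j\in J}$, and invoke Lemmas~\ref{Lem-Lpq-Wiener-discrete-equivalence} and~\ref{Lem-Lpq-disc-ell-p-q} to express $\norm{h}_{M^{p',q'}_{1/m}}\asymp\norm{V_gh}_{W(L^\infty,L^{p',q'}_{1/m})}\asymp\norm{(\,\essupp_{(x_i,\xi_j)+V_{1,2}}\abs{V_gh}\,)_{i,j}}_{\ell^{p',q'}_{1/m_{\mathbf X}}(I\times J)}$, where $m_{\mathbf X}(i,j)=m(x_i,\xi_j)$ and $V_{1,2}$ is as in Lemma~\ref{Lem-Lpq-disc-ell-p-q}. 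Because $1\le p,q<\infty$, this last norm is the dual norm of $\ell^{p,q}_{m_{\mathbf X}}(I\times J)$, so it suffices to bound $\abs{\sum_{i,j}b_{ij}\,\overline{V_gh(\mathbf y_{ij})}}$ uniformly over finitely supported families $(b_{ij})$ with $\norm{(b_{ij})}_{\ell^{p,q}_{m_{\mathbf X}}}\le1$, where (using continuity of $V_gh$) $\mathbf y_{ij}\in(x_i,\xi_j)+V_{1,2}$ is chosen so that $\abs{V_gh(\mathbf y_{ij})}$ nearly attains $\essupp_{(x_i,\xi_j)+V_{1,2}}\abs{V_gh}$. Testing $\ell$ against $f\coloneqq\sum_{i,j}b_{ij}\,\pi(\mathbf y_{ij})g\in\cS_\cC(\cG)$ and using that the Gabor synthesis map $\ell^{p,q}_{m_{\mathbf X}}(I\times J)\to M^{p,q}_m(\cG)$, $(b_{ij})\mapsto\sum b_{ij}\pi(\mathbf y_{ij})g$, is bounded — a consequence of the coorbit atomic-decomposition machinery of Appendix~A, available because $g\in\cS_\cC(\cG)\subseteq\sA_{\tilde v}$ by Lemma~\ref{Lem-SC-window-space} — one gets, via the orthogonality relations, $\abs{\sum_{i,j}b_{ij}\overline{V_gh(\mathbf y_{ij})}}=\abs{\la f,h\ra}=\abs{\ell(f)}\le\opnorm{\ell}\norm{f}_{M^{p,q}_m}\lesssim\opnorm{\ell}$. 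This forces $h\in M^{p',q'}_{1/m}(\cG)$ with the required estimate, and then $\ell(f)=\la f,h\ra$ holds on all of $M^{p,q}_m(\cG)$ by density.

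The main obstacle is the second inclusion, and inside it the interplay between two ingredients: one must work with the Wiener-amalgam form of the norm (not with the bare $L^{p,q}_m$-norm of $V_gh$) in order to discretise through Lemmas~\ref{Lem-Lpq-Wiener-discrete-equivalence}--\ref{Lem-Lpq-disc-ell-p-q}, and one needs the boundedness of the Gabor synthesis operator on $M^{p,q}_m(\cG)$, which is exactly where $\cS_\cC(\cG)\subseteq\sA_{\tilde v}$ (Lemma~\ref{Lem-SC-window-space}) is used; the passage from essential suprema to point samples via continuity of $V_gh$ is a minor but necessary technicality. Alternatively, the proposition also follows directly from the general duality theorem for coorbit spaces recorded in Appendix~A, once one checks that the associate (Köthe) dual of $L^{p,q}_{\tilde m}(\bH_\cG)$ is $L^{p',q'}_{1/\tilde m}(\bH_\cG)$ for $1\le p,q<\infty$ and that the induced pairing is precisely $\la V_gf,V_gh\ra$.
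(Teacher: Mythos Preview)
Your argument is essentially sound, but it is considerably more elaborate than what the paper actually does. The paper's proof exploits the Banach-range hypothesis $1\le p,q$ in a much more direct way: since $L^{p,q}_{\tilde m}(\bH_\cG)$ is then a solid bi-invariant \emph{Banach} function space continuously embedded in $L^1_{loc}(\bH_\cG)$, Theorem~\ref{Th-Rauhut-Coo-Th6.1} gives $\Co(L^{p,q}_{\tilde m})=\CoFG(L^{p,q}_{\tilde m})$ with $\norm{V_gf}_{W(L^{p,q}_m)}\asymp\norm{V_gf}_{L^{p,q}_m}$. In other words the Wiener-amalgam layer disappears entirely, and one is back to the classical Feichtinger--Gr\"ochenig description $M^{p,q}_m(\cG)=\{f\in\cR_{\tilde v}:V_gf\in L^{p,q}_m\}$. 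From there the paper simply invokes the standard duality argument of \cite[Theorem~11.3.6]{Grochenig_2001_Foundations}, using only that $(L^{p,q}_m)'=L^{p',q'}_{1/m}$ on the $\sigma$-finite space $\cG\times\hcG$.

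Your route, by contrast, stays inside the Wiener-amalgam framework and discretises via Lemmas~\ref{Lem-Lpq-Wiener-discrete-equivalence}--\ref{Lem-Lpq-disc-ell-p-q} in order to test against Gabor atoms. This is more self-contained (you do not cite \cite{Grochenig_2001_Foundations}), but it carries one technical wrinkle you should tighten: the synthesis bound you invoke for $f=\sum_{i,j}b_{ij}\,\pi(\mathbf y_{ij})g$ must be \emph{uniform} in the choice of the sample points $\mathbf y_{ij}\in(x_i,\xi_j)+V_{1,2}$. Theorem~\ref{Th-Voigt-Th-2.4.19}(ii) as stated in the Appendix gives the synthesis estimate only for the \emph{fixed} localising family of a $U_0$-BUPU, not for arbitrary perturbations within cells; and Theorem~\ref{Th-synth-op-continuity}, which would cover this, comes later in the paper. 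The gap is easily closed---the perturbed family $\{\mathbf y_{ij}\}$ is relatively separated with constants controlled by those of $\mathfrak X$ and $\overline{V_{1,2}}$, and $m(\mathbf y_{ij})\asymp m(x_i,\xi_j)$ uniformly by $v$-moderateness, so the proof of Theorem~\ref{Th-synth-op-continuity} carries over verbatim---but you should say so explicitly. Finally, your closing remark that the result ``follows directly from the general duality theorem for coorbit spaces recorded in Appendix~A'' is not quite accurate: no such duality theorem appears in the paper's Appendix; what is there (Theorem~\ref{Th-Rauhut-Coo-Th6.1}) is the $\Co=\CoFG$ identification, which is precisely the shortcut the paper takes.
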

\begin{proof}
	For $1\leq p,q\leq\infty$, $L^{p,q}_{\tilde{m}}(\bH_\cG)$ is a solid bi-invariant Banach function space continuously embedded into $L^1_{loc}(\bH_\cG)$. Therefore, from Theorem \ref{Th-Rauhut-Coo-Th6.1} combined with Remark \ref{Rem-WaveletSchrodinger}, we have
	\begin{equation}\label{Eq-Proof-Duality-1}
		M^{p,q}_m(\cG)=\Co(L^{p,q}_{\tilde{m}}(\bH_\cG))=\CoFG(L^{p,q}_{\tilde{m}}(\bH_\cG))=\{f\in\cR_{\tilde{v}}\,|\,V_g f\in L^{p,q}_{m}(\cG\times\hcG)\}
	\end{equation} 
	with
	\begin{equation}\label{Eq-Proof-Duality-2}
		\norm{V_gf}_{W(L^{p,q}_m)}\asymp\norm{V_gf}_{L^{p,q}_m}.
	\end{equation}
	The proof then goes as in \cite[Theorem 11.3.6]{Grochenig_2001_Foundations}, after noticing that we can identify $(L^1_m)'$ with $L^\infty_{1/m}$ since under our assumptions $\cG\times\hcG$ is $\si$-finite, similarly for mixed-norm cases. 
\end{proof}

\begin{theorem}\label{Th-S_C-dense-Banach}
		$(i)$ If $0< p,q<\infty$, then $\cS_\cC(\cG)$ is quasi-norm-dense in $M^{p,q}_m(\cG)$.
		$(ii)$ If $1\leq p,q\leq\infty$ and at least one between $p$ and $q$ is equal to $\infty$, then $\cS_\cC(\cG)$ is w-$*$-dense in $M^{p,q}_m(\cG)$.
\end{theorem}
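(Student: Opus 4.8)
The plan is to treat the two statements separately, since for $p$ or $q$ equal to $\infty$ one cannot hope for quasi-norm density. For $(i)$ I would deduce density from the atomic decomposition furnished by the coorbit machinery of Appendix~A, exploiting that $\cS_\cC(\cG)$ is a linear span of time-frequency shifts of the Gaussian and hence invariant under the operators $\vr(x,\xi,\tau)=\tau M_\xi T_x$. For $(ii)$ I would argue by duality, reducing weak-$*$ density to the triviality of an annihilator and closing with Lemma~\ref{Lem-Sc-dense-Tv}. Note that the naive route ``$\cS_\cC(\cG)$ dense in $\cT_{\tilde{v}}$ and $\cT_{\tilde{v}}$ dense in $M^{p,q}_m(\cG)$'' is \emph{not} available for $0<p,q<1$, since then $\cT_{\tilde{v}}$ is not even contained in $M^{p,q}_m(\cG)$; this is why the abstract atomic decomposition is the right tool.

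For $(i)$, I would first record that for $0<p,q<\infty$ the solid QBF space $L^{p,q}_{\tilde{m}}(\bH_\cG)$ has absolutely continuous (quasi-)norm: $\bT$ is compact, $\cG$ and $\hcG$ are $\sigma$-compact, and the exponents are finite, so $\norm{F\chi_{A_n}}_{L^{p,q}_{\tilde{m}}}\to0$ whenever $\chi_{A_n}\downarrow0$ a.e. Under this condition the coorbit theory of Appendix~A (\cite{Rauhut2007Coorbit,Voig2015}) supplies a \emph{quasi-norm convergent} atomic decomposition: choosing the Gaussian $\f\in\cS_\cC(\cG)\subseteq\sA_{\tilde{v}}$ as synthesising window (admissible by Lemma~\ref{Lem-SC-window-space}) and a sufficiently dense relatively separated family $\{(x_i,\xi_i,\tau_i)\}_i\subseteq\bH_\cG$, every $f\in M^{p,q}_m(\cG)=\Co(L^{p,q}_{\tilde{m}})$ is of the form $f=\sum_i c_i\,\vr(x_i,\xi_i,\tau_i)\f$ with convergence in $\norm{\cdot}_{M^{p,q}_m}$. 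Since $\vr(x_i,\xi_i,\tau_i)\f=\tau_i\,\pi(x_i,\xi_i)\f\in\cS_\cC(\cG)$, every finite partial sum lies in $\cS_\cC(\cG)$, and letting the partial sums converge proves quasi-norm density.

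For $(ii)$, with $1\leq p,q\leq\infty$ and $p=\infty$ or $q=\infty$, I would identify $M^{p,q}_m(\cG)$ with $\big(M^{p',q'}_{1/m}(\cG)\big)'$ under the pairing $\la f,h\ra=\la V_gf,V_gh\ra_{L^2(\cG\times\hcG)}$, extending Proposition~\ref{Pro-Duality-Mpq-Banach} to these cases (where it rests on $(L^{1})'=L^{\infty}$ in the relevant factor); on $\cS_\cC(\cG)\subseteq\cT_{\tilde{v}}$ this pairing coincides, up to the normalisation of $g$, with the $\cR_{\tilde{v}}$--$\cT_{\tilde{v}}$ duality. By the bipolar theorem it then suffices to show that the only $h\in M^{p',q'}_{1/m}(\cG)$ with $\la s,h\ra=0$ for all $s\in\cS_\cC(\cG)$ is $h=0$. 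Taking $s=\pi(x,\xi)\f$ for all $(x,\xi)\in\cG\times\hcG$ shows that $h$, regarded as a continuous antilinear functional on $\cT_{\tilde{v}}$, vanishes on the generators of $\cS_\cC(\cG)$, hence on all of $\cS_\cC(\cG)$, which is dense in $\cT_{\tilde{v}}$ by Lemma~\ref{Lem-Sc-dense-Tv}; therefore $h=0$. In the two cases $(p,q)=(1,\infty)$ and $(p,q)=(\infty,1)$, where $M^{p,q}_m(\cG)$ need not literally be a dual, ``weak-$*$'' is to be read as density for the topology $\sigma\big(M^{p,q}_m(\cG),M^{p',q'}_{1/m}(\cG)\big)$, and the argument is unchanged.

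The main obstacle lies in $(i)$: one has to be sure that the abstract quasi-Banach coorbit theory of Appendix~A indeed upgrades the atomic expansion to \emph{quasi-norm} — not merely weak-$*$ — convergence once $L^{p,q}_{\tilde{m}}(\bH_\cG)$ has absolutely continuous norm, and that the synthesising atom may be chosen inside $\cS_\cC(\cG)$; these are exactly what Appendix~A and Lemma~\ref{Lem-SC-window-space} are designed to provide. In $(ii)$ the only delicate point is checking that the duality pairing, restricted to $\cS_\cC(\cG)$, agrees with the reservoir--test-space pairing, after which Lemma~\ref{Lem-Sc-dense-Tv} finishes the proof at once.
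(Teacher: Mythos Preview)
Your argument for $(i)$ is essentially the paper's: both invoke the atomic decomposition of Theorem~\ref{Th-Voigt-Th-2.4.19} with the Gaussian $\f$ as atom, and use that for $p,q<\infty$ the finite sequences are dense in the discrete space $(L^{p,q}_{\tilde m})_d=\ell^{p,q}_{m_{\mathbf X}}$ (your ``absolutely continuous quasi-norm'' is the continuous-side counterpart of this), so the expansion \eqref{Eq-decomp-Rv} converges in quasi-norm and its finite partial sums, which lie in $\cS_\cC(\cG)$, do the job.

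For $(ii)$ you take a genuinely different route. The paper identifies $M^\infty_m=(M^1_{1/m})'$, fixes $f$ in the pre-annihilator ${}^\perp\cS_\cC(\cG)\subseteq M^1_{1/m}$, and for each $(u,\o)$ tests against $h=\pi(u,\o)\f$; from $\la V_\f f,V_\f h\ra_{L^2}=0$ it deduces (via the orthogonality relations, in effect) that $V_\f f(u,\o)=0$, hence $V_\f f\equiv0$ and $f=0$ by injectivity of $W^\vr_\f$ on $\cR_{\tilde v}$. You instead note that any $h$ in the predual $M^{p',q'}_{1/m}\subseteq\cR_{\tilde v}$ is already an antilinear functional on $\cT_{\tilde v}$; if it annihilates $\cS_\cC(\cG)$ under the modulation-space pairing, then---once one checks that this pairing restricts on $\cS_\cC(\cG)$ to a nonzero multiple of the $\cR_{\tilde v}$--$\cT_{\tilde v}$ pairing, which follows from the reproducing formula in coorbit theory---it vanishes on a dense subspace of $\cT_{\tilde v}$ by Lemma~\ref{Lem-Sc-dense-Tv}, hence is zero. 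Your approach is more abstract and handles all the mixed cases $(p,\infty)$, $(\infty,q)$ uniformly, at the cost of the pairing-compatibility check you correctly flag; the paper's approach is more hands-on and makes the vanishing of $V_\f f$ explicit, but treats only $p=q=\infty$ in detail.
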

\begin{proof}
	For any $0\leq p,q\leq\infty$,  $\cS_\cC(\cG)$ is a subspace of $M^{p,q}_m (\cG)$, cf.  the computations in the proof of Lemma \ref{Lem-SC-window-space} and the inclusions in \eqref{Eq-Incl-Mpq}.\par
	$(i)$ Let $\f$ be as in \eqref{Gauss} and consider the relatively compact unit neighbourhood $U_0$ coming from Theorem \ref{Th-Voigt-Th-2.4.19}. Without loss of generality we can assume $U_0=V_{\cG}\times V_{\hcG}\times \bT=V$ as in \eqref{Eq-V-open-neigh}, see the proofs of \cite[Theorem 2.4.19]{Voig2015} and \cite[Lemma 2.4.17]{Voig2015}. Then there exists a $U_0$-BUPU with localizing family $\mathfrak{X}=\{(x_i,\xi_j,1)\}_{(i,j)\in I\times J}$ such that any $f\in M^{p,q}_m(\cG)$ can be written as 
	\begin{equation}\label{Eq-decomp-Rv}
		f=\sum_{i\in I,j\in J}\lambda_{ij}(f)\vr(x_i,\xi_j,1)\f=\sum_{i\in I,j\in J}\lambda_{ij}(f)\pi(x_i,\xi_j)\f,
	\end{equation}
	with unconditional convergence in $M^{p,q}_m(\cG)$ since the finite sequences are dense in $\ell^{p,q}_{m_{\mathbf{X}}}(I\times J)=(L^{p,q}_{\tilde{m}}(\bH_\cG))_d(\mathfrak{X},V)$,  $p,q<\infty$.\\
	$(ii)$ We show the case $p=q=\infty$, the remaining ones are  analogous. From Proposition  \ref{Pro-Duality-Mpq-Banach}, $M^\infty_m(\cG)$ can be seen as the dual of $M^1_{1/m}(\cG)$. Therefore, with 	$\f$ the Gaussian  in \eqref{Gauss},
	\begin{equation}
	{}^\perp\cS_\cC(\cG)=\{f\in M^{1}_{1/m}\,|\,\la V_\f f,V_\f h\ra=0,\quad\forall\,h\in\cS_\cC(\cG)\}.
	\end{equation} 
For fixed $(u,\o)\in\cG\times\hcG$ consider $h=\pi(u,\o)\f\in\cS_\cC(\cG)$. From Lemma \ref{Lem-STFTinS_C}
	\begin{equation}
	V_\f h(x,\xi)=\overline{\la\xi-\o,u\ra}T_{(u,\o)}V_\f \f(x,\xi).
	\end{equation}
	In particular, from \eqref{Eq-SFTFgauss}, $V_\f h(u,\o)\neq 0$ and it is continuous. Therefore if $f\in {}^\perp\cS_\cC(\cG)$
	\begin{equation*}
	\la V_\f f,V_\f h\ra=\int_{\cG\times\hcG}V_\f f(x,\xi)\overline{V_\f h(x,\xi)}\,dxd\xi=0\quad\Rightarrow\quad V_\f f\overline{V_\f h}=0\,\text{a.e.},
	\end{equation*}
	but since $V_\f f\overline{V_\f h}$ is continuous this implies $V_\f f(x,\xi)\overline{V_\f h(x,\xi)}=0$ for every $(x,\xi)\in\cG\times\hcG$. Necessarily
	 \,$V_\f f$ vanishes on a neighbourhood of $(u,\o)$. On account of the arbitrariness of the point $(u,\o)\in\cG\times\hcG$, we have $V_\f f\equiv 0$ which also means $W^\vr_\f f\equiv 0$. Since the application
	\begin{equation*}
	W^\vr_\f\colon\cR_{\tilde{v}}\to C(\bH_\cG)\cap L^\infty_{1/\tilde{v}}(\bH_\cG)
	\end{equation*}
	is injective, see \cite[Lemma 2.4.8]{Voig2015}, we infer $f=0$. Therefore ${}^\perp\cS_\cC(\cG)=\{0\}$ and
	\begin{equation*}
	\overline{\cS_\cC(\cG)}^{w-\ast}=\left({}^\perp\cS_\cC(\cG)\right)^\perp=\left(\{0\}\right)^\perp=M^{\infty}_m(\cG).
	\end{equation*}
	This concludes the proof.
\end{proof}

\begin{lemma}
	For every , $0<p,q\leq\infty$ and $m\in\cM_v(\cG\times\hcG)$
	\begin{equation*}
		\sA_{\tilde{v}}(\cG)\subseteq M^{p,q}_m(\cG).
	\end{equation*}
\end{lemma}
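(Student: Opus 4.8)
The plan is to reduce the statement, via the known inclusions between modulation spaces, to the single coorbit space $M^r_v(\cG)$ with $r\coloneqq\min\{1,p,q\}$. Fix $f\in\sA_{\tilde{v}}(\cG)$. By \eqref{Eq-maximal-window-space-ALL-modulation-spaces} we have $f\in\bA^r_{\tilde{v}}$, and by Lemma~\ref{Lem-SC-window-space} also $\cS_\cC(\cG)\subseteq\sA_{\tilde{v}}\subseteq\bA^r_{\tilde{v}}$; choose a non-zero $g\in\cS_\cC(\cG)$.

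Since $f,g\in\bA^r_{\tilde{v}}$, formula \eqref{Eq-remark-bA^r_v} yields
\[
	W^\vr_g f\in W^R(L^\infty,W(L^\infty,L^r_{\tilde{v}}(\bH_\cG))).
\]
Next I would record the elementary embedding $W^R(L^\infty,W(L^\infty,Z))\hookrightarrow W(L^\infty,Z)$, valid for every solid QBF space $Z$: the value of the left-hand norm on a function $F$ is $\norm{\sfM_V[\sfM^R_V F]}_{Z}$ (cf. the computation in the proof of Lemma~\ref{Lem-SC-window-space}), and since $\sfM^R_V F\geq|F|$ a.e. while $\sfM_V$ is monotone, $\sfM_V[\sfM^R_V F]\geq\sfM_V F$, so $\norm{\sfM_V F}_{Z}\leq\norm{\sfM_V[\sfM^R_V F]}_{Z}$ by solidity. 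Taking $Z=L^r_{\tilde{v}}(\bH_\cG)$ gives $W^\vr_g f\in W(L^\infty,L^r_{\tilde{v}}(\bH_\cG))$. As moreover $f\in\sA_{\tilde{v}}\subseteq L^2(\cG)$, which embeds canonically into the reservoir $\cR_{\tilde{v}}$ (cf. Remark~\ref{Rem-S_0}), the very definition \eqref{Eq-Def-coorbit-Lpq-Heis} of the coorbit space gives $f\in\Co(L^r_{\tilde{v}}(\bH_\cG))=M^r_v(\cG)$.

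It remains to note $M^r_v(\cG)\hookrightarrow M^{p,q}_m(\cG)$. Since $r=\min\{1,p,q\}\leq p$ and $r\leq q$, and since $v$-moderateness of $m$ forces $m\lesssim v$ (evaluate the moderateness inequality at the identity of $\cG\times\hcG$), Proposition~\ref{Pro-Incl-Mpq} applied with $(p_1,q_1)=(r,r)$, $(p_2,q_2)=(p,q)$, $m_1=v$, $m_2=m$ yields $M^{r,r}_v(\cG)=M^r_v(\cG)\hookrightarrow M^{p,q}_m(\cG)$. Hence $f\in M^{p,q}_m(\cG)$, and since $f$ was an arbitrary element of $\sA_{\tilde{v}}(\cG)$ the inclusion follows.

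I expect no serious obstacle: the only non-bookkeeping ingredient is the embedding $W^R(L^\infty,W(L^\infty,Z))\hookrightarrow W(L^\infty,Z)$ for solid $Z$, after which everything is a combination of \eqref{Eq-remark-bA^r_v}, the coorbit definition, and Proposition~\ref{Pro-Incl-Mpq}; the cases $p=\infty$ and/or $q=\infty$ need no separate treatment, being already covered by Proposition~\ref{Pro-Incl-Mpq}.
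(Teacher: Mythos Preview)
Your proof is correct and follows essentially the same route as the paper's: both reduce to showing $\sA_{\tilde{v}}\subseteq M^r_v$ via \eqref{Eq-remark-bA^r_v} and the embedding $W^R(L^\infty,W(L^\infty,L^r_{\tilde{v}}))\hookrightarrow W(L^\infty,L^r_{\tilde{v}})$, then invoke the inclusion relations for modulation spaces. The only differences are cosmetic: the paper cites \cite[p.~113]{Voig2015} (cf.\ \eqref{Eq-Inclusions-Wiener-Voig-p113}) for that Wiener-space embedding while you supply the one-line solidity argument directly, and you make the step $m\lesssim v$ explicit where the paper leaves it implicit in ``inclusion relations for modulation spaces''.
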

\begin{proof}
	We just need to show that for every $0<r\leq 1$ the inclusion
	\begin{equation}
		\sA_{\tilde{v}}(\cG)\subseteq M^r_v(\cG)
	\end{equation}
	holds true, then the claim follows from the inclusion relations for modulation spaces.
	From \eqref{Eq-remark-bA^r_v} and the inclusion relations in \cite[p. 113]{Voig2015}, if $g\in \sA_{\tilde{v}}\subseteq \bA^r_{\tilde{v}}$ and $\f$ is the Gaussian as in \eqref{Gauss}, we get that
	\begin{equation*}
		W^\vr_\f g\in  W^R(L^\infty,W(L^\infty,L^r_{\tilde{v}}))\hookrightarrow W(L^\infty, L^r_{\tilde{v}}).
	\end{equation*}
	Hence $g\in M^r_v(\cG)$.
\end{proof}

\begin{corollary}
	If $0<p,q<\infty$, then $\sA_{\tilde{v}}$ is quasi-norm-dense in $M^{p,q}_m(\cG)$.
\end{corollary}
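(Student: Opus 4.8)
The plan is to deduce this immediately from the two results that directly precede it together with the density statement in Theorem~\ref{Th-S_C-dense-Banach}. The key observation is the chain of inclusions $\cS_\cC(\cG)\subseteq\sA_{\tilde v}(\cG)\subseteq M^{p,q}_m(\cG)$: the first inclusion is exactly Lemma~\ref{Lem-SC-window-space}, and the second is the lemma stated just above this corollary. Once this sandwich is in place, density of $\sA_{\tilde v}$ in $M^{p,q}_m(\cG)$ follows formally from density of the smaller space $\cS_\cC(\cG)$.

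Concretely, first I would invoke Theorem~\ref{Th-S_C-dense-Banach}$(i)$, which for $0<p,q<\infty$ gives that $\cS_\cC(\cG)$ is quasi-norm-dense in $M^{p,q}_m(\cG)$; that is, for every $f\in M^{p,q}_m(\cG)$ and every $\varepsilon>0$ there is $h\in\cS_\cC(\cG)$ with $\norm{f-h}_{M^{p,q}_m}<\varepsilon$. Since $\cS_\cC(\cG)\subseteq\sA_{\tilde v}(\cG)$ by Lemma~\ref{Lem-SC-window-space}, the very same $h$ lies in $\sA_{\tilde v}(\cG)$, and because $\sA_{\tilde v}(\cG)\subseteq M^{p,q}_m(\cG)$ this approximation takes place inside $M^{p,q}_m(\cG)$. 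Hence every element of $M^{p,q}_m(\cG)$ is a quasi-norm limit of elements of $\sA_{\tilde v}$, which is the claim. (Here one only uses the trivial topological fact that if $A\subseteq B\subseteq X$ and $A$ is dense in $X$, then $B$ is dense in $X$; no completeness or metrizability beyond what is already established for $M^{p,q}_m(\cG)$ is needed, and the quasi-norm — being an $r$-norm with $r=\min\{1,p,q\}/2$ — induces a genuine metric topology, so ``quasi-norm-dense'' is unambiguous.)

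There is essentially no obstacle here: the corollary is a purely formal consequence of Lemma~\ref{Lem-SC-window-space}, the preceding embedding lemma, and Theorem~\ref{Th-S_C-dense-Banach}$(i)$. The only point worth a sentence of care is making explicit that the approximating sequence, produced a priori in $\cS_\cC(\cG)$, already belongs to $\sA_{\tilde v}$, so that no separate approximation argument for $\sA_{\tilde v}$ itself is required.
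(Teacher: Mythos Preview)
Your proposal is correct and matches the paper's own proof essentially verbatim: the paper simply says ``The claim follows from the above theorem, the previous lemma and the inclusion $\cS_\cC\subseteq\sA_{\tilde{v}}$,'' which is exactly the chain $\cS_\cC(\cG)\subseteq\sA_{\tilde v}\subseteq M^{p,q}_m(\cG)$ combined with Theorem~\ref{Th-S_C-dense-Banach}(i) that you spell out.
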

\begin{proof}
	The claim follows from the above theorem, the previous lemma and the inclusion $\cS_\cC\subseteq\sA_{\tilde{v}}$.
\end{proof}

\begin{corollary}\label{Cor-Sc-w*-dense-S'0}
	For every $f\in S'_0(\cG)$ there exists a net $\left(f_\alpha\right)_{\alpha\in A}\subseteq \cS_\cC(\cG)$ such that
	\begin{equation}
		\lim_{\al\in A}\la f_\al, h\ra_{L^2(\cG)}={}_{S'_0}\la f,h\ra_{S_0},\qquad\forall\, h\in S_0(\cG).
	\end{equation}
\end{corollary}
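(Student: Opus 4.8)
The plan is to mimic, in the unweighted setting, the weak-$*$ density argument of Theorem~\ref{Th-S_C-dense-Banach}~(ii): I would show that the pre-annihilator of $\cS_\cC(\cG)$ inside $S_0(\cG)$ is trivial, and then invoke the bipolar theorem. To fix the dual pair, take $v\equiv 1$, so that by Remark~\ref{Rem-S_0} we have $\cT_1=\bG_1=S_0(\cG)$ and $\cR_1=S'_0(\cG)$; by Lemma~\ref{Lem-SC-window-space} (with $v\equiv 1$) one has $\cS_\cC(\cG)\subseteq\sA_{\tilde v}\subseteq\bG_{1}=S_0(\cG)$. The canonical duality $S'_0(\cG)=\left(S_0(\cG)\right)'$ restricts on $S_0(\cG)\times S_0(\cG)$ to the $L^2(\cG)$-inner product (antilinear in the second slot, consistently with $\cR_{\tilde v}$ being the space of antilinear functionals on $\cT_{\tilde v}$), so for $g\in\cS_\cC(\cG)\subseteq L^2(\cG)$ and $h\in S_0(\cG)$ one has ${}_{S'_0}\langle g,h\rangle_{S_0}=\langle g,h\rangle_{L^2(\cG)}$.

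The crucial computation is that of the pre-annihilator
\begin{equation*}
{}^{\perp}\cS_\cC(\cG)=\{h\in S_0(\cG)\,:\,\langle g,h\rangle_{L^2(\cG)}=0\ \ \text{for all}\ g\in\cS_\cC(\cG)\}.
\end{equation*}
If $h$ lies in this set, then choosing $g=\pi(x,\xi)\f=M_\xi T_x\f\in\cS_\cC(\cG)$ for an arbitrary $(x,\xi)\in\cG\times\hcG$ gives $0=\langle M_\xi T_x\f,h\rangle_{L^2(\cG)}=\overline{V_\f h(x,\xi)}$, so $V_\f h\equiv 0$ on $\cG\times\hcG$. By the orthogonality relations for the STFT (the Duflo--Moore operator being the identity; cf. the lemma preceding Lemma~\ref{Lem-Sc-dense-Tv} together with Remark~\ref{Rem-WaveletSchrodinger}), $\norm{V_\f h}_{L^2(\cG\times\hcG)}=\norm{\f}_{L^2(\cG)}\norm{h}_{L^2(\cG)}$; since $\f\neq 0$, this forces $h=0$. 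Hence ${}^{\perp}\cS_\cC(\cG)=\{0\}$.

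By the bipolar theorem, the weak-$*$ closure of $\cS_\cC(\cG)$ in $S'_0(\cG)$ equals $\left({}^{\perp}\cS_\cC(\cG)\right)^{\perp}=\{0\}^{\perp}=S'_0(\cG)$. Therefore, given $f\in S'_0(\cG)$, there is a net $\left(f_\al\right)_{\al\in A}\subseteq\cS_\cC(\cG)$ with ${}_{S'_0}\langle f_\al,h\rangle_{S_0}\to{}_{S'_0}\langle f,h\rangle_{S_0}$ for every $h\in S_0(\cG)$; since each $f_\al\in\cS_\cC(\cG)\subseteq L^2(\cG)$, the left-hand side equals $\langle f_\al,h\rangle_{L^2(\cG)}$, which is exactly the asserted convergence.

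I do not anticipate a serious obstacle: this is essentially a one-line bipolar/annihilator argument, the only points deserving care being the compatibility of the $S'_0$--$S_0$ pairing with the $L^2(\cG)$-inner product on $\cS_\cC(\cG)$ and the injectivity of $V_\f$ on $L^2(\cG)$, both of which follow directly from the tools already established. An equally short alternative is to apply Theorem~\ref{Th-S_C-dense-Banach}~(ii) with $p=q=\infty$ and $m\equiv 1$, after identifying $M^\infty(\cG)=S'_0(\cG)$ (from \eqref{Eq-Def-coorbit-Lpq-Heis} and item~\textbf{J$'$}, since $W(L^\infty,L^\infty)=L^\infty$) and $M^1(\cG)=S_0(\cG)$.
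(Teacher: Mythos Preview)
Your argument is correct, but it differs from the paper's own proof of this corollary. The paper proceeds in two steps: first it invokes Lemma~\ref{Lem-Sc-dense-Tv} (with $v\equiv1$) to get that $\cS_\cC(\cG)$ is norm-dense in $\cT_1=S_0(\cG)$, and then it quotes \cite[Proposition~6.15]{Jakobsen2018} to obtain a bounded net in $S_0(\cG)$ that approximates $f\in S'_0(\cG)$ in the weak-$\ast$ sense; the conclusion follows by combining these two approximations. Your route instead computes the pre-annihilator of $\cS_\cC(\cG)$ directly via the injectivity of $V_\f$ and applies the bipolar theorem, exactly as in the proof of Theorem~\ref{Th-S_C-dense-Banach}~(ii) specialized to $p=q=\infty$, $m\equiv1$ --- as you yourself note at the end. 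The upside of your approach is that it is self-contained and avoids the external reference to Jakobsen; the paper's approach, on the other hand, exploits results already in place and yields (via Jakobsen) a \emph{bounded} approximating net, which the bipolar argument alone does not give.
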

\begin{proof}
	From Lemma \ref{Lem-Sc-dense-Tv} we have that $\cS_\cC(\cG)$ is norm-dense in $\cT_{1}=S_0(\cG)$. From \cite[Proposition 6.15]{Jakobsen2018} there exists a bounded net $(f_\be)_{\be\in B}\subseteq S_0(\cG)$ such that
	\begin{equation}
	\lim_{\be\in B}\la f_\be,h\ra_{L^2(\cG)}={}_{S'_0}\la f,h\ra_{S_0},\qquad\forall\,h\in S_0(\cG).
	\end{equation} 
	This concludes the proof.
\end{proof}

\begin{remark}\label{Rem-Modulation-coincide}
	From  Theorem \ref{Th-S_C-dense-Banach} and relations  \eqref{Eq-Proof-Duality-1} and \eqref{Eq-Proof-Duality-2} it follows that the modulation spaces introduced in Definition \ref{definizione-modulazione} coincide with the classical ones in \cite{feichtinger-modulation,GroStr2007}. This implies that
	\begin{equation}
		M^1_m(\cG)\cong\left(\text{clos}_{M^\infty_{1/m}}\left(\cS_\cC(\cG)\right)\right)',
	\end{equation}
	the dual of the closure of $\cS_\cC(\cG)$ with respect to the norm on $M^\infty_{1/m}(\cG)$. If $f\in M^{\infty,1}_m(\cG)$ and $g\in M^{1,\infty}_{1/m}(\cG)$, then for $\f$ as in \eqref{Gauss} 
	\begin{equation}\label{Eq-duality-Minfty1-M1infty}
		\abs{\la V_\f f,V_\f g\ra_{L^2(\cG\times\hcG)}}\lesssim\norm{f}_{M^{\infty,1}_m}\norm{g}_{M^{1,\infty}_{1/m}}.
	\end{equation}
	See \cite[Proposition 2.2]{GroStr2007}.\\
	(ii) The theory for $\cG=\rd$ developed in \cite{Galperin2004} is recovered for every $0<p,q\leq\infty$. In fact, it was observed in \cite[Section 8]{Rauhut2007Coorbit} that from \cite[Lemma 3.2]{Galperin2004} follows the equality
	\begin{equation*}
		\Co(L^{p,q}_{\tilde{m}}(\bH_{\rd}))=\{f\in\cS'\,|\,V_gf\in L^{p,q}_m(\rdd)\}\qquad 0<p,q\leq\infty,
	\end{equation*}
	with equivalent (quasi-)norms. 
\end{remark}
	 For a general LCA group $\cG$ it is an open problem whether a construction of the type
	 \begin{equation*}
	 \{f\in\cR_{\tilde{v}}\,|\,V_g f\in L^{p,q}_{m}(\cG\times\hcG)\},
	 \end{equation*}
	 with obvious (quasi-)norm, could make sense or not when at least one between $p$ and $q$ is smaller than $1$. However, we are able to answer affirmatively if $\cG$ is discrete or compact, see the lemma and corollary below.
\begin{lemma}\label{Lem-Analogo-GalSam-Lem3.2}
	Let 
	$0<p,q\leq\infty$. Suppose $\cG$ is discrete or compact. Then there exists $C>0$ such that for every $f\in M^{p,q}_m(\cG)$
	\begin{equation}\label{Eq-Analogo-GalSam-Lem3.2}
		\norm{W^\vr_g f}_{W(L^{p,q}_{\tilde{m}})}\leq C \norm{W^\vr_g f}_{L^{p,q}_{\tilde{m}}},
	\end{equation}
	for some $g\in\cS_\cC(\cG)\smallsetminus\{0\}$.
\end{lemma}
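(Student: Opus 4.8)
The plan is to exploit the flexibility in the choice of the compact open subgroup $\cK$ underlying the generalized Gaussian $\f$ of \eqref{Gauss}, granted by Remark \ref{Rem-scelta-K}. Since $\cG$ is discrete or compact, the structure theorem forces the Euclidean factor to vanish ($d=0$), so $\cG=\cG_0$ and $\hcG=\hcG_0$. I would take $\cK=\{e\}$ when $\cG$ is discrete, so that $\f=\chi_{\{e\}}$ and $\cK^\perp=\hcG$, and $\cK=\cG$ when $\cG$ is compact, so that $\f=\chi_\cG=\mathbf 1$ and $\cK^\perp=\{\hat e\}$; in both situations $\f\in\cS_\cC(\cG)\smallsetminus\{0\}$, $\cK$ is compact open in $\cG$, and $\cK^\perp$ is compact open in $\hcG$. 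The point of the proof is that for this particular window $g=\f$ the maximal function appearing in $\norm{\cdot}_{W(L^{p,q}_{\tilde{m}})}$ is redundant.

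First I would set $Q\coloneqq\cK\times\cK^\perp\times\bT$ and verify that it is a compact open \emph{subgroup} of $\bH_\cG$: it is compact and open, being a product of such sets, and it is closed under the operation \eqref{Eq-operationH} precisely because the cocycle term $\la\kappa,k\ra$ equals $1$ whenever $k\in\cK$ and $\kappa\in\cK^\perp$; the same identity shows $(k,\kappa,s)^{-1}\in Q$ and that $zQ=Qz=(x+\cK)\times(\xi+\cK^\perp)\times\bT$ for $z=(x,\xi,\tau)$. Since $Q$ is in particular a relatively compact unit neighbourhood, the independence of the Wiener amalgam space from the defining window (Lemma \ref{Lem-WienerSpaceIndependence}) allows me to replace $\norm{\cdot}_{W(L^{p,q}_{\tilde{m}})}$ by the equivalent $\norm{\cdot}_{W_Q(L^{p,q}_{\tilde{m}})}$ at the cost of a constant $C=C(Q,m,v)$.

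The heart of the argument is to show that $\lvert W^\vr_\f f\rvert$ is constant along the left cosets of $Q$. Using $\lvert W^\vr_\f f(x,\xi,\tau)\rvert=\lvert V_\f f(x,\xi)\rvert$ together with the explicit shape of $\f$, the defining pairing gives $V_\f f(x,\xi)=\overline{\la\xi,x\ra}\,f(x)$ in the discrete case and $V_\f f(x,\xi)=\widehat f(\xi)$ in the compact case, where $f(x)$, resp. $\widehat f(\xi)$, is the value at $\delta_x\in\cT_{\tilde{v}}$, resp. the distributional Fourier transform, of $f\in\cR_{\tilde{v}}$. Hence $\lvert V_\f f\rvert$ does not depend on $\xi$ (discrete case, where $\cK=\{e\}$ makes the other direction trivial) or does not depend on $x$ (compact case, where $\cK^\perp=\{\hat e\}$ makes the other direction trivial); either way $\lvert V_\f f\rvert$ is constant on the cosets of $\cK\times\cK^\perp$, so $\lvert W^\vr_\f f\rvert$ is constant on $zQ$. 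Consequently $\sfM_Q[W^\vr_\f f]$ coincides with $\lvert W^\vr_\f f\rvert$ a.e., so $\norm{W^\vr_\f f}_{W_Q(L^{p,q}_{\tilde{m}})}=\norm{W^\vr_\f f}_{L^{p,q}_{\tilde{m}}}$, and combining this with the previous paragraph yields \eqref{Eq-Analogo-GalSam-Lem3.2}.

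I expect the real content to lie in two observations rather than in any long computation. The first is recognizing that this choice of $\cK$ is the right move, and seeing why it is available only in the discrete or compact case: for a general $\cG$ the Euclidean component carries a genuine Gaussian that no choice of $\cK$ can collapse, the maximal function then acts non-trivially, and inequality \eqref{Eq-open-problem} becomes the open problem mentioned after it. The second is being careful that $f$ lives in the reservoir $\cR_{\tilde{v}}$ rather than in $L^2(\cG)$, so that the pointwise formulas for $V_\f f$ — and hence the coset-constancy of $\lvert W^\vr_\f f\rvert$ — must be read off from the extended wavelet transform/pairing rather than from an $L^2$ inner product. Checking that $Q$ is a subgroup and computing its cosets is routine but worth writing out, since the non-commutativity of $\bH_\cG$ is exactly what produces the cocycle term that must be controlled.
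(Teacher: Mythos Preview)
Your proof is correct and follows essentially the same approach as the paper: choose $\cK=\{e\}$ (discrete case) or $\cK=\cG$ (compact case), take $g=\f$ and the unit neighbourhood built from $\cK\times\cK^\perp$, and observe that $\lvert V_\f f\rvert$ is constant on its cosets so that the maximal function is trivial and the Wiener norm equals the $L^{p,q}_{\tilde m}$-norm for this window. The only cosmetic difference is that the paper computes $V_\f f$ via a w-$\ast$ approximation by $S_0$ elements (invoking \cite[Proposition~6.15]{Jakobsen2018}), whereas you evaluate the pairing directly on $\delta_x$ or $\la\xi,\cdot\ra$; your route is slightly more direct.
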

\begin{proof}
	If we prove for some suitable unit neighbourhood $Q\subseteq\cG\times\hcG$ that there exists $C>0$ such that
	\begin{equation}
		\norm{V_g f}_{W_Q(L^{p,q}_{m})}\leq C \norm{V_g f}_{L^{p,q}_m},
	\end{equation}
	then \eqref{Eq-Analogo-GalSam-Lem3.2} holds true, see Remark \ref{Rem-WaveletSchrodinger}. Moreover, as shown in Proposition \ref{Pro-Independence-Coo-Reservoir}, we can consider the modulation spaces as subsets of $S'_0(\cG)$.\\
	\emph{Case $\cG$ discrete.} 
	$\hcG$ is compact and the structure theorem reads as $\cG=\cG_0$ and $\hcG=\hcG_0$. In the definition of the Gaussian function \eqref{Gauss} we take, Remark \ref{Rem-scelta-K}, the open and compact subgroup $\cK=\{e\}$, therefore
	\begin{equation*}
		\f(x)\coloneqq\chi_{\{e\}}(x)\eqqcolon\delta_e(x).
	\end{equation*}
	We also choose $Q\coloneqq\{e\}\times\hcG$, which is a measurable, relatively compact, unit neighbourhood. Fix $f\in M^{p,q}_m(\cG)$, from \cite[Proposition 6.15]{Jakobsen2018}, we have that there exists a bounded net $(f_\a)_{\a\in A}\subseteq S_0(\cG)$ such that
	\begin{equation}
		\lim_{\al\in A}\la f_\al,h\ra_{L^2(\cG)}={}_{S'_0}\la f,h\ra_{S_0},\qquad\forall\,h\in S_0(\cG).
	\end{equation}
	Recall that $\cS_\cC(\cG)\subseteq
	S_0(\cG)$, then adopting the widow function $\f$, we compute
	\begin{align*}
		V_\f f(x,\xi)&=\la f,\pi(x,\xi)\delta_e\ra=\lim_{\al\in A}\la f_{\al},\pi(x,\xi)\delta_e\ra=\lim_{\al\in A}\sum_{u\in\cG}f_\al(u)\overline{\la\xi,u\ra\delta_x(u)}\\
		&=\lim_{\al\in A}f_\al(x)\overline{\la\xi,x\ra}=\overline{\la\xi,x\ra}\lim_{\al\in A}f_\al(x),\\
		\sfM_QV_\f f(x,\xi)&=\underset{(y,\eta)\in(x,\xi)+\{e\}\times\hcG}{\essupp}\abs{\overline{\la\eta,y\ra}\lim_{\al\in A}f_\al(y)}=\underset{(y,\eta)\in\{x\}\times\hcG}{\essupp}\abs{\lim_{\al\in A}f_\al(y)}\\
		&=\abs{\lim_{\al\in A}f_\al(x)}=\abs{V_\f f(x,\xi)}.
	\end{align*}
	Therefore
	\begin{equation*}
		\norm{V_\f f}_{W_Q(L^{p,q}_{m})}=\norm{\sfM_QV_\f f}_{L^{p,q}_{m}}=\norm{V_\f f}_{L^{p,q}_m}.
	\end{equation*}
	\emph{Case $\cG$ compact.} 
	The argument is identical to the previous one, take $\cK=\cG$ and $Q\coloneqq \cG\times\{\hat{e}\}$.	
\end{proof}

\begin{corollary}
	Suppose $\cG$ is discrete or compact. Consider $m\in\cM_v(\cG\times\hcG)$ and $0<p,q\leq\infty$. Then
	\begin{equation*}
		M^{p,q}_m(\cG)
		=\{f\in S'_0(\cG)\,|\,V_gf\in L^{p,q}_{m}(\cG\times\hcG)\}
	\end{equation*}
	and
	\begin{equation}
		\norm{f}_{M^{p,q}_m}
		\asymp\norm{V_g f}_{L^{p,q}_{m}},
	\end{equation}
	for some $g\in\cS_\cC(\cG)\smallsetminus\{0\}$.
\end{corollary}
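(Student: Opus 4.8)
The plan is simply to assemble results already established. By Definition~\ref{definizione-modulazione} and the description of the coorbit space recorded just after item~\textbf{K$'$}, one has
\[
M^{p,q}_m(\cG)=\Co(L^{p,q}_{\tilde{m}}(\bH_\cG))=\{f\in\cR_{\tilde{v}}\,|\,V_g f\in W(L^\infty,L^{p,q}_{m}(\cG\times\hcG))\},
\]
with $\norm{f}_{M^{p,q}_m}\asymp\norm{V_g f}_{W(L^\infty,L^{p,q}_m)}$ for any fixed $g\in\cS_\cC(\cG)\smallsetminus\{0\}$; here I pass from $W^\vr_g f$ on $\bH_\cG$ to $V_g f$ on $\cG\times\hcG$ via Remark~\ref{Rem-WaveletSchrodinger}, using $\abs{W^\vr_g f}=\abs{V_g f}$ and $d\tau(\bT)=1$ (the same computation applies to elements of the reservoir). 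Thus the statement follows once we (a) replace the reservoir $\cR_{\tilde{v}}$ by $S'_0(\cG)$, and (b) exhibit a window $g$ for which $\norm{V_g f}_{W(L^\infty,L^{p,q}_m)}\asymp\norm{V_g f}_{L^{p,q}_m}$.

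For (b), one inequality is automatic: fixing a measurable, relatively compact, unit neighbourhood $Q\subseteq\cG\times\hcG$, the pointwise bound $\abs{V_g f(x,\xi)}\le\sfM_Q V_g f(x,\xi)$ holds for a.e.\ $(x,\xi)$, so solidity of $L^{p,q}_m$ gives $\norm{V_g f}_{L^{p,q}_m}\le\norm{\sfM_Q V_g f}_{L^{p,q}_m}=\norm{V_g f}_{W_Q(L^{p,q}_m)}$, and the Wiener amalgam norm is independent of $Q$ by Lemma~\ref{Lem-WienerSpaceIndependence}. The reverse inequality is exactly Lemma~\ref{Lem-Analogo-GalSam-Lem3.2}, applied with the window dictated by its proof: the Gaussian attached to $\cK=\{e\}$ when $\cG$ is discrete (so $g=\delta_e$ and $Q=\{e\}\times\hcG$), and to $\cK=\cG$ when $\cG$ is compact (so $Q=\cG\times\{\hat{e}\}$). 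In fact the computation there establishes the stronger pointwise identity $\sfM_Q V_g f=\abs{V_g f}$ for \emph{every} $f\in S'_0(\cG)$, whence $\norm{V_g f}_{W_Q(L^{p,q}_m)}=\norm{V_g f}_{L^{p,q}_m}$ for all such $f$; this particular choice of $g$ is admissible since $M^{p,q}_m(\cG)$ is independent of the window within $\cS_\cC(\cG)$, together with the freedom in the choice of $\cK$ recorded in Remark~\ref{Rem-scelta-K}.

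For (a), I invoke Proposition~\ref{Pro-Independence-Coo-Reservoir}: the restriction map identifies $\Co(L^{p,q}_{\tilde{m}}(\bH_\cG))$ with $\{f\in S'_0(\cG)\,|\,W^\vr_g f\in W(L^\infty,L^{p,q}_{\tilde{m}}(\bH_\cG))\}$, and by Remark~\ref{Rem-WaveletSchrodinger} this condition is equivalent to $V_g f\in W(L^\infty,L^{p,q}_m(\cG\times\hcG))$, which by (b) is equivalent to $V_g f\in L^{p,q}_m(\cG\times\hcG)$, with equivalent (quasi-)norms throughout. Chaining these equivalences yields $M^{p,q}_m(\cG)=\{f\in S'_0(\cG)\,|\,V_g f\in L^{p,q}_m(\cG\times\hcG)\}$ and $\norm{f}_{M^{p,q}_m}\asymp\norm{V_g f}_{L^{p,q}_m}$ for this $g$. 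There is no genuine obstacle here beyond the bookkeeping; the only point worth flagging is that the window realizing the equivalence is not arbitrary but is exactly the one furnished by Lemma~\ref{Lem-Analogo-GalSam-Lem3.2}, all the substantive work having been done in that lemma.
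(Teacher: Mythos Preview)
Your proof is correct and follows essentially the same route as the paper: invoke Proposition~\ref{Pro-Independence-Coo-Reservoir} to pass to $S'_0(\cG)$, use the embedding $W(L^{p,q}_{\tilde{m}})\hookrightarrow L^{p,q}_{\tilde{m}}$ for one inequality and Lemma~\ref{Lem-Analogo-GalSam-Lem3.2} for the reverse, together with Remark~\ref{Rem-WaveletSchrodinger}. Your version is simply more explicit about the bookkeeping, and your observation that the pointwise identity $\sfM_Q V_g f=\abs{V_g f}$ from the proof of Lemma~\ref{Lem-Analogo-GalSam-Lem3.2} actually holds for all $f\in S'_0(\cG)$ is exactly what is needed to obtain the set equality (not just the norm equivalence on $M^{p,q}_m$).
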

\begin{proof}
	We consider $M^{p,q}_m(\cG)$ as a subspace of $S'_0(\cG)$ instead of $\cR_{\tilde{v}}$ (Proposition \ref{Pro-Independence-Coo-Reservoir}). The claim then follows from the continuous embedding $W(L^{p,q}_{\tilde{m}})\hookrightarrow L^{p,q}_{\tilde{m}}$, Lemma \ref{Lem-Analogo-GalSam-Lem3.2} and Remark \ref{Rem-WaveletSchrodinger}.
\end{proof}
\section{Continuity of the Rihaczek and Kohn-Nirenberg operators}
In this section we investigate the continuity of the Rihaczek distribution \eqref{Rdef} on modulation spaces and infer boundedness results for the Kohn-Nirenberg operators, defined in \eqref{EqDefKN}.
\subsection{Boundedness results} We first study the boundedness of the Rihaczek distribution on modulation spaces. The techniques are mainly borrowed from \cite[Theorem 3.1]{Wignersharp2018} and \cite[Theorem 4]{Cor2020} for the Wigner distribution on $\rd$.\\
From now on we shall mainly work with $S_0(\cG)$ and $S'_0(\cG)$ instead of $\cT_{\tilde{v}}$ and $\cR_{\tilde{v}}$ (Proposition \ref{Pro-Independence-Coo-Reservoir}).
Preliminarly, we exhibit a proof for Young's inequality in $L^{p,q}(\cG\times\hcG)$ and some generalizations. This result is folklore, but no explicit proof is available according to authors' knowledge. 
	 
\begin{proposition}\label{P1} Consider  $1\leq p_i,q_i, r_i\leq\infty$, $i=1,2$, such that
	\begin{equation}\label{indici}
\frac{1}{p_i}+\frac{1}{q_i}=1+\frac{1}{r_i},\quad i=1,2.
	\end{equation}
	If $F\in L^{p_1,p_2}(\cG\times\hcG)$ and $H\in L^{q_1,q_2}(\cG\times\hcG)$, then $F\ast H\in L^{r_1,r_2}(\cG\times\hcG)$ with
	\begin{equation}\label{Young-in}
	\|F\ast H\|_{L^{r_1,r_2}}\leq \|F\|_{L^{p_1,p_2}}\|H\|_{L^{q_1,q_2}}.
	\end{equation}
\end{proposition}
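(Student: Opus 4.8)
The plan is to deduce the mixed-norm estimate \eqref{Young-in} by iterating the classical scalar Young convolution inequality on the two factor groups of $\cG\times\hcG$, gluing the two one-variable estimates together by Minkowski's integral inequality. Since $\cG$ and $\hcG$ are $\sigma$-compact, the product $\cG\times\hcG$ is $\sigma$-finite, so Tonelli's theorem applies without restriction; consequently it suffices to prove the inequality for $F,H\ge 0$ (otherwise replace $F,H$ by $|F|,|H|$), and the bound on $|F|\ast|H|$ then guarantees that the integral defining $(F\ast H)(x,\xi)$ converges absolutely for a.e.\ $(x,\xi)$ whenever the right-hand side of \eqref{Young-in} is finite. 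The two scalar ingredients I would invoke are: (a) ordinary Young's inequality $\|u\ast_{\cG}v\|_{L^{r_1}(\cG)}\le\|u\|_{L^{p_1}(\cG)}\|v\|_{L^{q_1}(\cG)}$ on the LCA group $\cG$, valid since $\tfrac{1}{p_1}+\tfrac{1}{q_1}=1+\tfrac{1}{r_1}$; and (b) the analogous inequality on $\hcG$ with indices $p_2,q_2,r_2$ (see e.g.\ \cite{Folland_AHA1995}). I also need the continuous Minkowski integral inequality, which is available because $r_1,r_2\ge 1$.

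First I would fix $\xi\in\hcG$ and write the phase-space convolution as an $\hcG$-superposition of $\cG$-convolutions,
\[
(F\ast H)(\cdot,\xi)=\int_{\hcG}\big(F(\cdot,\eta)\ast_{\cG}H(\cdot,\xi-\eta)\big)\,d\eta,
\]
where $\ast_{\cG}$ denotes convolution in the first variable only. Taking the $L^{r_1}(\cG)$-norm in $x$, applying Minkowski's integral inequality in the $\cG$-variable (integrating out $\eta$), and then the scalar Young inequality (a) on $\cG$, I obtain
\[
\big\|(F\ast H)(\cdot,\xi)\big\|_{L^{r_1}(\cG)}\le\int_{\hcG}\Phi(\eta)\,\Psi(\xi-\eta)\,d\eta=(\Phi\ast_{\hcG}\Psi)(\xi),
\]
where $\Phi(\eta)\coloneqq\|F(\cdot,\eta)\|_{L^{p_1}(\cG)}$ and $\Psi(\eta)\coloneqq\|H(\cdot,\eta)\|_{L^{q_1}(\cG)}$, both measurable on $\hcG$ by Tonelli, and satisfying $\|\Phi\|_{L^{p_2}(\hcG)}=\|F\|_{L^{p_1,p_2}}$, $\|\Psi\|_{L^{q_2}(\hcG)}=\|H\|_{L^{q_1,q_2}}$.

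Finally I would take the $L^{r_2}(\hcG)$-norm of the left-hand side in $\xi$ and apply the scalar Young inequality (b) on $\hcG$, using $\tfrac{1}{p_2}+\tfrac{1}{q_2}=1+\tfrac{1}{r_2}$:
\[
\|F\ast H\|_{L^{r_1,r_2}}=\Big\|\,\big\|(F\ast H)(\cdot,\xi)\big\|_{L^{r_1}(\cG)}\,\Big\|_{L^{r_2}_{\xi}(\hcG)}\le\|\Phi\ast_{\hcG}\Psi\|_{L^{r_2}(\hcG)}\le\|\Phi\|_{L^{p_2}(\hcG)}\|\Psi\|_{L^{q_2}(\hcG)}=\|F\|_{L^{p_1,p_2}}\|H\|_{L^{q_1,q_2}},
\]
which is exactly \eqref{Young-in}. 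The endpoint cases in which some exponent equals $\infty$ only require the usual conventions (the relevant convolutions and norms become essential suprema), and Minkowski's integral inequality persists trivially in that range. I do not expect a genuine obstacle: the only points demanding care are the measurability of $\Phi,\Psi$ and the a.e.\ absolute convergence of the convolution, both routine once one reduces to nonnegative functions and uses $\sigma$-finiteness of $\cG\times\hcG$; the result is folklore precisely because this iteration is so clean.
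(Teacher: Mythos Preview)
Your proposal is correct and follows essentially the same route as the paper's proof: reduce to nonnegative $F,H$, view the phase-space convolution as an $\hcG$-integral of $\cG$-convolutions, apply Minkowski's integral inequality followed by Young on $\cG$ to bound $\|(F\ast H)(\cdot,\xi)\|_{L^{r_1}(\cG)}$ by $(\Phi\ast_{\hcG}\Psi)(\xi)$, then finish with Young on $\hcG$. The paper introduces the notation $\|W\|_{(s)}(\xi)$ for your $\Phi,\Psi$ and writes the inner convolution with the variable split as $F(\cdot,\xi-\omega)\ast H(\cdot,\omega)$ rather than your $F(\cdot,\eta)\ast H(\cdot,\xi-\eta)$, but these are cosmetic differences.
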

\begin{proof}
	We follow the pattern of \cite[Part II, Theorem 1, b)]{BenPan61}. It suffices to prove the claim for $F,H\geq 0$. Given a measurable function $W\colon\cG\times\hcG\to\bC$ and $1\leq s\leq\i$, we define the (measurable) function on $\hcG$
	\begin{equation}
		\norm{W}_{(s)}(\xi)\coloneqq\begin{cases}
		 \left(\int_{\cG}\abs{W(x,\xi)}^s\,dx\right)^{\frac1s}\quad\text{if}\quad s<\i,\\
		\essupp_{x\in\cG}\abs{W(x,\xi)}\quad\text{if}\quad s=\i.
		\end{cases}
	\end{equation}
	We show the case $r_1<\i$, the case $r_1=\i$ is done similarly. In the following we shall use  Minkowski's integral inequality  (see \cite[Appendix A.1]{Stein70}):  
	\begin{align*}
		\norm{F\ast H}_{(r_1)}(\xi)
		&=\left(\int_\cG\left[\int_{\cG\times\hcG}F((x,\xi)-(u,\o))H(u,\o)\,dud\o\right]^{r_1}\,dx\right)^{\frac1{r_1}}\nonumber\\
		&=\left(\int_\cG\left[\int_{\hcG}\left(\int_\cG F((x,\xi)-(u,\o))H(u,\o)\,du\right)\,d\o\right]^{r_1}\,dx\right)^{\frac1{r_1}}\nonumber\\
		&\leq\int_{\hcG}\left(\int_{\cG}\left[\int_\cG F((x,\xi)-(u,\o))H(u,\o)\,du\right]^{r_1}\,dx\right)^{\frac1{r_1}}\,d\o\\
		&=\int_{\hcG}\left(\int_{\cG}\left[ [F(\cdot,\xi-\o)\ast H(\cdot,\o)](x)\right]^{r_1}\,dx\right)^{\frac1{r_1}}\,d\o\nonumber\\
		&=\int_{\hcG}\norm{F(\cdot,\xi-\o)\ast H(\cdot,\o)}_{L^{r_1}(\cG)}\,d\o:=I
		\end{align*}
		 Using Young's inequality (see \cite[Theorem 20.18]{HewRos63}) with indexes $p_1,q_1,r_1$ as  in \eqref{indici} we majorize as
		\begin{align*}
		I&\leq\int_{\hcG}\norm{F(\cdot,\xi-\o)}_{L^{p_1}(\cG)}\norm{ H(\cdot,\o)}_{L^{q_1}(\cG)}\,d\o\\
		&=\int_{\hcG}\left(\int_\cG F(x,\xi-\om)^{p_1}\,dx\right)^{\frac1{p_1}}\left(\int_\cG H(x,\om)^{q_1}\,dx\right)^{\frac1{q_1}}\,d\o\\
		&=\int_{\hcG}\norm{F}_{(p_1)}(\xi-\o)\norm{H}_{(q_1)}(\o)\,d\o\\
		&=\left(\norm{F}_{(p_1)}\ast\norm{H}_{(q_1)}\right)(\xi)
	\end{align*}
	Using Young's inequality with indices $p_2,q_2,r_2$ in  \eqref{indici} we obtain the desire result. Namely, 
	\begin{align*}
		\norm{F\ast H}_{L^{r_1,r_2}(\cG\times\hcG)}
		&=\left(\int_{\hcG}\left[\norm{F\ast H}_{(r_1)}(\xi)\right]^{r_2}\,d\xi\right)^{\frac1{r_2}}\nonumber\\
		&\leq\left(\int_{\hcG}\left[\left(\norm{F}_{(p_1)}\ast\norm{H}_{(q_1)}\right)(\xi)\right]^{r_2}\,d\xi\right)^{\frac1{r_2}}\nonumber\\
		&=\norm{\norm{F}_{(p_1)}\ast\norm{H}_{(q_1)}}_{L^{r_2}(\hcG)}\nonumber\\
		&\leq\norm{\norm{F}_{(p_1)}}_{L^{p_2}(\hcG)}\norm{\norm{H}_{(q_1)}}_{L^{q_2}(\hcG)}\label{UsoYoung2}\\
		&=\norm{F}_{L^{p_1,p_2}(\cG\times\hcG)}\norm{\norm{H}_{(q_1)}}_{L^{q_1,q_2}(\cG\times\hcG)}\nonumber.
	\end{align*}
	This concludes the proof.
\end{proof}
A straightforward consequence is the weighted Young's inequality below.
\begin{corollary}
	Consider  $1\leq p_i,q_i, r_i\leq\infty$, $i=1,2$, such that
	\begin{equation}
	\frac{1}{p_i}+\frac{1}{q_i}=1+\frac{1}{r_i},\quad i=1,2.
	\end{equation}
	Consider $m\in\cM_v(\cG\times\hcG)$. If $F\in L^{p_1,p_2}_m(\cG\times\hcG)$ and $H\in L^{q_1,q_2}_v(\cG\times\hcG)$, then $F\ast H\in L^{r_1,r_2}_m(\cG\times\hcG)$ with
	\begin{equation}
	\|F\ast H\|_{L^{r_1,r_2}_m}\leq \|F\|_{L^{p_1,p_2}_m}\|H\|_{L^{q_1,q_2}_v}.
	\end{equation}
\end{corollary}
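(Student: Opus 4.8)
The plan is to deduce the weighted inequality from the unweighted one in Proposition~\ref{P1} by the standard device of absorbing the weight $m$ on the left-hand side into $m$ acting on $F$ and $v$ acting on $H$, via the $v$-moderateness of $m$. As in the proof of Proposition~\ref{P1}, I would first reduce to the case $F,H\ge 0$: indeed $\abs{F\ast H}\le \abs{F}\ast\abs{H}$ pointwise, and since $m>0$, replacing $F$ and $H$ by $\abs{F}$ and $\abs{H}$ leaves all three weighted norms unchanged while, by solidity of $L^{r_1,r_2}(\cG\times\hcG)$, only increasing the left-hand side.

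Next I would record the pointwise estimate. Recall that $m\in\cM_v(\cG\times\hcG)$ provides a constant $C=C(m,v)>0$ with $m((x,\xi)+(u,\omega))\le C\,v(x,\xi)\,m(u,\omega)$ for all $(x,\xi),(u,\omega)\in\cG\times\hcG$; applied with $(x,\xi)=w$ and $(u,\omega)=z-w$ this yields $m(z)\le C\,v(w)\,m(z-w)$. Hence, for nonnegative $F,H$ and a.e.\ $z\in\cG\times\hcG$,
\begin{align*}
m(z)\,(F\ast H)(z)&=\int_{\cG\times\hcG}m(z)\,F(z-w)\,H(w)\,dw\\
&\le C\int_{\cG\times\hcG}\big(m(z-w)F(z-w)\big)\big(v(w)H(w)\big)\,dw=C\,\big((mF)\ast(vH)\big)(z),
\end{align*}
where $mF$ and $vH$ denote the pointwise products.

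Finally, $mF\in L^{p_1,p_2}(\cG\times\hcG)$ with $\norm{mF}_{L^{p_1,p_2}}=\norm{F}_{L^{p_1,p_2}_m}$ and $vH\in L^{q_1,q_2}(\cG\times\hcG)$ with $\norm{vH}_{L^{q_1,q_2}}=\norm{H}_{L^{q_1,q_2}_v}$, and the exponents satisfy \eqref{indici}. Applying Proposition~\ref{P1} to $mF$ and $vH$, together with the solidity of $L^{r_1,r_2}(\cG\times\hcG)$ and the pointwise bound above, I obtain
\begin{equation*}
\norm{F\ast H}_{L^{r_1,r_2}_m}=\norm{m\cdot(F\ast H)}_{L^{r_1,r_2}}\le C\,\norm{(mF)\ast(vH)}_{L^{r_1,r_2}}\le C\,\norm{F}_{L^{p_1,p_2}_m}\,\norm{H}_{L^{q_1,q_2}_v}.
\end{equation*}
Under the usual normalization of the $v$-moderateness constant to $C=1$ this is precisely the claimed bound; in general it holds with the constant $C(m,v)$. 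There is no essential obstacle here: the only point to get right is the direction of the moderateness inequality (so that $m$ attaches to $F$ and $v$ to $H$, not conversely), after which Proposition~\ref{P1} does all the work.
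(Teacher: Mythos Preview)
Your argument is correct and is exactly the standard route the paper has in mind: the corollary is stated immediately after Proposition~\ref{P1} with no proof, and the intended derivation is precisely to use $v$-moderateness to obtain $m\cdot(F\ast H)\le C\,(mF)\ast(vH)$ pointwise and then apply the unweighted Young inequality \eqref{Young-in}. Your remark about the constant is also on point: as written in the paper the inequality carries the implicit $v$-moderateness constant $C(m,v)$.
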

Note that  Proposition \ref{P1} can be easily generalized to  $N$ indices, $N\geq 2$, as in \cite[Part II, Theorem 1, b)]{BenPan61}:
\begin{proposition}
	Consider $N\in\bN$ and let $\cG_i$ be a LCA, $\si$-finite group with Haar measure $dx_i$, $i=1,\dots,N$. Consider  $1\leq p_i,q_i, r_i\leq\infty$, $i=1,\dots,N$, such that
	\begin{equation*}
	\frac{1}{p_i}+\frac{1}{q_i}=1+\frac{1}{r_i},\quad i=1,\dots,N.
	\end{equation*}
	If $F\in L^{p_1,\dots,p_N}(\cG_1\times\dots\times\cG_N)$ and $H\in L^{q_1,\dots,q_N}(\cG_1\times\dots\times\cG_N)$, then $F\ast H\in L^{r_1,\dots,r_N}(\cG_1\times\dots\times\cG_N)$ with
	\begin{equation*}
	\|F\ast H\|_{L^{r_1,\dots,r_N}(\cG_1\times\dots\times\cG_N)}\leq \|F\|_{L^{p_1,\dots,p_N}(\cG_1\times\dots\times\cG_N)}\|H\|_{L^{q_1,\dots,q_N}(\cG_1\times\dots\times\cG_N)},
	\end{equation*}
	where the product LCA $\si$-finite group $\cG_1\times\dots\times\cG_N$ is endowed with the product Haar measure $dx_1\dots dx_N$.
\end{proposition}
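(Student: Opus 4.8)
The plan is to reduce the $N$-index case to the two-index Proposition \ref{P1} by induction on $N$, essentially repeating the ``peel off one coordinate at a time'' strategy of \cite[Part II, Theorem 1, b)]{BenPan61}. The base case $N=1$ is the classical Young inequality on a $\sigma$-finite LCA group (\cite[Theorem 20.18]{HewRos63}); the base case $N=2$ is exactly Proposition \ref{P1}. For the inductive step, assume the statement holds for $N-1$ factors and write $\cG_1\times\dots\times\cG_N = \cG_1\times\cH$ with $\cH\coloneqq\cG_2\times\dots\times\cG_N$, noting that $\cH$ is again LCA and $\sigma$-finite with the product Haar measure. As in the proof of Proposition \ref{P1}, I would first apply Minkowski's integral inequality to pull the outer $\cG_1$-integral (the one attached to the exponent $r_1$) inside, so that
\begin{equation*}
\norm{F\ast H}_{(r_1)}(\mathbf{y}) \leq \int_{\cH} \norm{F(\cdot,\mathbf{y}-\mathbf{z})\ast_{\cG_1} H(\cdot,\mathbf{z})}_{L^{r_1}(\cG_1)}\,d\mathbf{z},
\end{equation*}
where $\mathbf{y},\mathbf{z}\in\cH$ and $\norm{W}_{(r_1)}(\mathbf{y})\coloneqq\norm{W(\cdot,\mathbf{y})}_{L^{r_1}(\cG_1)}$. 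Then apply the one-dimensional Young inequality on $\cG_1$ with indices $p_1,q_1,r_1$ to bound the integrand by $\norm{F}_{(p_1)}(\mathbf{y}-\mathbf{z})\,\norm{H}_{(q_1)}(\mathbf{z})$, which shows
\begin{equation*}
\norm{F\ast H}_{(r_1)} \leq \norm{F}_{(p_1)} \ast_{\cH} \norm{H}_{(q_1)}
\end{equation*}
pointwise on $\cH$, where now $\norm{F}_{(p_1)},\norm{H}_{(q_1)}$ are functions on the $(N-1)$-fold product $\cH$.

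The second half of the step is to take the mixed $L^{r_2,\dots,r_N}(\cH)$-norm of both sides and invoke the inductive hypothesis: since $1/p_i+1/q_i = 1+1/r_i$ for $i=2,\dots,N$, the $(N-1)$-index Young inequality gives
\begin{equation*}
\norm{\,\norm{F}_{(p_1)}\ast_{\cH}\norm{H}_{(q_1)}\,}_{L^{r_2,\dots,r_N}(\cH)} \leq \norm{\,\norm{F}_{(p_1)}\,}_{L^{p_2,\dots,p_N}(\cH)}\,\norm{\,\norm{H}_{(q_1)}\,}_{L^{q_2,\dots,q_N}(\cH)}.
\end{equation*}
Unwinding the definitions of $\norm{\cdot}_{(p_1)}$ and $\norm{\cdot}_{(q_1)}$, the right-hand side is exactly $\norm{F}_{L^{p_1,\dots,p_N}}\norm{H}_{L^{q_1,\dots,q_N}}$, while the left-hand side dominates $\norm{F\ast H}_{L^{r_1,\dots,r_N}}$ by the pointwise bound just established and solidity of the mixed-norm space. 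This closes the induction. As in Proposition \ref{P1}, it suffices throughout to treat $F,H\geq 0$ (the general case follows by $\abs{F\ast H}\leq\abs{F}\ast\abs{H}$), and the cases where some $r_i=\infty$ are handled by the usual obvious modifications, replacing the relevant integral norm by an essential supremum.

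The only genuinely delicate point is bookkeeping: one must keep straight which coordinate the convolution is being performed in at each stage and verify that Minkowski's integral inequality and Fubini/Tonelli are legitimately applicable, which they are because all functions are nonnegative and measurable and each $\cG_i$ is $\sigma$-finite (so the product measure is $\sigma$-finite and Tonelli applies without integrability hypotheses). There is no new analytic obstacle beyond what already appears in Proposition \ref{P1}; the induction merely iterates that argument. I would therefore present the proof tersely, writing ``arguing exactly as in the proof of Proposition \ref{P1}, peeling off the $\cG_1$-coordinate and invoking the inductive hypothesis on $\cG_2\times\dots\times\cG_N$'' rather than reproducing the Minkowski-and-Young chain in full.
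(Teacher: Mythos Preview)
Your proposal is correct and follows exactly the approach the paper intends: the paper does not actually give a proof for this proposition but simply states that it is an easy generalization of Proposition~\ref{P1} ``as in \cite[Part II, Theorem 1, b)]{BenPan61}'', which is precisely the Benedek--Panzone peel-off-one-coordinate induction you have outlined. Your write-up is in fact more detailed than the paper's own treatment.
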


We need to  extend  \cite[formula (51)]{GroStr2007} to wider classes of functions. Namely,
\begin{lemma}\label{lemmae2}
		Consider  $\psi\in\mathcal{S_\mathcal{C}}(\cG)$ and  $f,g\in S'_0(\cG)$. Then
	\begin{equation}\label{Eq-GroStro-formula-51}
	V_{R(\psi,\psi)}R(g,f)((x,\xi),(\o,u))=\overline{\<\xi,u\>}V_{\psi}g(x,\xi+\o)\overline{V_{\psi}f(x+u,\xi)},
	\end{equation}
	with $x,u\in\cG$ and $\xi,\o\in\hcG$. 
\end{lemma}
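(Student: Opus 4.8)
The strategy is to reduce the identity to the computation already carried out in \cite{GroStr2007} for $f,g\in\cS_\cC(\cG)$ by a density/continuity argument. First I would fix $\psi\in\cS_\cC(\cG)$ and note that, by Corollary \ref{Cor-Rfg-windows}, $R(\psi,\psi)\in\sA_{\tilde v}(\cG\times\hcG)$, so $R(\psi,\psi)$ is an admissible analysing window for the modulation spaces on $\cG\times\hcG$ and hence $V_{R(\psi,\psi)}$ is well-defined on $S'_0(\cG\times\hcG)$. Next I would check that both sides of \eqref{Eq-GroStro-formula-51}, viewed as functions of the pair $(g,f)\in S'_0(\cG)\times S'_0(\cG)$ with $(x,\xi,\o,u)$ fixed, are separately weak-$*$ continuous: the left-hand side because $R(\cdot,\cdot)$ maps $S'_0(\cG)\times S'_0(\cG)$ into $S'_0(\cG\times\hcG)$ continuously (this follows from the kernel/tensor description of $R$ in \eqref{Rdef} together with the fact that the Fourier transform is a homeomorphism of $S'_0$) and $V_{R(\psi,\psi)}$ is weak-$*$ continuous into $C(\bH_{\cG\times\hcG})\cap L^\infty_{1/\tilde v}$; the right-hand side because $V_\psi\colon S'_0(\cG)\to C\cap L^\infty_{1/\tilde v}$ is weak-$*$ continuous and products of such functions behave continuously pointwise.

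The core step is the base case $f,g\in\cS_\cC(\cG)$. Here I would invoke \cite[formula (51)]{GroStr2007} directly — the excerpt explicitly says \eqref{Eq-GroStro-formula-51} "extends" that formula — or, if one prefers a self-contained derivation, expand $f=\sum_k a_k\pi(\bfu_k)\f$, $g=\sum_j b_j\pi(\bfy_j)\f$ via Lemma \ref{Lem-STFTinS_C}, use the covariance relation \eqref{Rtfs} for $R(\pi(\bfx)f,\pi(\bfy)g)$ together with the STFT--time-frequency-shift identity \eqref{STFTtfsfts}, and match coefficients on both sides. The algebra is bookkeeping with the cocycle $\la\xi',x\ra$ and the isomorphism $\cJ$, so I would not grind through it; the point is only that the identity holds on the dense subspace $\cS_\cC(\cG)$.

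Finally I would pass to the limit. By Corollary \ref{Cor-Sc-w*-dense-S'0}, every $f\in S'_0(\cG)$ is a weak-$*$ limit of a net in $\cS_\cC(\cG)$, and likewise for $g$; applying the established base case along such nets and using the weak-$*$ continuity of both sides noted above gives \eqref{Eq-GroStro-formula-51} for all $f,g\in S'_0(\cG)$. The main obstacle is the bookkeeping in verifying that $R(\cdot,\cdot)$ is genuinely weak-$*$--weak-$*$ continuous as a map $S'_0(\cG)\times S'_0(\cG)\to S'_0(\cG\times\hcG)$ and that $V_{R(\psi,\psi)}$ then transports this continuity to pointwise convergence — one must be careful that the bilinear structure does not spoil net convergence; fixing one argument at a time (first let $g_\al\to g$ with $f$ fixed in $\cS_\cC$, then let $f_\be\to f$) circumvents any joint-continuity subtlety. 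Everything else is either cited or routine.
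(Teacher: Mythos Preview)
Your proposal is correct and follows essentially the same route as the paper: invoke \cite[formula (51)]{GroStr2007} for the base case $f,g\in\cS_\cC(\cG)$, then extend to $S'_0(\cG)$ by the weak-$*$ density of $\cS_\cC(\cG)$ (Corollary \ref{Cor-Sc-w*-dense-S'0}) together with weak-$*$ continuity of both sides. The paper makes the left-hand-side convergence concrete by writing $R(f_\al,g_\al)=\overline{\la\cdot,\cdot\ra}\,\cF_2(f_\al\otimes\overline{g}_\al)$ and using that $\cF_2$ is a topological isomorphism on $S_0$ and $S'_0$, which is precisely the mechanism behind your abstract ``$R$ is weak-$*$--weak-$*$ continuous'' claim.
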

\begin{proof}
For $f,g,\psi\in\mathcal{S_\mathcal{C}}(\cG)$ formula \eqref{Eq-GroStro-formula-51} is proved in  \cite[formula (51)]{GroStr2007}. Consider now $f,g\in S'_0(\cG)$. From Corollary \ref{Cor-Sc-w*-dense-S'0} there exist nets $(f_\al)_{\al\in A},(g_\al)_{\al\in A}\in S_0(\cG)$ which converge pointwisely to $f$ and $g$ in $S'_0(\cG)$. Therefore for every $x,u\in\cG$ and $\xi\in\hcG$,
\begin{equation*}
\lim_{\al\in A}V_{\p}f_\al(x+u,\xi)=\lim_{\al\in A}\la f_\al,\pi(x+u,\xi)\p\ra=\la f,\pi(x+u,\xi)\p\ra=V_{\p}f(x+u,\xi),
\end{equation*}
and similarly for $V_\psi g$. For the left-hand side of \eqref{Eq-GroStro-formula-51}, observe that
\begin{equation*}
R(f_\al,g_\al)(x,\xi)=\overline{\la\xi,x\ra}\cF_2(f_\al\otimes\overline{g}_\al)(x,\xi).
\end{equation*}
 The partial Fourier transform $\cF_2$ is a topological isomorphism from $S_0(\cG\times\cG)$ onto $S_0(\cG\times\hcG)$ and from $S'_0(\cG\times\cG)$ onto $S'_0(\cG\times\hcG)$. Write $\bfx=(x,\xi)$ and $\boldsymbol{\o}=(\o,u)$,
\begin{align*}
\lim_{\al\in A} V_{R(\p,\p)}R(f_\al,g_\al)(\bfx,\boldsymbol{\o})&=\lim_{\al\in A}\la\overline{\la\cdot{,}\cdot\ra}\cF_2(f_\al\otimes\overline{g}_\al),\pi(\bfx,\boldsymbol{\o})R(\p,\p)\ra\\
&=\la\overline{\la\cdot{,}\cdot\ra}\cF_2(f\otimes\overline{g}),\pi(\bfx,\boldsymbol{\o})R(\p,\p)\ra\\
&=V_{R(\p,\p)}R(f,g)(\bfx,\boldsymbol{\o}),
\end{align*}
being $R(\p,\p)\in S_0(\cG\times\hcG)$. This concludes the proof.
\end{proof}
\begin{proposition}\label{Pro-continuity-R}
	Consider $p,q,p_i,q_i\in(0,\infty]$, $i=1,2$, such that
	\begin{align}
	p_i,q_i&\leq q,\quad i=1,2;\label{Eq-continuity-R-condition-1}\\
	\min\set{\frac1{p_1}+\frac1{p_2},\frac1{q_1}+\frac1{q_2}}&\geq \frac1p+\frac1q.\label{Eq-continuity-R-condition-2}
	\end{align}
	Let $v$ be a even submultiplicative weight bounded from below  on $\cG\times\hcG$, and $\cJ$  the isomorphism in \eqref{DefJ}. For $g\in M^{p_1,q_1}_v(\cG)$ and $f\in M^{p_2,q_2}_v(\cG)$, we have $R(g,f)\in M^{p,q}_{1\otimes v\circ\cJ^{-1}}(\cG\times\hcG)$, with
	\begin{equation}
	\norm{R(g,f)}_{M^{p,q}_{1\otimes v\circ\cJ^{-1}}}\lesssim \norm{g}_{M^{p_1,q_1}_v}\norm{f}_{M^{p_2,q_2}_v}.
	\end{equation}
\end{proposition}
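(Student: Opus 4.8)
The plan is to estimate $\norm{R(g,f)}_{M^{p,q}_{1\otimes v\circ\cJ^{-1}}(\cG\times\hcG)}$ through the pointwise factorisation of its STFT furnished by Lemma \ref{lemmae2}. First I would fix $\psi=\f\in\cS_\cC(\cG)$ the Gaussian \eqref{Gauss} and take $\Psi\coloneqq R(\psi,\psi)$ as a window on $\cG\times\hcG$; by Corollary \ref{Cor-Rfg-windows} we have $\Psi\in\sA_{\tilde v}(\cG\times\hcG)$, so $\Psi$ is admissible and
\[
\norm{R(g,f)}_{M^{p,q}_{1\otimes v\circ\cJ^{-1}}}\asymp\norm{\sfM_Q[V_\Psi R(g,f)]}_{L^{p,q}_{1\otimes v\circ\cJ^{-1}}}
\]
for a product unit neighbourhood $Q=Q_1\times Q_2\subseteq(\cG\times\hcG)\times(\hcG\times\cG)$, chosen freely thanks to Lemma \ref{Lem-WienerSpaceIndependence}. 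Since $g\in M^{p_1,q_1}_v(\cG)$ and $f\in M^{p_2,q_2}_v(\cG)$ may be viewed in $S'_0(\cG)$ (Proposition \ref{Pro-Independence-Coo-Reservoir}) and $\Psi\in S'_0(\cG\times\hcG)$, Lemma \ref{lemmae2} gives $\abs{V_\Psi R(g,f)((x,\xi),(\omega,u))}=\abs{V_\psi g(x,\xi+\omega)}\abs{V_\psi f(x+u,\xi)}$; because the two inner arguments are affine images of $((x,\xi),(\omega,u))$, passing to the maximal function and enlarging the underlying neighbourhoods yields a pointwise bound
\[
\sfM_Q[V_\Psi R(g,f)]((x,\xi),(\omega,u))\le\sfM_{Q_g}[V_\psi g](x,\xi+\omega)\,\sfM_{Q_f}[V_\psi f](x+u,\xi)
\]
for suitable unit neighbourhoods $Q_g,Q_f\subseteq\cG\times\hcG$.

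Next I would absorb the weight. As $\cJ^{-1}(\omega,u)=(u,-\omega)=(x+u,\xi)-(x,\xi+\omega)$ in $\cG\times\hcG$, submultiplicativity and evenness of $v$ give $(1\otimes v\circ\cJ^{-1})((x,\xi),(\omega,u))=v(u,-\omega)\lesssim v(x,\xi+\omega)\,v(x+u,\xi)$. Setting $G\coloneqq\sfM_{Q_g}[V_\psi g]\cdot v$ and $F\coloneqq\sfM_{Q_f}[V_\psi f]\cdot v$ on $\cG\times\hcG$, the $v$-moderateness of $m$ with respect to $v$ yields $\norm{G}_{L^{p_1,q_1}}\asymp\norm{g}_{M^{p_1,q_1}_v}$, $\norm{F}_{L^{p_2,q_2}}\asymp\norm{f}_{M^{p_2,q_2}_v}$, and moreover $G\in W_{Q_g}(L^{p_1,q_1})$, $F\in W_{Q_f}(L^{p_2,q_2})$ with equivalent (quasi-)norms. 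The statement then reduces to the scalar inequality
\[
\norm{\,G(x,\xi+\omega)\,F(x+u,\xi)\,}_{L^{p,q}_{((x,\xi),(\omega,u))}}\lesssim\norm{G}_{L^{p_1,q_1}}\,\norm{F}_{L^{p_2,q_2}},
\]
with inner $L^p$ over $(x,\xi)\in\cG\times\hcG$ and outer $L^q$ over $(\omega,u)\in\hcG\times\cG$.

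The scalar inequality is the crux and the step I expect to be the main obstacle; it is the $L^{p,q}(\cG\times\hcG)$-analogue of the Euclidean estimates of \cite[Theorem 3.1]{Wignersharp2018} and \cite[Theorem 4]{Cor2020}. Carrying out the inner $L^p$-integration in $(x,\xi)$ and substituting $x\mapsto x+u$, $\xi\mapsto\xi+\omega$ identifies it with a correlation on $\cG\times\hcG$: $\int_{\cG\times\hcG}G(x,\xi+\omega)^pF(x+u,\xi)^p\,dx\,d\xi=(G^p\star F^p)(u,-\omega)$, so that (by Tonelli, after the change of variable $\omega\mapsto-\omega$)
\[
\norm{G(x,\xi+\omega)F(x+u,\xi)}_{L^{p,q}}^{p}=\norm{G^p\star F^p}_{L^{q/p}(\cG\times\hcG)}.
\]
Since $G\in W_{Q_g}(L^{p_1,q_1})$ and $F\in W_{Q_f}(L^{p_2,q_2})$, raising to the $p$-th power gives $G^p\in W_{Q_g}(L^{p_1/p,q_1/p})$, $F^p\in W_{Q_f}(L^{p_2/p,q_2/p})$, and I would then invoke the convolution relations for Wiener amalgam spaces with $L^\infty$ local component, valid in the quasi-Banach regime: discretising by a product BUPU (Lemma \ref{Lem-BUPU-tensor}, Lemma \ref{Lem-Lpq-Wiener-discrete-equivalence}) reduces them to the sharp sequence inequalities $\ell^a\ast\ell^b\hookrightarrow\ell^{\max\{a,b\}}$ when $\min\{a,b\}\le1$ and to the classical Young inequality when $a,b\ge1$, whence $G^p\star F^p\in W(L^\infty,L^{c_1,c_2})$ for an exponent pair with $c_1,c_2\le q/p$ --- here both hypotheses enter, \eqref{Eq-continuity-R-condition-1} controlling $\max\{p_1,p_2\}$ and $\max\{q_1,q_2\}$ when some index is smaller than $p$, and \eqref{Eq-continuity-R-condition-2} providing the Young balance when all indices are at least $p$. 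Finally $W(L^\infty,L^{c_1,c_2})\hookrightarrow W(L^\infty,L^{q/p,q/p})\hookrightarrow L^{q/p}(\cG\times\hcG)$, and bookkeeping of the equivalence constants --- itself delicate, since one wants uniformity over the whole range $0<p,q\le\infty$ --- completes the proof. The genuine difficulty is thus concentrated in making the amalgam convolution machinery work in the quasi-Banach setting; everything else is bookkeeping with the coorbit-theoretic tools of Section 3 and the Appendix.
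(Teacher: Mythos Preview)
Your proposal is correct and follows essentially the same route as the paper: fix $\Psi=R(\psi,\psi)$, invoke Lemma~\ref{lemmae2} to factor $|V_\Psi R(g,f)|$, pass to maximal functions, absorb the weight via the submultiplicativity/evenness of $v$, and reduce everything to an $L^{q/p}$-bound for the (cross-)convolution $(G^p)^\ast\ast F^p$ on $\cG\times\hcG$. The paper arrives at the identical expression $\norm{([\sfM_U V_\psi g]^\ast\cdot v)^p\ast(\sfM_{V_{1,2}}V_\psi f\cdot v)^p}_{L^{q/p}}^{1/p}$ and then simply appeals to Young's inequality together with the index case-analysis of \cite[Theorem 3.1]{Wignersharp2018} and \cite[Theorem 4]{Cor2020}; your discretisation via a product BUPU and the sequence-space convolution $\ell^a\ast\ell^b\hookrightarrow\ell^{\max\{a,b\}}$ is precisely how those references handle the quasi-Banach range, so there is no substantive divergence. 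One cosmetic difference: the paper keeps an outer $\essupp_{(\nu,z)\in(\omega,u)+V_{2,1}}$ after the inner integration and then pushes it through the convolution using \cite[Lemma~2.3.23]{Voig2015}, whereas your pointwise bound $\sfM_Q[V_\Psi R(g,f)]\le\sfM_{Q_g}[V_\psi g](x,\xi+\omega)\,\sfM_{Q_f}[V_\psi f](x+u,\xi)$ absorbs that step at the outset --- slightly cleaner, same content.
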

\begin{proof} 
	Consider  $\psi\in\mathcal{S_\mathcal{C}}(\cG)$, $f\in M^{p_2,q_2}_v(\cG)$, $g\in M^{p_1,q_1}_v(\cG)$. By Lemma \ref{lemmae2} the STFT of the Rihaczek distribution is given by 
	\begin{equation}
	V_{R(\psi,\psi)}R(g,f)((x,\xi),(\o,u))=\overline{\<\xi,u\>}V_{\psi}g(x,\xi+\o)\overline{V_{\psi}f(x+u,\xi)},
	\end{equation}
with $x,u\in\cG$ and $\xi,\o\in\hcG$.   Corollary \ref{Cor-Rfg-windows}  shows that $R(\psi,\psi)\in\sA_{\widetilde{1\otimes v\circ\cJ^{-1}}}(\cG\times\hcG)$. Consider $V_\cG\subseteq\cG$ and $V_{\hcG}\subseteq\hcG$ open, relatively compact, unit neighbourhoods. According to the notation  in 
	\eqref{Eq-Decomposition-V} we define
	\begin{align}
		V_{1,2}=V_\cG\times V_{\hcG},\quad V_{2,1}\coloneqq V_{\hcG}\times V_\cG,\quad  O\coloneqq V_{1,2}\times V_{2,1}\times\bT.
	\end{align}
	Set $$H_g((x,\xi),(\o,u),\tau)\coloneqq V_{\psi}g(x,\xi+\o)\quad \mbox{and}\quad H_f((x,\xi),(\o,u),\tau)\coloneqq\overline{V_{\psi}f(x+u,\xi)},$$ which are functions on the Heisenberg group associated to $\cG\times\hcG$. Notice
	\begin{equation*}
	\sfM_O[\overline{\tau}V_{R(\psi,\psi)}R(g,f)]=\sfM_O[H_g\cdot H_f]\leq \sfM_O [H_g]\cdot \sfM_O[H_f].
	\end{equation*}
We compute
	\begin{align*}
	\sfM_O [H_g]((x,\xi),(\o,u),\tau)&=\underset{\substack{((y,\eta),(\nu,z),s)\in\\ ((x,\xi),(\o,u),\tau)O}}{\essupp}\abs{V_{\psi}g(y,\eta+\nu)}\\
	&=\essupp_{\nu\in\o+V_{\hcG}}\essupp_{(y,\eta)\in(x,\xi)+V_{1,2}}\abs{T_{(e,-\nu)}V_{\psi}g(y,\eta)}\\
	&=\essupp_{\nu\in\o+V_{\hcG}} \left(\sfM_{V_{1,2}}[T_{(e,-\nu)}V_\psi g](x,\xi)\right)\\
	&=\essupp_{\nu\in\o+V_{\hcG}} \left(T_{(e,-\nu)}[\sfM_{V_{1,2}}V_\psi g(x,\xi)]\right)\\
	&=\essupp_{\nu\in\o+V_{\hcG}} \left(\sfM_{V_{1,2}}V_\psi g(x,\xi+\nu)\right).
	\end{align*}
	Similarly,
	\begin{align*}
	\sfM_O [H_f]((x,\xi),(\o,u),\tau)&=\essupp_{z\in u+V_{\cG}} \left(T_{(-z,\hat{e})}[\sfM_{V_{1,2}}V_\psi f(x,\xi)]\right)\\
	&=\essupp_{z\in u+V_{\cG}} \left(\sfM_{V_{1,2}}V_\psi f(x+z,\xi)\right).
	\end{align*}
	By the  modulation spaces  independence of the window in 
	$\sA_{\widetilde{1\otimes v\circ\cJ^{-1}}}(\cG\times\hcG)$, we can write
	\begin{align*}
		\norm{R(g,f)}_{M^{p,q}_{1\otimes v\circ\cJ^{-1}}}&\asymp\bigg(\int_{\hcG\times\cG\times\bT}\bigg(\int_{\cG\times\hcG}\abs{\sfM_O[\overline{\tau}V_{R(\psi,\psi)}R(g,f)]((x,\xi),(\o,u),\tau)}^p dxd\xi\bigg)^{\frac{q}{p}}\\
		&\qquad\qquad\qquad\qquad\times v^q\circ\cJ^{-1}(\o,u)\,d\o du d\tau\bigg)^{\frac1q}\\
		&\leq\bigg(\int_{\hcG\times\cG}\bigg(\int_{\cG\times\hcG}\essupp_{\nu\in\o+V_{\hcG}} \left(\sfM_{V_{1,2}}V_\psi g(x,\xi+\nu)\right)^p\\
		&\qquad\qquad\times\essupp_{z\in u+V_{\cG}} \left(\sfM_{V_{1,2}}V_\psi f(x+z,\xi)\right)^p\,dxd\xi\bigg)^{\frac{q}{p}} v^q\circ\cJ^{-1}(\o,u)\,d\o du\bigg)^{\frac1q}\\
		&=\bigg(\!\!\int_{\hcG\times\cG}\!\!\bigg(\!\!\essupp_{(\nu,z)\in(\o,u)+V_{2,1}}\!\int_{\cG\times\hcG}\!\!\!\!\!\sfM_{V_{1,2}}V_\psi g(x,\xi+\nu)^p \sfM_{V_{1,2}}V_\psi f(x+z,\xi)^p\,dxd\xi\bigg)^{\frac{q}{p}}\\
		&\qquad\qquad\times v^q\circ\cJ^{-1}(\o,u)\,d\o du\bigg)^{\frac1q}.
	\end{align*}
	The inner integral can be rephrased  using the left-right invariance of Haar measure  and the involution $h^\ast(\cdot)\coloneqq \overline{h(-\cdot)}$ as follows:
	\begin{align*}
	\int_{\cG\times\hcG}&\sfM_{V_{1,2}}V_{\psi}g(x,\xi+\nu)^p\sfM_{V_{1,2}}V_{\psi}f(x+z,\xi)^p\,dxd\xi\\
	&=\int_{\cG\times\hcG}\sfM_{V_{1,2}}V_{\psi}g(x',\xi')^p\sfM_{V_{1,2}}V_{\psi}f((x',\xi')+(z,-\nu))^p\,dx'd\xi' \\
	&=\int_{\cG\times\hcG}(\sfM_{V_{1,2}}V_{\psi}g)^\ast(x'',\xi'')^p \sfM_{V_{1,2}}V_{\psi}f((z,-\nu)-(x'',\xi''))^p\,dx''d\xi''\\
	&=(\sfM_{V_{1,2}}V_{\psi}g)^\ast\,^p\ast (\sfM_{V_{1,2}}V_{\psi}f)^p(z,-\nu)\\
	&=(\sfM_{V_{1,2}}V_{\psi}g)^\ast\,^p\ast (\sfM_{V_{1,2}}V_{\psi}f)^p\circ\cJ^{-1}(\nu,z).
	\end{align*}
	Whence, using \cite[Lemma 2.3.23]{Voig2015}, we majorize 
	\begin{align*}
		\essupp_{(\nu,z)\in(\o,u)+V_{2,1}}&(\sfM_{V_{1,2}}V_{\psi}g)^\ast\,^p\ast (\sfM_{V_{1,2}}V_{\psi}f)^p\circ\cJ^{-1}(\nu,z)\\
		&=\essupp_{(z',\nu')\in\cJ^{-1}(\o,u)+\cJ^{-1}V_{2,1}}(\sfM_{V_{1,2}}V_{\psi}g)^\ast\,^p\ast (\sfM_{V_{1,2}}V_{\psi}f)^p(z',\nu')\\
		&=\sfM_{\cJ^{-1}V_{2,1}}[(\sfM_{V_{1,2}}V_{\psi}g)^\ast\,^p\ast (\sfM_{V_{1,2}}V_{\psi}f)^p](\cJ^{-1}(\o,u))\\
		&\leq [\sfM_{\cJ^{-1}V_{2,1}}[(\sfM_{V_{1,2}}V_{\psi}g)^\ast\,^p]\ast (\sfM_{V_{1,2}}V_{\psi}f)^p](\cJ^{-1}(\o,u))\\
		&=[\sfM_{\cJ^{-1}V_{2,1}}[(\sfM_{V_{1,2}}V_{\psi}g)^\ast\,^p]\ast (\sfM_{V_{1,2}}V_{\psi}f)^p]\circ\cJ^{-1}(\o,u).
	\end{align*}
	Setting $U\coloneqq -\cJ^{-1}V_{2,1}+V_{1,2}$, which is  an open, relatively compact, unit neighbourhood, we obtain
	\begin{align*}
	\sfM_{\cJ^{-1}V_{2,1}}[(\sfM_{V_{1,2}}V_{\psi}g)^\ast\,^p](u,\o)&
	=\essupp_{(y,\eta)\in(u,\o)+\cJ^{-1}V_{2,1}}\essupp_{(x,\xi)\in-(y,\eta)+V_{1,2}}\abs{V_{\psi}g(x,\xi)}^p\\
	&\leq\essupp_{(y,\eta)\in(u,\o)+\cJ^{-1}V_{2,1}}\underset{\substack{(x,\xi)\in\\-(u,\o)-\cJ^{-1}V_{2,1}+V_{1,2}}}{\essupp}\abs{V_{\psi}g(x,\xi)}^p\\
	&=\underset{\substack{(x,\xi)\in\\-(u,\o)-\cJ^{-1}V_{2,1}+V_{1,2}}}{\essupp}\abs{V_{\psi}g(x,\xi)}^p\\
	&=\left(\sfM_U V_\psi g(-u,-\o)\right)^p
	=\left([\sfM_U V_\psi g(u,\o)]^\ast\right)^p.
	\end{align*}
	Observe that for positive functions $h,l$ on $\cG\times\hcG$ and $v$ a submultiplicative weight we can write
	\begin{equation}
		\left((h\ast l)v\right)(x,\xi)\leq \left(hv\ast lv\right)(x,\xi),\qquad(x,\xi)\in\cG\times\hcG,
	\end{equation}
	moreover $v^p$ is submultiplicative as well.
	Therefore 
	\begin{align*}
		\norm{R(g,f)}_{M^{p,q}_{1\otimes v\circ\cJ^{-1}}}&\lesssim\bigg(\int_{\hcG\times\cG}\bigg([([\sfM_U V_\psi g]^\ast)^p\ast (\sfM_{V_{1,2}}V_{\psi}f)^p]\circ\cJ^{-1}(\o,u)\bigg)^{\frac{q}{p}}\\
		&\qquad\qquad\times v^q\circ\cJ^{-1}(\o,u)\,d\o du\bigg)^{\frac1q}\\
		&\leq\left(\int_{\hcG\times\cG}\!\!\left(([\sfM_U V_\psi g]^\ast\cdot v)^p\ast (\sfM_{V_{1,2}}V_{\psi}f\cdot v)^p\right)^{\frac{q}{p}}\circ\cJ^{-1}(\o,u)\,d\o du\right)^{\frac1q}\\
		&=\norm{([\sfM_U V_\psi g]^\ast\cdot v)^p\ast (\sfM_{V_{1,2}}V_{\psi}f\cdot v)^p}_{L^{q/p}(\cG\times\hcG)}^{1/p}.
	\end{align*}
	By Young's convolution inequality and following the same argumenta as in the proofs in \cite[Theorem 3.1]{Wignersharp2018} and \cite[Theorem 4]{Cor2020} for the Euclidean case (replacing the Wigner distribution with the Rihaczek) we infer the estimate
	\begin{equation}
		\norm{R(g,f)}_{M^{p,q}_{1\otimes v\circ\cJ^{-1}}}\lesssim \norm{g}_{M^{p_1,q_1}_v}\norm{f}_{M^{p_2,q_2}_v},
	\end{equation}
	with indices satisfying the conditions \eqref{Eq-continuity-R-condition-1} and \eqref{Eq-continuity-R-condition-2}. Following the patterns of \cite{Cor2020,Wignersharp2018} the same result is obtained when $p=\infty$ or $q=\infty$.
\end{proof}

The boundedness properties of the Rihaczek distributions enter the study of  Kohn-Nirenberg pseudo-differential operators $\Opz(\si)$, defined in \eqref{EqDefKN} and \eqref{Eq-KonNirenberg-weak}, in the same fashion of \cite{GroStr2007}.

The boundedness  result  for Weyl operators in the Euclidean setting \cite[Theorem 5.1]{Wignersharp2018} can be written for Kohn-Nirenberg operators on groups as follows.
\begin{theorem}
	Consider $p,q,p_i,q_i\in[1,\infty]$, $i=1,2$, such that:
	\begin{align}
	q&\leq\min\{p'_1,q'_1,p_2,q_2\};\\
	\min\set{\frac1{p_1}+\frac1{p'_2},\frac1{q_1}+\frac1{q'_2}}&\geq \frac1{p'}+\frac1{q'}.
	\end{align}
	Consider $v$ submultiplicative weight even and bounded from below  on $\cG\times\hcG$. If $\si\in M^{p,q}_{1\otimes \frac1v\circ\cJ^{-1}}(\cG\times\hcG)$, then $\Opz(\si)$ is a bounded operator from $M^{p_1,q_1}_v(\cG)$ into $M^{p_2,q_2}_{1/v}(\cG)$ with estimate
	\begin{equation}
	\norm{\Opz(\si) f}_{M^{p_2,q_2}_{1/v}}\lesssim\norm{\si}_{M^{p,q}_{1\otimes \frac1v\circ\cJ^{-1}}}\norm{f}_{M^{p_1,q_1}_v}.
	\end{equation}
\end{theorem}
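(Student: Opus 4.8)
The plan is to derive the theorem from the Kohn-Nirenberg representation of the operator together with the boundedness of the Rihaczek distribution established in Proposition \ref{Pro-continuity-R} and the duality relation \eqref{Eq-duality-Minfty1-M1infty}. Recall from the weak definition \eqref{Eq-KonNirenberg-weak} that
\begin{equation*}
\la \Opz(\si)f,g\ra=\la\si,R(g,f)\ra,
\end{equation*}
so the mapping properties of $\Opz(\si)$ are governed by a bilinear estimate on $\si$ paired against $R(g,f)$. First I would fix $f\in M^{p_1,q_1}_v(\cG)$ and $g\in M^{p_2',q_2'}_v(\cG)$, since $M^{p_2,q_2}_{1/v}(\cG)$ is (up to the identification coming from Proposition \ref{Pro-Duality-Mpq-Banach}) the dual of $M^{p_2',q_2'}_v(\cG)$ under the $L^2$-type pairing of STFTs. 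The goal becomes to show
\begin{equation*}
\abs{\la\si,R(g,f)\ra}\lesssim \norm{\si}_{M^{p,q}_{1\otimes\frac1v\circ\cJ^{-1}}}\norm{f}_{M^{p_1,q_1}_v}\norm{g}_{M^{p_2',q_2'}_v},
\end{equation*}
after which taking the supremum over $g$ in the unit ball of $M^{p_2',q_2'}_v(\cG)$ yields the claimed bound on $\norm{\Opz(\si)f}_{M^{p_2,q_2}_{1/v}}$.

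The second step is to apply Proposition \ref{Pro-continuity-R} with the windows $g$ and $f$ playing the roles of the two entries: with exponents $(p_2',q_2')$ for $g$ and $(p_1,q_1)$ for $f$, the hypotheses $q'\le\min\{p_1,q_1,p_2',q_2'\}$ — which is exactly $q\le\min\{p_1',q_1',p_2,q_2\}$ after passing to conjugates — and $\min\{\tfrac1{p_2'}+\tfrac1{p_1},\tfrac1{q_2'}+\tfrac1{q_1}\}\ge\tfrac1{p'}+\tfrac1{q'}$, which is the stated second condition, guarantee
\begin{equation*}
\norm{R(g,f)}_{M^{p',q'}_{1\otimes v\circ\cJ^{-1}}}\lesssim\norm{g}_{M^{p_2',q_2'}_v}\norm{f}_{M^{p_1,q_1}_v}.
\end{equation*}
Here I must be careful: Proposition \ref{Pro-continuity-R} as stated produces membership in $M^{p,q}_{1\otimes v\circ\cJ^{-1}}$ for entries in $M^{p_i,q_i}_v$, so I simply invoke it with the output exponent pair renamed $(p',q')$, checking that \eqref{Eq-continuity-R-condition-1}–\eqref{Eq-continuity-R-condition-2} translate precisely into the two displayed hypotheses of the theorem once conjugate exponents are introduced. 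Then the bilinear pairing estimate follows from the generalized Hölder/duality inequality for modulation spaces: $\si\in M^{p,q}_{1\otimes\frac1v\circ\cJ^{-1}}(\cG\times\hcG)$ pairs with $R(g,f)\in M^{p',q'}_{1\otimes v\circ\cJ^{-1}}(\cG\times\hcG)$, the weights being reciprocal, via \eqref{Eq-duality-Minfty1-M1infty} and its $L^{p,q}$–$L^{p',q'}$ analogue on the group $\cG\times\hcG$ (Proposition \ref{Pro-Duality-Mpq-Banach} applied on $\cG\times\hcG$ in place of $\cG$, which is legitimate since $\cG\times\hcG$ is again a second countable $\si$-compact LCA group).

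The main obstacle, as I see it, is bookkeeping rather than a new idea: matching the index constraints of the theorem with those of Proposition \ref{Pro-continuity-R} under the conjugation $p_2\leftrightarrow p_2'$, $q_2\leftrightarrow q_2'$ requires verifying that $q\le\min\{p_1',q_1',p_2,q_2\}$ is equivalent to $q'\le\min\{p_1,q_1,p_2',q_2'\}$ (it is, by $a\le b\iff b'\le a'$ for $a,b\ge1$) and that the convolution-type condition transforms correctly. A second technical point that needs care is the justification of the duality pairing: the identity $\la\Opz(\si)f,g\ra=\la\si,R(g,f)\ra$ is first valid for $f,g\in\cS_\cC(\cG)$ and $\si\in M^\infty$, and one must extend it by density — using Theorem \ref{Th-S_C-dense-Banach} for the relevant spaces and the continuity statements already recorded for $\Opz$ from $M^1(\cG)$ to $M^\infty(\cG)$ — so that the final estimate, obtained for $f,g$ in dense subspaces, passes to the closures and identifies $\Opz(\si)f$ as an element of $M^{p_2,q_2}_{1/v}(\cG)$ with the asserted norm control. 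Apart from these routine verifications the proof is a direct concatenation of the Rihaczek boundedness theorem, the convolution/duality structure of modulation spaces, and the weak formulation of $\Opz(\si)$.
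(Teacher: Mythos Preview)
Your approach is exactly the paper's: duality via the weak definition \eqref{Eq-KonNirenberg-weak}, then Proposition \ref{Pro-continuity-R} applied with output exponents $(p',q')$ and inputs $g\in M^{p_2',q_2'}_v$, $f\in M^{p_1,q_1}_v$. One bookkeeping slip to fix: the Proposition's condition \eqref{Eq-continuity-R-condition-1} with those inputs reads $p_1,q_1,p_2',q_2'\le q'$ (i.e.\ $\max\{\ldots\}\le q'$), not $q'\le\min\{p_1,q_1,p_2',q_2'\}$ as you wrote; the former is what is genuinely equivalent to $q\le\min\{p_1',q_1',p_2,q_2\}$ under $a\le b\iff b'\le a'$, so your claimed intermediate equivalence is misstated even though your starting and ending conditions are the correct ones.
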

\begin{proof}
	It 
	follows by duality using Proposition \ref{Pro-continuity-R} and the weak definition of $\Opz(\si)$ in \eqref{Eq-KonNirenberg-weak}.
\end{proof}

\subsection{Gabor frames on quasi-lattices}
The key tool in the boundedness properties of Kohn-Nirenberg  operators
on quasi-Banach modulation spaces is the Gabor frame theory. For a detailed treatment of frame theory see, e.g.,  \cite{Olebook2003}.

In what follows we shall recall and prove new properties for Gabor frames on a LCA group. As a byproduct, we generalize the convolution relations for modulation spaces firstly given in \cite[Proposition 3.1]{BasCorNic20}, see Proposition \ref{Pro-convolution-Mpq}.\par
A {\slshape lattice} in $\cG$ is a discrete subgroup $\La$ such that the quotient group $\cG / \La$ is compact. In this case, there is a relatively compact $U\subseteq\cG$, called {\slshape fundamental domain} for $\La$, such that 
\begin{equation*}
\cG=\bigcup_{w\in\La}\left(w+U\right),\qquad\left(w+U\right)\cap\left(u+U\right)=\varnothing\quad\mbox{for}\quad w\neq u.
\end{equation*}
Not every LCA admits a lattice, for example the $p$-adic groups $\mathbb{Q}_p$, therefore we adopt the following construction of Gr\"{o}chenig and Strohmer \cite{GroStr2007}. Recall from the structure theorem $\cG\cong\rd\times\cG_0$.  
Consider  $D\subseteq \cG_0$   a collection of coset representatives of $\cG_0 / \cK$ and $A\in \text{GL}(\rd)$. We define $U\coloneqq A [0,1)^d\times\cK$. The discrete set $\La\coloneqq A\zd\times D$ is called {\slshape quasi-lattice} with {\slshape fundamental domain} $U$.  Observe that we have the following partition
\begin{equation*}
\cG=\bigcup_{w\in\La}\left(w+U\right).
\end{equation*} 
According to the above theory, a quasi-lattice on the phase-space $\cG\times\hcG$ is of the  type:
\begin{equation}\label{Eq-quasi-lattice-phase-space}
\La\coloneqq\La_1\times\La_2\coloneqq\left(A_1\zd\times D_1\right)\times\left(A_2\zd\times D_2\right)\cong A_{1,2}\zdd\times D_{1,2}
\end{equation}
with fundamental domain 
\begin{equation}\label{Eq-fund-domain-phase-space}
U\coloneqq U_1\times U_2\coloneqq\left(A_1[0,1)^d\times\cK\right)\times\left(A_2 [0,1)^d\times\cK^\perp\right)\cong A_{1,2}[0,1)^{2d}\times\left(\cK\times\cK^\perp\right),
\end{equation}
where $D_2\subseteq\hcG_0$ is a set of coset representatives of $\hcG_0/\cK^\perp$ and
\begin{equation}\label{Eq-block-matrix-quasi-lattice}A_{1,2}\coloneqq
\begin{bmatrix}
A_1&0\\
0&A_2
\end{bmatrix},\qquad D_{1,2}\coloneqq D_1\times D_2.
\end{equation}
We shall denote elements of a quasi-lattice $\La$ in $\cG\times\hcG$ as
\begin{equation*}
\bfw=(w,\mu)=((w_1,w_2),(\mu_1,\mu_2))\in\La=\La_1\times\La_2\subseteq\cG\times\hcG.
\end{equation*}


\begin{lemma}
	Let $\La\subseteq\cG\times\hcG$ be a quasi-lattice as in \eqref{Eq-quasi-lattice-phase-space}. Then $\La$ is a relatively separated family.
\end{lemma}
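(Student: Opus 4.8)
The plan is to prove the (formally stronger) statement that $\La$ is \emph{uniformly discrete} — i.e. a single separated family — since any separated family is in particular relatively separated. Using the identification $\La\cong A_{1,2}\zdd\times D_{1,2}$ inside $\cG\times\hcG\cong\rdd\times(\cG_0\times\hcG_0)$ from \eqref{Eq-quasi-lattice-phase-space}, I would treat the two factors separately. The Euclidean factor $A_{1,2}\zdd$ is a lattice in $\rdd$ (recall $A_{1,2}\in\mathrm{GL}(\rdd)$), so there is $\varepsilon>0$ for which the open balls $A_{1,2}n+B_\varepsilon(0)$, $n\in\zdd$, are pairwise disjoint. For the other factor, $D_{1,2}=D_1\times D_2$ is by construction a complete set of coset representatives of the compact \emph{open} subgroup $\cK\times\cK^\perp$ of $\cG_0\times\hcG_0$; hence the cosets $d+(\cK\times\cK^\perp)$, $d\in D_{1,2}$, are pairwise disjoint.

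Next I would glue these together. Set $W\coloneqq B_\varepsilon(0)\times(\cK\times\cK^\perp)$, an open, relatively compact neighbourhood of the identity of $\cG\times\hcG$. If $\bfw,\bfw'\in\La$ are distinct, then either their $\rdd$-coordinates differ — in which case the first disjointness statement gives $(\bfw+W)\cap(\bfw'+W)=\varnothing$ — or else their $\cG_0\times\hcG_0$-coordinates lie in distinct $\cK\times\cK^\perp$-cosets, and the second disjointness statement gives the same conclusion. Thus the translates $\{\bfw+W\}_{\bfw\in\La}$ are pairwise disjoint, so $\La$ is $W$-separated and therefore relatively separated. If one prefers the counting formulation from the Appendix: for any relatively compact unit neighbourhood $Q$ and any $\bfz\in\cG\times\hcG$, every $\bfw\in\La$ with $\bfw\in\bfz+Q$ satisfies $\bfw+W\subseteq\bfz+Q+W$, and since these $\bfw+W$ are pairwise disjoint and $Q+W$ is relatively compact, the number of such $\bfw$ is at most $|Q+W|/|W|$, a finite bound independent of $\bfz$.

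I do not anticipate a genuine obstacle here: the only points requiring a little care are that $\cK\times\cK^\perp$ is an open subgroup (so that $W$ really is a unit neighbourhood and the cosets are disjoint) and that the argument is phrased with the exact definition of ``relatively separated'' used in Appendix A, together with $0<|W|,|Q+W|<\infty$ for the Haar measure on $\cG\times\hcG$. An equally short alternative is the tiling argument: the fundamental domain $U$ of \eqref{Eq-fund-domain-phase-space} has $0<|U|<\infty$ and its $\La$-translates partition $\cG\times\hcG$, so the same volume count over $\bfz+(Q+U)$ bounds $\#\{\bfw\in\La:\bfw\in\bfz+Q\}$ by $|Q+U|/|U|$ uniformly in $\bfz$.
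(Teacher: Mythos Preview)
Your argument is correct and in fact establishes the stronger conclusion that $\La$ is \emph{separated} (uniformly discrete), not just relatively separated. The paper takes a different, more modular route: it invokes Lemma~\ref{Lem-product-rel-sep-fam} (products of relatively separated families are relatively separated) and then checks each factor separately; for the discrete factor $D_1\subseteq\cG_0$ the paper gives a combinatorial bound on $C_{D_1,Q_0}$ by covering an arbitrary compact $Q_0$ with finitely many $\cK$-cosets and counting. Your approach is shorter and more direct because you exploit from the outset that $\cK\times\cK^\perp$ is an \emph{open} subgroup, which immediately gives disjoint coset translates and hence an explicit separating neighbourhood $W$; the volume-counting (or tiling) step then yields the uniform bound required by \eqref{Eq-Constant-Rel-Sep-Fam} without any factorwise analysis. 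The paper's approach, by contrast, illustrates how the general product lemma applies and verifies the definition of relative separation directly for each piece, which is slightly more robust if one ever needed factors that are merely relatively separated rather than separated.
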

\begin{proof}
	We use Lemma \ref{Lem-product-rel-sep-fam}. The fact that $A_{1,2}\zdd$ is relatively separated in $\rdd$ is trivial. We only have to show that $D_1$ is relatively separated in $\cG_0$; $D_2$  is treated similarly.
	For a fixed compact set $Q_0\subseteq\cG_0$ we have to show that
	\begin{equation*}
	C_{D_1,Q_0}=\sup_{x\in D_1}\#\{y\in D_1\,|\,\left(x+Q_0\right)\cap\left( y+Q_0\right)\neq\varnothing\}<+\infty.
	\end{equation*}
	Since $Q_0$ is compact  and $\cK$ is an open subgroup, there exist $q_1,\dots,q_n\in Q_0$ such that
	\begin{equation*}
	Q_0\subseteq\bigcup_{i=1}^n\left(q_i+\cK\right)\eqqcolon Q'_0.
	\end{equation*}
	For $x,y\in D_1$, if 
$\left(x+Q_0\right)\cap\left( y+Q_0\right)\neq\varnothing$ then   $\left(x+Q'_0\right)\cap\left( y+Q'_0\right)\neq\varnothing$, hence $C_{D_1,Q_0}\leq C_{D_1,Q'_0}$. Assume that $\left(x+Q'_0\right)\cap\left( y+Q'_0\right)\neq\varnothing$, then there are $i_0,j_0\in\{1,\dots,n\}$ and $k_{i_0},k_{j_0}\in\cK$ such that
	\begin{equation*}
	x+q_{i_0}+k_{i_0}=y+q_{j_0}+k_{j_0}\quad\Leftrightarrow\quad y=x+q_{i_0}-q_{j_0}+k_{i_0}-k_{j_0}.
	\end{equation*}
	Fix $x\in D_1$, quotienting by $\cK$,
	\begin{equation}
	[y]^\bullet=[x+q_{i_0}-q_{j_0}]^\bullet\quad\Rightarrow\quad\#\{y\in D_1\,|\,\left(x+Q'_0\right)\cap\left( y+Q'_0\right)\neq\varnothing\}\leq n^2,
	\end{equation}
	where $[y]^\bullet$ denotes the projection of $y\in\cG_0$ onto the quotient $\cG_0/\cK$.
	This proves $C_{D_1,Q_0}\leq C_{D_1,Q'_0}<+\infty$. The desired result follows now from Lemma \ref{Lem-product-rel-sep-fam}.
\end{proof}

\begin{corollary}
	Let $\La\subseteq\cG\times\hcG$ be a quasi-lattice. Then $\La$ is at most countable.
\end{corollary}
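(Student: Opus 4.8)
The plan is to combine the lemma immediately preceding this corollary --- which establishes that a quasi-lattice $\La\subseteq\cG\times\hcG$ is a relatively separated family --- with the general fact, recalled just before Lemma \ref{Lem-BUPU-tensor} and due to \cite[Lemma 2.3.10]{Voig2015}, that in a $\si$-compact group every relatively separated family is at most countable.

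Concretely, the first step is to observe that $\cG\times\hcG$ is $\si$-compact. This follows from the standing assumptions in Section 2: $\cG$ is second countable and $\si$-compact, and by \cite[Theorem 4.2.7]{ReiSte2000} together with Pontrjagin duality the same holds for $\hcG$; the product of two $\si$-compact spaces is $\si$-compact, so $\cG\times\hcG$ is a $\si$-compact LCA group. The second step is to invoke the previous lemma to conclude $\La$ is a relatively separated family in $\cG\times\hcG$. The final step is to apply \cite[Lemma 2.3.10]{Voig2015}: in a $\si$-compact group, any relatively separated family is countable; hence $\La$ is at most countable.

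There is essentially no obstacle here --- the statement is an immediate corollary, and all the ingredients (relative separatedness of quasi-lattices, $\si$-compactness of $\cG\times\hcG$, countability of relatively separated families in $\si$-compact groups) are already available in the excerpt. The only thing to be slightly careful about is that $\si$-compactness passes to the product $\cG\times\hcG$, which is standard. So the proof is a one-line assembly:
\begin{equation*}
\La\ \text{quasi-lattice}\ \Longrightarrow\ \La\ \text{relatively separated in the}\ \si\text{-compact group}\ \cG\times\hcG\ \Longrightarrow\ \La\ \text{countable.}
\end{equation*}
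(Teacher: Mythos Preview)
Your proposal is correct and takes essentially the same approach as the paper: use the preceding lemma to get that $\La$ is relatively separated, then apply \cite[Lemma 2.3.10]{Voig2015}. The paper's proof is the one-liner ``We use the fact that $\La$ is a relatively separated family and \cite[Lemma 2.3.10]{Voig2015}''; you simply make the $\si$-compactness of $\cG\times\hcG$ explicit, which is fine.
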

\begin{proof}
	We use the fact that $\La$ is a relatively separated family and \cite[Lemma 2.3.10]{Voig2015}.
\end{proof}

In the following issue about the existence of a particular BUPU, we use the quasi-lattice $\La$ both as localizing family and as indexes' set. The argument was presented in \cite[Remark 2.5]{fegr85}.
\begin{lemma}\label{Lem-BUPU-ad-hoc}
	Let $\La\subseteq\cG\times\hcG$ be a quasi-lattice as in \eqref{Eq-quasi-lattice-phase-space} with fundamental domain $U$ as in \eqref{Eq-fund-domain-phase-space}. Then there exist two open, relatively compact, unit neighbourhoods $Q$ and $V_{1,2}$ in $\cG\times\hcG$, where $V_{1,2}$ as in \eqref{Eq-Decomposition-V}, such that $Q\subsetneq V_{1,2}$ and there is a $V_{1,2}$-BUPU 
	\begin{equation*}
	\{\psi_w\otimes\gamma_\mu\}_{(w,\mu)\in\La}
	\end{equation*}  
	with localizing family $\La$ and such that for every $(w,\mu)\in\La$
	\begin{equation*}
	\psi_w\otimes\gamma_\mu\equiv1\qquad\mbox{on}\qquad(w,\mu)+Q.
	\end{equation*}
\end{lemma}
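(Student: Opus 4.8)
The plan is to reduce the statement to the Euclidean-times-discrete structure exposed by the structure theorem and then invoke the classical partition-of-unity construction of Feichtinger--Gr\"ochenig. Concretely, recall from \eqref{Eq-quasi-lattice-phase-space} and \eqref{Eq-fund-domain-phase-space} that $\La\cong A_{1,2}\zdd\times D_{1,2}$ with fundamental domain $U\cong A_{1,2}[0,1)^{2d}\times(\cK\times\cK^\perp)$, so that $\cG\times\hcG$ is partitioned by the translates $\bfw+U$, $\bfw\in\La$. The goal is to find a unit neighbourhood $Q$, a slightly larger one $V_{1,2}=V_\cG\times V_{\hcG}$, and a $V_{1,2}$-BUPU subordinate to $\La$ whose members are identically $1$ on $\bfw+Q$; the product form $\psi_w\otimes\gamma_\mu$ is what makes Lemma \ref{Lem-BUPU-tensor} applicable later.

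First I would handle the Euclidean factor $A_{1,2}\zdd\subseteq\rdd$. Pick a smooth bump $\phi_0\in C^\infty_c(\rdd)$ with $0\le\phi_0\le1$, $\phi_0\equiv1$ on a small cube $A_{1,2}Q_0$ and $\supp\phi_0\subseteq A_{1,2}[-1,2)^{2d}$ say, chosen so that $\sum_{k\in\zdd}\phi_0(\cdot-A_{1,2}k)\ge c>0$ everywhere (this holds because the cube $A_{1,2}[0,1)^{2d}$ tiles). Normalizing, $\phi_k\coloneqq\phi_0(\cdot-A_{1,2}k)/\sum_{j}\phi_0(\cdot-A_{1,2}j)$ gives a BUPU on $\rdd$ with localizing family $A_{1,2}\zdd$; shrinking the set on which $\phi_0\equiv1$ does not destroy the property that $\phi_k\equiv1$ on a small cube around $A_{1,2}k$, because near that point all the other summands vanish. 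For the totally disconnected factor $D_{1,2}\subseteq\cG_0\times\hcG_0$ the situation is easier: since $\cK\times\cK^\perp$ is a \emph{compact open} subgroup, the indicator functions $\chi_{d+\cK\times\cK^\perp}$, $d\in D_{1,2}$, are continuous, form an exact partition of unity (the cosets are pairwise disjoint and cover $\cG_0\times\hcG_0$), and trivially equal $1$ on $d+(\cK\times\cK^\perp)$. Then I set $\psi_w\otimes\gamma_\mu$ to be the tensor product of the corresponding Euclidean bump with the indicator of the relevant coset, after matching indices through the isomorphism $\La\cong A_{1,2}\zdd\times D_{1,2}$; by Lemma \ref{Lem-BUPU-tensor} this is a genuine BUPU on $\cG\times\hcG$ (hence usable on $\bH_{\cG\times\hcG}$ after tensoring with $\chi_\bT$) with localizing family $\La$, and its window set is $V_{1,2}=V_\cG\times V_{\hcG}$ with $V_\cG,V_{\hcG}$ the evident products of a Euclidean cube neighbourhood and $\cK$ (resp.\ $\cK^\perp$). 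Taking $Q$ to be the corresponding product of the smaller Euclidean cube with $\cK\times\cK^\perp$, one gets $Q\subsetneq V_{1,2}$ and $\psi_w\otimes\gamma_\mu\equiv1$ on $\bfw+Q$.

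The main obstacle is bookkeeping rather than anything deep: one must make sure the Euclidean bump's ``flat'' region and its support are calibrated so that (i) the translates along $A_{1,2}\zdd$ still cover with a positive lower bound (so the normalization is legitimate and the quotients are again $C^\infty_c$ with controlled support), (ii) after normalization each $\phi_k$ is still identically $1$ on a fixed neighbourhood of $A_{1,2}k$ — this forces the flat region of $\phi_0$ to be contained in the interior of the set where only the $k$-th translate is nonzero, which is possible by taking $\phi_0$ supported in a slightly dilated fundamental cube and flat on a sufficiently small central cube — and (iii) the resulting $V_{1,2}$ genuinely has the decomposed form \eqref{Eq-Decomposition-V} demanded in the statement, which is automatic from the product construction. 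Once these calibrations are fixed, the verification that $\{\psi_w\otimes\gamma_\mu\}$ is a $V_{1,2}$-BUPU and that each member is $1$ on $\bfw+Q$ is immediate from the definitions, and the argument is exactly the one sketched in \cite[Remark 2.5]{fegr85}, adapted to the quasi-lattice setting via the structure theorem.
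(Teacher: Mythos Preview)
Your construction is correct and is precisely the Feichtinger--Gr\"obner argument the paper invokes from \cite[Remark 2.5]{fegr85}; the paper provides no proof of its own beyond that citation. Two small calibrations worth making explicit: take the support template to be the \emph{symmetric} cube $A_{1,2}(-\tfrac12-\epsilon,\tfrac12+\epsilon)^{2d}$ so that each lattice point lies in the region where only its own translate is nonzero (your initial choice $A_{1,2}[-1,2)^{2d}$ does not give this, though your later ``slightly dilated fundamental cube'' fix does if read symmetrically), and split the Euclidean bump as a tensor product along the block decomposition of $A_{1,2}$ in \eqref{Eq-block-matrix-quasi-lattice} so that the resulting partition genuinely has the form $\psi_w\otimes\gamma_\mu$ with $\psi_w$ on $\cG\cong\rd\times\cG_0$ and $\gamma_\mu$ on $\hcG\cong\rd\times\hcG_0$.
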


\begin{remark}
	Without loss of generality, the unit neighbourhood $Q$ of the previous lemma can be chosen  such that
	\begin{equation}
	\left(\{0_{\rd}\}\times\cK\right)\times\left(\{0_{\rd}\}\times\cK^\perp\right)\subsetneq Q.
	\end{equation}
	Therefore for every $(w,\mu)\in\La=\La_1\times\La_2$ we have
	\begin{equation*}
	\psi_w\equiv 1\quad\mbox{on}\quad w+\left(\{0_{\rd}\}\times\cK\right),\qquad\ga_\mu\equiv 1\quad\mbox{on}\quad \mu+\left(\{0_{\rd}\}\times\cK^\perp\right).
	\end{equation*}
\end{remark}

\begin{definition}\label{frame} Given a quasi-lattice $\La\subseteq\cG\times\hcG$ and windows $g,h\in L^2(\cG)$,  the {\slshape Gabor system generated by $g$} is
\begin{equation*}
\{\pi(\bfw)g\,|\,\bfw\in\La\}=\{\pi(\bfw)g\}_{\bfw\in\La}.
\end{equation*}
The {\slshape coefficient} or {\slshape analysis operator} is given by
\begin{equation}
\cC_g\colon L^2(\cG)\to\ell^2(\La),f\mapsto \left(\la f,\pi(\bfw)g\ra\right)_{\bfw\in\La}.
\end{equation}
Its adjoint is called {\slshape reconstruction} or {\slshape synthesis operator} and has the form
\begin{equation}
\cC^\ast_g\colon\ell^2(\La)\to L^2(\cG), \left(c_{\bfw}\right)_{\bfw\in\La}\mapsto\sum_{\bfw\in\La}c_{\bfw}\pi(\bfw)g.
\end{equation}
The {\slshape Gabor frame operator}  $S_{h,g}$ is given by
\begin{equation}
S_{h,g}f= \cC^\ast_h \cC_g f=\sum_{\bfw\in\La}\la f,\pi(\bfw)g\ra\pi(\bfw)h.
\end{equation}
We say that $\{\pi(\bfw)g\}_{\bfw\in\La}$ is a {\slshape Gabor frame for $L^2(\cG)$}  if there exist $A,B>0$ such that 
\begin{equation}
A\norm{f}^2_{L^2}\leq\sum_{\bfw\in\La}\abs{\la f,\pi(\bfw)g\ra}^2\leq B\norm{f}^2_{L^2},\quad \forall f\in L^2(\cG);
\end{equation}
this is equivalent to saying that $S_{g,g}$ is invertible on $L^2(\cG)$. If $A=B$ the frame is called {\slshape tight}. Moreover, if $h\in L^2(\cG)$ is such that
\begin{equation}
	S_{h,g}=S_{g,h}=I_{L^2},
\end{equation}
then $h$ is named  {\slshape dual window} for the frame $\{\pi(\bfw)g\}_{\bfw\in\La}$.\\
\end{definition}
We note that  Theorem 2.7 in \cite{GroStr2007} is till valid for the case of the  Gaussian $\f$ 
and considering a Gabor frame not tight. Namely,
\begin{theorem}\label{Th-frame-L2}
	Let $\La\coloneqq \alpha\zdd\times D_{1,2}$, $\alpha\in(0,1)$, be a quasi-lattice in $\cG\times\hcG$
	. Consider  the Gaussian $\f$ in \eqref{Gauss}. Then 
	\begin{equation}\label{Eq-Gabor-frame-L2}
	\{\pi(\bfw)\f\,|\,\bfw\in\La\}
	\end{equation}
	is a Gabor frame for $L^2(\cG)$.
\end{theorem}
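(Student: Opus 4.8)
The plan is to reduce everything to a tensor-product statement via the structure theorem $\cG\cong\rd\times\cG_0$ and the factorization $\f=\f_1\otimes\f_2$ of the Gaussian \eqref{Gauss}. Under the identifications $L^2(\cG)\cong L^2(\rd)\,\widehat{\otimes}\,L^2(\cG_0)$ and $\hcG\cong\rd\times\hcG_0$, the hypothesis $\La=\alpha\zdd\times D_{1,2}$ with $\alpha\in(0,1)$ means that in the Euclidean component we take time shifts along $\alpha\zd$ and frequency shifts along $\alpha\zd$, while in the $\cG_0$-component we take time shifts along the transversal $D_1$ of $\cG_0/\cK$ and frequency shifts along the transversal $D_2$ of $\hcG_0/\cK^\perp$. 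Since time--frequency shifts on a product group split, $\pi(\bfw)(\f_1\otimes\f_2)=\big(M_{\mu_1}T_{w_1}\f_1\big)\otimes\big(M_{\mu_2}T_{w_2}\f_2\big)$ for $\bfw=((w_1,w_2),(\mu_1,\mu_2))$, I would observe that the system \eqref{Eq-Gabor-frame-L2} is exactly the tensor product of the Euclidean Gabor system $\{M_{\mu_1}T_{w_1}\f_1\}_{(w_1,\mu_1)\in\alpha\zd\times\alpha\zd}$ for $L^2(\rd)$ and the Gabor system $\{M_{\mu_2}T_{w_2}\chi_\cK\}_{(w_2,\mu_2)\in D_1\times D_2}$ for $L^2(\cG_0)$. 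It then suffices to prove that each factor is a frame, because the tensor product of two frames is a frame with bounds the products of the respective bounds; the Bessel property of each factor (established below) guarantees that the frame operator is well defined and that the identity $S_{\f,\f}=S^{(1)}\otimes S^{(2)}$, checked on elementary tensors, extends to all of $L^2(\cG)$ by density and boundedness.

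For the Euclidean factor, $\f_1(x_1)=e^{-\pi x_1^2}$ is the $d$-dimensional Gaussian and the lattice $\alpha\zd\times\alpha\zd$ has covolume $\alpha^{2d}<1$. I would invoke the classical density characterization of Gaussian Gabor frames (Lyubarskii; Seip--Wallsten) together with its trivial $d$-fold tensorization to conclude that $\{M_{\mu_1}T_{w_1}\f_1\}_{(w_1,\mu_1)\in\alpha\zd\times\alpha\zd}$ is a Gabor frame for $L^2(\rd)$; in particular it is Bessel and its frame operator $S^{(1)}$ is bounded and boundedly invertible with constants $0<A_1\le B_1<\infty$. Note that the Gaussian does not make this factor tight, which is precisely why the resulting frame on $\cG$ is not tight, in contrast with the situation in \cite[Theorem 2.7]{GroStr2007}.

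For the $\cG_0$-factor, $\f_2=\chi_\cK$ and I expect a direct computation to give a tight frame, essentially re-deriving \eqref{Eq-SFTFgauss}. Fixing $f\in L^2(\cG_0)$ and $w_2\in D_1$, and writing $h_{w_2}(y)\coloneqq f(w_2+y)$ for $y\in\cK$, the commutation relations \eqref{Eq-commutation-relations} and translation invariance of the Haar measure give
\[
\<f,M_{\mu_2}T_{w_2}\chi_\cK\>=\overline{\<\mu_2,w_2\>}\int_\cK h_{w_2}(y)\overline{\<\mu_2,y\>}\,dy .
\]
Since $\widehat{\cK}\cong\hcG_0/\cK^\perp$ via restriction of characters and $D_2$ is a transversal of $\hcG_0/\cK^\perp$, the family $\{\mu_2|_\cK\,|\,\mu_2\in D_2\}$ is exactly $\widehat{\cK}$, so Parseval's identity on the compact group $\cK$ yields $\sum_{\mu_2\in D_2}\abs{\<f,M_{\mu_2}T_{w_2}\chi_\cK\>}^2=c(\cK)\int_{w_2+\cK}\abs{f(y)}^2\,dy$ for a constant $c(\cK)>0$ depending only on $\cK$. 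Summing over $w_2\in D_1$ and using that $\{w_2+\cK\,|\,w_2\in D_1\}$ partitions $\cG_0$ gives $\sum_{(w_2,\mu_2)\in D_1\times D_2}\abs{\<f,M_{\mu_2}T_{w_2}\chi_\cK\>}^2=c(\cK)\norm{f}_{L^2(\cG_0)}^2$, so this factor is a tight frame with bound $c(\cK)$ and $S^{(2)}=c(\cK)\,I_{L^2(\cG_0)}$.

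Assembling, the system \eqref{Eq-Gabor-frame-L2} is Bessel and its frame operator factors as $S_{\f,\f}=S^{(1)}\otimes\big(c(\cK)\,I_{L^2(\cG_0)}\big)=c(\cK)\,(S^{(1)}\otimes I)$, which is bounded and boundedly invertible on $L^2(\cG)$ with bounds $c(\cK)A_1$ and $c(\cK)B_1$; hence \eqref{Eq-Gabor-frame-L2} is a Gabor frame for $L^2(\cG)$. The routine parts are the tensor-product bookkeeping and the Parseval computation on $\cK$; the only genuinely nontrivial ingredient—and thus the main ``obstacle''—is the classical fact that the Gaussian generates a Gabor frame on $\rd$ precisely when the lattice covolume is $<1$, which is imported rather than proved. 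An alternative write-up is to follow the proof of \cite[Theorem 2.7]{GroStr2007} verbatim and simply replace the tight-frame generating window on $\rd$ by the Gaussian, invoking this classical result at the corresponding step.
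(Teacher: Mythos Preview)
Your proposal is correct and follows essentially the same approach as the paper: the paper simply notes that \cite[Theorem 2.7]{GroStr2007} remains valid with the Gaussian window (yielding a non-tight frame), and the argument behind that theorem---made explicit in the proof of Corollary \ref{Cor-dual-window-in-sA_v}---is precisely the tensor-product decomposition you give, combining the Lyubarskii/Seip--Wallst\'en result on $\rd$ with the fact that $\{\pi(w_2,\mu_2)\chi_\cK\}_{(w_2,\mu_2)\in D_{1,2}}$ is an orthonormal basis of $L^2(\cG_0)$. Your write-up in fact supplies more detail than the paper itself.
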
 

\begin{corollary}\label{Cor-dual-window-in-sA_v}
	There exists $\alpha\in(0,1)$ such that the Gabor frame $\{\pi(\bfw)\f\,|\,\bfw\in\La\}$ defined in \eqref{Eq-Gabor-frame-L2} admits a dual window $h\in\sA_{\tilde{v}}$.
\end{corollary}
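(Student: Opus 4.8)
The plan is to take as dual window the canonical dual $h:=S_{\f,\f}^{-1}\f$ of the frame $\{\pi(\bfw)\f\}_{\bfw\in\La}$ from Theorem~\ref{Th-frame-L2} (for a suitable $\alpha\in(0,1)$) and to show $h\in\sA_{\tilde v}$; both the well-posedness of this choice over a \emph{quasi}-lattice and the regularity of $h$ rest on a factorization of the frame operator, which is the ``consider the quotient group'' idea. Writing $\cG\cong\rd\times\cG_0$ and $\f=\f_1\otimes\f_2$ with $\f_2=\chi_\cK$ as in \eqref{Gauss}, and using that on a product group time-frequency shifts factorize, $\pi(\bfw)\f=\bigl(\pi_{\rd}(\alpha k_1,\alpha k_2)\f_1\bigr)\otimes\bigl(\pi_{\cG_0}(d_1,d_2)\f_2\bigr)$ for $\bfw=((\alpha k_1,d_1),(\alpha k_2,d_2))\in\La$, so that, under $L^2(\cG)\cong L^2(\rd)\otimes L^2(\cG_0)$,
\[
S_{\f,\f}=S^{\rd}_{\f_1,\f_1}\otimes S^{\cG_0}_{\f_2,\f_2},
\]
where $S^{\rd}_{\f_1,\f_1}$ is the Euclidean Gabor frame operator of the Gaussian $\f_1$ over the lattice $\alpha\zdd$ and $S^{\cG_0}_{\f_2,\f_2}$ is the Gabor frame operator of $\chi_\cK$ over the coset representatives $D_1\times D_2$ of $\cG_0/\cK$ and $\hcG_0/\cK^{\perp}$. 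By the degenerate shape of the ambiguity function on the compact-open-subgroup factor (see \eqref{Eq-SFTFgauss} and \cite[Section~2]{GroStr2007}) the system $\{\pi(d_1,d_2)\chi_\cK\}$ is a \emph{tight} frame, i.e. $S^{\cG_0}_{\f_2,\f_2}=c(\cK)\,I_{L^2(\cG_0)}$.

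This factorization has two consequences. First, $S_{\f,\f}$ is invertible on $L^2(\cG)$ (re-proving Theorem~\ref{Th-frame-L2}) and, although $\La$ is not a subgroup, $S_{\f,\f}$ still commutes with every $\pi(\bfw)$, $\bfw\in\La$: on the Euclidean factor this is the standard commutation of a Gabor frame operator with shifts along its \emph{lattice} $\alpha\zdd$, and on the $\cG_0$-factor $c(\cK)I$ commutes trivially. Hence $S_{\f,\f}^{-1}\pi(\bfw)\f=\pi(\bfw)h$, so $\{\pi(\bfw)h\}_{\bfw\in\La}$ is the canonical dual frame and $h=\gamma_1\otimes\chi_\cK$, with $\gamma_1:=c(\cK)^{-1}\bigl(S^{\rd}_{\f_1,\f_1}\bigr)^{-1}\f_1$, is a dual window in the sense of Definition~\ref{frame}. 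Second, it remains to check $h\in\sA_{\tilde v}(\cG)$, and for this --- exactly as in the proof of Lemma~\ref{Lem-SC-window-space} --- one dominates $v$ via the structure theorem by a tensor product $v_1\otimes v_2$ of submultiplicative weights on $\rdd$ and $\cG_0\times\hcG_0$, so that the STFT of $h$ and the Wiener amalgam norms split coordinatewise; the $\cG_0$-part is controlled, for every $0<r\le1$, by the compactness of $\cK\times\cK^{\perp}$ (this is the integral $I_2$ of Lemma~\ref{Lem-SC-window-space}), and the claim reduces to $\gamma_1\in\sA_{\tilde v_1}(\rd)=\bigcap_{0<r\le1}M^r_{v_1}(\rd)$, the spaces $M^r_{v_1}(\rd)$ being the classical quasi-Banach modulation spaces by Remark~\ref{Rem-Modulation-coincide}(ii).

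For this last, Euclidean, step I would invoke a Wiener-type lemma. The Gabor matrix $\bigl(\langle\pi(\lambda)\f_1,\pi(\mu)\f_1\rangle\bigr)_{\lambda,\mu\in\alpha\zdd}$ of the Gaussian is, up to unimodular factors, dominated by $V_{\f_1}\f_1(\lambda-\mu)\asymp e^{-\frac{\pi}{2}|\lambda-\mu|^2}$; since a submultiplicative weight grows at most exponentially (\cite[Lemma~2.1.4]{CordRod2020}), this matrix --- hence $S^{\rd}_{\f_1,\f_1}$ modulo scalars --- lies in a spectrally invariant matrix algebra (of Jaffard/Baskakov type) which is inverse-closed in $B(L^2(\rd))$ and acts boundedly on each $M^r_{v_1}(\rd)$, $0<r\le1$. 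As $S^{\rd}_{\f_1,\f_1}$ is invertible on $L^2(\rd)$, its inverse lies in that algebra and thus maps $\sA_{\tilde v_1}(\rd)$ into itself; since $\f_1\in\sA_{\tilde v_1}(\rd)$ by Lemma~\ref{Lem-SC-window-space}, we obtain $\gamma_1\in\sA_{\tilde v_1}(\rd)$, and therefore $h=\gamma_1\otimes\chi_\cK\in\sA_{\tilde v}(\cG)$, for every $\alpha\in(0,1)$ to which Theorem~\ref{Th-frame-L2} applies (for small $\alpha$ the same follows elementarily by expanding $\bigl(S^{\rd}_{\f_1,\f_1}\bigr)^{-1}$ in a Neumann series in that algebra; cf. \cite[Ch.~13]{Grochenig_2001_Foundations}). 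The main obstacle is precisely this interplay between the quotient-group decoupling and the quasi-Banach Wiener's lemma: one must check that the $\cG_0/\cK$, $\hcG_0/\cK^{\perp}$ part of the quasi-lattice yields a tight Gabor frame --- which both legitimizes ``$S_{\f,\f}^{-1}\f$'' as a dual window over a quasi-lattice and peels off the Euclidean factor --- and then import the weighted, quasi-Banach form of Wiener's lemma to keep the Euclidean canonical dual window in $\bigcap_{0<r\le1}M^r_{v_1}(\rd)$; the rest (the tensor structure of $\sA_{\tilde v}$ and the splitting of $v$) is routine and parallels Lemma~\ref{Lem-SC-window-space}.
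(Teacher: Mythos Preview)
Your overall architecture coincides with the paper's: both factor $S_{\f,\f}=S^{\rd}_{\f_1,\f_1}\otimes S^{\cG_0}_{\f_2,\f_2}$, observe that $\{\pi(d_1,d_2)\chi_\cK\}_{(d_1,d_2)\in D_{1,2}}$ is an orthonormal basis (hence tight), deduce $h=\gamma_1\otimes\chi_\cK$, and then reduce $h\in\sA_{\tilde v}$ to a purely Euclidean statement via the splitting $v\le v_1\otimes v_2$ and the $I_2$-computation of Lemma~\ref{Lem-SC-window-space}. The divergence is only in how the Euclidean dual window $\gamma_1=(S^{\rd}_{\f_1,\f_1})^{-1}\f_1$ is placed in $\bigcap_{0<r\le1}M^r_{v_1}(\rd)$.

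The paper does not argue spectral invariance at all: it fixes $\alpha$ with $\alpha^{2d}<(d+1)^{-1}$ and invokes Gr\"ochenig--Lyubarskii \cite{GroLyu2007,GroLyu2009}, which gives directly that the canonical dual of the Gaussian lies in the Gelfand--Shilov class $\cS^{1/2}_{1/2}(\rd)$; since $V_{\gamma_0}\gamma_0\in\cS^{1/2}_{1/2}(\rdd)$ decays faster than any sub-exponential, the Lemma~\ref{Lem-SC-window-space} calculation goes through verbatim. This is short and yields a concrete $\alpha$, at the price of importing a result based on the Bargmann transform. Your route via a Jaffard/Baskakov-type algebra is more self-contained in spirit, but the step ``inverse-closed in $B(\ell^2)$ \emph{and} acting on all $\ell^r_{v_1}$, $0<r\le1$'' is not in the standard references you cite (\cite[Ch.~13]{Grochenig_2001_Foundations} treats the Banach range $r\ge1$); to make it rigorous you would have to set up a quasi-Banach convolution-dominated matrix algebra and prove a Wiener lemma there. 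Your fallback---the Neumann series for small $\alpha$---is correct and does suffice for the corollary as stated (``there exists $\alpha$''), so your proof is complete once you drop the stronger ``for every $\alpha\in(0,1)$'' claim or justify the quasi-Banach spectral invariance separately.
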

\begin{proof}
	We first tackle the problem of finding a dual window. The proof is similar to that in  \cite[Theorem 2.7]{GroStr2007}. We distinguish three cases.\par
	\emph{Case $\cG=\rd$.} In this case the frame we are considering is
	\begin{equation}
	\{\pi(w_1,\mu_1)e^{-\pi x_1^2},\,(w_1,\mu_1)\in\alpha\zdd\}, \quad \alpha\in(0,1).
	\end{equation}
	We fix $\alpha$ such that $\alpha^{2d}<(d+1)^{-1}$. Then the existence of a dual window $\ga_0$ for the Gabor frame generated by the first Hermite function $H_0$ (the Gaussian) was proved by Gr\"{o}chenig and Lyubarskii, see \cite{GroLyu2007,GroLyu2009}. In particular in \cite[Remarks 2]{GroLyu2009} 
	was observed that $\ga_0$ belongs to the Gelfand-Shilov space $\cS^{1/2}_{1/2}(\rd)$, cf. \cite{GelShi68}.\\
	\emph{Case $\cG=\cG_0$.} In this case the frame that we are dealing with is the orthonormal basis for $L^2(\cG_0)$
	\begin{equation}
	\{\pi(w_2,\mu_2)\chi_{\cK}(x_2),\,(w_2,\mu_2)\in D_{1,2}\}.
	\end{equation}
	Therefore  $\chi_{\cK}$ is a dual window itself.\\
	\emph{Case $\cG\cong\rd\times\cG_0$.} The frame in this case is the tensor product of the previous ones:
	\begin{equation}
	\{\pi(\bfw)\f,\,\bfw=((w_1,w_2),(\mu_1,\mu_2))\in \La=\left(\alpha\zd\times D_{1}\right)\times\left(\alpha\zd\times D_2\right)\},
	\end{equation}
	where $\f(x_1,x_2)=e^{-\pi x_1^2}\chi_{\cK}(x_2)=(\f_1\otimes\f_2)(x_1,x_2)$.
	Recall that the functions of the type $f_1\otimes f_2$, with $f_1\in L^2(\rd)$ and $f_2\in L^2(\cG_0)$, are dense in $L^2(\rd\times\cG_0)$. 
	Let us show that
	\begin{equation}
	h(x_1,x_2)\coloneqq (\ga_0\otimes \chi_{\cK})(x_1,x_2)
	\end{equation}
	is a dual window. In fact,
	\begin{align*}
	\sum_{\bfw\in\La}&\la f_1\otimes f_2, \pi(\bfw)\f\ra \pi(\bfw)\ga_0\otimes \chi_{\cK}\\
	&=\sum_{(w_1,\mu_1)}\la f_1, \pi(w_1,\mu_1)\f_1\ra\pi(w_1,\mu_1)\gamma_0\sum_{(w_2,\mu_2)}\la f_2,\pi(w_2,\mu_2)\f_2\ra\pi(w_2,\mu_2)\chi_{\cK}\\
	&=f_1\otimes f_2;
	\end{align*}
	similarly,
	\begin{equation*}
	\sum_{\bfw\in\La}\la f_1\otimes f_2, \pi(\bfw)\ga_0\otimes \chi_{\cK}\ra\pi(\bfw)\f=f_1\otimes f_2.
	\end{equation*}
	The claim follows by density argument.\par
	We now prove that $h\in\sA_{\tilde{v}}$ in the general case $\cG\cong\rd\times\cG_0$. Similarly to the wavelet transform of the generalized Gaussian $\f$ in \eqref{Gauss}, see \eqref{Eq-SFTFgauss}, we obtain 
	\begin{equation}
	W^\vr_h h(x,\xi,\tau)=\overline{\tau}c(\cK) V_{\ga_0}\ga_0\otimes\chi_{\cK\times\cK^\perp}(x,\xi).
	\end{equation}
	Since $V_{\ga_0}\ga_0\in\cS^{1/2}_{1/2}(\rdd)$, see e.g. \cite[Theorem 2.13]{BasTeo21}
	,  calculations similar to the ones performed in Lemma \ref{Lem-SC-window-space} yield the desired result. 
\end{proof}

\begin{lemma}\label{Lem-Gabor-frame-Gaussian-pi/2}
	Let $\La=\alpha\zdd\times D_{1,2}$, $\alpha\in\bR$, be a quasi-lattice in $\cG\times\hcG$
	. Consider the function
	\begin{equation}\label{Eq-Gaussian-pi/2}
	\f^\circ(x)\coloneqq \f^\circ(x_1,x_2)\coloneqq 2^{-\frac{d}{2}}\mbox{meas}(\cK)e^{-\frac{\pi}{2}x^2_1}\otimes\chi_{\cK}(x_2)\in\sA_{\tilde{v}}
	\end{equation}
	for $x=(x_1,x_2)\in\rd\times\cG_0\cong\cG$, where $\mbox{meas}(\cK)$ is the (finite) measure of the compact open closed subgroup $\cK$ in $\cG_0$. Then there exist $\al\in(0,1)$ and a function $h^\circ\in\sA_{\tilde{v}}$ such that
	\begin{equation}\label{Eq-Gabor-frame-L2-pi/2}
	\{\pi(\bfw)\f^\circ\,|\,\bfw\in\La\}
	\end{equation}
	is a Gabor frame for $L^2(\cG)$ with dual window $h^\circ$.
\end{lemma}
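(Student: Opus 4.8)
The plan is to mimic, almost verbatim, the proof of Corollary~\ref{Cor-dual-window-in-sA_v}, the only change being the replacement of the standard Euclidean Gaussian $e^{-\pi x_1^2}$ by the dilated one $e^{-\pi x_1^2/2}$, together with the harmless normalizing constant $2^{-d/2}\mathrm{meas}(\cK)$. First I would observe that $\f^\circ$ is, up to that nonzero scalar, a generalized Gaussian of the form $e^{-a x_1^2}\otimes\chi_\cK$ with $a=\pi/2$; hence, by Remark~\ref{Rem-scelta-K}, $\f^\circ\in\cS_\cC(\cG)\subseteq\sA_{\tilde{v}}$ (this is the membership already asserted in \eqref{Eq-Gaussian-pi/2}), and every result proved earlier for the Gaussian $\f$ in \eqref{Gauss} — in particular Theorem~\ref{Th-frame-L2} and the proof of Corollary~\ref{Cor-dual-window-in-sA_v} — remains valid with $e^{-\pi x_1^2}$ replaced by $e^{-\pi x_1^2/2}$.

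Next I would establish the frame property. Since a nonzero scalar factor does not affect being a Gabor frame, it suffices to treat $e^{-\pi x_1^2/2}\otimes\chi_\cK$ over $\La=\alpha\zdd\times D_{1,2}$. Following the factorization used in Corollary~\ref{Cor-dual-window-in-sA_v}, the Gabor system splits as a tensor product over $\cG\cong\rd\times\cG_0$: on the factor $\cG_0$, $\{\pi(w_2,\mu_2)\chi_\cK\}_{(w_2,\mu_2)\in D_{1,2}}$ is an orthonormal basis of $L^2(\cG_0)$ exactly as there; on the Euclidean factor, the Gabor system $\{\pi(\alpha k,\alpha l)e^{-\pi x_1^2/2}\}_{(k,l)\in\zdd}$ is a frame of $L^2(\rd)$ for every $\alpha\in(0,1)$, because $e^{-\pi x_1^2/2}$ is a unitary (metaplectic) dilation of the standard Gaussian and the frame property of a Gaussian over a full-rank lattice depends only on the lattice covolume (Lyubarskii--Seip), which here is $\alpha^{2d}<1$. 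Taking the tensor product yields the frame \eqref{Eq-Gabor-frame-L2-pi/2} for a suitable $\alpha\in(0,1)$.

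It remains to produce the dual window $h^\circ$ and verify $h^\circ\in\sA_{\tilde{v}}$, again by the three-case scheme of Corollary~\ref{Cor-dual-window-in-sA_v}. On $\rd$, by Gr\"{o}chenig--Lyubarskii \cite{GroLyu2007,GroLyu2009} (applied to $e^{-\pi x_1^2/2}$, a dilate of $H_0$, and shrinking $\alpha$ as in that corollary so that the multidimensional volume condition holds) the Gabor frame generated by $e^{-\pi x_1^2/2}$ over $\alpha\zdd$ admits a dual window $\ga_0^\circ\in\cS^{1/2}_{1/2}(\rd)$ \cite{GelShi68}; on $\cG_0$, $\chi_\cK$ is its own dual window; and on $\cG\cong\rd\times\cG_0$ one sets $h^\circ:=2^{-d/2}\mathrm{meas}(\cK)\,(\ga_0^\circ\otimes\chi_\cK)$ and checks, by the same computation as in Corollary~\ref{Cor-dual-window-in-sA_v} using density of the functions $f_1\otimes f_2$ in $L^2(\rd\times\cG_0)$, that $h^\circ$ is a dual window for $\{\pi(\bfw)\f^\circ\}_{\bfw\in\La}$. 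Finally, arguing as for \eqref{Eq-SFTFgauss}, $W^\vr_{h^\circ}h^\circ(x,\xi,\tau)=\overline{\tau}\,c(\cK)\,V_{\ga_0^\circ}\ga_0^\circ\otimes\chi_{\cK\times\cK^\perp}(x,\xi)$, and since $V_{\ga_0^\circ}\ga_0^\circ\in\cS^{1/2}_{1/2}(\rdd)$ (see, e.g., \cite[Theorem~2.13]{BasTeo21}), the estimates in the proof of Lemma~\ref{Lem-SC-window-space} apply \emph{mutatis mutandis} and give $h^\circ\in\sA_{\tilde{v}}$.

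The main obstacle — really the only non-mechanical point — is the Euclidean dual-window step: one must make sure that the Gr\"{o}chenig--Lyubarskii results (frame property together with a Gelfand--Shilov canonical dual window) can legitimately be invoked for the \emph{dilated} Gaussian $e^{-\pi x_1^2/2}$ over a \emph{square} lattice $\alpha\zdd$, $\alpha\in(0,1)$, and that a threshold on $\alpha$ of the type $\alpha^{2d}<(d+1)^{-1}$ is still available after the dilation. Once this is pinned down, the rest is a line-by-line repetition of the arguments already given for $\f$.
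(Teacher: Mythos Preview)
Your proposal is correct and follows essentially the same route as the paper: both argue by repeating verbatim the proofs of Theorem~\ref{Th-frame-L2} and Corollary~\ref{Cor-dual-window-in-sA_v} with $e^{-\pi x_1^2}$ replaced by $e^{-\pi x_1^2/2}$. The only difference is that the paper disposes of the ``non-mechanical point'' you correctly isolate --- transferring the Gr\"{o}chenig--Lyubarskii frame and Gelfand--Shilov dual-window results from the standard Gaussian on $\alpha\zdd$ to the dilated Gaussian --- by a direct citation of \cite[Lemma~3.2.2]{CordRod2020}, whereas you sketch the underlying dilation/covolume argument explicitly.
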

\begin{proof}
	The result is obtained using the same arguments as in Theorem \ref{Th-frame-L2} and Corollary \ref{Cor-dual-window-in-sA_v}, combined with \cite[Lemma 3.2.2]{CordRod2020}.
\end{proof}

\begin{theorem}\label{Th-coeff-op-continuity}
	Let $\La\subseteq\cG\times\hcG$ be a quasi-lattice 
	with fundamental domain $U$
	. Consider $0<p,q\leq\infty$, $m\in\cM_v(\cG\times\hcG)$ and $g\in\sA_{\tilde{v}}$. Then the coefficient operator $\cC_g$ admits a unique continuous and linear extension
	\begin{equation}
	\cC_g\colon M^{p,q}_m(\cG)\to \ell^{p,q}_{m_\La}(\La),
	\end{equation}
	where $m_\La$ is the restriction of $m$ to $\La$. Moreover, if $0<\delta\leq\infty$ is such that $0<\delta\leq\min\{p,q\}\leq\infty$, then there is a constant $C=C(\delta)>0$, such that
	\begin{equation*}
		\opnorm{\cC_g}_{M^{p,q}_m\to \ell^{p,q}_{m_\La}}\leq C
	\end{equation*}
	for all $p,q\geq\delta$. The constant $C=C(\delta)$ may depend on other elements, but not on $p$ and $q$.
\end{theorem}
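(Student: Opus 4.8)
The plan is to reduce the continuity of $\cC_g$ on $M^{p,q}_m(\cG)$ to the already-established equivalences between Wiener amalgam norms on $\bH_\cG$ and discrete mixed-norm spaces. First I would recall that, by Definition \ref{definizione-modulazione} and the identification $\abs{W^\vr_g f}=\abs{V_g f}$ from Remark \ref{Rem-WaveletSchrodinger}, one has $\norm{f}_{M^{p,q}_m}\asymp\norm{W^\vr_g f}_{W(L^\infty,L^{p,q}_{\tilde m})}$, and that by Lemma \ref{Lem-SC-window-space} and \eqref{Eq-remark-bA^r_v} the choice $g\in\sA_{\tilde v}\subseteq\bA^r_{\tilde v}$ makes $g$ an admissible analysing vector, so the coorbit machinery of \cite{Voig2015} applies with the maximal window space. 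The coefficient sequence $\cC_g f=\left(\la f,\pi(\bfw)g\ra\right)_{\bfw\in\La}=\left(\overline{\tau}W^\vr_g f(\bfw,1)\right)_{\bfw\in\La}$ (up to the irrelevant unimodular factor on the torus coordinate, which we may set to $1$) is a sampling of $W^\vr_g f$ along the relatively separated family $\mathfrak{X}=\La\times\{1\}\subseteq\bH_\cG$, which is relatively separated by the lemma preceding this theorem and by Lemma \ref{Lem-product-rel-sep-fam}.

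The key steps, in order, are: (1) choose the $V_{1,2}$-BUPU $\{\psi_w\otimes\gamma_\mu\}_{(w,\mu)\in\La}$ provided by Lemma \ref{Lem-BUPU-ad-hoc}, with localizing family $\La$ and $\psi_w\otimes\gamma_\mu\equiv 1$ on $(w,\mu)+Q$ for a smaller unit neighbourhood $Q$; (2) for each $\bfw\in\La$ bound the single coefficient pointwise by a local sup, $\abs{W^\vr_g f(\bfw,1)}\leq\sfM_Q[W^\vr_g f](\bfw,1)\leq\norm{(\psi_w\otimes\gamma_\mu\otimes\chi_\bT)\cdot\sfM_Q W^\vr_g f}_{L^\infty}$, using that the BUPU function equals $1$ near $\bfw$; (3) invoke Lemma \ref{Lem-Lpq-Wiener-discrete-equivalence} (with window set $Q$) to identify $\norm{\left(\norm{(\psi_w\otimes\gamma_\mu\otimes\chi_\bT)\cdot F}_{L^\infty}\right)_{\bfw\in\La}}_{(L^{p,q}_{\tilde m})_d(\mathfrak{X},Q)}\asymp\norm{F}_{W_Q(L^{p,q}_{\tilde m})}$ applied to $F=\sfM_Q W^\vr_g f$, noting that $\sfM_Q$ of a Wiener-amalgam function is controlled by a larger Wiener-amalgam norm of $W^\vr_g f$; (4) apply Lemma \ref{Lem-Lpq-disc-ell-p-q} to pass from $(L^{p,q}_{\tilde m}(\bH_\cG))_d(\mathfrak{X},V)$ to $\ell^{p,q}_{m_\La}(\La)$, using that $\mathfrak{X}=\La\times\{1\}$ is exactly of the tensor form in Lemma \ref{Lem-BUPU-tensor} because the quasi-lattice $\La=\La_1\times\La_2$ decomposes; (5) for the uniformity in $p,q$, invoke Corollary \ref{Cor-equivalence-constants-pq>delta}: once we restrict to $p,q\geq\delta$, the equivalence constants in Lemma \ref{Lem-Lpq-disc-ell-p-q} are bounded by a constant depending only on $\delta$, $X$, $\Xi$, $V_\cG$, $V_{\hcG}$ and $v$; the remaining constants (from Lemma \ref{Lem-Lpq-Wiener-discrete-equivalence} and from the window-change estimate \eqref{Eq-Foundations-(11.33)-general}) are likewise independent of $p$ and $q$. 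Putting these together yields $\norm{\cC_g f}_{\ell^{p,q}_{m_\La}}\lesssim\norm{W^\vr_g f}_{W(L^\infty,L^{p,q}_{\tilde m})}\asymp\norm{f}_{M^{p,q}_m}$ with a constant $C=C(\delta)$ uniform over $p,q\geq\delta$.

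For the density/extension part: by Theorem \ref{Th-S_C-dense-Banach} $\cS_\cC(\cG)$ is quasi-norm dense in $M^{p,q}_m(\cG)$ when $p,q<\infty$, so on that dense subspace $\cC_g$ is the genuine Gabor coefficient operator and the estimate above shows it extends uniquely by continuity; the resulting extension agrees with $f\mapsto(\la f,\pi(\bfw)g\ra)_{\bfw\in\La}$ computed via the $\cR_{\tilde v}$--$\cT_{\tilde v}$ duality since $\pi(\bfw)g\in\sA_{\tilde v}\subseteq\cT_{\tilde v}$ and $W^\vr_g$ is continuous from $\cR_{\tilde v}$ into $C(\bH_\cG)\cap L^\infty_{1/\tilde v}$. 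For the cases with $p=\infty$ or $q=\infty$ one argues the bound directly on all of $M^{p,q}_m(\cG)$ using the pointwise estimate, since no density is needed to define $\cC_g$ there, uniqueness following from the $w^*$-density statement in Theorem \ref{Th-S_C-dense-Banach}(ii) together with continuity.

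I expect the main obstacle to be step (5): tracking the dependence of all equivalence constants on $p$ and $q$ through the chain of lemmas. Each isolated equivalence (Wiener $\leftrightarrow$ discrete, discrete on $\bH_\cG$ $\leftrightarrow$ $\ell^{p,q}$, and the window-change inequality) has constants that are a priori $p,q$-dependent through factors like $dx(V_\cG)^{1/p}$ or $C_{\Xi,\overline V_{\hcG}}^{1/p}$; the point is that all such factors are monotone in $1/p$ (and bounded once $p\geq\delta$), which is exactly the content of Corollary \ref{Cor-equivalence-constants-pq>delta}. The routine pointwise maximal-function manipulations in steps (2)--(3) are standard and I would not belabour them; the genuinely delicate bookkeeping is ensuring that the final $C$ can be chosen as a single constant $C(\delta)$ valid simultaneously for the whole range $p,q\geq\delta$, which is why the statement is phrased with the parameter $\delta$.
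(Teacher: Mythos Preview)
Your proposal is correct and follows essentially the same route as the paper: choose the BUPU from Lemma \ref{Lem-BUPU-ad-hoc}, bound each coefficient by a local sup against the BUPU function, pass to $(L^{p,q}_{\tilde m})_d(\mathfrak{X},V)$ via Lemma \ref{Lem-Lpq-disc-ell-p-q}, and then to the Wiener norm via Lemma \ref{Lem-Lpq-Wiener-discrete-equivalence}, with the uniformity in $p,q$ coming from Corollary \ref{Cor-equivalence-constants-pq>delta}.

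The only difference is a small detour in your step (2): you insert an extra maximal function $\sfM_Q$ before taking the sup against the BUPU, which then forces you in step (3) to control $\norm{\sfM_Q W^\vr_g f}_{W_Q(L^{p,q}_{\tilde m})}$ by a Wiener norm with a larger window. The paper avoids this entirely: since $(\psi_w\otimes\gamma_\mu\otimes\chi_\bT)(w,\mu,1)=1$, one has directly
\[
\abs{\la f,\pi(w,\mu)g\ra}=\abs{(\psi_w\otimes\gamma_\mu\otimes\chi_\bT)(w,\mu,1)\cdot W^\vr_g f(w,\mu,1)}\leq\norm{(\psi_w\otimes\gamma_\mu\otimes\chi_\bT)\cdot W^\vr_g f}_{L^\infty},
\]
so Lemma \ref{Lem-Lpq-Wiener-discrete-equivalence} can be applied to $W^\vr_g f$ itself rather than to $\sfM_Q W^\vr_g f$. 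Your version still works (by window-independence of the Wiener norm), but the paper's pointwise bound is cleaner and removes one layer of estimates.
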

\begin{proof}
	Consider $f\in M^{p,q}_m(\cG)$. Let $\{\psi_w\otimes\gamma_\mu\}_{(w,\mu)\in\La}$ be the BUPU on $\cG\times\hcG$ constructed in Lemma \ref{Lem-BUPU-ad-hoc}. Since tensor product of BUPUs is a BUPU (Lemma \ref{Lem-BUPU-tensor}) it follows that $\{\psi_w\otimes\gamma_\mu\otimes\chi_\bT\}_{(w,\mu)\in\La}$ is a $V$-BUPU on $\bH_\cG$, $V$ as in \eqref{Eq-V-open-neigh}, with localizing family $\mathfrak{X}=\La\times\{1\}$ and such that
	\begin{equation}
	(\psi_w\otimes\gamma_\mu\otimes\chi_\bT)(w,\mu,1)=1\qquad\forall\,(w,\mu)\in\La.
	\end{equation}
	Hence
	\begin{align*}
	\abs{\la f,\pi(w,\mu)g\ra}
	=\abs{(\psi_w\otimes\gamma_\mu\otimes\chi_\bT)(w,\mu,1)\cdot W^\vr_g f(w,\mu,1)}
	\leq\norm{\left(\psi_w\otimes\gamma_\mu\otimes\chi_\bT\right)\cdot W^\vr_g f}_{L^\infty}.
	\end{align*}
	By Lemma \ref{Lem-Lpq-disc-ell-p-q},
	\begin{align*}
	\norm{\cC_g f}_{\ell^{p,q}_{m_\La}(\La)}&=\norm{\left(\la f,\pi(w,\mu)g\ra\right)_{(w,\mu)\in\La}}_{\ell^{p,q}_{m_\La}(\La)}\\
	&\leq\norm{\left(\norm{\left(\psi_w\otimes\gamma_\mu\otimes\chi_\bT\right)\cdot W^\vr_g f}_{L^\infty}\right)_{(w,\mu)\in\La}}_{\ell^{p,q}_{m_\La}(\La)}\\
	&\asymp\norm{\left(\norm{\left(\psi_w\otimes\gamma_\mu\otimes\chi_\bT\right)\cdot W^\vr_g f}_{L^\infty}\right)_{(w,\mu)\in\La}}_{(L^{p,q}_{\tilde{m}})_d(\mathfrak{X},V)}\\
	&\asymp\norm{W^\vr_g f}_{W(L^{p,q}_{\tilde{m}})}=\norm{f}_{M^{p,q}_m}, 
	\end{align*}
	where in the last equivalence we used Lemma \ref{Lem-Lpq-Wiener-discrete-equivalence}, see also \eqref{Eq-discrete-equiv-norm-Wiener} in the Appendix A. The last claim comes from Lemma \ref{Lem-Lpq-Wiener-discrete-equivalence} and Corollary \ref{Cor-equivalence-constants-pq>delta}.
\end{proof}

\begin{theorem}\label{Th-synth-op-continuity}
	Let $\La\subseteq\cG\times\hcG$ be a quasi-lattice 
	with fundamental domain $U$
	. Consider $0<p,q\leq\infty$, $m\in\cM_v(\cG\times\hcG)$ and $g\in\sA_{\tilde{v}}$. Then the synthesis operator $\cC^\ast_g$ admits a unique continuous and linear extension
	\begin{equation}
	\cC^\ast_g\colon \ell^{p,q}_{m_\La}(\La)\to M^{p,q}_m(\cG),
	\end{equation}
	where $m_\La$ is the restriction of $m$ to $\La$. If $p,q\neq\infty$, then the series representing $\cC^\ast_g(c)$ converges unconditionally in $M^{p,q}_m(\cG)$. Otherwise $\cC^\ast_g(c)$ w-$\ast$-converges in $M^\infty_{1/v}(\cG)$. Moreover, if $0<\delta\leq\infty$ is such that $0<\delta\leq\min\{p,q\}\leq\infty$, then there is a constant $C=C(\delta)>0$, such that
	\begin{equation*}
		\opnorm{\cC^\ast_g}_{\ell^{p,q}_{m_\La} \to M^{p,q}_m}\leq C
	\end{equation*}
	for all $p,q\geq\delta$. The constant $C=C(\delta)$ may depend on other elements, but not on $p$ and $q$.
\end{theorem}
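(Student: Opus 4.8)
The plan is to pass, by density, to finitely supported sequences and then estimate $\norm{\cC^\ast_g c}_{M^{p,q}_m}$ directly, as the transpose of the argument for $\cC_g$ in Theorem~\ref{Th-coeff-op-continuity}: where that proof used the maximal function, here one is led to a convolution of the discrete sequence $c$ with $V_h g$. (In the Banach range one may alternatively argue by duality, reading $\cC^\ast_g$ on $\ell^{p,q}_{m_\La}$ as the Banach adjoint of $\cC_g$ from Theorem~\ref{Th-coeff-op-continuity} and invoking Proposition~\ref{Pro-Duality-Mpq-Banach}, taken in the $w^\ast$ sense when $p$ or $q$ equals $\infty$; the direct route below covers $0<p,q\le\infty$ at once and stays closer to the preceding proofs.)

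Fix an analysing window $h\in\sA_{\tilde v}$, say $h=\f$, and let $c=(c_\bfw)_{\bfw\in\La}$ be finitely supported, so that $f\coloneqq\cC^\ast_g c=\sum_{\bfw\in\La}c_\bfw\pi(\bfw)g\in\sA_{\tilde v}\subseteq M^{p,q}_m(\cG)$. Specialising \eqref{STFTtfsfts} to $V_h(\pi(w,\mu)g)$ and using $\abs{W^\vr_h(\cdot)}=\abs{V_h(\cdot)}$ (Remark~\ref{Rem-WaveletSchrodinger}) one obtains
\begin{equation*}
\abs{W^\vr_h f(x,\xi,\tau)}\le\sum_{\bfw\in\La}\abs{c_\bfw}\,T_\bfw\abs{V_h g}(x,\xi),\qquad(x,\xi,\tau)\in\bH_\cG,
\end{equation*}
independent of $\tau$, so by solidity of the Wiener amalgam space and Remark~\ref{Rem-WaveletSchrodinger} the quantity $\norm{f}_{M^{p,q}_m}\asymp\norm{W^\vr_h f}_{W(L^\infty,L^{p,q}_{\tilde m})}$ is dominated by $\big\|\sum_{\bfw\in\La}\abs{c_\bfw}T_\bfw\abs{V_h g}\big\|_{W(L^\infty,L^{p,q}_m(\cG\times\hcG))}$. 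Everything thus reduces to the convolution estimate
\begin{equation*}
\Big\|\textstyle\sum_{\bfw\in\La}\abs{c_\bfw}\,T_\bfw\Phi\Big\|_{W(L^\infty,L^{p,q}_m)}\lesssim\norm{c}_{\ell^{p,q}_{m_\La}}\,\norm{\Phi}_{W(L^\infty,L^r_v)},\qquad r\coloneqq\min\{1,p,q\},
\end{equation*}
applied with $\Phi=\abs{V_h g}$, which lies in $W(L^\infty,L^r_v)$ for every $0<r\le1$ because $g,h\in\sA_{\tilde v}\subseteq\bA^r_{\tilde v}$ (cf.\ \eqref{Eq-remark-bA^r_v}); finiteness of the right-hand side then closes the argument.

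To prove this estimate I would discretise with the $V_{1,2}$-BUPU of Lemma~\ref{Lem-BUPU-ad-hoc} tensored with $\chi_\bT$ and, via Lemma~\ref{Lem-Lpq-Wiener-discrete-equivalence} and Lemma~\ref{Lem-Lpq-disc-ell-p-q}, pass to a genuinely discrete convolution over the quasi-lattice, absorbing the relatively compact ``defect'' of $\La-\La$ into an enlarged neighbourhood (the ``pass to the quotient'' device of \cite{GroStr2007}). For $1\le p,q\le\infty$ the resulting inequality is the mixed-norm Young inequality for sequences together with the $v$-moderateness of $m$; for $0<\min\{p,q\}<1$ the power trick $\big(\sum a_\bfw\big)^r\le\sum a_\bfw^r$ reduces it to the Banach case for exponents $p/r,q/r\ge1$ and weight $m^r$ (still $v^r$-moderate). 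Tracking constants through Corollary~\ref{Cor-equivalence-constants-pq>delta} — the exponents enter only through factors $b^{\pm1/p}$, monotone in $p$, and the Young constant is $\le1$ — yields the uniform bound $\opnorm{\cC^\ast_g}_{\ell^{p,q}_{m_\La}\to M^{p,q}_m}\le C(\delta)$ for $p,q\ge\delta$. As for convergence: when $p,q<\infty$ the finitely supported sequences are dense in $\ell^{p,q}_{m_\La}$, so $\cC^\ast_g$ extends uniquely and continuously and, being the limit of its finite partial sums, the series converges unconditionally in $M^{p,q}_m(\cG)$; when $p=\infty$ or $q=\infty$ one realises $\cC^\ast_g$ as a $w^\ast$-continuous adjoint, using Proposition~\ref{Pro-Duality-Mpq-Banach} and $M^{p,q}_m(\cG)\hookrightarrow M^\infty_{1/v}(\cG)$ (Proposition~\ref{Pro-Incl-Mpq}), whence $w^\ast$-convergence of the series in $M^\infty_{1/v}(\cG)$.

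The main obstacle is the quasi-Banach range combined with the non-periodic nature of $\La$: with no duality to lean on one must carry the BUPU/maximal-function reduction all the way down to a discrete convolution over a relatively separated (not subgroup) family, make the power-trick bookkeeping go through, and keep every equivalence constant uniform in $p,q\ge\delta$.
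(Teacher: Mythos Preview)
Your proposal is correct and follows essentially the same route as the paper: bound $\abs{V_h(\cC^\ast_g c)}$ pointwise by $\sum_{\bfw}\abs{c_\bfw}\,T_\bfw\abs{V_h g}$, discretise with the BUPU of Lemma~\ref{Lem-BUPU-ad-hoc}, and apply a discrete mixed-norm Young-type convolution inequality (the paper takes $h=g$ and cites \cite[Lemma~2.7]{Galperin2004}, whose proof is exactly your power trick, then controls $\norm{(\sfM_{V_{1,2}}V_g g(\bfw))_\bfw}_{\ell^{t,s}_{v_\La}}$ via the inclusion $W^R(W(L^r_{\tilde v}))\hookrightarrow W(L^{t,s}_{\tilde v})$). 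Your explicit attention to the $\La-\La$ defect is a technical point the paper's convolution notation glosses over; the quotient device you invoke is the one made explicit later in Lemma~\ref{Lem-quotient-coefficient-operator}.
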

\begin{proof}
	The proof follows the pattern displayed in \cite{Galperin2004}. Let $(x,\xi,\tau)\in\bH_\cG$ and $c=\left(c_{\bfw}\right)_{\bfw\in\La}\in\ell^{p,q}_{m_\La}(\La)$, 
	then we write
	\begin{align*}
	\abs{W^\vr_g [\cC^\ast_g(c)](x,\xi,\tau)}&
	=\abs{V_g\left[\sum_{\bfw\in\La}c_\bfw\pi(\bfw)g\right](x,\xi)}=\abs{\sum_{\bfw\in\La}c_\bfw V_g\pi(\bfw)g(x,\xi)}\\
	&\leq \sum_{\bfw\in\La}\abs{c_\bfw} \abs{T_\bfw V_g g(x,\xi)}\eqqcolon F^g_c(x,\xi,\tau).
	\end{align*}
	Let $\{\psi_w\otimes\gamma_\mu\}_{(w,\mu)\in\La}$ be the $V_{1,2}$-BUPU on $\cG\times\hcG$ constructed in Lemma \ref{Lem-BUPU-ad-hoc}. Then $\{\psi_w\otimes\gamma_\mu\otimes\chi_\bT\}_{(w,\mu)\in\La}$ is a $V$-BUPU, $V$ as in \eqref{Eq-V-open-neigh}, on $\bH_\cG$ with localizing family $\mathfrak{X}=\La\times\{1\}$. Using the norm equivalence in \eqref{Eq-discrete-equiv-norm-Wiener} (Appendix A) and Lemma \ref{Lem-Lpq-disc-ell-p-q}
	\begin{align*}
	\norm{\cC^\ast_g(c)}_{M^{p,q}_m}\asymp\norm{\cC^\ast_g(c)}_{W_{V}(L^{p,q}_{\tilde{m}})}&\lesssim\norm{\left(\norm{\left(\psi_w\otimes\ga_\mu\otimes\chi_\bT\right)\cdot F^g_c}_{L^\infty}\right)_{\bfw\in\La}}_{(L^{p,q}_{\tilde{m}})_d(\mathfrak{X},V)}\\
	&\asymp\norm{\left(\norm{\left(\psi_w\otimes\ga_\mu\otimes\chi_\bT\right)\cdot F^g_c}_{L^\infty}\right)_{\bfw\in\La}}_{\ell^{p,q}_{m_{\La}}(\La)}.
	\end{align*}
	We control the latter sequence as follows:
	\begin{align*}
	\norm{\left(\psi_w\otimes\ga_\mu\otimes\chi_\bT\right)\cdot F^g_c}_{L^\infty}&
	\leq\sum_{\bfu\in\La}\abs{c_\bfu}\essupp_{(x,\xi)\in\bfw+V_{1,2}}\abs{T_\bfu V_g g(x,\xi)}\\
	&=
	\sum_{\bfw\in\La}\abs{c_\bfw}\sfM_{V_{1,2}}V_g g(\bfw-\bfu)\\
	&=\Big((\abs{c_\bfu})_{\bfu}\ast(\sfM_{V_{1,2}}V_g g(\bfu))_{\bfu}\Big)(\bfw).
	\end{align*}
	We set $t=\min\{1,p\}$ and $s=\min\{1,p,q\}$. Using the convolution relations for the sequences' spaces in \cite[Lemma 2.7]{Galperin2004}, we obtain
	\begin{align*}
	\norm{\cC^\ast_g(c)}_{M^{p,q}_m}&\lesssim\norm{\Bigg(\Big((\abs{c_\bfu})_{\bfu}\ast(\sfM_{V_{1,2}}V_g g(\bfu))_{\bfu}\Big)(\bfw)\Bigg)_{\bfw\in\La}}_{\ell^{p,q}_{m_{\La}}(\La)}\\
	&\lesssim\norm{c}_{\ell^{p,q}_{m_\La}(\La)}\norm{(\sfM_{V_{1,2}}V_g g(\bfw))_{\bfw\in\La.}}_{\ell^{t,s}_{v_\La}(\La)}.
	\end{align*}
	Arguing as in the proof of Theorem \ref{Th-coeff-op-continuity} and using Lemma \ref{Lem-Lpq-disc-ell-p-q} and \eqref{Eq-discrete-equiv-norm-Wiener}  again
	\begin{align*}
	\norm{(\sfM_{V_{1,2}}V_g g(\bfw))_{\bfw\in\La}}_{\ell^{a,b}_{v_\La}(\La)}&\leq\norm{\left(\norm{\left(\psi_w\otimes\ga_\mu\otimes\chi_\bT\right)\cdot \sfM_{V}W^\vr_g g}_{L^\infty}\right)_{\bfw\in\La}}_{\ell^{t,s}_{v_\La}(\La)}\\
	&\asymp\norm{\left(\norm{\left(\psi_w\otimes\ga_\mu\otimes\chi_\bT\right)\cdot \sfM_{V}W^\vr_g g}_{L^\infty}\right)_{\bfw\in\La}}_{(L^{t,s}_{\tilde{v}})_d(\mathfrak{X},V)}\\
	&\asymp\norm{\sfM_{V}W^\vr_g g}_{W_{V}(L^{t,s}_{\tilde{v}})}
	=\norm{\sfM_{V}\sfM_{V}W^\vr_g g}_{L^{t,s}_{\tilde{v}}}\\
	&\leq\norm{\sfM_{V^2}W^\vr_g g}_{L^{t,s}_{\tilde{v}}}
	=\norm{W^\vr_g g}_{W_{V^2}(L^{t,s}_{\tilde{v}})},
	\end{align*}
	where we set $V^2\coloneqq VV$ (multiplicative notation in $\bH_\cG$). As reported in Remark \ref{Rem-A-subset-G}, for any $0<r\leq 1$ we have  the continuous inclusion
	\begin{equation}
	W^R(L^\infty,W(L^\infty,L^r_{\tilde{v}}))\hookrightarrow W(L^\infty, L^r_{\tilde{v}}).
	\end{equation}
	Arguing as in Proposition \ref{Pro-Incl-Mpq} and taking $r<\min\{t,s\}$ we obtain 
	\begin{equation}
	W(L^\infty, L^r_{\tilde{v}})\hookrightarrow W(L^\infty, L^{t,s}_{\tilde{v}}).
	\end{equation}
	The fact that $g$ is in $\sA_{\tilde{v}}$ (defined in \eqref{Eq-maximal-window-space-ALL-modulation-spaces}) implies then
	\begin{equation*}
	\norm{W^\vr_g g}_{W_{V^2}(L^{t,s}_{\tilde{v}})}<+\infty
	\end{equation*}
	and 
	\begin{equation}
	\norm{\cC^\ast_g(c)}_{M^{p,q}_m}\lesssim\norm{c}_{\ell^{p,q}_{m_\La}(\La)}.
	\end{equation}
	Unconditional convergence for the series defining $\cC^\ast_g(c)$ in $M^{p,q}_m(\cG)$ if $p,q\neq\infty$, and w-$\ast$-convergence in $M^\infty_{1/v}(\cG)$ otherwise, is inferred as in \cite[Theorem 12.2.4]{Grochenig_2001_Foundations}. The last claim comes from Lemma \ref{Lem-Lpq-Wiener-discrete-equivalence} and Corollary \ref{Cor-equivalence-constants-pq>delta}.
\end{proof}

\begin{theorem}\label{Th-gabor-expansions-Mpq}
	Let $0<p,q\leq\infty$, $m\in\cM_v(\cG\times\hcG)$ and $\f$ as in \eqref{Gauss}. Consider $h\in\sA_{\tilde{v}}$ such that
	\begin{equation}
	S_{h,\f}=S_{\f,h}=I_{L^2},
	\end{equation}
	for a suitable quasi-lattice $\La=\La_1\times\La_2\subseteq\cG\times\hcG$. Then
	\begin{equation}\label{Eq-gabor-expansions-Mpq}
	f=\sum_{\bfw\in\La}\la f, \pi(\bfw)\f\ra \pi(\bfw) h=\sum_{\bfw\in\La}\la f, \pi(\bfw)h\ra \pi(\bfw) \f
	\end{equation}
	with unconditional convergence in $M^{p,q}_m(\cG)$ if $p,q\neq\infty$, and w-$\ast$-convergence in $M^\infty_{1/v}(\cG)$ otherwise. Moreover, for every $f\in M^{p,q}_m(\cG)$ we have the following quasi-norm equivalences:
	\begin{align}\label{Eq-disc-equiv-quasinorms-gabor-Mpq}
	\norm{f}_{M^{p,q}_m}&\asymp\left(\sum_{\mu\in\La_2}\left(\sum_{w\in\La_1}\abs{V_\f f(w,\mu)}^{p}m(w,\mu)^p\right)^\frac{q}{p}\right)^{\frac1q}=\norm{\left(V_\f f(\bfw)\right)_{\bfw\in\La}}_{\ell^{p,q}_{m_\La}(\La)},\\
	\norm{f}_{M^{p,q}_m}&\asymp\left(\sum_{\mu\in\La_2}\left(\sum_{w\in\La_1}\abs{V_hf(w,\mu)}^{p}m(w,\mu)^p\right)^\frac{q}{p}\right)^{\frac1q}=\norm{\left(V_h f(\bfw)\right)_{\bfw\in\La}}_{\ell^{p,q}_{m_\La}(\La)},\notag
	\end{align}
	and similarly if $p=\infty$ or $q=\infty$.
\end{theorem}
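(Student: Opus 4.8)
The plan is to read the statement off the boundedness of the analysis and synthesis operators established in Theorems \ref{Th-coeff-op-continuity} and \ref{Th-synth-op-continuity}, together with the $L^2$-identity $S_{h,\f}=S_{\f,h}=I_{L^2}$ and the density results of Theorem \ref{Th-S_C-dense-Banach}. Since $\f\in\cS_\cC(\cG)\subseteq\sA_{\tilde v}$ by Lemma \ref{Lem-SC-window-space} and $h\in\sA_{\tilde v}$ by hypothesis, Theorem \ref{Th-coeff-op-continuity} makes $\cC_\f,\cC_h\colon M^{p,q}_m(\cG)\to\ell^{p,q}_{m_\La}(\La)$ bounded and linear, while Theorem \ref{Th-synth-op-continuity} makes $\cC^\ast_\f,\cC^\ast_h\colon\ell^{p,q}_{m_\La}(\La)\to M^{p,q}_m(\cG)$ bounded and linear, the convergence of the defining series being unconditional when $p,q\neq\infty$ and $w$-$\ast$ (in $M^\infty_{1/v}(\cG)$) otherwise. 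Hence $\cC^\ast_h\cC_\f$ and $\cC^\ast_\f\cC_h$ are bounded operators on $M^{p,q}_m(\cG)$, and the whole theorem reduces to identifying them with the identity.

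Next I would make that identification. On $\cS_\cC(\cG)\subseteq L^2(\cG)$ the hypothesis gives $\cC^\ast_h\cC_\f f=S_{h,\f}f=f$ and $\cC^\ast_\f\cC_h f=S_{\f,h}f=f$. When $0<p,q<\infty$, $\cS_\cC(\cG)$ is quasi-norm-dense in $M^{p,q}_m(\cG)$ by Theorem \ref{Th-S_C-dense-Banach}(i), so two bounded operators agreeing there agree everywhere. When $p=\infty$ or $q=\infty$ I would instead combine Theorem \ref{Th-S_C-dense-Banach}(ii) with a $w$-$\ast$-continuity argument: $\cC_\f$ is $w$-$\ast$-continuous because it merely evaluates against the fixed vectors $\pi(\bfw)\f$, which lie in the predual, and $\cC^\ast_h$ carries bounded $w$-$\ast$-convergent nets to $w$-$\ast$-convergent ones (test against $g\in\cS_\cC(\cG)$, for which $(\langle\pi(\bfw)h,g\rangle)_{\bfw\in\La}$ is summable against every $\ell^{p,q}_{m_\La}$-bounded family, and apply dominated convergence); thus $\cC^\ast_h\cC_\f$ is $w$-$\ast$-to-$w$-$\ast$ continuous and, being the identity on a $w$-$\ast$-dense subspace, is the identity throughout. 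This yields the two expansions \eqref{Eq-gabor-expansions-Mpq}, with convergence in exactly the mode provided by Theorem \ref{Th-synth-op-continuity} applied to $c=\cC_\f f$, respectively $c=\cC_h f$.

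Finally, the norm equivalences \eqref{Eq-disc-equiv-quasinorms-gabor-Mpq} follow from the chain
\[
\norm{\left(V_\f f(\bfw)\right)_{\bfw\in\La}}_{\ell^{p,q}_{m_\La}}=\norm{\cC_\f f}_{\ell^{p,q}_{m_\La}}\lesssim\norm{f}_{M^{p,q}_m}=\norm{\cC^\ast_h\cC_\f f}_{M^{p,q}_m}\lesssim\norm{\cC_\f f}_{\ell^{p,q}_{m_\La}},
\]
the first inequality being Theorem \ref{Th-coeff-op-continuity} and the last Theorem \ref{Th-synth-op-continuity}; the iterated-sum form of the right-hand side of \eqref{Eq-disc-equiv-quasinorms-gabor-Mpq} is merely the definition of the mixed norm on $\ell^{p,q}_{m_\La}(\La)$ once the product structure $\La=\La_1\times\La_2$ is used, with the obvious modification when $p=\infty$ or $q=\infty$. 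The $h$-version is obtained symmetrically from $\cC^\ast_\f\cC_h=\mathrm{Id}$ and the boundedness of $\cC_h$ and $\cC^\ast_\f$. I expect the only genuinely delicate step to be this last passage from $\cS_\cC(\cG)$ to all of $M^{p,q}_m(\cG)$ when one index is infinite, namely the $w$-$\ast$-continuity of $\cC^\ast_h\cC_\f$; everything else is bookkeeping layered on top of Theorems \ref{Th-coeff-op-continuity} and \ref{Th-synth-op-continuity}.
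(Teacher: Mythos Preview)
Your proposal is correct and follows essentially the same approach as the paper, which simply states that the proof is based on the continuity of $\cC_\f$, $\cC^\ast_\f$, $\cC_h$, $\cC^\ast_h$ and that the pattern is that of \cite[Corollary 12.2.6]{Grochenig_2001_Foundations}. You have spelled out exactly that pattern: bounded analysis and synthesis operators (Theorems \ref{Th-coeff-op-continuity} and \ref{Th-synth-op-continuity}), the $L^2$-identity on $\cS_\cC(\cG)$, extension by density or $w$-$\ast$-density (Theorem \ref{Th-S_C-dense-Banach}), and the sandwich $\norm{\cC_\f f}_{\ell^{p,q}_{m_\La}}\lesssim\norm{f}_{M^{p,q}_m}=\norm{\cC^\ast_h\cC_\f f}_{M^{p,q}_m}\lesssim\norm{\cC_\f f}_{\ell^{p,q}_{m_\La}}$ for the norm equivalences.
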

\begin{proof}
	The proof is based on the continuity of $\cC_\f$, $\cC^\ast_\f$, $\cC_h$ and $\cC^\ast_h$. The pattern is the  same of \cite[Corollary 12.2.6]{Grochenig_2001_Foundations}.
\end{proof}

Expansions and equivalences analogous to \eqref{Eq-gabor-expansions-Mpq} and \eqref{Eq-disc-equiv-quasinorms-gabor-Mpq} hold for $\f^\circ$ and $h^\circ$ defined in Lemma \ref{Lem-Gabor-frame-Gaussian-pi/2}.

\begin{proposition}\label{Pro-convolution-Mpq}
	Consider $m\in\cM_v(\cG\times\hcG)$, define for $x\in\cG$ and $\xi\in\hcG$
	\begin{equation}
	m_1(x)\coloneqq m(x,\hat{e}),
	\quad v_1(x)\coloneqq v(x,\hat{e}),\quad v_2(\xi)\coloneqq v(e,\xi).
	\end{equation}
	Let $\nu (\xi )>0$ be an arbitrary  weight function on $\hcG$ such that
	\begin{equation}
	m_1\otimes\nu, v_1\otimes v_2\nu^{-1}\in\cM_v(\cG\times\hcG).
	\end{equation}
	Let  $0<p,q,r,t,u,\gamma\leq\infty$, with
	\begin{equation}\label{Holderindices}
	\frac 1u+\frac 1t=\frac 1\gamma,
	\end{equation}
	and 
	\begin{equation}
	\frac1p+\frac1q=1+\frac1r,\quad \,\, \text{ for } \, 1\leq r\leq \infty
	\end{equation}
	whereas
	\begin{equation}
	p=q=r,\quad \,\, \text{ for } \, 0<r<1.
	\end{equation}
	Then 
	\begin{equation}\label{mconvm}
	M^{p,u}_{m_1\otimes \nu}(\cG)\ast  M^{q,t}_{v_1\otimes
		v_2\nu^{-1}}(\cG)\hookrightarrow M^{r,\gamma}_m(\cG)
	\end{equation}
	with  quasi-norm inequality 
	\begin{equation}
	\norm{f\ast g}_{M^{r,\ga}_m}\lesssim \norm{f}_{M^{p,u}_{m_1\otimes \nu}}\norm{g}_{ M^{q,t}_{v_1\otimes
			v_2\nu^{-1}}}.
	\end{equation}
\end{proposition}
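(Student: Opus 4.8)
The plan is to reduce the convolution estimate on modulation spaces to a pointwise/convolution estimate on the STFT side, exactly as in the Euclidean argument of \cite[Proposition 3.1]{BasCorNic20}, but being careful about the quasi-Banach range and the replacement of the Lebesgue norm by a Wiener-amalgam norm. First I would fix a convenient window: by Theorem \ref{Th-gabor-expansions-Mpq} and the subsequent remark, I would use the Gaussian $\f^\circ$ from Lemma \ref{Lem-Gabor-frame-Gaussian-pi/2} with its dual $h^\circ\in\sA_{\tilde v}$, since this gives a Gabor frame over a suitable quasi-lattice $\La=\La_1\times\La_2$ and, crucially, the quasi-norm equivalences \eqref{Eq-disc-equiv-quasinorms-gabor-Mpq} turning the $M^{p,q}_m$-(quasi-)norm into a weighted $\ell^{p,q}$-(quasi-)norm of sampled STFT values. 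The key computational input is the behaviour of the STFT under convolution: for $f,g\in\cS_\cC(\cG)$ one has, with $\f^\circ\ast\f^\circ$ proportional to a Gaussian again (Remark \ref{Rem-scelta-K} allows rescaling the Gaussian coefficient so that $\f^\circ\ast\f^\circ$ is itself an admissible window),
\begin{equation*}
V_{\f^\circ\ast\f^\circ}(f\ast g)(x,\xi)=\int_\cG V_{\f^\circ}f(y,\xi)\,V_{\f^\circ}g(x-y,\xi)\,dy,
\end{equation*}
i.e.\ a convolution in the first (space) variable with the frequency variable held fixed. I would establish this identity first on $\cS_\cC(\cG)$ by direct computation and then extend it to the relevant modulation spaces by the density results of Theorem \ref{Th-S_C-dense-Banach} (quasi-norm density when $p,q<\infty$, $w$-$*$-density otherwise) together with the continuity of the bilinear convolution map, which is precisely what we are proving — so this extension must be done at the level of the a priori estimate, not post hoc.

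With the STFT identity in hand, the core of the proof is an $\ell^{p,q}$-estimate. Sampling on $\La$, writing $m=m_1\otimes\nu\cdot(v_1\otimes v_2\nu^{-1})\cdot(\text{a }v\text{-moderate factor})$ and using $v$-moderateness of $m$ to split the weight, the inner ($w\in\La_1$, space-variable) sum becomes a discrete convolution to which I would apply the quasi-Banach Young inequality for sequence spaces, \cite[Lemma 2.7]{Galperin2004}, with the indices $1/p+1/q=1+1/r$ (for $1\le r\le\infty$) or $p=q=r$ (for $0<r<1$); the needed summability of $\big(\sfM_{V_{1,2}}V_{\f^\circ}\f^\circ(\bfw)\big)_{\bfw}$ in the appropriate $\ell^{\tau,s}_{v_\La}$ comes from $\f^\circ\in\sA_{\tilde v}$, exactly as in the proof of Theorem \ref{Th-synth-op-continuity}. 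The outer ($\mu\in\La_2$, frequency-variable) index is handled by Hölder's inequality with $1/u+1/t=1/\gamma$, since the frequency variable is merely carried along (not convolved), and the weights $\nu$ and $v_2\nu^{-1}$ are designed to cancel in the product. Passing back from the sampled $\ell^{p,q}_{m_\La}$-norm to the $M^{r,\gamma}_m$-norm uses \eqref{Eq-disc-equiv-quasinorms-gabor-Mpq} (for $\f^\circ$, $h^\circ$) in the other direction, together with the continuity of the synthesis operator $\cC^\ast_{h^\circ}$ from Theorem \ref{Th-synth-op-continuity}.

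The main obstacle I anticipate is the interplay between the Wiener-amalgam definition of the modulation (quasi-)norm \eqref{Eq-norm-Mpq-new} and the convolution structure: unlike in $\rd$, where \cite{Galperin2004} lets one replace $W(L^\infty,L^{p,q}_m)$ by $L^{p,q}_m$, here one must carry the maximal functions $\sfM_{V_{1,2}}$ through the convolution identity. The remedy is the submultiplicativity-type inequality $\sfM_{Q}(F\ast H)\le \sfM_{Q}F\ast \sfM_{Q'}H$ for a slightly enlarged neighbourhood, which appears in \cite[Lemma 2.3.23]{Voig2015} and was already used in the proof of Proposition \ref{Pro-continuity-R}; combining it with the sampling/BUPU machinery of Lemma \ref{Lem-BUPU-ad-hoc} and Lemma \ref{Lem-Lpq-disc-ell-p-q} reduces everything to the discrete estimate above. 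A secondary technical point is the treatment of the endpoints $p=\infty$ or $q=\infty$ (and the degenerate Hölder/Young cases), which I would dispatch by the same computations with $\essupp$ replacing the corresponding integral, following the pattern of \cite{Galperin2004} and of Theorem \ref{Th-synth-op-continuity}.
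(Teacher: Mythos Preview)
Your plan is essentially the paper's proof: the same STFT convolution identity, the same discretization via the Gabor frame equivalence \eqref{Eq-disc-equiv-quasinorms-gabor-Mpq}, then Young in the $\La_1$-variable and H\"older in the $\La_2$-variable. Two small points: the paper takes $\f$ (not $\f^\circ$) as the factor window so that the product window is exactly $\f\ast\f=\f^\circ$, which is slightly cleaner than introducing $\f^\circ\ast\f^\circ$; and your ``main obstacle'' about carrying $\sfM_{V_{1,2}}$ through the convolution does not arise, because the paper never works with the Wiener-amalgam norm directly---it passes to the discrete $\ell^{p,q}_{m_\La}(\La)$-norm at the very first step via \eqref{Eq-disc-equiv-quasinorms-gabor-Mpq} (applied with $\f^\circ$) and stays there, so no maximal-function gymnastics are needed.
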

\begin{proof}
	We follow the patter displayed in \cite[Proposition 3.1]{BasCorNic20}. A direct computation gives $\f\ast\f=\f^\circ$, where $\f$ is defined in \eqref{Gauss} and $\f^\circ$ in \eqref{Eq-Gaussian-pi/2}. Similarly, the following identities can be easily checked:
	\begin{equation*}
	V_{h}f(x,\xi)=\overline{\la\xi,x\ra}\left(f\ast M_\xi[h^\ast]\right)(x),\quad M_\xi[\f^{\circ\ast}](x)=\left(M_\xi[\f^\ast]\ast M_\xi[\f^\ast]\right)(x)
	\end{equation*}
	(recall the involution $h^\ast(x)=\overline{h(-x)}$). Using associativity and commutativity of the convolution product we can write
	\begin{equation}
	V_{\f^\circ}(f\ast g)(x,\xi)=\overline{\la\xi,x\ra}\left(\left(f\ast M_\xi[\f^\ast]\right)\ast \left(g\ast M_\xi[\f^\ast]\right)\right)(x).
	\end{equation}
	In what follows we will use the frame expansions in Theorem \ref{Th-gabor-expansions-Mpq} with  $\f^\circ$ in place of $\f$, see Lemma \ref{Lem-Gabor-frame-Gaussian-pi/2}. We majorize the weight $m$ by
	\begin{equation*}
	m(\bfw)=m(w,\mu)\lesssim m(w,\hat{e})v(e,\mu)=m_1(w)v_2(\mu)\qquad\bfw=(w,\mu)\in\La,
	\end{equation*}
	use  Young's convolution inequality for sequences in the $w$-variable  and
	H\"older's one in the $\mu$-variable. The indices $p,q,r,\gamma,t,u$ fulfil the equalities in the assumptions.  We show in details the case when $r,\gamma,t,u<\infty$. The others are similar. Namely,
	\begin{align*}
	\norm{f\ast g}_{M^{r,\gamma}_m}&\asymp  \norm{\left((V_{\f^\circ}(f\ast h))(\bfw)m(\bfw)\right)_{\bfw\in\La}}_{\ell^{r,\gamma}(\La)}\\
	&=\left(\sum_{\mu\in\La_2}\left(\sum_{w\in\La_1}\abs{V_{\f^\circ}(f\ast g)(w,\mu)}^r m(w,\mu)^r\right)^{\frac{\ga}{r}}\right)^{\frac{1}{\ga}}\\
	&\lesssim \left( \sum_{\mu\in\La_2} \left( \sum_ {w\in\La_1} \abs{(f\ast M_{\mu}[\f^\ast]) \ast (g\ast
		M_{\mu}[\f^\ast])(w)}^r m_1(w)^r \right) ^{\frac{\gamma}{r}} v_2(\mu) ^\gamma\right)^{\frac1\gamma} \\
	&=\left(\sum_{\mu\in\La_2}\norm{\left((f\ast M_{\mu}[\f^\ast]) \ast (g\ast
		M_{\mu}[\f^\ast])(w)\right)_{w\in\La_1}}_{\ell^r_{m_1}(\La_1)}^\gamma v_2(\mu)^\gamma\right)^{\frac1\gamma}\\
	&\lesssim\left(\sum_{\mu\in\La_1}\norm{\left((f\ast M_{\mu}[\f^\ast])(w)\right)_{w\in\La_1}}_{\ell^p_{m_1}(\La_1)}^\gamma\norm{\left((g\ast
		M_{\mu}[\f^\ast])(w)\right)_{w\in\La_1}}_{\ell^q_{v_1}(\La_1)}^\gamma\right.\\
	&\qquad\qquad\times \left.v_2(\mu)^\gamma\frac{\nu(\mu)^\gamma}{\nu(\mu)^\gamma}\right)^{\frac1\gamma}\\
	&\leq \left(\sum_{\mu\in\La_2}\norm{\left((f\ast M_{\mu}[\f^\ast])(w)\right)_{w\in\La_1}}_{\ell^p_{m_1}(\La_1)}^u\nu(\mu)^u\right)^{\frac1u}\\
	&\times \left(\sum_{\mu\in\La_2}\norm{\left((g\ast M_{\mu}[\f^\ast])(w)\right)_{w\in\La_1}}_{\ell^q_{m_1}(\La_1)}^t\frac{v_2(\mu)^t}{\nu(\mu)^t}\right)^{\frac1t}\\
	&=\norm{\left(V_\f f(\bfw)\right)_{\bfw\in\La}}_{\ell^{p,u}_{m_1\otimes\nu}(\La)}\norm{\left(V_\f g(\bfw)\right)_{\bfw\in\La}}_{\ell^{q,t}_{m_1\otimes v_2\nu^{-1}}(\La)}\\
	&\asymp\norm{f}_{M^{p,u}_{m_1\otimes \nu}}\norm{g}_{ M^{q,t}_{v_1\otimes
			v_2\nu^{-1}}},
	\end{align*}
	the last equivalence is  \eqref{Eq-disc-equiv-quasinorms-gabor-Mpq}. This concludes the proof.
\end{proof}

Let us introduce the closed and compact subgroups of $\cG\times\hcG$ and $\hcG\times\cG$, respectively:
\begin{equation}
\bU(\cG)\coloneqq\left(\{0_{\rd}\}\times\cK\right)\times\left(\{0_{\rd}\}\times\cK^\perp\right),\,\,\bU(\hcG)\coloneqq\left(\{0_{\rd}\}\times\cK^\perp\right)\times\left(\{0_{\rd}\}\times\cK\right).
\end{equation}
Given  $\bfx\in\cG\times\hcG$, we will denote its projection on $(\cG\times\hcG)/\bU(\cG)$  by
\begin{equation*}
\overset{\bullet}{\bfx}\quad\mbox{or}\quad[\bfx]^\bullet,
\end{equation*}
and similarly for the projection of $\boldsymbol{\xi}\in\hcG\times\cG$ onto $(\hcG\times\cG)/\bU(\hcG)$.\\
Let  $\La=A_{1,2}\bZ^{2d}\times D_{1,2}\subseteq\cG\times\hcG$ and $\Ga=A_{3,4}\bZ^{2d}\times D_{3,4}\subseteq\hcG\times\cG$ be  quasi-lattices, then their projections
\begin{equation}
\sD(\cG)\coloneqq\sD(\cG,A_{1,2})\coloneqq\overset{\bullet}{\La}\qquad\mbox{and}\qquad\sD(\hcG)\coloneqq\sD(\hcG,A_{3,4})\coloneqq\overset{\bullet}{\Ga}
\end{equation}
are discrete and  at most countable LCA groups. Given a distribution $f$ in $\cR_{\tilde{v}}$, or $S'_0$, and a window $g\in\sA_{\tilde{v}}$, the function
\begin{equation}\label{Eq-quotient-STFT}
\overset{\bullet}{V}_g f(\overset{\bullet}{\bfx})\coloneqq\sup_{\bfz\in\bU(\cG)}\abs{V_g f(\bfx+\bfz)}=\sfM_{\bU(\cG)}V_g f(\bfx)
\end{equation}
is well defined on the quotient group $(\cG\times\hcG)/\bU(\cG)$. In fact, if $\bfu$ is such that $\overset{\bullet}{\bfx}=\overset{\bullet}{\bfu}$, then there exists $\bfn\in\bU(\cG)$ such that $\bfu=\bfx+\bfn$. Setting $\bfy=\bfn+\bfz\in\bU(\cG)$ we have
\begin{equation*}
\sup_{\bfz\in\bU(\cG)}\abs{V_g f(\bfu+\bfz)}=\sup_{\bfz\in\bU(\cG)}\abs{V_g f(\bfx+\bfn+\bfz)}=\sup_{\bfy\in\bU(\cG)}\abs{V_g f(\bfx+\bfy)}.
\end{equation*}
Similarly, given a weight $m\in\cM_v(\cG\times\hcG)$, the function
\begin{equation}
\overset{\bullet}{m}(\overset{\bullet}{\bfx})\coloneqq\sup_{\bfz\in\bU(\cG)}m(\bfx+\bfz)
\end{equation}
is well defined on the quotient. 

\begin{lemma}\label{Lem-quotient-coefficient-operator}
	Consider a quasi-lattice $\La$ in $\cG\times\hcG$. Let $g\in\sA_{\tilde{v}}$, $0<p,q\leq\infty$, $m\in\cM_v(\cG\times\hcG)$ and define the mapping
	\begin{equation}
	\overset{\bullet}{\cC}_g\colon M^{p,q}_m(\cG)\to\ell^{p,q}_{\overset{\bullet}{m}}(\sD(\cG)),\,f\mapsto\left(\overset{\bullet}{V}_g f(\overset{\bullet}{\bfw})\right)_{\overset{\bullet}{\bfw}\in\sD(\cG)},
	\end{equation}
	where the weight $\overset{\bullet}{m}$ is understood to be restricted on $\sD(\cG)$. Then there exists a constant $C>0$ such that for every $f\in M^{p,q}_m(\cG)$ we have
	\begin{equation}
	\norm{\overset{\bullet}{\cC}_g f}_{\ell^{p,q}_{\overset{\bullet}{m}}(\sD(\cG))}\leq C\norm{f}_{M^{p,q}_m}.
	\end{equation}
\end{lemma}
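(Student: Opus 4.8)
The plan is to reduce the statement about $\overset{\bullet}{\cC}_g$ to the already-proved continuity of the ordinary coefficient operator $\cC_g\colon M^{p,q}_m(\cG)\to\ell^{p,q}_{m_\La}(\La)$ (Theorem \ref{Th-coeff-op-continuity}) by unfolding the definition of $\overset{\bullet}{V}_g f$ as a maximal function over the compact subgroup $\bU(\cG)$. First I would recall that, by \eqref{Eq-quotient-STFT}, $\overset{\bullet}{V}_g f(\overset{\bullet}{\bfw})=\sfM_{\bU(\cG)}V_g f(\bfw)$ for any representative $\bfw\in\La$ of $\overset{\bullet}{\bfw}\in\sD(\cG)=\overset{\bullet}{\La}$. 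The point is that $\sfM_{\bU(\cG)}V_g f$ is, up to an enlargement of the neighbourhood, controlled by a Wiener-amalgam-type maximal function already appearing in the proof of Theorem \ref{Th-coeff-op-continuity}: if $V_{1,2}$ is the unit neighbourhood from the BUPU of Lemma \ref{Lem-BUPU-ad-hoc}, then $\bU(\cG)\subseteq V_{1,2}$ (by the Remark following Lemma \ref{Lem-BUPU-ad-hoc}), hence $\sfM_{\bU(\cG)}V_g f(\bfw)\leq \sfM_{V_{1,2}}V_g f(\bfw)=\norm{(\psi_w\otimes\gamma_\mu\otimes\chi_\bT)\cdot\sfM_{V}W^\vr_g f}_{L^\infty}$ for the relevant BUPU element, or more simply $\leq \norm{(\psi_w\otimes\gamma_\mu\otimes\chi_\bT)\cdot W^\vr_g[\,\cdot\,]}$ after replacing $W^\vr_g f$ by $\sfM_V W^\vr_g f$.

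Next I would handle the weight. Since $\bU(\cG)$ is compact and $m\in\cM_v(\cG\times\hcG)$, one has $\overset{\bullet}{m}(\overset{\bullet}{\bfw})=\sup_{\bfz\in\bU(\cG)}m(\bfw+\bfz)\leq\big(\sup_{\bU(\cG)}v\big)\,m(\bfw)\asymp m(\bfw)=m_\La(\bfw)$, the constant being finite by local boundedness of the submultiplicative weight $v$ (\cite[Theorem 2.2.22]{Voig2015}). Therefore $\norm{(\overset{\bullet}{V}_g f(\overset{\bullet}{\bfw}))_{\overset{\bullet}{\bfw}}}_{\ell^{p,q}_{\overset{\bullet}{m}}(\sD(\cG))}$ is bounded, up to a constant, by $\norm{(\sfM_{V_{1,2}}V_g f(\bfw))_{\bfw\in\La}}_{\ell^{p,q}_{m_\La}(\La)}$, where now the index set is the full quasi-lattice $\La$ (each coset $\overset{\bullet}{\bfw}$ has a representative in $\La$, and we may just sum over $\La$, which only increases the norm since the extra terms are the same maximal quantities).

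The final step is to recognize that $\norm{(\sfM_{V_{1,2}}V_g f(\bfw))_{\bfw\in\La}}_{\ell^{p,q}_{m_\La}(\La)}$ is exactly the quantity estimated inside the proof of Theorem \ref{Th-coeff-op-continuity}: passing through Lemma \ref{Lem-Lpq-disc-ell-p-q} and Lemma \ref{Lem-Lpq-Wiener-discrete-equivalence} it is equivalent to $\norm{\sfM_V W^\vr_g f}_{W_V(L^{p,q}_{\tilde m})}=\norm{\sfM_V\sfM_V W^\vr_g f}_{L^{p,q}_{\tilde m}}\leq\norm{\sfM_{V^2}W^\vr_g f}_{L^{p,q}_{\tilde m}}=\norm{W^\vr_g f}_{W_{V^2}(L^{p,q}_{\tilde m})}\asymp\norm{f}_{M^{p,q}_m}$, using that $g\in\sA_{\tilde v}$ and window-independence of the Wiener amalgam space (Lemma \ref{Lem-WienerSpaceIndependence}, Corollary \ref{Cor-Bounded-R-Wiener}). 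Chaining these estimates gives the claimed inequality $\norm{\overset{\bullet}{\cC}_g f}_{\ell^{p,q}_{\overset{\bullet}{m}}(\sD(\cG))}\leq C\norm{f}_{M^{p,q}_m}$.

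I expect the only genuine subtlety — the ``main obstacle'' — to be bookkeeping the neighbourhoods so that $\bU(\cG)$ really sits inside the BUPU-neighbourhood $V_{1,2}$ and that the double maximal function $\sfM_{V_{1,2}}\circ\sfM_{V_{1,2}}$ (one $\sfM$ from the quotient-supremum, one from the Wiener norm) is absorbed by $\sfM_{V^2}$ with $V^2=VV$ relatively compact; this is precisely the kind of argument already carried out in Lemma \ref{Lem-SC-window-space} and in the proof of Theorem \ref{Th-synth-op-continuity}, so it should go through verbatim. Everything else is an application of results proved earlier in this section, in particular Theorem \ref{Th-coeff-op-continuity} together with the local boundedness of $v$.
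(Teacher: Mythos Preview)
Your proposal is correct and takes essentially the same approach as the paper: bound $\overset{\bullet}{V}_g f(\overset{\bullet}{\bfw})$ by the BUPU-localized supremum via the inclusion $\bU(\cG)\subsetneq V_{1,2}$ (the Remark following Lemma~\ref{Lem-BUPU-ad-hoc}), control $\overset{\bullet}{m}(\overset{\bullet}{\bfw})\asymp m(\bfw)$ by compactness of $\bU(\cG)$, and then conclude as in Theorem~\ref{Th-coeff-op-continuity}. The paper's final step is marginally cleaner than yours---it invokes the discrete equivalence~\eqref{Eq-discrete-equiv-norm-Wiener} directly on $\left(\norm{(\psi_w\otimes\ga_\mu\otimes\chi_\bT)\cdot W^\vr_g f}_{L^\infty}\right)_{\bfw}$ to reach $\norm{f}_{M^{p,q}_m}$ without the detour through $\sfM_{V^2}$---but this is cosmetic.
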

\begin{proof}
	The BUPU $\{\psi_w\otimes\ga_\mu\otimes\chi_\bT,\,\bfw=(w,\mu)\in\La\}$ coming from Lemma \ref{Lem-BUPU-ad-hoc} is such that
	\begin{equation*}
	\psi_w\otimes\ga_\mu\equiv 1\qquad\mbox{on}\qquad\bfw+\bU(\cG).
	\end{equation*}
	Noticing that the projection of $\La$ onto $\sD(\cG)$ is one-to-one we have without ambiguity
	\begin{equation*}
	\overset{\bullet}{V}_g f(\overset{\bullet}{\bfw})\leq\norm{\left(\psi_w\otimes\ga_\mu\otimes\chi_\bT\right)\cdot V_g f}_{L^\infty}=\norm{\left(\psi_w\otimes\ga_\mu\otimes\chi_\bT\right)\cdot W^\vr_g f}_{L^\infty},
	\end{equation*}
	where  $(w,\mu)$ is the only representative of $\overset{\bullet}{\bfw}$ in the quasi-lattice.
	Since  $\bU(\cG)$ is compact   there exists  a constant $C=C(\bU(\cG),v)>0$ such that
	\begin{equation}
	\frac1C m(\bfx+\bfz)\leq m(\bfx)\leq C m(\bfx+\bfz),
	\end{equation}
	for every $\bfx\in\cG\times\hcG$ and $\bfz\in\bU(\cG)$, see   \cite[Corollary 2.2.23]{Voig2015}. For $\bfx=\bfw\in\La$, taking the supremum over $\bfz$ in $\bU(\cG)$ we can unambiguously  write
	\begin{equation}
	\overset{\bullet}{m}(\overset{\bullet}{\bfw})\asymp m(\bfw).
	\end{equation} 
	All together we have 
	\begin{align*}
	\norm{\overset{\bullet}{\cC}_g f}_{\ell^{p,q}_{\overset{\bullet}{m}}(\sD(\cG))}&=\norm{\left(\overset{\bullet}{V}_g f(\overset{\bullet}{\bfw})\cdot\overset{\bullet}{m}(\overset{\bullet}{\bfw})\right)_{\overset{\bullet}{\bfw}\in\sD(\cG)}}_{\ell^{p,q}(\sD(\cG))}\\
	&\lesssim\norm{\left(\norm{\left(\psi_w\otimes\ga_\mu\otimes\chi_\bT\right)\cdot W^\vr_g f}_{L^\infty}\cdot m(\bfw)\right)_{\bfw\in\La}}_{\ell^{p,q}(\La)}.
	\end{align*}
	Then we conclude as in the proof of Theorem \ref{Th-coeff-op-continuity}.
\end{proof}

\subsection{Eigenfunctions of Kohn-Nirenberg operators}
We have now all the instruments to study the eigenfunctions for Kohn-Nirenberg operators.
Let us first  introduce the Gabor matrix of $\Opz(\si)$.
\begin{definition}\label{Def-Gabor-matrix-KN}
	Consider $g\in\cS_\cC(\cG)$ and $\si\in S'_0(\cG\times\hcG)$. The {\slshape Gabor matrix} of the Kohn-Nirenberg operator $\Opz(\si)$ (with respect to $g$) is defined by
	\begin{equation}\label{Eq-Gabor-matrix-KN}
	[M(\si)]_{\bfx,\bfy}\coloneqq\la\Opz(\si)\pi(\bfy)g,\pi(\bfx)g\ra,\qquad\bfx,\bfy\in\cG\times\hcG.
	\end{equation}
\end{definition} 

The machinery developed in the previous subsection let us generalize what stated in \cite[Thereom 3.3 (i)]{BasCorNic20} for Weyl operators on $\rd$ and proved separately in  \cite[Theorem 4.3]{toft1} and \cite[Theorem 3.1]{ToftquasiBanach2017}. We will then obtain properties  for the eigenfunctions in $L^2(\cG)$ of $\Opz(\si)$ similar to the ones for Weyl operators on the Euclidean space, cf. \cite[Proposition 3.5]{BasCorNic20}.\par
We start with the boundedness properties of Weyl operators.
\begin{theorem}\label{Th-cont-KN-3-indici}
	Consider $0<p,q,\ga\leq\infty$ such that
	\begin{equation}\label{Eq-cont-KN-indexes}
	\frac1p+\frac1q=\frac1\ga
	\end{equation}
	and a symbol $\si\in M^{p,\min\{1,\ga\}}(\cG\times\hcG)$. Then Kohn-Nirenberg operator $\Opz(\si)$: $ S_0(\cG)\to  S'_0(\cG)$   admits a unique linear continuous  extension
	\begin{equation*}
	\Opz(\si)\colon M^q(\cG)\to M^\ga(\cG).
	\end{equation*}
\end{theorem}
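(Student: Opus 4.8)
The plan is to reduce the boundedness of $\Opz(\si)$ on modulation spaces to estimates on its Gabor matrix, and then to bound the action of that matrix between the sequence spaces $\ell^q_{m_\La}(\La)$ and $\ell^\ga_{m_\La}(\La)$. First I would fix a Gabor frame $\{\pi(\bfw)\f\}_{\bfw\in\La}$ for $L^2(\cG)$ with a dual window $h\in\sA_{\tilde v}$, available from Corollary \ref{Cor-dual-window-in-sA_v} (or, equivalently, the pair $\f^\circ,h^\circ$ from Lemma \ref{Lem-Gabor-frame-Gaussian-pi/2}). By Theorem \ref{Th-gabor-expansions-Mpq} every $f\in M^q(\cG)$ has the expansion $f=\sum_{\bfy\in\La}\la f,\pi(\bfy)h\ra\,\pi(\bfy)\f$, with the quasi-norm equivalence $\norm{f}_{M^q}\asymp\norm{(\la f,\pi(\bfy)h\ra)_{\bfy}}_{\ell^q(\La)}$, and similarly $\norm{\Opz(\si)f}_{M^\ga}\asymp\norm{(\la \Opz(\si)f,\pi(\bfx)h\ra)_{\bfx}}_{\ell^\ga(\La)}$, since the coefficient operator is continuous $M^\ga\to\ell^\ga$ (Theorem \ref{Th-coeff-op-continuity}) and the synthesis operator is continuous $\ell^\ga\to M^\ga$ (Theorem \ref{Th-synth-op-continuity}). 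Expanding $\Opz(\si)f$ through the frame one gets
\begin{equation*}
\la \Opz(\si)f,\pi(\bfx)h\ra=\sum_{\bfy\in\La}\la f,\pi(\bfy)h\ra\,\la \Opz(\si)\pi(\bfy)\f,\pi(\bfx)h\ra,
\end{equation*}
so the operator is intertwined with the (modified) Gabor matrix $[M(\si)]_{\bfx,\bfy}=\la\Opz(\si)\pi(\bfy)\f,\pi(\bfx)h\ra$ acting on sequences.

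\textbf{Controlling the Gabor matrix.} The core step is the pointwise estimate
\begin{equation*}
\abs{\la\Opz(\si)\pi(\bfy)\f,\pi(\bfx)h\ra}\lesssim \abs{V_{\Phi}\si\big((x,\o_x),\cJ(\bfy-\bfx)\big)}
\end{equation*}
for a suitable window $\Phi\in\sA_{\tilde v}(\cG\times\hcG)$, obtained from the weak definition \eqref{Eq-KonNirenberg-weak}, namely $\la\Opz(\si)\pi(\bfy)\f,\pi(\bfx)h\ra=\la\si,R(\pi(\bfx)h,\pi(\bfy)\f)\ra$, combined with the covariance formula \eqref{Rtfs} for the Rihaczek distribution of time-frequency shifts and with the fact that $R(h,\f)\in\sA_{\tilde v}(\cG\times\hcG)$ (Corollary \ref{Cor-Rfg-windows}). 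Thus $\la\si,R(\pi(\bfx)h,\pi(\bfy)\f)\ra$ equals, up to unimodular factors, an STFT value $V_{R(h,\f)}\si$ evaluated at a point depending affinely on $(\bfx,\cJ(\bfy-\bfx))$. Because $\si\in M^{p,\min\{1,\ga\}}(\cG\times\hcG)$, the function $\bfw\mapsto H(\bfw):=\sup_{\bfz}\abs{V_{R(h,\f)}\si(\bfw+\bfz)}$ (the maximal/Wiener-amalgam control from \eqref{Eq-quotient-STFT} and Lemma \ref{Lem-quotient-coefficient-operator}) lies in $\ell^{p,\min\{1,\ga\}}$ over the relevant quasi-lattice; in particular, passing through the first variable by the $M^p$ information and through the ``difference'' variable $\cJ(\bfy-\bfx)$ by the $\ell^{\min\{1,\ga\}}$ information, one finds that the off-diagonal decay of $[M(\si)]$ is governed by a single sequence $a=(a_{\bfz})_{\bfz}\in\ell^{\min\{1,\ga\}}$ with $\abs{[M(\si)]_{\bfx,\bfy}}\lesssim a_{\cJ(\bfy-\bfx)}$ after summing out the first variable appropriately. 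One then invokes the convolution relations for sequence spaces \cite[Lemma 2.7]{Galperin2004}: convolution with an $\ell^{s}$ sequence, $s=\min\{1,\ga\}$, maps $\ell^q\to\ell^\ga$ exactly when $\tfrac1q+\tfrac1s=1+\tfrac1\ga$, i.e. $\tfrac1q=\tfrac1\ga-\tfrac1s$; if $\ga\ge1$ then $s=1$ and this reads $\tfrac1q=\tfrac1\ga$, while if $\ga<1$ then $s=\ga$ and it reads $\tfrac1q=\tfrac1\ga-\tfrac1\ga+1-\tfrac1\ga$... — here one must carefully match the Hölder/Young bookkeeping against \eqref{Eq-cont-KN-indexes}, using $\tfrac1p+\tfrac1q=\tfrac1\ga$ together with the fact that the first-variable integration consumes exactly the $M^p$-index. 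Putting these together yields $\norm{(\la\Opz(\si)f,\pi(\bfx)h\ra)_{\bfx}}_{\ell^\ga}\lesssim\norm{\si}_{M^{p,\min\{1,\ga\}}}\norm{(\la f,\pi(\bfy)h\ra)_{\bfy}}_{\ell^q}$, hence $\norm{\Opz(\si)f}_{M^\ga}\lesssim\norm{\si}_{M^{p,\min\{1,\ga\}}}\norm{f}_{M^q}$.

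\textbf{Extension and the main obstacle.} Having proved the estimate for $f\in S_0(\cG)$ (where $\Opz(\si):S_0\to S_0'$ is already well defined by the discussion around \eqref{Eq-KonNirenberg-weak} and \cite[Corollary 4.2, Theorem 5.3]{Jakobsen2018}), one extends to all of $M^q(\cG)$: if $0<q<\infty$, $\cS_\cC(\cG)$ is quasi-norm dense in $M^q(\cG)$ by Theorem \ref{Th-S_C-dense-Banach}(i) and $S_0\supseteq\cS_\cC$, so the unique continuous extension follows by density; if $q=\infty$ one uses instead the $w$-$\ast$-density from Theorem \ref{Th-S_C-dense-Banach}(ii) and the duality $M^\infty=(M^1)'$ (Proposition \ref{Pro-Duality-Mpq-Banach}), defining the extension weakly and checking it is the unique continuous one. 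The uniqueness is then automatic since $\cS_\cC(\cG)$ is dense (in the appropriate topology) in every $M^q(\cG)$. The main obstacle — and the genuinely technical part, as the authors warn in the introduction — is the middle step: controlling the Gabor matrix $[M(\si)]$ by a convolution kernel on the quasi-lattice. The difficulty is twofold: (a) the quasi-lattice $\La$ is not a group, so one cannot directly identify $[M(\si)]_{\bfx,\bfy}$ with a value of a function of $\bfy-\bfx$; the remedy (following Gröchenig–Strohmer's ``pass to the quotient'' idea) is to work on $(\cG\times\hcG)/\bU(\cG)$ via the maximal function \eqref{Eq-quotient-STFT} and Lemma \ref{Lem-quotient-coefficient-operator}, which restores a genuine group structure on the discrete quotient $\sD(\cG)$; and (b) the quasi-Banach range $\ga<1$ forces the kernel to be summed in $\ell^{\min\{1,\ga\}}=\ell^\ga$ rather than $\ell^1$, so one must verify that $\si\in M^{p,\ga}$ indeed gives the needed $\ell^\ga$-control in the difference variable and that the index arithmetic \eqref{Eq-cont-KN-indexes} is exactly what the Young-type convolution lemma on $\sD(\cG)$ demands. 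Once these two points are handled, everything else is the now-routine interplay of frame expansions, Wiener-amalgam norm equivalences (Lemma \ref{Lem-Lpq-Wiener-discrete-equivalence}, Lemma \ref{Lem-Lpq-disc-ell-p-q}) and the continuity of $\cC_g$, $\cC_g^\ast$ established above.
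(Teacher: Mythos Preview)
Your outline gets the scaffolding right---discretize via the Gabor frame, identify the Gabor matrix, push the problem to sequence spaces---but the middle step, where you claim the matrix is controlled by a single convolution kernel $a_{\cJ(\bfy-\bfx)}\in\ell^{\min\{1,\ga\}}$, does not go through as written. From \eqref{Eq-KonNirenberg-weak} and \eqref{Rtfs} one actually obtains (with $\Phi=R(\f,\f)$)
\[
\abs{[M(\si)]_{\bfw,\bfu}}=\abs{V_{\Phi}\si\big(\sfT_0(\bfw,\bfu),\cJ(\bfu-\bfw)\big)},\qquad \sfT_0\big((w,\mu),(u,\nu)\big)=(w,\nu),
\]
so the entry depends on both the ``difference'' $\cJ(\bfu-\bfw)$ \emph{and} the mixed base point $\sfT_0(\bfw,\bfu)$, which is not a function of $\bfu-\bfw$. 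You cannot ``sum out the first variable'' and still keep a pointwise bound $\abs{[M(\si)]_{\bfw,\bfu}}\le a_{\cJ(\bfu-\bfw)}$: taking the $\ell^p$-norm in the first slot gives the $M^{p,\min\{1,\ga\}}$-control you want on $a$ but destroys the pointwise inequality, while taking the supremum keeps the inequality but only uses $\si\in M^{\infty,\min\{1,\ga\}}$ and loses the $p$. Your own hesitation in the Young/H\"older bookkeeping is a symptom of this structural issue, not just arithmetic.

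The paper resolves this by splitting into two genuinely different arguments, and the split is essential. For $\ga\le 1$ there is no convolution step at all: one uses the $\ga$-subadditivity $(\sum_\bfu\cdots)^\ga\le\sum_\bfu(\cdots)^\ga$ to get
\[
\|M(\si)c\|_{\ell^\ga}^\ga\le\sum_{\bfw}\sum_{\bfu}\abs{V_\Phi\si(\sfT_0(\bfw,\bfu),\cJ(\bfu-\bfw))}^\ga\abs{c_\bfu}^\ga,
\]
then a straight H\"older in $\bfu$ with exponents $p/\ga,\,q/\ga$ (conjugate precisely because $1/p+1/q=1/\ga$), and finally the quotient change of variables $(\overset{\bullet}{\bfw},\overset{\bullet}{\bfu})\mapsto(\overset{\bullet}{\bfz},\overset{\bullet}{\bfth})=([\sfT_0(\bfw,\bfu)]^\bullet,[\cJ(\bfu-\bfw)]^\bullet)$ on $\sD(\cG)\times\sD(\hcG)$, which turns the remaining double sum into $\|\overset{\bullet}{\cC}_\Phi\si\|_{\ell^{p,\ga}}\lesssim\|\si\|_{M^{p,\ga}}$ via Lemma \ref{Lem-quotient-coefficient-operator}. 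For $\ga>1$ (so $\min\{1,\ga\}=1$ and necessarily $p,q>1$) the paper abandons the Gabor-matrix route entirely and argues by duality: $\abs{\la\Opz(\si)f,g\ra}=\abs{\la\si,R(g,f)\ra}\le\|\si\|_{M^{p,1}}\|R(g,f)\|_{M^{p',\infty}}$, with Proposition \ref{Pro-continuity-R} supplying $\|R(g,f)\|_{M^{p',\infty}}\lesssim\|g\|_{M^{\ga'}}\|f\|_{M^q}$ under \eqref{Eq-cont-KN-indexes}. The $\ga$-subadditivity trick is only available when $\ga\le1$, and the duality only when $\ga>1$; neither case alone covers the theorem, and your unified convolution scheme does not substitute for either.
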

\begin{proof}
	We distinguish two cases: $\ga\leq1$ and $\ga>1$.\\
	\emph{Case $\ga\leq1$.} Let $\f$ be as in \eqref{Gauss} and consider $h\in\sA_{\tilde{v}}$ and a quasi-lattice $\La$ such that $S_{h,\f}=S_{\f,h}=I_{L^2}$.
	Write
	\begin{equation}\label{Eq-Opz-composition}
	\Opz(\si)=\cC^\ast_h\circ\cC_\f\circ\Opz(\si)\circ\cC^\ast_\f\circ\cC_h\eqqcolon\cC^\ast_h\circ M(\si)\circ\cC_h.
	\end{equation}
	We shall prove that the Gabor matrix  $M(\si)$ is linear and continuous from $\ell^q(\La)$ into $\ell^\ga(\La)$. It is sufficient to prove that the diagram 
	\begin{equation*}
	\begin{diagram}
	\node{{M}^q}\arrow{s,l}{\cC_h} \arrow{c,t}{\Opz(\si)}  \node{ {M}^\gamma}
	\\
	\node{{\ell}^q}  \arrow{c,t}{M(\si)} \node{ {\ell}^\gamma} \arrow{n,r}{\cC^\ast_h}
	\end{diagram}
	\end{equation*}
	is commutative.	We show in detail the cases $p<+\infty$ and $q<+\infty$, the others are similar.
	For $f\in M^q(\cG)$, using the decomposition in  \eqref{Eq-Opz-composition} and the notation for the Gabor matrix \eqref{Eq-Gabor-matrix-KN}, we have
	\begin{align*}
	\Opz(\si)f&=\sum_{\bfw\in\La}\sum_{\bfu\in\La}\la\Opz(\si)\pi(\bfu)\f,\pi(\bfw)\f\ra\la f,\pi(\bfu)h\ra \pi(\bfw)h\\
	&=\sum_{\bfw\in\La}\sum_{\bfu\in\La}\left[M(\si)\right]_{\bfw,\bfu}\la f,\pi(\bfu)h\ra \pi(\bfw)h,
	\end{align*}
	so that
	\begin{equation*}
	M(\si)\colon\ell^q(\La)\to\ell^\ga(\La),\quad \left(c_{\bfw}\right)_{\bfw\in\La}\mapsto\left(\sum_{\bfu\in\La}\left[M(\si)\right]_{\bfw,\bfu}c_{\bfu}\right)_{\bfw\in\La}.
	\end{equation*}
	From the weak definition \eqref{Eq-KonNirenberg-weak} and \eqref{Rtfs} we can write each entry of the (discrete) Gabor matrix of $\Opz(\si)$ as follows:
	\begin{align*}
	\left[M(\si)\right]_{\bfw,\bfu}&=\la\Opz(\si)\pi(\bfu)\f,\pi(\bfw)\f\ra\\
	&=\la\si,R(\pi(\bfw)\f,\pi(\bfu)\f)\ra\\
	&=\la\si,\la\nu,w-u\ra M_{\cJ(\bfu-\bfw)}T_{(w,\nu)}R(\f,\f)\ra\\
	&=\overline{\la\nu,w-u\ra}V_{\Phi}\si\left((w,\nu),\cJ(\bfu-\bfw)\right),
	\end{align*}
	where $\bfw=(w,\mu)$, $\bfu=(u,\nu)$ and $\Phi\coloneqq R(\f,\f)\in\sA_{\tilde{v}}(\cG\times\hcG)$. We introduce the mapping 
	\begin{equation}
	\sfT_0\colon\left(\cG\times\hcG\right)\times\left(\cG\times\hcG\right)\to\cG\times\hcG,\quad \left((w,\mu),(u,\nu)\right)\mapsto(w,\nu)
	\end{equation}
	and write
	\begin{equation}
	\abs{\left[M(\si)\right]_{\bfw,\bfu}}=\abs{V_{\Phi}\si\left(\sfT_0(\bfw,\bfu),\cJ(\bfu-\bfw)\right)}.
	\end{equation}
	Since $\ga\leq1$, we have $\norm{c}_{\ell^1}\leq\norm{c}_{\ell^\ga}$ and we estimate
	\begin{align*}
	\norm{M(\si)c}_{\ell^\ga(\La)}&=\left(\sum_{\bfw\in\La}	\abs{\sum_{\bfu\in\La}\left[M(\si)\right]_{\bfw,\bfu}c_{\bfu}}^\ga\right)^\frac1\ga\\
	&\leq\left(\sum_{\bfw\in\La}\left(\sum_{\bfu\in\La}\abs{\left[M(\si)\right]_{\bfw,\bfu}}\abs{c_{\bfu}}\right)^\ga\right)^\frac1\ga\\
	&\leq\left(\sum_{\bfw\in\La}\sum_{\bfu\in\La}\abs{\left[M(\si)\right]_{\bfw,\bfu}}^\ga\abs{c_{\bfu}}^\ga\right)^\frac1\ga\\
	&=\left(\sum_{\bfw\in\La}\sum_{\bfu\in\La}\abs{V_{\Phi}\si\left(\sfT_0(\bfw,\bfu),\cJ(\bfu-\bfw)\right)}^\ga\abs{c_{\bfu}}^\ga\right)^\frac1\ga.
	\end{align*}
	Let us majorize each entry of the matrix as follows:
	\begin{align}
	\abs{V_{\Phi}\si\left(\sfT_0(\bfw,\bfu),\cJ(\bfu-\bfw)\right)}&\leq\sup_{\bfz\in\bU(\cG),\bfdel\in\bU(\hcG)}\abs{V_{\Phi}\si\left(\sfT_0(\bfw,\bfu)+\bfz,\cJ(\bfu-\bfw)+\bfdel\right)}\notag\\
	&=\overset{\bullet}{V}_\Phi \si\left(\left[\sfT_0(\bfw,\bfu)\right]^\bullet,\left[\cJ(\bfu-\bfw)\right]^\bullet\right)\label{Eq-STFT-sigma-almost-on-quotient},
	\end{align}
	where the function on the quotient group was introduced in \eqref{Eq-quotient-STFT}. Fix $\bfw,\bfu\in\La$ and consider $\bfx=(x,\xi),\,\bfy=(y,\eta)$ such that $\overset{\bullet}{\bfw}=\overset{\bullet}{\bfx}$ and $\overset{\bullet}{\bfu}=\overset{\bullet}{\bfy}$. Then there exist unique $\bfz=(z,\zeta)=((0,z_2),(0,\zeta_2)),\bfn=(n,\iota)=((0,n_2),(0,\iota_2))\in\bU(\cG)$ such that 
	\begin{equation*}
	\bfx=\bfw+\bfz,\qquad\bfy=\bfu+\bfn.
	\end{equation*} 
	Therefore
	\begin{align*}
	\sfT_0(\bfx,\bfy)&=\sfT_0(\bfw+\bfz,\bfu+\bfn)=((w_1,w_2+z_2),(\nu_1,\nu_2+\iota_2))\\
	&=\sfT_0(\bfw,\bfu)+((0,z_2),(0,\iota_2))
	\end{align*}
	where $((0,z_2),(0,\iota_2))\in\bU(\cG)$, so that we have shown
	\begin{equation}\label{Eq-T0-cosets}
	\overset{\bullet}{\bfw}=\overset{\bullet}{\bfx},\,\overset{\bullet}{\bfu}=\overset{\bullet}{\bfy}\qquad\Rightarrow\qquad\left[\sfT_0(\bfw,\bfu)\right]^\bullet=\left[\sfT_0(\bfx,\bfy)\right]^\bullet.
	\end{equation}
	Similarly,
	\begin{equation*}
	\cJ(\bfy-\bfx)=\cJ(\bfu+\bfn-\bfw-\bfz)=\cJ(\bfu-\bfw)+\cJ(\bfn-\bfz)
	\end{equation*}
	and being $\cJ(\bfn-\bfz)\in\bU(\hcG)$ we have proved
	\begin{equation}
	\overset{\bullet}{\bfw}=\overset{\bullet}{\bfx},\,\overset{\bullet}{\bfu}=\overset{\bullet}{\bfy}\qquad\Rightarrow\qquad\left[\cJ(\bfu-\bfw)\right]^\bullet=\left[\cJ(\bfy-\bfx)\right]^\bullet.
	\end{equation}
	Hence  the function in \eqref{Eq-STFT-sigma-almost-on-quotient} depends only on the cosets of $\bfw$ and $\bfu$, so that the application
	\begin{equation}
	\overset{\bullet}{H}(\overset{\bullet}{\bfu},\overset{\bullet}{\bfw})\coloneqq\overset{\bullet}{V}_\Phi \si\left(\left[\sfT_0(\bfw,\bfu)\right]^\bullet,\left[\cJ(\bfu-\bfw)\right]^\bullet\right)
	\end{equation} 
	is well defined. A sequence $c=(c_\bfw)_{\bfw\in\La}$ on the quasi-lattice $\La$ uniquely determines a sequence on $\sD(\cG)=\overset{\bullet}{\La}$ simply by
	\begin{equation}
	\overset{\bullet}{c}\coloneqq\left(c_{\overset{\bullet}{\bfw}}\coloneqq c_\bfw\right)_{\overset{\bullet}{\bfw}\in\sD(\cG)}
	\end{equation}
	with
	\begin{equation*}
	\norm{c}_{\ell^{q}(\La)}=\norm{\overset{\bullet}{c}}_{\ell^q(\sD(\cG))}.
	\end{equation*}
	Using H\"{o}lder's inequality in the $\overset{\bullet}{\bfu}$ variable (observe  $1/(p/\ga)+1/(q/\ga)=1$)  and the consideration above:
	\begin{align*}
	\norm{M(\si)c}_{\ell^\ga(\La)}&\leq\left(\sum_{\bfw\in\La}\sum_{\bfu\in\La}\overset{\bullet}{H}(\overset{\bullet}{\bfu},\overset{\bullet}{\bfw})^\ga\abs{c_{\bfu}}^\ga\right)^\frac1\ga\\
	&=\left(\sum_{\overset{\bullet}{\bfw}\in\sD(\cG)}\sum_{\overset{\bullet}{\bfu}\in\sD(\cG)}\overset{\bullet}{H}(\overset{\bullet}{\bfu},\overset{\bullet}{\bfw})^\ga\abs{c_{\overset{\bullet}{\bfu}}}^\ga\right)^\frac1\ga\\
	&\leq\left(\sum_{\overset{\bullet}{\bfw}\in\sD(\cG)}\left(\sum_{\overset{\bullet}{\bfu}\in\sD(\cG)}\overset{\bullet}{H}(\overset{\bullet}{\bfu},\overset{\bullet}{\bfw})^{\ga\frac{p}{\ga}}\right)^{\frac{\ga}{p}}\left(\sum_{\overset{\bullet}{\bfu}\in\sD(\cG)}\abs{c_{\overset{\bullet}{\bfu}}}^{\ga\frac{q}{\ga}}\right)^{\frac{\ga}{q}}\right)^\frac1\ga\\
	&=\norm{c}_{\ell^{q}(\La)}\left(\sum_{\overset{\bullet}{\bfw}\in\sD(\cG)}\left(\sum_{\overset{\bullet}{\bfu}\in\sD(\cG)}\overset{\bullet}{V}_\Phi \si\left(\left[\sfT_0(\bfw,\bfu)\right]^\bullet,\left[\cJ(\bfu-\bfw)\right]^\bullet\right)^{p}\right)^{\frac{\ga}{p}}\right)^\frac1\ga.
	\end{align*}
	Let us perform the following change of variables:
	\begin{equation}
	\overset{\bullet}{\bfth}\coloneqq\left[\cJ(\bfu-\bfw)\right]^\bullet\in\sD(\hcG)=\left[\cJ \La\right]^\bullet.
	\end{equation}
	Notice that $\cJ\La\subseteq\hcG\times\cG$ is a quasi-lattice. Then there exists $\bfdel\in\bU(\hcG)$ such that $\bfth+\bfdel=\cJ(\bfu-\bfw)$ and 
	\begin{equation*}
	\bfu-\bfw=\cJ^{-1}(\bfth+\bfdel)\,\Rightarrow\,\bfw=\bfu-\cJ^{-1}(\bfth)-\cJ^{-1}(\bfdel)\,\Rightarrow\,\overset{\bullet}{\bfw}=[\bfu-\cJ^{-1}(\bfth)]^\bullet,
	\end{equation*}
	since $-\cJ^{-1}(\bfdel)\in\bU(\cG)$. Recalling \eqref{Eq-T0-cosets} and writing $\bfth=(\theta,s)=((\theta_1,\theta_2),(s_1,s_2))\in\cJ\La$, we have
	\begin{align*}
	[\sfT_0(\bfw,\bfu)]^\bullet&=[\sfT_0(\bfu-\cJ^{-1}(\bfth),\bfu)]^\bullet=[\sfT_0((u-s,\nu+\theta),(u,\nu))]^\bullet\\
	&=[(u-s,\nu)]^\bullet=[\bfu-(s,\hat{e})]^\bullet.
	\end{align*}
	In the above calculation we can choose as representative of $\overset{\bullet}{\bfth}$ the only one in $\cJ\La$ without loss of generality. In fact,  write $\La=(\al\zd\times D_1)\times(\al\zd\times D_2)$, $\cJ\La=(\al\zd\times -D_2)\times(\al\zd\times D_1)$, and consider $\boldsymbol{\eta}=(\eta,l)=((\eta_1,\eta_2),(l_1,l_2))$  such that $\overset{\bullet}{\bfth}=\overset{\bullet}{\boldsymbol{\eta}}$ and $\boldsymbol{\eta}\notin\cJ\La$. Being $\bU(\hcG)=(\{0_{\rd}\}\times\cK^\perp)\times(\{0_{\rd}\}\times\cK)$, it necessarily follows that $\theta_1=\eta_1$ and $s_1=l_1$ in $\al\zd$, $[\theta_2]^\bullet=[\eta_2]^\bullet$ in $\hcG_0/\cK^\perp$, $[s_2]^\bullet=[l_2]^\bullet$ in $\cG_0/\cK$ and  $[(l,\hat{e})]^\bullet\in\overset{\bullet}{\La}$.\par
	Eventually we set 
	\begin{equation}
	\overset{\bullet}{\bfz}\coloneqq\overset{\bullet}{\bfu}-[(s,\hat{e})]^\bullet\in\sD(\cG)=\overset{\bullet}{\La}
	\end{equation} 
	and using Lemma \ref{Lem-quotient-coefficient-operator}
	\begin{align*}
	\Bigg(\sum_{\overset{\bullet}{\bfw}\in\sD(\cG)}\Bigg(\sum_{\overset{\bullet}{\bfu}\in\sD(\cG)}&\overset{\bullet}{V}_\Phi \si\Bigg(\left[\sfT_0(\bfw,\bfu)\right]^\bullet,\left[\cJ(\bfu-\bfw)\right]^\bullet\Bigg)^{p}\Bigg)^{\frac{\ga}{p}}\Bigg)^{\frac1\ga}\\
	&=\Bigg(\sum_{\overset{\bullet}{\bfth}\in\sD(\hcG)}\Bigg(\sum_{\overset{\bullet}{\bfz}\in\sD(\cG)}\overset{\bullet}{V}_\Phi\si(\overset{\bullet}{\bfz},\overset{\bullet}{\bfth})^{p}\Bigg)^{\frac{\ga}{p}}\Bigg)^\frac1\ga=\norm{\overset{\bullet}{\cC}_\Phi\si}_{\ell^{p,\ga}(\sD(\cG)\times\sD(\hcG))}\\
	&\lesssim \norm{\si}_{M^{p,\ga}(\cG\times\hcG)}<+\infty.
	\end{align*}
	\emph{Case $\ga>1$.} Observe that $p\geq\ga>1$ and $q\geq\ga>1$.
	Consider first $p\neq\infty$. The desired result is obtained by duality. By Proposition \ref{Pro-Duality-Mpq-Banach} $M^\ga(\cG)\cong(M^{\ga'}(\cG))'$, we hence show that if $f\in M^{q}(\cG)$ then $\Opz(\si)f$ is a continuous linear functional on $M^{\ga'}(\cG)$. Let $g\in M^{\ga'}(\cG)$, from the weak definition \eqref{Eq-KonNirenberg-weak} and the fact that $M^{p,1}(\cG\times\hcG)\cong(M^{p',\infty}(\cG\times\hcG))'$ we get:
	\begin{equation*}
	\abs{\la\Opz(\si)f,g\ra}=\abs{\la\si,R(g,f)\ra}\leq\norm{\si}_{M^{p,1}}\norm{R(g,f)}_{M^{p',\infty}}.
	\end{equation*}
	The  indexes' conditions in  \eqref{Eq-continuity-R-condition-1} and \eqref{Eq-continuity-R-condition-2}  become
	\begin{align}
	\ga',q&\leq\infty,\\
	\frac{1}{\ga'}+\frac1q&\geq\frac1p.
	\end{align} 
	The first one is trivial, the second follows from the assumption \eqref{Eq-cont-KN-indexes}. 
	Therefore
	\begin{equation*}
	\norm{R(g,f)}_{M^{p',\infty}}\lesssim \norm{g}_{M^{\ga'}}\norm{f}_{M^q}
	\end{equation*}
	and the boundedness of $\Opz(\si)$ from $M^q(\cG)$ into $M^\ga(\cG)$ follows.\\
	If $p=\infty$ the argument is similar,   we use the duality \eqref{Eq-duality-Minfty1-M1infty} between $M^{\infty,1}$ and $M^{1,\infty}$.
\end{proof}

\begin{proposition}\label{Pro-KN-eigenfunction}
	Consider a symbol $\si$ on the phase space such that for some $0<p<\infty$
	\begin{equation}
	\si\in\bigcap_{\ga>0}M^{p,\ga}(\cG\times\hcG).
	\end{equation}
	If $\lambda\in\si_P(\Opz(\si))\smallsetminus\{0\}$, then any eigenfunction $f\in L^2(\cG)$ with eigenvalue $\lambda$ satisfies  $f\in \bigcap_{\ga>0}M^\ga(\cG)$.
\end{proposition}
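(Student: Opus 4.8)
The plan is a bootstrap argument: starting from $f\in L^2(\cG)$ one repeatedly feeds $f$ through the eigenvalue identity $f=\lambda^{-1}\Opz(\si)f$ and uses the index‑shifting continuity of $\Opz(\si)$ from Theorem~\ref{Th-cont-KN-3-indici} to place $f$ in modulation spaces $M^{q_n}(\cG)$ with $q_n\to 0^+$, then closes the argument with the embeddings of Proposition~\ref{Pro-Incl-Mpq}. Concretely, recall $M^2(\cG)=L^2(\cG)$ (see Remark~\ref{Rem-Modulation-coincide}), so $f\in M^2(\cG)$; define the exponents $q_0:=2$ and, for integers $n\ge 0$,
\[
\frac{1}{q_{n+1}}:=\frac1p+\frac{1}{q_n},\qquad\text{equivalently}\qquad\frac{1}{q_n}=\frac12+\frac{n}{p}.
\]
Since $0<p<\infty$ we have $0<q_{n+1}<q_n$ for every $n$ and $q_n\to 0^+$.

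For the inductive step, suppose $f\in M^{q_n}(\cG)$. Apply Theorem~\ref{Th-cont-KN-3-indici} with the triple $(p,q,\ga)=(p,q_n,q_{n+1})$: the admissibility relation \eqref{Eq-cont-KN-indexes} is exactly $\tfrac1p+\tfrac1{q_n}=\tfrac1{q_{n+1}}$, and the hypothesis $\si\in\bigcap_{\ga>0}M^{p,\ga}(\cG\times\hcG)$ gives in particular $\si\in M^{p,\min\{1,q_{n+1}\}}(\cG\times\hcG)$. Hence $\Opz(\si)$ extends to a bounded operator $M^{q_n}(\cG)\to M^{q_{n+1}}(\cG)$. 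This extension is consistent with the operator $\Opz(\si)\colon S_0(\cG)\to S'_0(\cG)$ underlying the eigenvalue equation: every realization of $\Opz(\si)$ furnished by Theorem~\ref{Th-cont-KN-3-indici} restricts to the weakly defined operator \eqref{Eq-KonNirenberg-weak} on the dense subspace $\cS_\cC(\cG)$ (Theorem~\ref{Th-S_C-dense-Banach}), so all these operators agree on overlapping domains. Consequently $f=\lambda^{-1}\Opz(\si)f\in M^{q_{n+1}}(\cG)$. Since $f\in M^{q_0}(\cG)$, induction gives $f\in M^{q_n}(\cG)$ for all $n\ge 0$.

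To conclude, fix an arbitrary $\ga>0$. As $q_n\to 0^+$, there is $n$ with $q_n\le\ga$, and then Proposition~\ref{Pro-Incl-Mpq} (equal first indices, trivial weights) yields the continuous inclusion $M^{q_n}(\cG)\hookrightarrow M^{\ga}(\cG)$, so $f\in M^{\ga}(\cG)$. Since $\ga>0$ was arbitrary, $f\in\bigcap_{\ga>0}M^{\ga}(\cG)$, as claimed.

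The only delicate point is the consistency remark in the inductive step: one must check that the object ``$\Opz(\si)f$'' appearing in the $L^2$‑eigenvalue relation coincides with the image of $f$ under the continuous extension $M^{q_n}(\cG)\to M^{q_{n+1}}(\cG)$ of Theorem~\ref{Th-cont-KN-3-indici}. This is routine once one notes that all the extensions in play extend the single weakly defined operator \eqref{Eq-KonNirenberg-weak} and that $\cS_\cC(\cG)$ is dense in each $M^{q}(\cG)$ with $q<\infty$; the remainder of the proof is the elementary observation that the recursion $\tfrac1{q_n}=\tfrac12+\tfrac np$ forces $q_n\to 0$.
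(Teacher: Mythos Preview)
Your proposal is correct and follows essentially the same approach as the paper, which simply refers to the bootstrap pattern of \cite[Proposition 3.5]{BasCorNic20}: iterate the eigenvalue identity $f=\lambda^{-1}\Opz(\si)f$ through Theorem~\ref{Th-cont-KN-3-indici} along the sequence $1/q_n=1/2+n/p$ and conclude via the inclusions of Proposition~\ref{Pro-Incl-Mpq}. Your consistency remark on the extensions and the identification $L^2(\cG)=M^2(\cG)$ are the only points the paper leaves implicit in its one-line proof.
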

\begin{proof}
	We use Theorem \ref{Th-cont-KN-3-indici} and follow the proof pattern of \cite[Proposition 3.5]{BasCorNic20}.
\end{proof}

\section{Localization Operators on Groups}
The  aim of this section is to infer a result for $L^2$ eigenfunctions of localization operators which extends the one obtained in the Euclidean setting in \cite[Theorem 3.7]{BasCorNic20}. \par
 We address the reader to  Wong's book \cite{WongLocalization} for a detailed treatment of localization operators on locally compact Hausdorff groups and point out the recent works \cite{Luef1,Luef2}.  Let us recall their definition.
\begin{definition}
	Consider windows $\p_1,\p_2\in S_0(\cG)=\bG_1$ and symbol $a\in S'_0(\cG\times\hcG)$. Then the {\slshape localization operator with symbol $a$ and windows $\p_1,\p_2$} in $S_0(\cG)$ is  formally defined as
	\begin{equation}
		\A f(x)=\int_{\cG\times\hcG}a(u,\o)V_{\p_1}f(u,\o)M_\o T_u\p_2(x)\,dud\o.
	\end{equation}
	Equivalently, its  weak definition is
	\begin{equation}
		\la \A f,g\ra=\la a, \overline{V_{\p_1}f}V_{\p_2}g\ra\qquad\forall\,g\in S_0(\cG).
	\end{equation}
\end{definition} 
It is straightforward computation to check that
\begin{equation}
\A\colon S_0(\cG)\to S'_0(\cG)
\end{equation}
is well defined, linear and continuous  (cf. \cite[Theorem 5.3]{Jakobsen2018}).
Concretely, we shall mainly consider windows $\p_1,\p_2\in \cS_\cC(\cG)$ rather than in the whole Feichtinger algebra. Notice that if $a\in L^p(\cG\times\hcG)$, for any $1\leq p\leq\infty$,  then $\A\in B(L^2(\cG))$, cf.  \cite[Proposition 12.1, 12.2, 12.3]{WongLocalization}.\par
Given a function $F$ on $\cG\times\cG$, we introduce the operator $\Tb$:
\begin{equation}\label{EqDefTb}
	\Tb F(x,u)= F(x,u-x).
\end{equation}
Recall that $\cF_2$ stands for  the partial Fourier transform with respect to the second variable of measurable functions $\si$ defined on $\cG\times\hcG$. We shall consider $\cF_2 \si$ to be defined on $\cG\times\cG$, instead of $\cG\times\widehat{\hcG}$, due to the Pontryagin's duality. $\Tb$ and $\cF_2$ are automorphisms of $S_0(\cG\times \cG)$ and $S_0(\cG\times\hcG)$, respectively, which extend to automorphisms of $S'_0(\cG\times\cG)$ and $S'_0(\cG\times\hcG)$ by transposition. 

\begin{lemma}\label{LemKernelKN}
	Consider $\si\in S'_0(\cG\times\hcG)$ and  $f,g\in S_0(\cG)$. Then
	\begin{equation}
		\< \Opz(\si) f,g\>_{L^2(\cG)}=\< k_\si,g\otimes\overline{f}\>_{L^2(\cG\times\cG)}
	\end{equation} 
	where the kernel $k_\sigma$ is given by
	\begin{equation}
		k_\si(x,u)= \int_{\hcG}\si(x,\xi)\overline{\<u-x,\xi\>}\,d\xi=\Tb(\cF_2 \si(x,u)).
	\end{equation}
\end{lemma}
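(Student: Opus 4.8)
\textbf{Proof plan for Lemma \ref{LemKernelKN}.}

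The plan is to compute $\langle \Opz(\si)f,g\rangle$ directly from the defining formula \eqref{EqDefKN}, interchange the order of integration, and recognize the resulting kernel. First I would write, for $f,g\in S_0(\cG)$,
\begin{equation*}
\langle \Opz(\si)f,g\rangle_{L^2(\cG)}=\int_\cG\Bigl(\int_{\hcG}\si(x,\xi)\hat f(\xi)\langle\xi,x\rangle\,d\xi\Bigr)\overline{g(x)}\,dx,
\end{equation*}
and then expand $\hat f(\xi)=\int_\cG f(u)\overline{\langle\xi,u\rangle}\,du$ via \eqref{Eq-Def-Fourier-Transform}. Inserting this and using Fubini's theorem (justified since $\si\in S'_0$ and $f,g\in S_0$, so after testing against the rapidly decreasing $g\otimes\bar f$ the double integral is absolutely convergent; more carefully one first does the computation for $\si\in S_0(\cG\times\hcG)$ where everything is integrable, then extends by density and the $w$-$*$-continuity of both sides, using Corollary \ref{Cor-Sc-w*-dense-S'0} and the fact that $\Tb,\cF_2$ are automorphisms of $S'_0$), one obtains
\begin{equation*}
\langle \Opz(\si)f,g\rangle=\int_\cG\int_\cG\Bigl(\int_{\hcG}\si(x,\xi)\langle\xi,x\rangle\overline{\langle\xi,u\rangle}\,d\xi\Bigr)f(u)\overline{g(x)}\,du\,dx.
\end{equation*}
Writing $\langle\xi,x\rangle\overline{\langle\xi,u\rangle}=\overline{\langle\xi,u-x\rangle}=\overline{\langle u-x,\xi\rangle}$ (using that the pairing is bilinear and $\overline{\langle\xi,x\rangle}=\langle\xi,-x\rangle$), the inner integral is exactly $k_\si(x,u)=\int_{\hcG}\si(x,\xi)\overline{\langle u-x,\xi\rangle}\,d\xi$.

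Next I would identify this with $\Tb(\cF_2\si)$. By definition of the partial Fourier transform in the second slot, $\cF_2\si(x,v)=\int_{\hcG}\si(x,\xi)\overline{\langle v,\xi\rangle}\,d\xi$ (again via Pontryagin duality, viewing the output on $\cG\times\cG$), so $k_\si(x,u)=\cF_2\si(x,u-x)=\Tb(\cF_2\si)(x,u)$ by \eqref{EqDefTb}. Finally, recognizing that $\int_\cG\int_\cG k_\si(x,u)f(u)\overline{g(x)}\,du\,dx=\langle k_\si,g\otimes\bar f\rangle_{L^2(\cG\times\cG)}$ gives the claim.

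The main obstacle is the rigorous justification of the Fubini interchange and of the identity $k_\si=\Tb\cF_2\si$ at the level of $S'_0$-distributions rather than honest $L^1$ functions: for $\si\in S'_0(\cG\times\hcG)$ the ``integrals'' defining $\Opz(\si)f$ and $k_\si$ are only formal. The clean way around this is to first establish everything for $\si\in S_0(\cG\times\hcG)$ (or even $\si\in\cS_\cC(\cG\times\hcG)$), where all integrals converge absolutely and Fubini applies verbatim, and then to extend to general $\si\in S'_0$ by density (Theorem \ref{Th-S_C-dense-Banach}/Corollary \ref{Cor-Sc-w*-dense-S'0}) together with the continuity of $\Opz(\cdot)$ in the weak sense \eqref{Eq-KonNirenberg-weak} and the fact, recalled just before the lemma, that $\Tb$ and $\cF_2$ are $w$-$*$-continuous automorphisms of $S'_0$. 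Both sides of the asserted identity are then $w$-$*$-continuous in $\si$ and agree on a dense subspace, hence agree everywhere.
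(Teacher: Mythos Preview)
Your proposal is correct and is precisely the standard argument the paper has in mind: the paper's own proof is just the one-line remark that ``the proof carries over from the Euclidean case almost verbatim'' with a reference to \cite[formula (4.3)]{CordRod2020}. Your write-up spells out exactly that computation---expand $\hat f$, swap integrals, identify $k_\si=\Tb\cF_2\si$, and extend from $\si\in S_0$ to $\si\in S'_0$ by density and $w$-$\ast$-continuity---so there is nothing to add.
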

\begin{proof}
	The proof carries over from the Euclidean case almost verbatim, see e.g. \cite[formula (4.3)]{CordRod2020}.
\end{proof}

The following issue presents the connection between localization and Kohn-Nirenberg operators on LCA groups, extending the Euclidean case  proved in \cite[Proposition 2.16]{BasTeo21}.

\begin{proposition}\label{Pro-Loc-KN-form}
Consider windows  $\p_1,\p_2\in S_0(\cG)$ and a symbol $a\in S'_0(\cG\times\hcG)$. Then we have
	\begin{equation}
		\A=\Opz(a\ast R(\p_2,\p_1)).
	\end{equation}
\end{proposition}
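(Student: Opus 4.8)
The plan is to reduce the identity $\A=\Opz(a\ast R(\p_2,\p_1))$ to a computation on kernels, exploiting Lemma \ref{LemKernelKN} which says that a Kohn-Nirenberg operator $\Opz(\si)$ has integral kernel $k_\si=\Tb(\cF_2\si)$, and the analogous (well-known and easily checked) fact that the localization operator $\A$ has integral kernel
\begin{equation*}
	k_{\A}(x,u)=\int_{\cG\times\hcG}a(v,\o)\,\overline{\p_1(u-v)}\,\overline{\la\o,u\ra}\,\p_2(x-v)\la\o,x\ra\,dvd\o,
\end{equation*}
obtained by writing out $\<\A f,g\>=\<a,\overline{V_{\p_1}f}V_{\p_2}g\>$ and using Fubini (justified since $a\in S'_0$, $\p_1,\p_2\in S_0$, so all the STFTs live in $S_0$ and the pairing is a genuine integral after a density argument via Corollary \ref{Cor-Sc-w*-dense-S'0}). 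Thus it suffices to prove the kernel identity $k_{\A}=k_{a\ast R(\p_2,\p_1)}$, i.e. $k_{\A}=\Tb\big(\cF_2(a\ast R(\p_2,\p_1))\big)$.

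First I would compute the right-hand side. By \eqref{Rdef}, $R(\p_2,\p_1)(v,\o)=\p_2(v)\overline{\hat{\p_1}(\o)}\,\overline{\la\o,v\ra}$; the convolution $a\ast R(\p_2,\p_1)$ is the convolution on the group $\cG\times\hcG$, so
\begin{equation*}
	\big(a\ast R(\p_2,\p_1)\big)(x,\xi)=\int_{\cG\times\hcG}a(v,\o)\,\p_2(x-v)\,\overline{\hat{\p_1}(\xi-\o)}\,\overline{\la\xi-\o,x-v\ra}\,dvd\o.
\end{equation*}
Then apply $\cF_2$ in the $\xi$-variable and precompose with $\Tb$ (replace $u$ by $u-x$ after the Fourier transform, per \eqref{EqDefTb}). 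The key subcomputation is that $\cF_2$ turns $\overline{\hat{\p_1}(\xi-\o)}\,\overline{\la\xi-\o,x-v\ra}$ — as a function of $\xi$ — into a translate/modulate of $\p_1$: using $\overline{\hat{\p_1}(\eta)}=\widehat{\p_1^{\flat}}$-type identities (Fourier inversion on $\cG$, with $\p_1^\flat(y)=\overline{\p_1(-y)}$ or the appropriate reflection) one gets, after the $\Tb$-substitution, a factor $\overline{\p_1(u-v)}$ together with the character factors $\overline{\la\o,u\ra}\la\o,x\ra$ coming from the $\o$-dependence. Carrying the $v,\o$ integrals through (they were untouched by $\cF_2$) reproduces exactly $k_{\A}(x,u)$ displayed above.

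The main obstacle I expect is bookkeeping of the characters and the reflection conventions: making sure that the partial Fourier transform $\cF_2$ (defined on $\cG\times\hcG$ but viewed, via Pontryagin duality, as landing in functions on $\cG\times\cG$, as the paper stresses before Lemma \ref{LemKernelKN}) is applied with the right sign, and that the $\overline{\la\xi-\o,\cdot\ra}$ chirp in $R(\p_2,\p_1)$ combines correctly with the chirp implicit in $\Tb\cF_2$. A clean way to organize this is to first establish the identity for $a\in S_0(\cG\times\hcG)$ and $\p_1,\p_2\in\cS_\cC(\cG)$, where every manipulation (Fubini, $\cF_2$ as a topological automorphism of $S_0$, Fourier inversion) is unproblematic, and then extend to $a\in S'_0$ and $\p_1,\p_2\in S_0(\cG)$ by the w-$*$-density of $\cS_\cC(\cG)$ (Corollary \ref{Cor-Sc-w*-dense-S'0}) and the continuity of $\Tb$, $\cF_2$, and of the bilinear maps $(a,\p_i)\mapsto \A$ and $(a,\p_i)\mapsto a\ast R(\p_2,\p_1)$ on the relevant spaces — both sides being weakly continuous in $a\in S'_0$. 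Alternatively, and perhaps more transparently, one can bypass kernels entirely and argue at the level of the weak definitions: show $\<\A f,g\>=\<a,\overline{V_{\p_1}f}V_{\p_2}g\>$ equals $\<a\ast R(\p_2,\p_1),R(g,f)\>$ for $f,g\in\cS_\cC(\cG)$, which amounts to the adjoint identity $\overline{V_{\p_1}f}\,V_{\p_2}g=R(\p_2,\p_1)^{\,\widetilde{}}\ast R(g,f)$ (a covariance/Moyal-type identity for the Rihaczek distribution), again reducing to a character computation and then density; I would pick whichever of the two routes produces the shorter chirp algebra.
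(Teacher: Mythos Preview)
Your proposal is correct and follows essentially the same route as the paper: compute the integral kernel of $\A$, compare it with the kernel of $\Opz(\si)$ via Lemma~\ref{LemKernelKN}, and conclude by the kernel theorem. The only minor differences are (i) the paper works in the inverse direction, computing $\si=\cF_2^{-1}\circ\Tb^{-1}(k_{\A})$ rather than $k_{a\ast R(\p_2,\p_1)}=\Tb\cF_2(a\ast R(\p_2,\p_1))$, and (ii) the paper short-circuits the character bookkeeping you anticipate by recognizing the integrand of $\cF_2^{-1}\Tb^{-1}(k_{\A})$ as $R(\pi(x,\xi)\p_2,\pi(x,\xi)\p_1)$ and then invoking the covariance formula~\eqref{Rtfs} to turn the $(x,\xi)$-integral directly into a convolution with $R(\p_2,\p_1)$---this is exactly the organizing identity that would spare you the chirp algebra you flagged as the main obstacle.
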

\begin{proof} The proof is similar to the Eucliean case. We detail it for sake of clarity. We first compute the kernel  $k$ of  $\A$:
	\begin{align*}
		\<\A f,g\>&=\int_{\cG\times\hcG}a(x,\xi)\left(\int_{\cG}f(u)\overline{\pi(x,\xi)\p_1(u)}\,du\right)\left(\int_{\cG}\overline{g(y)}\pi(x,\xi)\p_2(y)\,dy\right)\,dxd\xi\\
		&=\int_{\cG\times\cG}f(u)\overline{g(y)}k(y,u)\,dydu,
	\end{align*}
	with
	\begin{equation*}
		k(y,u)=\int_{\cG\times\hcG}a(x,\xi)\overline{\pi(x,\xi)\p_1(u)}\pi(x,\xi)\p_2(y)\,dxd\xi.
	\end{equation*}
Using Lemma \ref{LemKernelKN}, we set $\Tb\circ\cF_2 (\si)=k$ and compute $\si$ using \eqref{Rtfs} as follows:
	\begin{align*}
		\cF^{-1}_2\circ\Tb^{-1}(k)&=\int_{\cG\times\hcG}a(x,\xi)\cF^{-1}_2\circ\Tb^{-1}\left(\pi(x,\xi)\p_2\otimes\overline{\pi(x,\xi)\p_1}(y,u)\right)\,dxd\xi\\
		&=\int_{\cG\times\hcG}a(x,\xi)\cF^{-1}_2\left(\pi(x,\xi)\p_2(y)\cdot\overline{\pi(x,\xi)\p_1(u+y)}\right)\,dxd\xi\\
		&=\int_{\cG\times\hcG}a(x,\xi)\pi(x,\xi)\p_2(y)\int_{\cG}\overline{\pi(x,\xi)\p_1(u+y)}\<\o,u\>\,du\,dxd\xi\\
		&=\int_{\cG\times\hcG}a(x,\xi)\pi(x,\xi)\p_2(y)\overline{\<\o,y\>}\overline{\cF(\pi(x,\xi)\p_1)(\o)}\,dxd\xi\\
		&=\int_{\cG\times\hcG}a(x,\xi)R(\pi(x,\xi)\p_2,\pi(x,\xi)\p_1)(y,\o)\,dxd\xi\\
		&=\int_{\cG\times\hcG}a(x,\xi)R(\p_2,\p_1)((y,\o)-(x,\xi))\,dxd\xi\\
		&=a\ast R(\p_2,\p_1)(y,\o).
	\end{align*}
	We then infer the thesis from the kernels' theorem \cite[Theorem B3]{Feichtinger-1980-kernel}.
\end{proof}

\begin{theorem}\label{Th-Loc-eigenfunction}
	Let $0<p<\infty$ and $a\in M^{p,\infty}(\cG\times\hcG)$. Consider $\p_1,\p_2\in\cS_\cC(\cG)\smallsetminus\{0\}$. Suppose that $\sigma_P(\A)\smallsetminus\{0\}\neq\varnothing$ and $\lambda\in\si_P(\A)\smallsetminus\{0\}$. Then any eigenfunction $f\in L^2(\cG)$ with eigenvalue $\lambda$ satisfies 
	\begin{equation}
	f\in\bigcap_{\ga>0}M^\ga(\cG).
	\end{equation}
\end{theorem}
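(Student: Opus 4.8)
The plan is to reduce the statement about eigenfunctions of the localization operator $\A$ to the corresponding statement for Kohn-Nirenberg operators, namely Proposition \ref{Pro-KN-eigenfunction}, via the identity $\A=\Opz(a\ast R(\p_2,\p_1))$ established in Proposition \ref{Pro-Loc-KN-form}. The main work is therefore to show that the Kohn-Nirenberg symbol $\si\coloneqq a\ast R(\p_2,\p_1)$ lies in $\bigcap_{\ga>0}M^{p,\ga}(\cG\times\hcG)$, so that Proposition \ref{Pro-KN-eigenfunction} applies with this $p$ and yields $f\in\bigcap_{\ga>0}M^\ga(\cG)$.

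First I would recall that, since $\p_1,\p_2\in\cS_\cC(\cG)$, the cross-Rihaczek distribution $R(\p_2,\p_1)$ is an element of $\sA_{\tilde v}(\cG\times\hcG)$ by Corollary \ref{Cor-Rfg-windows}, hence it belongs to $M^{q_1,q_2}_w(\cG\times\hcG)$ for every $0<q_1,q_2\le\infty$ and every admissible weight $w$ (by the Lemma stating $\sA_{\tilde v}\subseteq M^{p,q}_m$), and in particular $R(\p_2,\p_1)\in M^{1,t}_{v}(\cG\times\hcG)$ for every $0<t\le\infty$ and a suitable submultiplicative $v$. Then I would invoke the convolution relation Proposition \ref{Pro-convolution-Mpq} (applied on the LCA group $\cG\times\hcG$ in place of $\cG$): with $a\in M^{p,\infty}(\cG\times\hcG)$ and $R(\p_2,\p_1)$ in the appropriate weighted $M^{q,t}$ space, one gets $a\ast R(\p_2,\p_1)\in M^{p,\ga}(\cG\times\hcG)$ for every $0<\ga\le\infty$. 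Concretely, in the notation of Proposition \ref{Pro-convolution-Mpq} one takes the first factor in $M^{p,u}_{m_1\otimes\nu}$ with $u=\infty$, $m\equiv 1$, $\nu\equiv 1$, and the second factor in $M^{q,t}_{v_1\otimes v_2\nu^{-1}}$ with $q=1$ so that $r=p$, and $t=\ga$ so that $1/u+1/t=1/\ga$; since $R(\p_2,\p_1)\in\sA_{\tilde v}$ absorbs the weights $v_1\otimes v_2$, the hypotheses are met, giving $\norm{a\ast R(\p_2,\p_1)}_{M^{p,\ga}}\lesssim\norm{a}_{M^{p,\infty}}\norm{R(\p_2,\p_1)}_{M^{1,\ga}_{v_1\otimes v_2}}<\infty$ for each $\ga>0$.

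Having shown $\si=a\ast R(\p_2,\p_1)\in\bigcap_{\ga>0}M^{p,\ga}(\cG\times\hcG)$, I would then apply Proposition \ref{Pro-KN-eigenfunction}: if $\lambda\in\si_P(\A)\smallsetminus\{0\}=\si_P(\Opz(\si))\smallsetminus\{0\}$ and $f\in L^2(\cG)$ is a corresponding eigenfunction, then $f\in\bigcap_{\ga>0}M^\ga(\cG)$, which is exactly the claim. One should also check the minor compatibility points: that $a\in M^{p,\infty}(\cG\times\hcG)\subseteq S_0'(\cG\times\hcG)$ so that $\A$ and $\Opz(\si)$ are well defined on $S_0(\cG)\to S_0'(\cG)$ and agree (Proposition \ref{Pro-Loc-KN-form}), and that an $L^2$ eigenfunction is in particular in $S_0'(\cG)\subseteq\cR_{\tilde v}$ so the modulation-space membership conclusion is meaningful.

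The main obstacle I anticipate is not conceptual but bookkeeping: one must verify that the weights appearing in Proposition \ref{Pro-convolution-Mpq} can be chosen so that the hypothesis $m_1\otimes\nu,\ v_1\otimes v_2\nu^{-1}\in\cM_v(\cG\times\hcG)$ holds while the second factor $R(\p_2,\p_1)$ still lies in the required weighted space — this is where the fact that $R(\p_2,\p_1)\in\sA_{\tilde v}$ (membership in \emph{every} weighted $M^{q,t}_w$) is essential, and one should state explicitly which $v$ on $(\cG\times\hcG)\times(\hcG\times\cG)$ is used, noting as in Remark \ref{Rem-scelta-K} that the Gaussian coefficient and subgroup $\cK$ may be chosen freely. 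Everything else — the reduction via Proposition \ref{Pro-Loc-KN-form} and the final appeal to Proposition \ref{Pro-KN-eigenfunction} — is immediate. I would write the proof in three short lines: quote Proposition \ref{Pro-Loc-KN-form}, quote Corollary \ref{Cor-Rfg-windows} together with Proposition \ref{Pro-convolution-Mpq} to get $\si\in\bigcap_{\ga>0}M^{p,\ga}$, then quote Proposition \ref{Pro-KN-eigenfunction}.
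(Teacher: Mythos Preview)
Your proposal is correct and follows exactly the same route as the paper's proof: write $\A=\Opz(a\ast R(\p_2,\p_1))$ via Proposition~\ref{Pro-Loc-KN-form}, use Corollary~\ref{Cor-Rfg-windows} to place $R(\p_2,\p_1)\in\sA_{\tilde v}(\cG\times\hcG)$ (hence in every modulation space), apply the convolution relations of Proposition~\ref{Pro-convolution-Mpq} to obtain $\si\in\bigcap_{\ga>0}M^{p,\ga}(\cG\times\hcG)$, and conclude with Proposition~\ref{Pro-KN-eigenfunction}. Your discussion of the index choices in Proposition~\ref{Pro-convolution-Mpq} and the weight bookkeeping is more explicit than the paper's, which simply refers to \cite[Theorem~3.7]{BasCorNic20} for these details.
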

\begin{proof}
	Observe that for $\p_1,\p_2\in\cS_\cC(\cG)$ we have $R(\p_2,\p_1)\in\sA_{\tilde{v}}(\cG\times\hcG)$, by Corollary \ref{Cor-Rfg-windows}. Therefore $R(\p_2,\p_1)$ belongs to every modulation space on the phase space; this is easily seen by using   \eqref{Eq-maximal-window-space-ALL-modulation-spaces}, the inclusion relations \eqref{Eq-Inclusions-Wiener-Voig-p113} and the inclusion between modulation spaces in Proposition \ref{Pro-Incl-Mpq}. Then the argument is the same as in \cite[Theorem 3.7]{BasCorNic20}: we write $\A$ in the Kohn-Nirenberg form (Proposition \ref{Pro-Loc-KN-form})
	\begin{equation}
	\A=\Opz(a\ast R(\p_2,\p_1)),
	\end{equation}
 use the convolution relations in Proposition \ref{Pro-convolution-Mpq} and infer the thesis applying Proposition \ref{Pro-KN-eigenfunction}.
\end{proof}


\appendix \label{appendix}
\section{}
We summarize the construction of coorbit spaces $\Co(Y)$, when $Y$ is a solid quasi-Banach function space on a locally compact Haudorff group $G$, even not abelian. This theory was first developed by Rauhut in \cite{Rauhut2007Coorbit} and technically fixed and deepened by Voigtlaender in his Ph.D. thesis \cite{Voig2015}. In the end we shall highlight the differences with the original theory for Banach spaces by Feichtinger and Gr\"ochenig, see \cite{FeiGro1988, FeiGro1989-I, FeiGro1989-II}.\\
 We mention that an exposition and treatment of the mentioned coorbit theory is now available also in the recent article \cite{VelthovenVoigtlaender2022} from van Velthoven and Voigtlaender, where the requirements of the weights are lightened up. However, due to the time when this work was written, we shall stick to the first version presented in \cite{Voig2015}. Moreover, on account of the objects of our particular setting, this makes no difference.
\par
In this section we deal with a locally compact, Hausdorff, $\sigma$-compact group $G$; the notation is intended to distinguish from the abelian case $\cG$. The group operation on $G$ will be expressed as multiplication; whenever a measure on $G$ is involved, it is understood to be the left Haar measure. We shall not list systematically the known properties for the spaces introduced in the sequel, but rather recall them when necessary. The reader is invited to consult \cite[Chapter 2]{Voig2015} for an exhaustive treatment. 
\par
Given $x\in G$ and a function $f$ on $G$, we define the {\slshape left} and {\slshape right translation operators} as 
\begin{equation}
L_x f(y)= f(x^{-1}y),\qquad R_x f(y)= f(yx).
\end{equation}
\begin{definition}\label{Def-QBF-space-Y}
	We say that $\left(Y,\norm{\cdot}_Y\right)$ is a {\slshape function space on $G$} if it is a quasi-normed space consisting of equivalence classes of measurable complex-valued functions on $G$, where two functions are identified if they coincide a.e.. \par
	A function space $\left(Y,\norm{\cdot}_Y\right)$ on $G$ is said to be {\slshape left invariant} if $L_x\colon Y\to Y$ is well defined and bounded for every $x\in G$, similarly we define the {\slshape right invariance}. We say that $Y$ is {\slshape bi-invariant} if it is both left and right invariant.\par
	A function space $(Y,\norm{\cdot}_Y)$ on $G$ is said {\slshape solid} if given $g\in Y$ and $f\colon G\to\bC$ measurable the following holds true:
	\begin{equation*}
	\abs{f}\leq\abs{g}\quad\text{a.e.}\qquad\Rightarrow\qquad f\in Y,\quad\norm{f}_Y\leq\norm{g}_Y;
	\end{equation*}
	$Y$ is called {\slshape quasi-Banach function (QBF) space on $G$} if it is complete.
\end{definition}
Without loss of generality, we can assume $\norm{\cdot}_Y$ to be a $r$-norm, $0<r\leq1$, i.e.
\begin{equation*}
\norm{f+g}^r_Y\leq \norm{f}^r_Y + \norm{g}^r_Y,\quad\forall f,g\in Y.
\end{equation*}
This is due to the Aoki-Rolewicz Theorem and the fact that equivalent quasi-norm induce the same topology (\cite[Theorem 2.1.4, Lemma 2.1.5]{Voig2015}).\par
It can be useful to describe Wiener Amalgam spaces, defined below, in terms of sequences. To this end, the so-called BUPUs and a particular space of sequences $Y_d$ associated to $Y$ are introduced. We present the space $Y_d$ under specific hypothesis fitting our framework, nevertheless a more general theory is possible, see \cite{Rauhut2007Winer} and \cite[Chapter 2]{Voig2015}.
\begin{definition}
	A family $X=\{x_i\}_{i\in I}$ in $G$ is called {\slshape relatively separated} if for all compact sets $K\subseteq G$ we have
	\begin{equation}\label{Eq-Constant-Rel-Sep-Fam}
	C_{X,K}\coloneqq\sup_{i\in I}\#\{j\in I\,|\,x_i K\cap x_j K\neq\varnothing\}<+\infty,
	\end{equation}
	where $\# S$ is the cardinality of a set $S$. Consider $X=\{x_i\}_{i\in I}$ relatively separated family in $G$, $Q\subseteq G$ measurable, relatively compact set of positive measure and $(Y,\norm{\cdot}_Y)$ solid QBF space on $G$. Then the {\slshape discrete sequence space associated to $Y$} is the set
	\begin{equation}
	Y_d(X,Q)=\left\{\left(\lambda_i\right)_{i\in I}\in \bC^I\,|\,\sum_{i\in I}\abs{\lambda_i}\chi_{x_i Q}\in Y\right\}
	\end{equation}
	endowed with the quasi-norm
	\begin{equation}
	\norm{\left(\lambda_i\right)_{i\in I}}_{Y_d(X,Q)}=\left\|\sum_{i\in I}\abs{\lambda_i}\chi_{x_i Q}\right\|_Y;
	\end{equation}
	$\bC^I$ is the space of functions from $I$ into $\bC$.
\end{definition}
If $G$ is $\sigma$-compact, then any relatively separated family $X$ is (at most) countable, see \cite[Lemma 2.3.10]{Voig2015}. In the setting presented so far, $Y_d(X,Q)$ is a quasi-Banach space. Moreover, if $Y$ is right invariant then $Y_d(X,Q)$ is independent of $Q$ in the sense that another $U\subseteq G$ measurable, relatively compact and with non empty interior yields the same space with an equivalent quasi-norm (cf. \cite[Lemma 2.2]{Rauhut2007Winer} and \cite[Lemma 2.3.16]{Voig2015}).
\begin{definition}\label{Def-BUPU}
	Let $U\subseteq G$ be a relatively compact, unit neighbourhood. A family $\Psi=\{\psi_i\}_{i\in I}$ of continuous functions on $G$ is called a {\slshape bounded uniform partition of unity of size $U$} ($U$-BUPU) if
	\begin{enumerate}
		\item[(i)] $0\leq \psi_i(x)\leq 1$ for all $x\in G$ and every $i\in I$;
		\item[(ii)] there exists $X=\{x_i\}_{i\in I}$ {\slshape $U$-localizing family for $\Psi$}, i.e., $X$ is a relatively separated family in $G$ such that
		\begin{equation*}
		\supp\psi_i\subseteq x_i U\qquad\forall i\in I;
		\end{equation*}
		\item[(iii)] $\sum_{i\in I}\psi_i\equiv 1$.
	\end{enumerate}
\end{definition}
Given any relatively compact unit neighbourhood $U$ in $G$, there  always exists a family $\Psi$ which is a  $U$-BUPU with some $U$-localizing family $X$ (\cite[Theorem 2]{Feichtinger_1981_Characterization}, \cite[Lemma 2.3.212]{Voig2015}) and, since $G$ is $\si$-compact, the indexes' set is (at most) countable. \par
We introduce the Wiener Amalgam spaces not in their full generality, but restrict ourselves to cases which ensure ``good" properties.
\begin{definition}\label{definizione-Wiener}
	Consider $Q\subseteq G$ measurable, relatively compact, unit neighbourhood and $f\colon G\to\bC$ measurable. We call {\slshape maximal function of $f$ with respect to $Q$} the following application
	\begin{equation}
	\sfM_Q f\colon G\to[0,+\infty], \quad x\mapsto \underset{y\in xQ}{\essupp}\abs{f(y)}.
	\end{equation}
	We fix a solid QBF space $(Y,\norm{\cdot}_Y)$ on $G$ and define the {\slshape Wiener Amalgam space with window $Q$, local component $L^\infty=L^\infty(G)$ and global component $Y$} as
	\begin{equation}
	W_Q(Y):= W_Q(L^\infty,Y)=\left\{f\in L^\infty_{loc}(G)\,|\,\sfM_Q f\in Y\right\}
	\end{equation}
	and endow it with
	\begin{equation}
	\norm{f}_{W_Q(Y)}\coloneqq\norm{f}_{W_Q(L^\infty,Y)}=\norm{\sfM_Q f}_Y.
	\end{equation}
\end{definition}
It was proven in \cite[Lemma 2.3.4]{Voig2015} that the maximal function $\sfM_Q f$ is measurable. Under the assumptions of the above definition, the Wiener Amalgam space\\ $(W_Q(Y),\norm{\cdot}_{W_Q(Y)})$ is a solid QBF space on $G$, in particular, $\norm{\cdot}_{W_Q(Y)}$ is a $r$-norm, $0<r\leq1$, if $\norm{\cdot}_Y$ is. For each $f\in L^\infty_{loc}(G)$ we have
\begin{equation}
\abs{f(x)}\leq \sfM_Q f(x)\quad\text{a.e.},
\end{equation}
which together with the solidity of $Y$ gives the continuous embedding
\begin{equation}
W_Q(L^\infty,Y)\hookrightarrow Y.
\end{equation}
In general the definition of $W_Q(Y)$ may depend on the chosen subset $Q$. However, we shall require some further properties in order to make the Wiener space independent of it. We collect some of the results of \cite[Lemma 2.3.16, Theorem 2.3.17]{Voig2015} in the following lemma (which holds under milder assumptions).
\begin{lemma}\label{Lem-WienerSpaceIndependence}
	Under the hypothesis presented so far, if the solid QBF space $Y$ on $G$ is right invariant, then the following equivalent facts hold true:
	\begin{itemize}
		\item[(i)] The Wiener Amalgam space $W_Q(L^\infty,Y)$ is right invariant for each measurable, relatively compact, unit neighbourhood $Q\subseteq G$;
		\item[(ii)] The Wiener Amalgam space $W_Q(L^\infty,Y)$ is independent of the choice of the measurable, relatively compact, unit neighbourhood $Q\subseteq G$, in the sense that different choices yield the same set with equivalent quasi-norms. The equivalence constants depend only on the two sets $Q,Q'\subseteq G$ and on $Y$.
	\end{itemize}
	If these conditions are fulfilled, $\Psi=\{\psi_i\}_{i\in I}$ is a $U$-BUPU for some localizing family $X=\{x_i\}_{i\in I}$ and $U\subseteq G$ relatively compact unit neighbourhood, then
	\begin{equation}\label{Eq-discrete-equiv-norm-Wiener}
	\norm{f}_{W_Q(L^\infty,Y)}\underset{X,Q,Y}{\asymp}\norm{\left(\norm{\psi_i \cdot f}_{L^\infty}\right)_{i\in I}}_{Y_d(X,Q)}
	\end{equation}
	for every $f\in W_Q(L^\infty,Y)$ and the constants involved in the above equivalence depend only on $X$, $Q$ and $Y$. 
\end{lemma}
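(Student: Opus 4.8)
The statement packages \cite[Lemma 2.3.16, Theorem 2.3.17]{Voig2015}, so a complete proof would reproduce those arguments; here is the route I would take. Everything rests on one elementary \emph{covering estimate} for maximal functions. Given two measurable, relatively compact unit neighbourhoods $Q,Q'\subseteq G$, by relative compactness of $Q'$ and $e\in\mathrm{int}(Q)$ I may choose finitely many $z_1,\dots,z_N\in G$ with $Q'\subseteq\bigcup_{j=1}^N z_jQ$; then for every $f\in L^\infty_{loc}(G)$ and a.e.\ $x$,
\[
\sfM_{Q'}f(x)=\essupp_{y\in xQ'}\abs{f(y)}\le\max_{1\le j\le N}\essupp_{y\in xz_jQ}\abs{f(y)}=\max_{1\le j\le N}R_{z_j}\!\left[\sfM_Q f\right](x)\le\sum_{j=1}^N R_{z_j}\!\left[\sfM_Q f\right](x).
\]
Alongside this I would record the identities $\sfM_Q(R_zf)=\sfM_{Qz}f$ and $\sfM_Q\circ\sfM_Q=\sfM_{QQ}$, together with the a.e.\ bound $\abs{f}\le\sfM_Qf$.

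From the covering estimate, solidity of $Y$, the $r$-triangle inequality for $\norm{\cdot}_Y$ and boundedness of each $R_{z_j}$ on $Y$ (right invariance), one gets $\norm{\sfM_{Q'}f}_Y\le\bigl(\sum_j\opnorm{R_{z_j}}_{Y\to Y}^{\,r}\bigr)^{1/r}\norm{\sfM_Q f}_Y$; exchanging the roles of $Q$ and $Q'$ yields $\norm{f}_{W_Q(Y)}\asymp\norm{f}_{W_{Q'}(Y)}$, i.e.\ (ii), with constants depending only on $Q,Q',Y$. For (i) I would use $\norm{R_zf}_{W_Q(Y)}=\norm{\sfM_{Qz}f}_Y=\norm{f}_{W_{Qz}(Y)}$; enlarging the relatively compact set $Qz$ to a relatively compact unit neighbourhood $Q''\supseteq Qz$ and invoking solidity together with (ii) gives $\norm{f}_{W_{Qz}(Y)}\le\norm{f}_{W_{Q''}(Y)}\asymp\norm{f}_{W_Q(Y)}$, so $R_z$ is bounded on $W_Q(Y)$. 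Conversely (i), via the same covering estimate now run at the level of the solid, right-invariant space $W_Q(Y)$ itself together with $\sfM_Q\circ\sfM_Q=\sfM_{QQ}$, gives back (ii); hence the two conditions are equivalent, and both hold under the standing hypothesis.

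For the discrete characterisation \eqref{Eq-discrete-equiv-norm-Wiener}, with $\Psi=\{\psi_i\}_{i\in I}$ a $U$-BUPU with $U$-localizing family $X=\{x_i\}$, I would prove two pointwise inequalities. Using $\sum_i\psi_i\equiv1$, $0\le\psi_i\le1$ and $\supp\psi_i\subseteq x_iU$ one has $\abs{f(y)}\le\sum_{i:\,y\in x_iU}\norm{\psi_if}_{L^\infty}$ a.e., whence $\sfM_Q f\le\sum_i\norm{\psi_if}_{L^\infty}\,\chi_{x_i(UQ^{-1})}$; solidity plus the window-independence of $Y_d$ for right-invariant $Y$ (\cite[Lemma 2.3.16]{Voig2015}) then give $\norm{f}_{W_Q(Y)}\lesssim\norm{(\norm{\psi_if}_{L^\infty})_i}_{Y_d(X,Q)}$. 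Conversely, for $x\in x_iQ$ one has $\norm{\psi_if}_{L^\infty}\le\sfM_Uf(x_i)\le\sfM_{Q^{-1}U}f(x)$, and since $\sum_i\chi_{x_iQ}(x)\le C_{X,\overline Q}$ by relative separation, $\sum_i\norm{\psi_if}_{L^\infty}\chi_{x_iQ}\le C_{X,\overline Q}\,\sfM_{Q^{-1}U}f$; applying $\norm{\cdot}_Y$ and (ii) yields the reverse estimate. The main difficulty throughout is bookkeeping: right translations do not interact with the left Haar measure, so one must check that every auxiliary set ($z_jQ$, $Qz$, $UQ^{-1}$, $Q^{-1}U$, $QQ$) stays relatively compact (and, where needed, a unit neighbourhood) and that all overlap counts are controlled by the relative-separation constants $C_{X,\overline K}$; the genuinely delicate implication is $(\mathrm{i})\Rightarrow(\mathrm{ii})$, where the covering estimate must be applied to $W_Q(Y)$ rather than to $Y$.
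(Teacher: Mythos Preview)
Your sketch is correct and faithful to the arguments in \cite[Lemma 2.3.16, Theorem 2.3.17]{Voig2015}, which is precisely what the paper defers to: the paper does not give its own proof of this lemma but merely collects the cited results from Voigtlaender's thesis. Your covering estimate for maximal functions, the derivation of (ii) via right invariance and the $r$-triangle inequality, the passage from (ii) to (i) through $\sfM_Q(R_zf)=\sfM_{Qz}f$ and enlargement to a unit neighbourhood, and the two-sided pointwise bounds for the discrete characterisation all match the structure of those proofs. The only cosmetic point is that the overlap bound $\sum_i\chi_{x_iQ}(x)\le C_{X,\overline Q}$ is not literally the constant in \eqref{Eq-Constant-Rel-Sep-Fam} but follows from \cite[Lemma 2.3.10]{Voig2015}; this does not affect the argument.
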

We remark that the right invariance of $Y$ is sufficient for conditions (i) or (ii)  but not necessary; the existence of an $U$-BUPU $\Psi$ is always guaranteed. When one of the above conditions is satisfied, we suppress the index $Q$ in the Wiener space and simply write $W(L^\infty,Y)$ or $W(Y)$.\par
By considering $Qx$ instead of $xQ$ in the definition of the maximal function, we obtain the ``right-sided" version of the Wiener spaces.
So that we set the {\slshape right-sided maximal function } to be
\begin{equation}
\sfM^R_Q f\colon G\to[0,+\infty],\quad  x\mapsto \underset{y\in Qx}{\essupp}\abs{f(y)}
\end{equation}
and define the {\slshape right-sided Wiener Amalgam space} $W^R_Q(Y)$ similarly as before.
Analogous considerations   hold for $W^R_Q(Y)$, with the proper cautions about Lemma \ref{Lem-WienerSpaceIndependence}. In particular, the independence of $W^R_Q(Y)$ from $Q$ is guaranteed if $Y$ is left invariant, see \cite[Lemma 2.3.29]{Voig2015}.
\begin{definition}\label{Def-weights}
	A {\slshape weight on $G$} is a measurable function $m\colon G\to(0,+\infty)$. A weight $v$ is said to be {\slshape submultiplicative } if
	\begin{equation}
	v(xy)\leq v(x)v(y),\qquad\forall x,y\in G.
	\end{equation}
	Given two weights $m$ and $v$ on $G$, $m$ is said to be {\slshape left-moderate w.r.t. $v$} if
	\begin{equation}
	m(xy)\lesssim v(x)m(y),\qquad\forall x,y\in G,
	\end{equation}
	it is {\slshape right-moderate w.r.t. $v$} if 
	\begin{equation}
	m(xy)\lesssim m(x)v(y),\qquad\forall x,y\in G.
	\end{equation}
	If a weight $m$ is both left- and right-moderate w.r.t. $v$, we simply say that it is {\slshape moderate w.r.t. $v$} or {\slshape $v$-moderate.}\\
	Consider $v$ submultiplicative weight on $G$ which is also even, bounded from below and satisfies the Gelfand-Raikov-Shilov (GRS) condition, i.e. 
	\begin{equation}
	v(x)=v(-x)\quad\forall x\in G,\quad\exists c>0\,:\,v(x)\geq c\quad\forall x\in G,\quad\lim_{n\to+\infty}v(x^n)^{\frac1n}=1\quad\forall\,x\in G,
	\end{equation}  
	then the {\slshape class of weights on $G$ moderate w.r.t. $v$} is denoted as follows:
	\begin{equation}\label{Eq-Def-Class-Weights}
	\cM_v(G)=\left\{m\,\mathrm{weight\, on}\,G\,|\,m\,\mathrm{is}\,v\mathrm{-moderate}\right\}.
	\end{equation}
\end{definition}

\begin{remark}
	The GRS condition will be used in this paper only in the subsection dealing with Gabor frames, see Theorem \ref{Th-frame-L2}. In this framework, $v$ is a weight on the abelian group $\cG\times\hcG$, hence the GRS condition has the form
	\begin{equation*}
	\lim_{n\to+\infty}v(n\bfx)^{\frac1n}=1\quad\forall\,\bfx\in\cG\times\hcG.
	\end{equation*}
\end{remark}
We are now able to state the coorbit theory in \cite[Assumption 2.4.1]{Voig2015}, see  items  \textbf{A}--\textbf{G} and \textbf{H} --\textbf{J}. 
\begin{itemize}
	\item[\textbf{A}.] We assume $G$ to be a LCH, $\sigma$-compact group. We consider $\rho\colon G\to\cU(\cH)$ a strongly continuous, unitary, irreducible representation of $G$ for some nontrivial complex Hilbert space $\cH$. $\cU(\cH)$ denotes the group of unitary operators on $\cH$ (see e.g.,  \cite{Folland_AHA1995, WongLocalization} ).
	
	\item[\textbf{B}.] Given $f,g\in\cH$, we define the {\slshape (generalized) wavelet transform induced by $\rho$}, or voice transform, {\slshape of $f$ w.r.t. $g$} as
	\begin{equation}\label{Eq-DefWaveletGenericRho}
	W^\rho_g f\colon G\to\bC,\quad  x\mapsto\la f,\rho(x)g\ra_{\cH},
	\end{equation}
	where $\la\cdot{,}\cdot\ra_{\cH}$, also denoted by $\la\cdot{,}\cdot\ra$, is the inner product on $\cH$ supposed antilinear in the second component. $W^\rho_g f$ is always a continuous and bounded function on $G$, see \cite{WongLocalization}. We assume the representation $\rho$ to be {\slshape integrable}, i.e. there exists $g\in\cH\smallsetminus\{0\}$ such that $W^\rho_g g\in L^1(G)$; this implies that $\rho$ is also {\slshape square-integrable}: there exists $g\in\cH\smallsetminus\{0\}$ such that $W^\rho_g g\in L^2(G)$. Such a $g$ is said to be {\slshape admissible}.
	
	\item[\textbf{C}.] $(Y,\norm{\cdot}_Y)$ will be supposed to be a solid QBF space on $G$ with $\norm{\cdot}_Y$, or some equivalent quasi-norm, $r$-norm with $0<r\leq1$.
	
	\item[\textbf{D}.] The Wiener Amalgam space $W_Q(L^\infty,Y)$ is assumed right invariant for each measurable, relatively compact, unit neighborhood $Q\subseteq G$. We consider a submultiplicative weight $w\colon G\to(0,+\infty)$ such that for some (and hence each) measurable, relatively compact, unit neighborhood $Q\subseteq G$
	\begin{equation}\label{Eq-Condition-D-1}
	w(x)\underset{Q}{\gtrsim}\opnorm{R_x}_{W_Q(Y)\to W_Q(Y)}
	\end{equation} 
	and
	\begin{equation}\label{Eq-Condition-D_2}
	w(x)\underset{Q}{\gtrsim} \Delta(x^{-1})\opnorm{R_{x^{-1}}}_{W_Q(Y)\to W_Q(Y)},
	\end{equation}
	where $\Delta(x)$ is the modular function on $G$. We also require the weight $w$ to be bounded from below, i.e. there exists $c>0$ such that $w(x)\geq c$ for every $x\in G$.\\
\end{itemize}
If the condition on $W_Q(Y)$ in \textbf{D} is satisfied, then the Wiener space is independent of $Q$, so that we can omit the lower index. Moreover, this is ensured if $Y$ is right invariant (Lemma \ref{Lem-WienerSpaceIndependence}).
\begin{itemize}	
	\item[\textbf{E}.] We fix a submultiplicative weight $v\colon G\to(0,+\infty)$, which will be called {\slshape control weight for $Y$}, such that
	\begin{equation}
	v\geq w,\qquad v\geq w_{\vee,r},
	\end{equation}
	where $w$ is defined in \textbf{D} and
	\begin{equation}
	w_{\vee, r}(x)=  w(x^{-1})\left[\Delta(x^{-1})\right]^{1/r}.
	\end{equation}
	\item[\textbf{F}.] The {\slshape class of good vectors} is defined to be
	\begin{equation}
	\bG_v=\left\{g\in\cH\,|\,W^\rho_g g\in L^1_v(G)\right\}
	\end{equation}	
	and supposed nontrivial, $\{0\}\subsetneq\bG_v$.
	
	\item[\textbf{G}.] The {\slshape class of analyzing vectors} is defined as
	\begin{equation}
	\bA^r_v=\left\{g\in\cH\,|\,W^\rho_g g\in W^R(L^\infty,W(L^\infty,L^r_v))\right\}
	\end{equation}
	and supposed nontrivial, $\{0\}\subsetneq\bA^r_v$.
\end{itemize}

\begin{remark}\label{Rem-A-subset-G}(i) Observe that, since $v$ is submultiplicative, $L^r_v(G)$ is bi-invariant. This implies that $W(L^r_v)$ is independent of the window $Q$ and it is left invariant, hence also $W^R(W(L^r_v))$ is independent of the window subset. Concretely, this allow us to work with the same $Q$:
	\begin{equation}
	\norm{W^\rho_g g}_{W^R(W(L^r_v))}\asymp\norm{\sfM_Q \sfM^R_Q W^\rho_g g}_{L^r_v},
	\end{equation}
	(see in Lemma \ref{Lem-SC-window-space}).	(ii) From the continuous embeddings for $0<r\leq 1$
	\begin{equation}\label{Eq-Inclusions-Wiener-Voig-p113}
	W^R(L^\infty,W(L^\infty,L^r_v))\hookrightarrow W(L^\infty,L^r_v)\hookrightarrow W(L^\infty,L^1_v)\hookrightarrow L^1_v,
	\end{equation}
	see \cite[p. 113]{Voig2015}, follows the inclusion $\bA^r_v\subseteq \bG_v$.
\end{remark}

\begin{itemize}
	\item[\textbf{H}.] For a fixed $g\in\bG_v\smallsetminus\{0\}$, the {\slshape space of test vectors} is the set
	\begin{equation}
	\cT_v=\left\{f\in\cH\,|\,W^\rho_g f\in L^1_v(G)\right\}
	\end{equation}
	endowed with the norm
	\begin{equation}
	\norm{f}_{\cT_v}=\norm{W^\rho_g f}_{L^1_v}.
	\end{equation}
\end{itemize}
$(\cT_v,\norm{\cdot}_{\cT_v})$ is a $\rho$-invariant Banach space which embeds continuously and with density into $\cH$ and it is independent from the choice of the window vector $g\in\bG_v\smallsetminus\{0\}$, see \cite[Lemma 2.4.7]{Voig2015}. Recall that often the notation $\cH^1_v$ is used in place of $\cT_v$, see e.g. \cite{FeiGro1988,FeiGro1989-I,FeiGro1989-II,Rauhut2007Coorbit}.
\begin{itemize}
	\item[\textbf{I}.] We call {\slshape reservoir} the Banach space 
	\begin{equation}
	\cR_v\coloneqq\cT_v^\neg=\left\{f\colon\cH^1_v\to\bC\,|\, \text{antilinear and continuous}\right\}.
	\end{equation}
	\item[\textbf{J}.] We can extend the wavelet transform to $f\in\cR_v$ and $g\in\cT_v$:
	\begin{equation}
	W^\rho_g f\colon G\to\bC, \quad x\mapsto{}_{\cR_v}\la f,\rho(x)g\ra_{\cT_v},
	\end{equation}
	where ${}_{\cR_v}\la \cdot{,}\cdot\ra_{\cT_v}$ is the duality between $\cR_v$ and $\cT_v$ that will be denoted simply by $\la\cdot{,}\cdot\ra$. We have that $W^\rho_g f\in C(G)\cap L^\infty_{1/v}(G)$.
	\item[\textbf{K}.] For a fixed vector window $g\in\bA^r_v\smallsetminus\{0\}$, the {\slshape coorbit space on $G$ with respect to $Y$} is defined as 
	\begin{equation}
	\Co(Y)=\left\{f\in\cR_v\,|\,W^\rho_g f\in W(L^\infty,Y)\right\}
	\end{equation}
	endowed with the quasi-norm
	\begin{equation}
	\norm{f}_{\Co(Y)}=\norm{W^\rho_g f}_{W(L^\infty,Y)}.
	\end{equation}
\end{itemize}
The coorbit space $\Co(Y)$ is independent of $g\in\bA^r_v\smallsetminus\{0\}$, in the sense that different windows yield equivalent quasi-norms. Moreover, $(\Co(Y),\norm{\cdot}_{\Co(Y)})$ is a quasi-Banach space continuously embedded into $\cR_v$ and $\norm{\cdot}_{\Co(Y)}$ is a $r$-norm, $0<r\leq1$, if $\norm{\cdot}_Y$ is. We refer to \cite[Theorem 2.4.9]{Voig2015}.\par
In the following theorem we collect \cite[Theorem 2.4.19, Remark 2.4.20]{Voig2015}.
\begin{theorem}\label{Th-Voigt-Th-2.4.19}
	For every $g\in\bA^r_v\smallsetminus\{0\}$ there exists $U_0\subseteq G$ relatively compact unit neighbourhood such that for each $U_0$-BUPU $\Psi=\{\psi_i\}_{i\in I}$ with localizing family $X=\{x_i\}_{i\in I}$ the following hold true:
	\begin{itemize}
		\item[(i)] for each $i\in I$ there exists a continuous linear functional
		\begin{equation*}
		\lambda_i\colon\cR_v\to\bC
		\end{equation*}
		such that $\left(\lambda_i(f)\right)_{i\in I}\in Y_d(X)$ for every $f\in\cR_v$ and 
		\begin{equation}
		f=\sum_{i\in I}\lambda_i(f)\rho(x_i)g,,\qquad\forall\,f\in\Co(Y),
		\end{equation}
		where the sum converges unconditionally in the w-$\ast$-topology of $\cR_v$. If the finite sequences are dense in $Y_d(X)$, then the series converges unconditionally in $\Co(Y)$;
		\item[(ii)] for all $\lambda=\left(\lambda_i\right)_{i\in I}\in Y_d(X)$ the series 
		\begin{equation}
		\textsf{S}^X_g(\lambda)\coloneqq\sum_{i\in I}\lambda_i\rho(x_i)g
		\end{equation}
		is an element of $\Co(Y)$. The above sum converges unconditionally in the w-$\ast$-topology of $\cR_v$ (pointwise). If the finite sequences are dense in $Y_d(X)$, then the series converges unconditionally in $\Co(Y)$ and there exists $C>0$ such that
		\begin{equation}
		\norm{\textsf{S}^X_g(\lambda)}_{\Co(Y)}\leq C\norm{\left(\lambda_i\right)_{i\in I}}_{Y_d(X)},\qquad\forall\,\lambda\in Y_d(X);
		\end{equation}
		\item[(iii)] for $f\in\cR_v$ we have
		\begin{equation}
		f\in\Co(Y)\Leftrightarrow\left(\lambda_i(f)\right)_{i\in I}\in Y_d(X)
		\end{equation}
		and for every $f\in\Co(Y)$
		\begin{equation}
		\norm{f}_{\Co(Y)}\asymp\norm{\left(\lambda_i(f)\right)_{i\in I}}_{Y_d(X)}.
		\end{equation}
	\end{itemize}
\end{theorem}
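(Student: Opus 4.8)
This theorem collects \cite[Theorem 2.4.19, Remark 2.4.20]{Voig2015}, so in the paper it suffices to invoke that reference; nonetheless the plan of a self-contained proof is the standard coorbit discretisation scheme, adapted to the quasi-Banach regime $0<r\le1$. First I would pass to the \emph{correspondence principle}. Since $g\in\bA^r_v\subseteq\bG_v$ (Remark \ref{Rem-A-subset-G}) is admissible, the orthogonality relations furnish a reproducing formula $W^\rho_g f = W^\rho_g f * K$ with reproducing kernel $K\coloneqq\|g\|_\cH^{-2}\,W^\rho_g g$, the convolution being the one on $G$. By items \textbf{J} and \textbf{K} the map $W^\rho_g$ embeds $\Co(Y)$ isometrically onto the closed reproducing-kernel subspace $\{F\in W(L^\infty,Y)\,|\,F=F*K\}$ and intertwines abstract reconstruction with right convolution by $K$. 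This reduces the entire statement to discretising the operator $F\mapsto F*K$ on $W(L^\infty,Y)$, after which every estimate is carried out for functions on $G$.

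Next I would build the discretisation from the $U_0$-BUPU $\Psi=\{\psi_i\}$ by defining the quasi-interpolation operator
\begin{equation*}
T_\Psi F\coloneqq\sum_{i\in I}\Big(\int_G\psi_i(y)F(y)\,dy\Big)\,L_{x_i}K,
\end{equation*}
which replaces the integral $F*K=\int_G F(y)\,L_y K\,dy$ by a Riemann-type sum over the sampling points $x_i$. The error $F*K-T_\Psi F$ is controlled by the local oscillation $\mathrm{osc}_{U_0}K(x)=\sup_{u\in U_0}|K(xu)-K(x)|$, measured in the iterated Wiener norm $W^R(L^\infty,W(L^\infty,L^r_v))$: precisely because $g\in\bA^r_v$ this norm of $K$ is finite, and continuity of $K$ forces $\|\mathrm{osc}_{U_0}K\|\to0$ as $U_0\downarrow\{e\}$. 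Hence one can fix $U_0$ (depending on $g,Y,v$) so small that $\opnorm{F\mapsto F*K-T_\Psi F}<1$ in the $r$-sense on the reproducing-kernel subspace; this is exactly the neighbourhood $U_0$ asserted in the theorem.

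With such $U_0$ fixed, the reconstruction operator differs from a scalar multiple of $\mathrm{Id}$ by a map of operator norm $<1$, so it is invertible through a Neumann series (valid in $r$-Banach spaces). I would then \emph{define} the functionals $\lambda_i$ by applying this inverse and sampling against $\psi_i$; extending them to $\cR_v$ and checking continuity together with $(\lambda_i(f))_i\in Y_d(X)$ yields the analysis bound and the reconstruction identity (i). For (ii) I would compute $W^\rho_g\big(\sum_i\lambda_i\rho(x_i)g\big)=\sum_i\lambda_i\,L_{x_i}W^\rho_g g$ and apply the discrete-to-continuous Wiener amalgam inequality (again powered by $g\in\bA^r_v$) to obtain $\norm{\mathsf{S}^X_g(\lambda)}_{\Co(Y)}\lesssim\norm{(\lambda_i)_i}_{Y_d(X)}$; unconditional w-$\ast$ convergence in $\cR_v$ is automatic from the boundedness of the partial sums, and norm convergence in $\Co(Y)$ follows once finite sequences are dense in $Y_d(X)$. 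Finally, combining the analysis and synthesis bounds with the reconstruction identity delivers the norm equivalence (iii).

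The hard part will be the quantitative discretisation lemma of the third step in the quasi-Banach range: with only the $r$-triangle inequality available and no duality, one must first establish the relevant Young-type convolution relations and the discrete-continuous equivalences on $W(L^\infty,Y)$ and on its right-sided iterate, and then turn the oscillation estimate into a genuine $r$-contraction uniformly in the mesh. The membership $W^\rho_g g\in W^R(L^\infty,W(L^\infty,L^r_v))$ defining $\bA^r_v$ is exactly the hypothesis engineered to make both this contraction estimate and the synthesis bound go through simultaneously; verifying these amalgam inequalities for $0<r\le1$ is the technical core, and is precisely what is carried out in detail in \cite[Chapter 2]{Voig2015}.
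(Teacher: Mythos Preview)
Your proposal is correct and matches the paper's treatment: the paper does not give a proof but simply records this result as a quotation of \cite[Theorem 2.4.19, Remark 2.4.20]{Voig2015}, exactly as you state in your opening sentence. The self-contained sketch you supply---correspondence principle, quasi-interpolation operator built from the BUPU, oscillation estimate on the reproducing kernel in the iterated Wiener norm, Neumann-series inversion in the $r$-Banach setting, and the resulting analysis/synthesis bounds---is faithful to the strategy actually carried out in \cite[Chapter 2]{Voig2015}, so nothing further is needed.
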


\begin{remark}
	Let us remark the main differences with the Banach setting considered by Feichtinger and Gr\"ochenig \cite{FeiGro1988}:
	\begin{itemize}
		\item[(i)] in \cite{FeiGro1988} a solid Banach function space $Y$ on $G$ is considered and supposed continuously embedded in $L^1_{loc}(G)$. In particular, we observe how the condition $Y\hookrightarrow L^1_{loc}(G)$ is restrictive, in fact even if one would allow $Y$ to be quasi-Banach, all the spaces $L^p(G)$ with $0<p<1$ would be excluded;
		
		\item[(ii)] the window space considered in the construction of the coorbit space is larger than the one presented so far, namely it is sufficient a non-zero $g\in\cA_v\coloneqq\bG_v$ and 
		\begin{equation}
		\CoFG(Y)\coloneqq\left\{f\in\cR_v\,|\,W^\rho_g f\in Y\right\}
		\end{equation}
		with obvious norm. Hence $\CoFG(Y)$ is a Banach space independent of the chosen window $g\in\cA_v\smallsetminus\{0\}$
		.
	\end{itemize} 
\end{remark}
It is a natural question whether the two constructions coincide. In the Banach case the answer is positive, see \cite[Theorem 8.3]{FeiGro1989-II} and \cite[Theorem 6.1]{Rauhut2007Coorbit}.

\begin{theorem}\label{Th-Rauhut-Coo-Th6.1}
	Consider a solid Banach function space $Y$ such that it is bi-invariant and continuously embedded in $L^1_{loc}(G)$. Then
	\begin{equation*}
	\CoFG(Y)=\Co(Y)
	\end{equation*}
	with equivalent norms.
\end{theorem}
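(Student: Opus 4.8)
\textbf{Proof proposal for Theorem \ref{Th-Rauhut-Coo-Th6.1}.}
The plan is to show that the two coorbit constructions agree by checking that, in the Banach setting, the Wiener-amalgam-based norm $\norm{W^\rho_g f}_{W(L^\infty,Y)}$ and the bare norm $\norm{W^\rho_g f}_{Y}$ are equivalent on the relevant reservoir, so that the defining conditions $W^\rho_g f\in W(L^\infty,Y)$ and $W^\rho_g f\in Y$ single out the same subspace of $\cR_v$. One inequality is free: since $\abs{F}\le \sfM_Q F$ a.e.\ and $Y$ is solid, we always have the continuous embedding $W_Q(L^\infty,Y)\hookrightarrow Y$, hence $\Co(Y)\hookrightarrow\CoFG(Y)$ with norm control. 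The content is the reverse inequality: for $f$ with $W^\rho_g f\in Y$ one must bound $\norm{\sfM_Q W^\rho_g f}_Y$ by $\norm{W^\rho_g f}_Y$ (up to a constant depending on $Q$ and $Y$). This rests on the convolution-type reproducing identity for the wavelet transform together with the bi-invariance and the $L^1_{loc}$-embedding of $Y$.

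First I would recall the reproducing formula: for $g\in\bG_v\setminus\{0\}$ (here one uses that in the Banach case $\bA_v=\bG_v$, so any admissible analysing window works) and $f\in\CoFG(Y)$ one has the correspondence principle
\begin{equation*}
W^\rho_g f = \frac{1}{\norm{g}^2_{\cH}}\, W^\rho_g f * W^\rho_g g,
\end{equation*}
valid pointwise, where $*$ is convolution on $G$ (left Haar measure). Taking the $Q$-maximal function and using the standard pointwise estimate $\sfM_Q(H*K)\le H * \sfM_Q K$ for $K$ in a suitable Wiener space — or, equivalently, $\sfM_Q(F*\Phi)(x)\le (|F|*\sfM_Q\Phi)(x)$ — one gets
\begin{equation*}
\sfM_Q\big(W^\rho_g f\big) \lesssim |W^\rho_g f| * \sfM_Q\big(W^\rho_g g\big).
\end{equation*}
Since $g\in\bA^1_v=\bG_v$ one has $\sfM_Q(W^\rho_g g)\in L^1_v(G)\subseteq L^1(G)$, and because $Y$ is a solid, bi-invariant Banach function space the convolution relation $L^1(G)*Y\hookrightarrow Y$ holds (this is where bi-invariance and $Y\hookrightarrow L^1_{loc}$ are genuinely used: left invariance plus a closed-graph / Minkowski-integral-inequality argument gives $\norm{h*F}_Y\le \norm{h}_{L^1}\norm{F}_Y$, cf.\ the argument in \cite[Lemma 2.4.7]{Voig2015} and \cite[Theorem 6.1]{Rauhut2007Coorbit}). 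Combining,
\begin{equation*}
\norm{W^\rho_g f}_{W_Q(L^\infty,Y)} = \norm{\sfM_Q W^\rho_g f}_Y \lesssim \norm{\sfM_Q(W^\rho_g g)}_{L^1}\,\norm{W^\rho_g f}_Y \lesssim \norm{f}_{\CoFG(Y)}.
\end{equation*}
Thus $W^\rho_g f\in W(L^\infty,Y)$, i.e.\ $f\in\Co(Y)$, with norm control, giving $\CoFG(Y)\hookrightarrow\Co(Y)$.

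Finally I would assemble the two embeddings: $\Co(Y)\hookrightarrow\CoFG(Y)$ from solidity and $\CoFG(Y)\hookrightarrow\Co(Y)$ from the reproducing-formula estimate, with the constants in both directions depending only on $Q$, $Y$, and $\norm{W^\rho_g g}$; this yields equality of the spaces and equivalence of norms, and one checks along the way that both constructions use the same reservoir $\cR_v$ (the Banach case $r=1$), so the identification is literal, not merely up to isomorphism. I expect the main obstacle to be the convolution inequality $L^1(G)*Y\hookrightarrow Y$ for a general solid bi-invariant Banach function space — it needs the $L^1_{loc}$-embedding to make the convolution integrals meaningful and a careful (Minkowski integral inequality / vector-valued) argument rather than a one-line estimate — and the bookkeeping that the pointwise maximal-function/convolution domination $\sfM_Q(F*\Phi)\le |F|*\sfM_Q\Phi$ is applied with $\Phi=W^\rho_g g$ lying in the right Wiener space so that $\sfM_Q\Phi\in L^1_v$. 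These are exactly the points handled in \cite[Theorem 8.3]{FeiGro1989-II} and \cite[Theorem 6.1]{Rauhut2007Coorbit}, to which the proof ultimately defers.
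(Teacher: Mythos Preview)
Your proposal is correct and, in fact, goes further than the paper: the paper does not supply its own proof of this theorem but simply records it as a known result, citing \cite[Theorem 8.3]{FeiGro1989-II} and \cite[Theorem 6.1]{Rauhut2007Coorbit}. Your sketch --- the easy inclusion $\Co(Y)\hookrightarrow\CoFG(Y)$ from solidity, and the reverse via the reproducing identity, the pointwise domination $\sfM_Q(F\ast\Phi)\le |F|\ast \sfM_Q\Phi$, and the convolution bound for a bi-invariant solid Banach function space --- is exactly the argument in those references; one small remark is that the convolution you actually use is $Y\ast L^1\hookrightarrow Y$ (with $|W^\rho_g f|\in Y$ on the left), not $L^1\ast Y$, but bi-invariance covers both.
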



\section*{Acknowledgments}  
We thank F. Nicola for fruitful conversations on this topic and H. G. Feichtinger for valuable comments which contributed to improve our work.

\end{document}